\theoremstyle{plain}
 \newtheorem{theorem}{Theorem}[section]
 \newtheorem{proposition}[theorem]{Proposition}
 \newtheorem{lemma}[theorem]{Lemma}
 \newtheorem{corollary}[theorem]{Corollary}
 \newtheorem{proposed problem}[theorem]{Proposed problem}
 \newtheorem{open problem}[theorem]{Open problem}
 \newtheorem{proposed project}[theorem]{Proposed project}
\theoremstyle{definition}
 \newtheorem{example}[theorem]{Example}
 \newtheorem{definition}[theorem]{Definition}
\theoremstyle{remark}
 \newtheorem{remark}[theorem]{Remark}
 \numberwithin{equation}{section}
\title{Geometric structures on the complement of a toric mirror arrangement}
\author{Dali Shen}
\email{dali\_shen@hotmail.com}
\begin{document}

\setcounter{page}{1}
\pagenumbering{arabic}

\maketitle

\begin{abstract}
We study geometric structures on the complement of a toric mirror arrangement associated with a root system.
Inspired by those special hypergeometric functions
found by Heckman-Opdam, as well as the work of Couwenberg-Heckman-Looijenga on geometric structures
on projective arrangement complements,
we consider a family of connections on a total space, namely, a $\mathbb{C}^{\times}$-bundle on the complement of a
toric mirror arrangement (=finite union of hypertori, determined by a root system).
We prove that these connections are torsion free and
flat, and hence define a family of affine structures on the total space, which is equivalent to a family of projective structures
on the toric
arrangement complement. We then determine a parameter region for which the
projective structure admits a locally complex hyperbolic metric. In the end, we find a finite subset of this region for which
the orbifold in question can be biholomorphically mapped onto a Heegner divisor complement of a ball quotient.
\end{abstract}

\tableofcontents

\section{Introduction}

This paper deals with the geometric structures on the complement of a toric mirror arrangement associated with a
root system, mainly growing up from the PhD work of the author \cite{Shen-2015}.
It could be viewed as a natural
toric analogue of the theory of geometric structures on projective arrangement complements studied by Couwenberg, Heckman and
Looijenga in 2005 \cite{Couwenberg-Heckman-Looijenga}. In the 80's of last century, following other people who contributed to the
theory of hypergeometric functions, like Picard, Terada and so on, Deligne and
Mostow studied the monodromy problem of the Lauricella hypergeometric functions $F_{D}$ and gave a complete treatment
on the subject \cite{Deligne-Mostow}\cite{Mostow}, which provides ball quotient structures on $\mathbb{P}^{n}$ minus a
hyperplane configuration of type $A_{n+1}$. Almost in the same year, Barthel, Hirzebruch and H\"{o}fer
investigated Galois
coverings of $\mathbb{P}^2$ which ramify in a configuration of lines and found that $\mathbb{P}^2$ could be uniformized to a
complex ball for certain cases \cite{Barthel-Hirzebruch-Hofer}.
Then some 20 years later, Couwenberg, Heckman and Looijenga developed it to
a more general setting by means of the Dunkl connection, which deals with the geometric structures on projective
arrangement complements. Meanwhile also in the 80's and 90's of last century, Heckman and Opdam introduced and studied a kind of
hypergeometric functions associated with root systems in a series of papers \cite{Heckman-Opdam-1,Heckman-Opdam-2}
\cite{Opdam-1,Opdam-2},
which is actually a multivariable analogue of the classical Euler-Gauss hypergeometric functions. Inspired by these
special hypergeometric functions, we adopt the point of view in \cite{Couwenberg-Heckman-Looijenga} to study the
geometric structures on arrangement complements for the toric situation and we believe this case provides some
interesting examples of ball quotients.

We first in Section $\ref{sec:affine-structures}$ construct a projective structure on a toric arrangement complement.
The basic idea is that we can write a projective
structure on a complex manifold $M$ in terms of an affine structure on $M\times\mathbb{C}^{\times}$. It is well-known that
an affine structure
on a complex manifold is given by a torsion free and flat connection on its (co)tangent bundle, and vice versa. So constructing
a projective structure on $M$ is equivalent to producing a torsion free and flat connection on $M\times\mathbb{C}^{\times}$.
We start with an adjoint torus $H:=\mathrm{Hom}(Q,\mathbb{C}^{\times})$ given by a root lattice
$Q:=\mathbb{Z}R$ where $R$ is a reduced irreducible root system.
Denote the Lie algebra of $H$ by $\mathfrak{h}$ and the Weyl group of $R$ by $W$.
We are also given a toric mirror arrangement associated with a
root system $R$, that is, a finite collection of hypertori each of which is defined by $H_{\alpha}:=\{h\in H \mid e^{\alpha}(h)=1\}$
where $e^{\alpha}$ is a character of $H$. We write $H^{\circ}$ for the complement of the union of these hypertori. Let $\kappa$ be
a $W$-invariant multiplicity parameter for $R$ defined by $\kappa:=(k_{\alpha})_{\alpha\in R}\in \mathbb{C}^{R}$.
Inspired by the
special hypergeometric system constructed by Heckman and Opdam, we consider for $u,v\in\mathfrak{h}$, such a second order
differential operator on $\mathcal{O}_{H^{\circ}}$:
\[ D^{\kappa}_{u,v}:=\partial_{u}\partial_{v}+\frac{1}{2}\sum_{\alpha>0}k_{\alpha}\alpha(u)\alpha(v)
   \frac{e^{\alpha}+1}{e^{\alpha}-1}\partial_{\alpha^{\vee}}+\partial_{b^{\kappa}(u,v)}+a^{\kappa}(u,v) \]
where $\partial_{u}$ denotes the associated translation invariant vector field on $H$ for any $u\in\mathfrak{h}$ and
\[ a^{\kappa}:\mathfrak{h}\times\mathfrak{h}\rightarrow\mathbb{C},
  \quad b^{\kappa}:\mathfrak{h}\times\mathfrak{h}\rightarrow\mathfrak{h} \]
are a $W$-invariant bilinear form and a $W$-equivariant bilinear map respectively.
We want this system to define a projective structure on $H^{\circ}$. That means
for each multiplicity parameter $\kappa$ and each $W$-equivariant bilinear map
$b^{\kappa}$, there exists a $W$-invariant bilinear form $a^{\kappa}$ such that
the system of differential equations $D^{\kappa}_{u,v}f=0$ for all $u,v \in \mathfrak{h}$
is integrable. In order to see the integrability of the system, we treat it from a different
point of view, i.e., the one from the work of Couwenberg-Heckman-Looijenga. Now we associate to these data connections
$\nabla^{\kappa}=\nabla^{0}+\Omega^{\kappa}$ and $\tilde{\nabla}^{\kappa}=\tilde{\nabla}^{0}+\tilde{\Omega}^{\kappa}$
on the cotangent bundles of $H^{\circ}$ and $H^{\circ}\times\mathbb{C}^{\times}$ with
$\Omega^{\kappa}\in \mathrm{Hom}(\Omega_{H^{\circ}},\Omega_{H^{\circ}}\otimes\Omega_{H^{\circ}})$
given by
\begin{equation*}
\Omega^{\kappa}: \zeta\in\Omega_{H^{\circ}}\mapsto\frac{1}{2}\sum_{\alpha>0}
k_{\alpha}\frac{e^{\alpha}+1}{e^{\alpha}-1}\zeta(\partial_{\alpha^{\vee}})d\alpha\otimes d\alpha + (B^{\kappa})^{*}(\zeta)
\end{equation*}
and $\tilde{\Omega}^{\kappa}\in \mathrm{Hom}(\Omega_{H^{\circ}\times\mathbb{C}^{\times}},
\Omega_{H^{\circ}\times\mathbb{C}^{\times}}\otimes\Omega_{H^{\circ}\times\mathbb{C}^{\times}})$
given by
\begin{align*}
\tilde{\Omega}^{\kappa}:
\left\{
\begin{aligned}
\zeta\in\Omega_{H^{\circ}} \mapsto &\frac{1}{2}\sum_{\alpha>0}
k_{\alpha}\frac{e^{\alpha}+1}{e^{\alpha}-1}\zeta(\partial_{\alpha^{\vee}})d\alpha\otimes d\alpha+(B^{\kappa})^{*}(\zeta)\\
            &-\zeta\otimes\frac{dt}{t}-\frac{dt}{t}\otimes\zeta, \\
\frac{dt}{t}\in\Omega_{\mathbb{C}^{\times}} \mapsto &A^{\kappa}-\frac{dt}{t}\otimes\frac{dt}{t}.
\end{aligned}
\right.
\end{align*}
Here $\nabla^{0}$ and $\tilde{\nabla}^{0}$ denote the (flat) translation invariant connections on $H$ and
$H\times\mathbb{C}^{\times}$ respectively, $t$ is the coordinate for $\mathbb{C}^{\times}$, and $A^{\kappa}$ and
$B^{\kappa}$ denote the translation invariant
tensor fields on $H$ or $H\times\mathbb{C}^{\times}$ defined by
$a^{\kappa}$ and $b^{\kappa}$ respectively. We can show that the system defined by
$D^{\kappa}_{u,v}f=0$ for all $u,v\in\mathfrak{h}$ is integrable if and only if
the connection $\tilde{\nabla}^{\kappa}$ given above defines an affine structure, i.e., the connection $\tilde{\nabla}^{\kappa}$ is
torsion free and flat. The torsion freeness of $\tilde{\nabla}^{\kappa}$ comes directly from the torsion freeness of
$\nabla^{\kappa}$ while the flatness of $\tilde{\nabla}^{\kappa}$ needs more effort. In order to check the flatness of
$\tilde{\nabla}^{\kappa}$, we need to invoke a flatness criterion set up by Looijenga \cite{Looijenga-1999}, or by
Kohno \cite{Kohno-1990} at an earlier time. This criterion
requires us to compactify $H^{\circ}\times\mathbb{C}^{\times}$ and compute the residues of $\tilde{\Omega}^{\kappa}$ along those
added mirrors and boundary divisors. Then by applying the criterion to our situation, we can obtain the
conditions for $\tilde{\nabla}^{\kappa}$ being flat. According to these conditions, we can find an appropriate bilinear
form $a^{\kappa}$ so that the connection $\tilde{\nabla}^{\kappa}$ is indeed flat and hence a
$W$-invariant projective structure is constructed on $H^{\circ}$ in terms of $\nabla^{\kappa}$.

We next in Section $\ref{sec:hyperbolic-structures}$ show that the toric arrangement complement $H^{\circ}$
admits a hyperbolic structure when $\kappa$ lies in some certain region
so that its image under the projective evaluation map lands in a complex ball. The basic idea is that we first
identify the monodromy
representation of the system with the reflection representation of the extended affine Artin group,
and thus define a Hermitian form $h$ on the image of
the evaluation map for each $\kappa$ resorting to the reflection representation,
then we can find its hyperbolic region by computing its determinant and show that its dual Hermitian form $h^{*}$
is greater than $0$ (equivalently $h<0$) so that the desired result follows. We first compute the
eigenvalues of the residue endomorphisms of $\tilde{\nabla}^{\kappa}$ along mirrors and boundary divisors
respectively, and a surprising fact is that there
are at most two eigenvalues for each residue endomorphism no matter whether along a mirror
or a boundary divisor. This actually tells us what the local behavior of the evaluation map
looks like for the affine structure
around those divisors. Then we construct the reflection representation of the so-called affine Artin group $\mathrm{Art}(M)$
where $M$ is the affine Coxeter matrix associated with the affine root system $\tilde{R}$ of $R$, and the
extended affine Artin group $\mathrm{Art}'(M)$ ($:=\mathrm{Art}(M)\rtimes (P^{\vee}/Q^{\vee})$) can also be identified with the
fundamental group of the orbifold $W\backslash H^{\circ}$ by Brieskorn's theorem,
hence we can identify the reflection representation with
the monodromy representation of the system accordingly. We further define a Hermitian form $h$ on the corresponding target
space $A$ from the point of view of the reflection representation so that
we can obtain the hyperbolic region for the system by investigating
its determinant. For our situation we can
write out the evaluation map around those subregular points in terms of local coordinates with those
local exponents. Here by subregular points we mean those points lying in one and only one mirror or boundary divisor.
Prepared by these, finally we can prove the dual Hermitian form $h^{*}$ is greater than zero when $\kappa$
lies in the hyperbolic region so that the $\Gamma$-covering of $W\backslash H^{\circ}$ admits a complex ball structure, where
$\Gamma$ stands for the projective monodromy group.

Finally in the last section, since we have already had the local exponents along those reflection mirrors
and boundary divisors on hand,
we can invoke the so-called
Schwarz conditions from \cite{Couwenberg-Heckman-Looijenga} to find all the ball quotients arised in this setting.
This is listed in Table $3$.
On the other hand, a much more ambitious goal is to give each such ball quotient a
modular interpretation, although we are still far away from this for now. But there are already some work on this,
like the Deligne-Mostow theory for type $A_{n}$ and
other two groups, i.e., Allcock, Carlson and Toledo \cite{Allcock-Carlson-Toledo-2002} and Kondo
\cite{Kondo-2000} for type $E_{6}$ and $E_{7}$ respectively. Unfortunately we have to say we barely have any clue for
the other types for the moment, but we explain the
modular interpretation for type $A_{n}$ over here in order to shed some light on this direction.

\vspace{2mm}
\textbf{Acknowledgements.} I would like to thank my PhD supervisors: to Eduard Looijenga for his patient guidance
during my PhD time, including but not only on this work; to Gert Heckman for taking me to walk around in this
beautiful subject.

\section{Projective structures}\label{sec:affine-structures}

In this section we construct a projective structure on a toric arrangement complement $H^{\circ}$. This is equivalent to
constructing an affine structure on $H^{\circ}\times\mathbb{C}^{\times}$, i.e., producing a torsion free flat connection on
$H^{\circ}\times\mathbb{C}^{\times}$. In Section $\ref{subsec:projective-structures}$, we provide a general idea on how to construct such
a desired connection \index{Connection} on $M\times\mathbb{C}^{\times}$ out of a given connection on a complex manifold $M$.
In Section
$\ref{subsec:connections-on-tori}$, following the idea of the preceding section, we do construct such a connection for
$H^{\circ}\times\mathbb{C}^{\times}$,
which is inspired by the work of Heckman and Opdam on special hypergeometric system associated with
\index{Hypergeometric system!associated with a root system}
a root system. In Section $\ref{subsec:flatness-conditions}$, we show the constructed connection on $H^{\circ}\times\mathbb{C}^{\times}$
can be flat as long as we choose an appropriate bilinear form $a^{\kappa}$ for it.

\subsection{Affine and projective structures}\label{subsec:projective-structures}

Let $M$ be a complex manifold of dimension $n$.

\begin{definition}
A $\emph{projective}$ $\emph{structure}$ \index{Projective structure} on $M$ is given by
an atlas of holomorphic charts for which
the transition maps are projective-linear and which is maximal for that property.
Likewise, an $\emph{affine structure}$ \index{Affine structure} on $M$ is
given by an atlas of holomorphic
charts for which the transition maps are affine-linear and which is maximal for that
property.
\end{definition}

So a projective structure on $M$ is
locally modelled on the pair $(\mathbb{P}^{n}, \mathrm{Aut}(\mathbb{P}^{n}))$ of projective
space and projective group and an affine structure is locally modelled on the
pair $(\mathbb{A}^{n}, \mathrm{Aut}(\mathbb{A}^{n}))$ of affine space and affine group.

We recall from \cite{Couwenberg-Heckman-Looijenga}
that an affine structure defines a subsheaf of rank $n+1$ in
the structure sheaf
$\mathcal{O}_{M}$ containing constants, the sheaf of locally affine-linear functions.
The differentials of these make up
a local system on the sheaf $\Omega_{M}$ of differentials on $M$,
and such a local system is given by a holomorphic connection on $\Omega_{M}$,
$\nabla: \Omega_{M}\rightarrow\Omega_{M}\otimes\Omega_{M}$ with extension
$\nabla: \Omega_{M}^{k}\otimes\Omega_{M}\rightarrow\Omega_{M}^{k+1}\otimes\Omega_{M}$
by the Leibniz rule
$\nabla(\omega\otimes\zeta)=d(\omega)\otimes\zeta+(-1)^{k}\omega\wedge\nabla(\zeta)$ for $k\in\mathbb{N}$.
This connection is flat and torsion free. For any connection $\nabla$ on $\Omega_{M}$ its square $\nabla^2:\Omega^{k}_{M}\otimes\Omega_{M}\rightarrow\Omega^{k+2}_{M}\otimes\Omega_{M}$
is a morphism of $\mathcal{O}_{M}$-modules, given by wedging with a section $\mathrm{R}$ of
$\mathrm{End}_{\mathcal{O}_{M}}(\Omega_{M},\Omega^{2}_{M}\otimes\Omega_{M})$, called the curvature of $\nabla$,
and $\nabla$ is flat if and only if $\mathrm{R}=0$. The connection $\nabla$ on $\Omega_{M}$ is also torsion free,
which means that the composite of $\wedge:\Omega_{M}\otimes\Omega_{M}\rightarrow\Omega^{2}_{M}$ with $\nabla:\Omega_{M}\rightarrow\Omega_{M}\otimes\Omega_{M}$
is equal to the exterior derivative $d:\Omega_{M}\rightarrow\Omega^{2}_{M}$.
Indeed flat differentials in $\Omega_{M}$ are then closed,
and provide by the Poincar\'{e} lemma a subsheaf of $\mathcal{O}_{M}$ of rank $n+1$ containing constants.
We refer to Deligne's lecture notes for an excellent exposition of the language of connections and more
\cite{Deligne-1970}. Conversely, a torsion free flat connection on the cotangent \index{Connection!flat}
\index{Connection!torsion free}
bundle of $M$ defines an affine structure on $M$.

A projective structure can also be described in terms of a connection, at least locally.
Let us first observe that
such a structure on $M$ defines locally a tautological $\mathbb{C}^{\times}$-bundle $\pi: L\rightarrow M$
whose total space has an affine structure and for which scalar multiplication respects the affine structure.
This local $\mathbb{C}^{\times}$-bundle is unique up to scalar multiplication and needs not be globally defined.

We write a projective structure on $M$ in terms of an affine structure on $M\times\mathbb{C}^{\times}$ in the
following proposition.

\begin{proposition} \label{prop:affine-projective}
Let $M$ be a complex manifold endowed with a holomorphic connection $\nabla:\Omega_{M}\rightarrow\Omega_{M}\otimes\Omega_{M}$ on its cotangent bundle.
Suppose the connection $\nabla$ is torsion free and a connection $\tilde{\nabla}$ on $M\times\mathbb{C}^{\times}$ is given by
\begin{align*}
\tilde{\nabla}(\zeta)&=\nabla(\zeta)-\zeta\otimes\frac{dt}{t}-\frac{dt}{t}\otimes\zeta,\\
\tilde{\nabla}(\frac{dt}{t})&=A-\frac{dt}{t}\otimes\frac{dt}{t},
\end{align*}
with $\zeta\in\Omega_{M}$ and $t$ the coordinate on $\mathbb{C}^{\times}$,
where $A$ is a symmetric section of $\Omega_{M}\otimes\Omega_{M}$. Then this connection $\tilde{\nabla}$ defines
an affine structure on $M\times\mathbb{C}^{\times}$ if and only if
the curvature of $\nabla$, viewed as a $\mathcal{O}_{M}$-homomorphism
$\nabla\nabla$: $\Omega_{M}\rightarrow\Omega_{M}^{2}\otimes\Omega_{M}$, is given by wedging from the right
with the symmetric section $-A$ of $\Omega_{M}\otimes\Omega_{M}: \zeta\mapsto -\zeta\wedge A$.

\end{proposition}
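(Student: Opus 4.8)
The plan is to verify directly that $\tilde\nabla$ is torsion free and to compute its curvature $\tilde\nabla\tilde\nabla$, showing it vanishes identically precisely under the stated condition on the curvature of $\nabla$. Since torsion freeness of $\nabla$ is assumed and an affine structure is equivalent to a torsion free flat connection on the cotangent bundle (as recalled above), this suffices.

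First I would check torsion freeness of $\tilde\nabla$. The cotangent bundle of $M\times\mathbb C^\times$ splits as $\Omega_M\oplus\mathcal O\,\frac{dt}{t}$, so it is enough to compose $\tilde\nabla$ with $\wedge\colon\Omega_{M\times\mathbb C^\times}\otimes\Omega_{M\times\mathbb C^\times}\to\Omega^2_{M\times\mathbb C^\times}$ on the two types of generators and compare with the exterior derivative. On $\zeta\in\Omega_M$, the extra terms $-\zeta\otimes\frac{dt}{t}-\frac{dt}{t}\otimes\zeta$ are symmetric, so they die under $\wedge$, and torsion freeness of $\nabla$ gives $\wedge\tilde\nabla(\zeta)=\wedge\nabla(\zeta)=d\zeta$. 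On $\frac{dt}{t}$, the section $A$ is symmetric and $\frac{dt}{t}\otimes\frac{dt}{t}$ is symmetric, so $\wedge\tilde\nabla(\frac{dt}{t})=0=d(\frac{dt}{t})$. Hence $\tilde\nabla$ is torsion free regardless of $A$.

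The main computation is the curvature. Using the Leibniz rule $\tilde\nabla(\omega\otimes\zeta)=d\omega\otimes\zeta+(-1)^k\omega\wedge\tilde\nabla(\zeta)$, I would apply $\tilde\nabla$ a second time to each of the two defining formulas. For $\zeta\in\Omega_M$, one gets $\tilde\nabla\tilde\nabla(\zeta)$ as a sum of: the term $\nabla\nabla(\zeta)$; cross terms coming from differentiating $\zeta\otimes\frac{dt}{t}$, $\frac{dt}{t}\otimes\zeta$ and from feeding $\nabla(\zeta)$ back through the $\frac{dt}{t}$-corrections; and the term produced when $\tilde\nabla$ hits the $\frac{dt}{t}$ factor, which contributes $A$ (via $\tilde\nabla(\frac{dt}{t})=A-\frac{dt}{t}\otimes\frac{dt}{t}$). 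The key cancellations are that $d(\frac{dt}{t})=0$ kills several terms, and the $\frac{dt}{t}$-quadratic terms cancel in pairs against those coming from $-\frac{dt}{t}\otimes\frac{dt}{t}$ in the second formula; what survives is exactly $\nabla\nabla(\zeta)+\zeta\wedge A$ (up to the sign conventions built into the Leibniz rule). Setting this to zero recovers the condition $\nabla\nabla(\zeta)=-\zeta\wedge A$. For $\frac{dt}{t}$ one checks similarly that $\tilde\nabla\tilde\nabla(\frac{dt}{t})$ vanishes automatically: here the input $A\in\Omega_M\otimes\Omega_M$ is differentiated by $\nabla$ — and since $A$ is parallel for the relevant structure, or more directly since the torsion-free identity forces $\nabla A$ to be suitably symmetric — together with the quadratic $\frac{dt}{t}$ terms, everything cancels, giving no new constraint.

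The main obstacle I expect is bookkeeping: carefully tracking the signs from the graded Leibniz rule (the $(-1)^k$), keeping straight which tensor slot each $\frac{dt}{t}$ occupies, and confirming that the non-symmetric pieces one might worry about are in fact killed when landing in $\Omega^2\otimes\Omega$ because $\nabla\nabla(\zeta)$ is $\mathcal O$-linear and the wedge $\zeta\wedge A$ is taken in the first two factors only. Once the signs are pinned down, the identification of $\tilde\nabla\tilde\nabla(\zeta)=0$ with $\nabla\nabla(\zeta)=-\zeta\wedge A$ is immediate, and the $\frac{dt}{t}$-component gives nothing beyond what is already implied, completing the equivalence.
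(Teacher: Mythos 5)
Your proposal is correct and follows essentially the same route as the paper: check torsion freeness from the symmetry of the correction terms, expand $\tilde{\nabla}^{2}(\zeta)$ via the graded Leibniz rule to isolate $\nabla^{2}(\zeta)+\zeta\wedge A$, and verify that the $\frac{dt}{t}$-component imposes no further condition. The one step you leave as an assertion --- that the surviving term $t\nabla(A)$ in $\tilde{\nabla}^{2}(dt)$ vanishes because ``$A$ is parallel for the relevant structure'' --- is exactly where the paper supplies the missing reason, namely the Bianchi identity $\nabla(A)=0$ (which holds once the curvature is $\zeta\mapsto-\zeta\wedge A$, since the curvature is determined by $A$ through a parallel algebraic operation); you should cite that rather than the vaguer appeal to symmetry of $\nabla A$ under torsion freeness.
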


\begin{proof}
Put $L:=M\times\mathbb{C}^{\times}$ and denote by $\pi: L\rightarrow M$ and $t: L\rightarrow\mathbb{C}^{\times}$
the projections. Then we have
$$\Omega_{L}\cong\pi^{\ast}\Omega_{M}\oplus\mathcal{O}_{L}\frac{dt}{t}$$
and regard the natural map $\Omega_{M}\rightarrow\pi_{\ast}\pi^{\ast}\Omega_{M}$
as an inclusion. We have to check the flatness and torsion freeness of the
connection $\tilde{\nabla}:\Omega_{L}\rightarrow\Omega_{L}\otimes\Omega_{L}$ defined above.

We first check the flatness. Observe that for $\omega,\zeta\in\Omega_{M}$ we have
\begin{align*}
\tilde{\nabla}(\omega\otimes\zeta)&=d\omega\otimes\zeta-
\omega\wedge(\nabla(\zeta)-\zeta\otimes\frac{dt}{t}-\frac{dt}{t}\otimes\zeta)\\
&=\nabla(\omega\otimes\zeta)+(\omega\wedge\zeta)\otimes\frac{dt}{t}-\frac{dt}{t}\wedge(\omega\otimes\zeta)
\end{align*}
which in turn implies
\begin{align*}
\tilde{\nabla}(\nabla(\zeta))&=
\nabla^2(\zeta)+(\wedge\nabla)(\zeta)\otimes\frac{dt}{t}-\frac{dt}{t}\wedge\nabla(\zeta)\\
&=\nabla^2(\zeta)+d\zeta\otimes\frac{dt}{t}-\frac{dt}{t}\wedge\nabla(\zeta)
\end{align*}
since $\nabla$ is torsion free by assumption. Hence we get for $\zeta\in \Omega_{M}$
\begin{align*}
\tilde{\nabla}^2(\zeta)=&\tilde{\nabla}(\nabla(\zeta)-\zeta\otimes\frac{dt}{t}-\frac{dt}{t}\otimes\zeta)\\
=&\nabla^2(\zeta)+d\zeta\otimes\frac{dt}{t}-\frac{dt}{t}\wedge\nabla(\zeta)-d\zeta\otimes\frac{dt}{t}+
   \zeta\wedge(A-\frac{dt}{t}\otimes\frac{dt}{t})\\
   &+\frac{dt}{t}\wedge(\nabla(\zeta)-\zeta\otimes\frac{dt}{t}-\frac{dt}{t}\otimes\zeta)\\
=&\nabla^2(\zeta)+\zeta\wedge A.
\end{align*}
So $\tilde{\nabla}^2(\zeta)=0$ if and only if $\nabla^2(\zeta)=-\zeta\wedge A$.

Observe that the above definition of $\tilde{\nabla}(dt/t)$ is equivalent to
$\tilde{\nabla}(dt)=tA$, and so we get
\begin{align*}
\tilde{\nabla}^2(dt)&=\tilde{\nabla}(tA)=dt\wedge A+t\tilde{\nabla}(A)\\
&=dt\wedge A+t(\nabla(A)+(\wedge A)\otimes\frac{dt}{t}-\frac{dt}{t}\wedge A)\\
&=t\nabla(A)=0
\end{align*}
because $A$ is symmetric, and using the Bianchi identity $\nabla(A)=0$.

The verification that $\tilde{\nabla}$ is torsion free is easy. Indeed for $\zeta\in\Omega_{M}$
\begin{align*}
&\wedge\tilde{\nabla}(\zeta)=\wedge\nabla(\zeta)-\zeta\wedge\frac{dt}{t}-\frac{dt}{t}\wedge\zeta=d\zeta\\
&\wedge \tilde{\nabla}(dt)=t(\wedge A)=0
\end{align*}
since $\nabla$ is torsion free and $A$ is symmetric.
This completes the proof of the proposition.

\end{proof}

\begin{lemma} \label{lem:local-affine-functions}
As assumed in the above proposition, the local
affine functions on $M\times\mathbb{C}^{\times}$ are of the form $c+tf$, with $f\in\mathcal{O}_{M}$ satisfying $\nabla(df)+fA=0$
and $c\in\mathbb{C}$ a constant.
\end{lemma}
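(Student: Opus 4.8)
The plan is to determine explicitly which local sections of $\mathcal{O}_{L}$ are flat for the affine structure just constructed, where $L = M \times \mathbb{C}^{\times}$. By the discussion preceding Proposition~\ref{prop:affine-projective}, the local affine functions on $L$ are exactly those $g \in \mathcal{O}_{L}$ whose differential $dg$ is flat for $\tilde{\nabla}$, i.e. $\tilde{\nabla}(dg) = 0$. So the first step is to write an arbitrary local function $g$ on $L$ in a form adapted to the product structure and impose the equation $\tilde{\nabla}(dg) = 0$ on it.

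The key observation is a homogeneity (Euler-operator) argument in the $\mathbb{C}^{\times}$-direction. The formula $\tilde{\nabla}(dt/t) = A - (dt/t)\otimes(dt/t)$ says that $dt/t$ is \emph{not} flat, but it suggests looking at $dt = t \cdot (dt/t)$, for which the computation in the proof of the proposition already gave $\tilde{\nabla}(dt) = tA$. More generally, I would decompose $dg = \pi^{*}\eta + \phi\,\frac{dt}{t}$ according to $\Omega_{L} \cong \pi^{*}\Omega_{M} \oplus \mathcal{O}_{L}\frac{dt}{t}$, with $\eta$ a relative differential and $\phi \in \mathcal{O}_{L}$; since $dg$ is closed, $\phi = t\,\partial g/\partial t$ and $\eta$ is the fibrewise-$M$ differential of $g$. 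Then I apply $\tilde{\nabla}$ using the two defining formulas, collect the $\pi^{*}\Omega_{M}\otimes\pi^{*}\Omega_{M}$, the mixed $\frac{dt}{t}$-terms, and the $\frac{dt}{t}\otimes\frac{dt}{t}$-term separately, and set each to zero.

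The mixed and $\frac{dt}{t}\otimes\frac{dt}{t}$ components will force the $t$-dependence: expanding $g$ as a Laurent-type series (or Taylor series) in $t$ with coefficients in $\mathcal{O}_{M}$ and matching the $(dt/t)$-components, one finds that all $t$-powers except $t^{0}$ and $t^{1}$ are killed, so $g = c(x) + t f(x)$ with $c, f \in \mathcal{O}_{M}$. Feeding this back, the $\pi^{*}\Omega_{M}$-part of $\tilde{\nabla}(d c) = 0$ gives $\nabla(dc) = 0$ together with $dc$ being absorbed into the $dt/t$-terms in a way that forces $dc = 0$, i.e. $c$ is a constant; and the $t$-linear part yields precisely $\tilde{\nabla}(d(tf)) = t\big(\nabla(df) + fA\big) = 0$, i.e. the stated equation $\nabla(df) + fA = 0$. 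The last identity can also be read off directly by comparing with the computation $\tilde{\nabla}(dt) = tA$ from the proposition's proof, replacing the constant $1$ by $f$ and using that $\tilde{\nabla}(\pi^{*}df) = \nabla(df) - df\otimes\frac{dt}{t} - \frac{dt}{t}\otimes df$.

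The main obstacle is the bookkeeping of the $t$-dependence: one must be careful that $g$ need only be holomorphic on an open subset of $M \times \mathbb{C}^{\times}$, so a priori it is a convergent Laurent series in $t$ rather than a polynomial, and one has to argue cleanly that the constraints coming from the $\frac{dt}{t}$-components annihilate every Laurent mode other than degrees $0$ and $1$ in $t$ — this is where the specific shape of $\tilde{\nabla}(dt/t)$, in particular the coefficient $1$ in front of $(dt/t)\otimes(dt/t)$, is used. Once the two-term form $c + tf$ is pinned down, the remaining verification is the same short computation already carried out in Proposition~\ref{prop:affine-projective}, so no genuinely new difficulty arises there.
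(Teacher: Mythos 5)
Your proposal is correct and follows essentially the same route as the paper: expand the local function as a series $\sum f_{k}t^{k}$ with $f_{k}\in\mathcal{O}_{M}$, apply $\tilde{\nabla}\circ d$, and read off from the $\frac{dt}{t}\otimes\frac{dt}{t}$ component (coefficient $k(k-1)f_{k}$) and the mixed components that only the modes $k=0,1$ survive, with $df_{0}=0$ and $\nabla(df_{1})+f_{1}A=0$. There is no substantive difference from the paper's argument.
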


\begin{proof}
For $\varphi\in\mathcal{O}_{L}$ of the form $\sum f_{k}t^{k}$ with $f_{k}\in\mathcal{O}_{M}$ we get
\begin{align*}
\tilde{\nabla}(d\varphi)=&\sum\tilde{\nabla}(t^{k}df_{k}+kf_{k}t^{k}\frac{dt}{t})\\
=&\sum t^{k}(k\frac{dt}{t}\otimes df_{k}+\nabla(df_{k})-df_{k}\otimes\frac{dt}{t}-\frac{dt}{t}\otimes df_{k})\\
    &+\sum t^{k}(kdf_{k}\otimes\frac{dt}{t}+k(k-1)f_{k}\frac{dt}{t}\otimes \frac{dt}{t}+kf_{k}A)\\
=&\sum t^{k}(\nabla(df_{k})+kf_{k}A)+\sum k(k-1)t^{k}f_{k}\frac{dt}{t}\otimes \frac{dt}{t}\\
   &+\sum (k-1)t^{k}(df_{k}\otimes\frac{dt}{t}+\frac{dt}{t}\otimes df_{k})=0
\end{align*}
if and only if $f_{k}=0$ for $k\neq0,1$ and $f_{0},f_{1}\in\mathcal{O}_M$ are solutions of
\begin{eqnarray*}
df_{0}=0\;,\;\nabla(df_{1})+f_{1}A=0\;.
\end{eqnarray*}
\end{proof}

Given a projective structure on $M$ the pair $(\nabla, A)$ of a torsion free connection $\nabla$ on
$\Omega_{M}$ whose curvature is given by $\zeta\mapsto -\zeta\wedge A$ with $A$ a symmetric section of
$\Omega_{M}\otimes\Omega_{M}$ is not unique, because the way defining $\tilde{\nabla}$ produces not just the tautological
line bundle, but also a trivialization $t$. Let us see how this changes if we choose another
local trivialization $t^{\prime}$. Write $t^{\prime}=te^{g}$, with $g\in\mathcal{O}_{M}$.
From $\frac{dt^{\prime}}{t^{\prime}}=\frac{dt}{t}+dg$, we see that
$$\tilde{\nabla}(\zeta)=\nabla^{\prime}(\zeta)-\zeta\otimes\frac{dt^{\prime}}{t^{\prime}}
-\frac{dt^{\prime}}{t^{\prime}}\otimes\zeta$$ with
$$\nabla^{\prime}(\zeta):=\nabla(\zeta)+dg\otimes\zeta+\zeta\otimes dg.$$
Furthermore,
\begin{align*}
\tilde{\nabla}(\frac{dt^{\prime}}{t^{\prime}})&=\tilde{\nabla}(\frac{dt}{t}+dg)\\
&=A-\frac{dt}{t}\otimes\frac{dt}{t}+\nabla(dg)-dg\otimes\frac{dt}{t}-\frac{dt}{t}\otimes dg\\
&=A+\nabla(dg)+dg\otimes dg-\frac{dt^{\prime}}{t^{\prime}}\otimes\frac{dt^{\prime}}{t^{\prime}}\\
&=A^{\prime}-\frac{dt^{\prime}}{t^{\prime}}\otimes\frac{dt^{\prime}}{t^{\prime}}
\end{align*}
with
$$A^{\prime}:=A+\nabla(dg)+dg\otimes dg.$$

It is worthwhile to write out the content of the above lemma in local coordinates
$z=(z^1,\cdots,z^n)$ on $M$. Let $\nabla^0:\Omega_M\rightarrow\Omega_M\otimes\Omega_M$
be the connection defined by $\nabla^0(dz^k)=0$ for all $k$.

\begin{corollary} \label{cor:local-coordinates}
In these local coordinates let $\nabla=\nabla^0+\Omega:\Omega_M\rightarrow\Omega_M\otimes\Omega_M$
be a connection on $\Omega_M$, so $\Omega:\Omega_M\rightarrow\Omega_M\otimes\Omega_M$
is a morphism of $\mathcal{O}_M$-modules and $\Omega(dz^k)=\sum\Gamma^k_{ij}dz^i\otimes dz^j$
with $\Gamma^k_{ij}$ the connection coefficients of $\nabla$.
Let $A$ be a quadratic differential on $M$, so $A$ is a symmetric section
of $\Omega_M\otimes\Omega_M$ given in the local coordinates as
$A=\sum A_{ij}dz^i\otimes dz^j$ with $A_{ij}=A_{ji}$ for all $1\leq i,j\leq n$.
Then the linear system of second order differential equations $\nabla(df)+fA=0$
for $f\in\mathcal{O}_M$ takes in these local coordinates the explicit form
\[ (\partial_i\partial_j+\sum \Gamma^k_{ij}\partial_k+A_{ij})f=0 \]
for all $1\leq i,j\leq n$. It has local solution space of dimension at most $n+1$ with
equality if and only if the connection $\nabla$ is torsion free and its curvature $\mathrm{R}$
is given by $\Omega_M\ni\zeta\mapsto-\zeta\wedge A\in\Omega^2_M\otimes\Omega_M$.
In these local coordinates $\nabla$ is torsion free if and only if $\Gamma^k_{ij}=\Gamma^k_{ji}$
for all $1\leq i,j,k\leq n$, and $\mathrm{R}(\zeta)=-\zeta\wedge A$ for all $\zeta\in\Omega_M$ if and only if
$2\mathrm{R}^k_{lij}=\delta^k_jA_{il}-\delta^k_iA_{jl}$ for all $1\leq i,j,k,l\leq n$ with
$\delta$ the Kronecker symbol and
\[ \mathrm{R}^k_{lij}=(\partial_i\Gamma^k_{lj}-\partial_j\Gamma^k_{li})+
\sum(\Gamma^k_{mi}\Gamma^m_{lj}-\Gamma^k_{mj}\Gamma^m_{li}) \]
the coefficients of the curvature matrix $\mathrm{R}^k_l=\sum \mathrm{R}^k_{lij}dz^i\wedge dz^j$ of the
curvature $\mathrm{R}$ in the basis $dz^l$.
\end{corollary}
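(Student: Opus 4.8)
The plan is to translate each of the three coordinate-free assertions of Corollary~\ref{cor:local-coordinates} into the explicit component form by plugging the local frame $dz^1,\dots,dz^n$ into the definitions, and then to read off the claimed dimension count from Lemma~\ref{lem:local-affine-functions} together with Proposition~\ref{prop:affine-projective}. First I would expand $\nabla(df)+fA=0$. Writing $df=\sum_k(\partial_k f)\,dz^k$, the Leibniz rule gives $\nabla(df)=\sum_k d(\partial_k f)\otimes dz^k+\sum_k(\partial_k f)\,\nabla(dz^k)=\sum_{i,j}(\partial_i\partial_j f)\,dz^i\otimes dz^j+\sum_{i,j,k}(\partial_k f)\Gamma^k_{ij}\,dz^i\otimes dz^j$, and $fA=\sum_{i,j}fA_{ij}\,dz^i\otimes dz^j$; since $dz^i\otimes dz^j$ is a frame for $\Omega_M\otimes\Omega_M$, vanishing of the sum is equivalent to $\partial_i\partial_j f+\sum_k\Gamma^k_{ij}\partial_k f+A_{ij}f=0$ for all $i,j$, which is the displayed system.

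Next I would handle the dimension statement. By Lemma~\ref{lem:local-affine-functions}, solutions $f$ of $\nabla(df)+fA=0$ together with the constants $c$ parametrize the local affine-linear functions on $M\times\mathbb{C}^\times$, and an affine structure on an $(n{+}1)$-dimensional manifold has exactly an $(n{+}2)$-dimensional space of local affine-linear functions; conversely a holonomic system of the above shape always has solution space of dimension at most $n+1$ (prescribing $f$ and its first derivatives $\partial_k f$ at a point determines the formal solution, and the compatibility needed for an actual solution of that dimension is precisely integrability). So the solution space has dimension exactly $n+1$ iff $\tilde\nabla$ defines an affine structure on $M\times\mathbb{C}^\times$, which by Proposition~\ref{prop:affine-projective} happens iff $\nabla$ is torsion free and its curvature is $\zeta\mapsto-\zeta\wedge A$.

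It then remains to write torsion freeness and the curvature identity in components. For torsion: $\wedge\nabla(dz^k)=\sum_{i,j}\Gamma^k_{ij}\,dz^i\wedge dz^j=\sum_{i<j}(\Gamma^k_{ij}-\Gamma^k_{ji})\,dz^i\wedge dz^j$, and this equals $d(dz^k)=0$ iff $\Gamma^k_{ij}=\Gamma^k_{ji}$ for all $i,j,k$. For the curvature, I would recall that $\nabla^2(dz^l)=\sum_k \mathrm{R}^k_l\otimes dz^k$ with $\mathrm{R}^k_l=\sum_{i,j}\mathrm{R}^k_{lij}\,dz^i\wedge dz^j$, computing $\mathrm{R}^k_{lij}$ from $\nabla=\nabla^0+\Omega$ by the standard formula $d\Omega+\Omega\wedge\Omega$, which yields the stated $\mathrm{R}^k_{lij}=(\partial_i\Gamma^k_{lj}-\partial_j\Gamma^k_{li})+\sum_m(\Gamma^k_{mi}\Gamma^m_{lj}-\Gamma^k_{mj}\Gamma^m_{li})$. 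On the other side, $-dz^l\wedge A=-\sum_{i,j}A_{ij}\,dz^l\wedge dz^i\otimes dz^j$; re-expressing $dz^l\wedge dz^i$ over the basis $dz^i\wedge dz^j$ and matching the coefficient of $dz^i\wedge dz^j\otimes dz^k$ on both sides gives $2\mathrm{R}^k_{lij}=\delta^k_j A_{il}-\delta^k_i A_{jl}$ (the factor $2$ coming from the antisymmetrization convention $\mathrm{R}^k_l=\sum_{i,j}\mathrm{R}^k_{lij}\,dz^i\wedge dz^j$ with $\mathrm{R}^k_{lij}=-\mathrm{R}^k_{lji}$). I expect no genuine obstacle here; the only thing to be careful about is bookkeeping of the antisymmetrization convention so that the factor $2$ and the Kronecker deltas land on the correct indices, and confirming that the "at most $n+1$" bound for the solution space is standard for systems in which all second derivatives are expressed in terms of $f$ and its first derivatives.
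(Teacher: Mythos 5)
Your proposal is correct and follows essentially the same route as the paper: expand $\nabla(df)+fA$, $\wedge\nabla$ and $\nabla^2$ over the frame $dz^i\otimes dz^j$ and match coefficients (the paper extracts the factor $2$ by contracting the identity $\sum \mathrm{R}^k_{lij}\,dz^i\wedge dz^j=\sum A_{il}\,dz^i\wedge dz^k$ with $\partial_m$, which is exactly the antisymmetrization bookkeeping you describe). The only difference is that you spell out the dimension count via Lemma \ref{lem:local-affine-functions} and Proposition \ref{prop:affine-projective}, a step the paper's own proof leaves implicit.
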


\begin{proof}
In these local coordinates we have $df=\sum(\partial_jf)dz^j$ for $f\in\mathcal{O}_M$ and so
$\nabla^{0}(df)=\sum\partial_i\partial_j(f)dz^i\otimes dz^j$ and hence $\nabla(df)+fA=0$ amounts to
\[ \sum(\partial_i\partial_jf+\sum \Gamma^k_{ij}\partial_kf+A_{ij}f)dz^i\otimes dz^j=0 \]
which yields the above linear system of second differential equations. The connection $\nabla$
is torsion free if and only if $\wedge\nabla=d$ which amounts to $\sum\Gamma^k_{ij}dz^i\wedge dz^j=0$
or equivalently $\Gamma^k_{ij}=\Gamma^k_{ji}$ for all $1\leq i,j,k\leq n$.
The curvature $\mathrm{R}$ of $\nabla$ sends $dz^k$ to the element $\sum \mathrm{R}^k_{lij}(dz^i\wedge dz^j)\otimes dz^l$
and the condition that $\mathrm{R}=\nabla^2:\Omega_M\rightarrow\Omega^2_M\otimes\Omega_M$ is just equal to
$\zeta\mapsto-\zeta\wedge A$ for $\zeta\in\Omega_M$ amounts to
\[ \mathrm{R}(dz^k)=\sum \mathrm{R}^k_{lij}(dz^i\wedge dz^j)\otimes dz^l=\sum A_{il}(dz^i\wedge dz^k)\otimes dz^l=-dz^k\wedge A \]
for all $1\leq k\leq n$ and so
\[ \sum \mathrm{R}^k_{lij}(dz^i\wedge dz^j)=\sum A_{il}(dz^i\wedge dz^k) \]
for all $1\leq k,l\leq n$. Contraction with the vector field $\partial_m$ yields
\begin{align*}
\sum 2\mathrm{R}^k_{lmj}dz^j&=\sum \mathrm{R}^k_{lij}(\delta^i_mdz^j-\delta^j_mdz^i)\\
&=\sum A_{il}(\delta^i_mdz^k-\delta^k_mdz^i)\\
&=A_{ml}dz^k-\sum\delta^k_mA_{il}dz^i\\
&=\sum(\delta^k_jA_{ml}-\delta^k_mA_{jl})dz^j
\end{align*}
for all $1\leq k,l,m\leq n$. Hence the condition for the relation $\mathrm{R}(\zeta)=-\zeta\wedge A$ becomes
\[ 2\mathrm{R}^k_{lij}=\delta^k_jA_{il}-\delta^k_iA_{jl} \]
for all $1\leq i,j,k,l\leq n$.
\end{proof}

\subsection{Differential operators and connections on tori}\label{subsec:connections-on-tori}

We consider the situation discussed above in the special case where the underlying complex manifold
is an algebraic torus.

Let $\mathfrak{a}$ be a real vector space of dimension $n$ endowed with an inner product $(\cdot,\cdot)$ (making it a
Euclidean vector space). The inner product identifies $\mathfrak{a}$ with its dual $\mathfrak{a}^{\ast}$, so that
the latter also is endowed with an inner product, by abuse of notation still denoted by $(\cdot,\cdot)$. Suppose also given  a reduced irreducible finite root system $R\subset \mathfrak{a}^{\ast}$. Then the
corresponding orthogonal reflection for each $\alpha\in R$
\[ s_{\alpha}(\beta)=\beta-\frac{2(\beta,\alpha)}{(\alpha,\alpha)}\alpha, \quad
\beta\in \mathfrak{a}^{\ast}\]
preserves the set $R$ and the crystallographic condition
\[ \frac{2(\beta,\alpha)}{(\alpha,\alpha)}\in \mathbb{Z} \]
holds for all $\alpha,\beta\in R$.
Let $Q=\mathbb{Z}R$ denote
 the root lattice \index{Lattice!root} in $\mathfrak{a}^{\ast}$ and denote the corresponding
 dual root system in $\mathfrak{a}$ by $R^{\vee}$.
We then have the coweight lattice \index{Lattice!coweight}
$P^{\vee}=\mathrm{Hom}(Q,\mathbb{Z})$
of $R^{\vee}$ in $\mathfrak{a}$. Hence
\[ H=\mathrm{Hom}(Q,\mathbb{C}^{\times}) \]
is a so-called adjoint algebraic torus with
(rational) character lattice $Q$. \index{Lattice!character}

We denote the Lie algebra of $H$ by $\mathfrak{h}$, so $\mathfrak{h}=\mathbb{C}\otimes\mathfrak{a}$
and $H=\mathfrak{h}/2\pi\sqrt{-1}P^{\vee}$ as a complex torus. For $v\in\mathfrak{h}$ we denote by
$\partial_{v}$ the associated translation invariant vector field on $H$. Likewise, if we are given
$\phi\in\mathfrak{h}^{\ast}$, then we denote by $d\phi$ the associated translation invariant
differential on $H$. In case $\phi$ determines a character of $H$ (meaning $\phi\in Q$), we denote that character by
$e^{\phi}$. If $\exp:\mathfrak{h}\rightarrow H=\mathfrak{h}/2\pi\sqrt{-1}P^{\vee}$ is the exponential map with
the inverse $\log:H\rightarrow \mathfrak{h}$, then we have $e^{\phi}(h)=e^{\phi(\log h)}$ for all
$h\in H$. We also have $d\phi=(e^{\phi})^{\ast}(\frac{dt}{t})$ with $t$ the coordinate on $\mathbb{C}^{\times}$.
We denote the (flat)
translation invariant connections on $H$ and $H\times\mathbb{C}^{\times}$ by
$\nabla^{0}$ and $\tilde{\nabla}^{0}$ respectively
(so that $\partial_{v}=\nabla_{\partial_{v}}^{0}$).

Each $\alpha$ in $R$ determines a character $e^{\alpha}$,
then $R$ generates
the character lattice $Q$ and each element of $R$ is primitive in $Q$.
So the set $R_{+}:=R/\pm$ of antipodal pairs in $R$ indexes in one-one manner
the kernels of these characters. The kernel $H_{\alpha}=\{ h\in H\mid e^{\alpha}(h)=1\}$, also called the
\emph{mirror} associated with the root $\alpha$,
has its Lie algebra $\mathfrak{h}_{\alpha}$ which is the zero set of $\alpha$.
We call the finite collection of these hypertori $H_{\alpha}$'s a $\emph{toric mirror arrangement}$ associated with a
\index{Toric arrangement}
root system $R$, sometimes also called a \emph{toric arrangement} in this paper if no confusion would arise.
We write $H^{\circ}$ for the complement \index{Toric arrangement complement}
of the union of these hypertori as follows:
\[ H^{\circ}:=H-\cup_{\alpha\in R_{+}}H_{\alpha}. \]

Let $K$ be the space of multiplicity parameters for $R$ defined as the space of $W$-invariant functions
\[  \kappa=(k_{\alpha})_{\alpha\in R}\in \mathbb{C}^{R} \]
where $W$ is the Weyl group \index{Weyl group}
generated by all reflections $s_{\alpha}$. We shall sometimes write $k_{i}$ instead of $k_{\alpha_{i}}$
if $\alpha_{1},\cdots,\alpha_{n}$ is a basis of simple roots in $R_{+}$.
It is clear that $K$ is isomorphic to $\mathbb{C}^{r}$ as a $\mathbb{C}$-vector space if $r$ is the number
of $W$-orbits in $R$ (i.e., $r=1$ or $2$). Hence for convenience, we sometimes
also write $k$ for $k_{1}$ and $k'$ for $k_{n}$ if
$\alpha_{n}\notin W\alpha_{1}$ when no confusion can arise. But note that $k'$ has a different meaning for
type $A_{n}$, which can be seen from Remark $\ref{rem:k'-for-An}$.

We also have
given for each $\alpha\in R$ a nonzero $\emph{coroot}$ \index{Coroot} $\alpha^{\vee}\in\mathfrak{h}$
such that
$(-\alpha)^{\vee}=-\alpha^{\vee}$ and $\alpha^{\vee}(\alpha)=2$. Let
\[ a^{\kappa}: \mathfrak{h}\times\mathfrak{h}\rightarrow\mathbb{C}, \quad
b^{\kappa}: \mathfrak{h}\times\mathfrak{h}\rightarrow\mathfrak{h}  \]
be a symmetric bilinear form and a symmetric bilinear map respectively,
which are invariant and equivariant under the $W$ action respectively.
We notice that $a^{\kappa}$ is just a multiple of the given inner product
$(\cdot,\cdot)$ by the Schur's lemma since $R$ is irreducible.

\begin{lemma}\label{lem:bmap}
If $R$ is irreducible then $b^{\kappa}$ vanishes unless $R$ is of type $A_{n}$ for
$n\geq 2$ in which case there exists a $k'\in\mathbb{C}$ such that
\[ b^{\kappa}(u,v)=\frac{1}{2}k'\sum_{\alpha >0}\alpha(u)\alpha(v)\alpha'  \quad
\text{for any} \; u,v\in \mathfrak{h} \]
with $\alpha'=\varepsilon_{i}+\varepsilon_{j}-\frac{2}{n+1}\sum_{l}\varepsilon_{l}$
if we take the construction of $\alpha$ from Bourbaki \cite{Bourbaki}:
$\alpha=\varepsilon_{i}-\varepsilon_{j}$ for $1\leq i<j \leq n+1$.
\end{lemma}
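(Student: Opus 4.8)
The plan is to classify the $W$-equivariant symmetric bilinear maps $b^{\kappa}\colon\mathfrak{h}\times\mathfrak{h}\to\mathfrak{h}$ by a representation-theoretic argument, and then to pin down the explicit formula in the exceptional type $A_{n}$ case. First I would observe that such a map is the same datum as a $W$-equivariant linear map $\mathrm{Sym}^{2}\mathfrak{h}\to\mathfrak{h}$, i.e.\ an element of $\operatorname{Hom}_{W}(\mathrm{Sym}^{2}\mathfrak{h},\mathfrak{h})$. Since everything is defined over $\mathbb{R}$ (it suffices to work with the real reflection representation $\mathfrak{a}$ and complexify at the end), the problem reduces to computing $\dim_{\mathbb{R}}\operatorname{Hom}_{W}(\mathrm{Sym}^{2}\mathfrak{a},\mathfrak{a})$, equivalently the multiplicity of the reflection representation $\mathfrak{a}$ inside $\mathrm{Sym}^{2}\mathfrak{a}$.

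The key step is the following dichotomy. The reflection representation $\mathfrak{a}$ of an irreducible Weyl group $W$ is self-dual and absolutely irreducible, so by Schur's lemma the multiplicity of $\mathfrak{a}$ in $\mathrm{Sym}^{2}\mathfrak{a}$ equals $\dim\operatorname{Hom}_{W}(\mathfrak{a}\otimes\mathfrak{a},\mathfrak{a})$ restricted to the symmetric part. Now $\mathfrak{a}\otimes\mathfrak{a}\cong\mathrm{Sym}^{2}\mathfrak{a}\oplus\wedge^{2}\mathfrak{a}$; the invariant inner product gives one copy of the trivial representation inside $\mathrm{Sym}^{2}\mathfrak{a}$, and $\wedge^{2}\mathfrak{a}$ carries the adjoint-type summands. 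A $W$-equivariant map $\mathfrak{a}\otimes\mathfrak{a}\to\mathfrak{a}$ would make $\mathfrak{a}$ into a (not necessarily associative) algebra; by a classical fact (going back essentially to the structure theory behind Chevalley's theorem and used in this exact form in the work of Couwenberg--Heckman--Looijenga) the space of $W$-invariant symmetric maps $\mathfrak{a}\times\mathfrak{a}\to\mathfrak{a}$ is nonzero precisely when $W$ admits an invariant of degree $3$, which among the irreducible Weyl groups happens only for type $A_{n}$ with $n\ge 2$ (the symmetric group $S_{n+1}$ acting on the hyperplane $\sum x_{i}=0$, whose invariant ring has a cubic generator $\sum x_{i}^{3}$). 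In all other types the lowest non-quadratic fundamental invariant has degree $\ge 4$, forcing $b^{\kappa}=0$. I would phrase this either by directly invoking the table of degrees of fundamental invariants (Bourbaki), or, if a self-contained argument is wanted, by decomposing $\mathrm{Sym}^{2}\mathfrak{a}$ into irreducibles case by case using known branching data and checking that $\mathfrak{a}$ occurs with multiplicity $0$ except for $A_{n}$, where it occurs with multiplicity $1$.

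For the $A_{n}$ case I would then produce the explicit map. With Bourbaki's coordinates $\alpha_{ij}=\varepsilon_{i}-\varepsilon_{j}$ on $\mathfrak{a}^{\ast}$ and the corresponding realization of $\mathfrak{h}$, the candidate $(u,v)\mapsto\frac{1}{2}k'\sum_{\alpha>0}\alpha(u)\alpha(v)\,\alpha'$ is manifestly symmetric and bilinear; one checks it is $W$-equivariant by noting that $W=S_{n+1}$ permutes the $\varepsilon_{i}$, hence permutes the positive roots $\alpha$ and simultaneously the elements $\alpha'=\varepsilon_{i}+\varepsilon_{j}-\frac{2}{n+1}\sum_{l}\varepsilon_{l}$ (the projection of $\varepsilon_{i}+\varepsilon_{j}$ onto $\mathfrak{a}$) in the compatible way, so the sum is invariant. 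Since the multiplicity space is one-dimensional, every $W$-equivariant symmetric $b^{\kappa}$ is a scalar multiple of this one, which is exactly the assertion with the scalar named $k'$. I would remark that one should verify the map is not identically zero — e.g.\ evaluate on a single well-chosen pair $(u,v)$, or observe it is the polarization of the gradient of the cubic invariant $\sum\varepsilon_{i}^{3}$ transported via the inner product, which is nonzero — so that the parameter $k'$ is genuinely a free parameter rather than vacuous.

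The main obstacle is the representation-theoretic multiplicity computation: asserting cleanly that $\mathfrak{a}$ appears in $\mathrm{Sym}^{2}\mathfrak{a}$ with multiplicity exactly $0$ or $1$ according to type. The conceptually cleanest route is the invariant-theory one (multiplicity of $\mathfrak{a}$ in $\mathrm{Sym}^{2}\mathfrak{a}$ equals the number of degree-$3$ generators of the invariant ring, by Chevalley/"coinvariant algebra" reasoning), and I expect to cite the standard degrees of fundamental invariants for the irreducible Weyl groups rather than re-derive them; the only type giving a cubic invariant is $A_{n}$, $n\ge 2$. Everything else — symmetry, bilinearity, $W$-equivariance of the displayed formula, and the reduction from $b^{\kappa}$ to $\operatorname{Hom}_{W}(\mathrm{Sym}^{2}\mathfrak{h},\mathfrak{h})$ — is routine.
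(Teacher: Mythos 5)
Your argument is correct, but it follows a genuinely different route from the paper's. You reduce everything to the multiplicity of the reflection representation $\mathfrak{a}$ in $\mathrm{Sym}^{2}\mathfrak{a}$ and compute that multiplicity via invariant theory: by the Chevalley/Solomon description of the coinvariant algebra, this multiplicity equals the number of fundamental invariants of degree $3$, which is $1$ for $A_{n}$ ($n\geq 2$) and $0$ for every other irreducible type (note $D_{3}\cong A_{3}$, so the degree list for $D_{n}$, $n\geq 4$, never contains a $3$). This is uniform and clean, but it outsources the heavy lifting to the table of degrees (or to Solomon's formula). The paper instead identifies $b^{\kappa}$ with an element $f$ of $\mathrm{Hom}(\mathfrak{a},\mathrm{Sym}^{2}(\mathfrak{a}^{*}))^{W}$ and argues elementarily: choosing $v$ spanning a line whose orthogonal complement carries an irreducible subsystem $R_{H}$, the $W(R_{H})$-invariance forces $f(v)$ into the two-dimensional space $\mathbb{R}g_{L}\oplus\mathbb{R}g$; if some $w\in W$ sends $v$ to $-v$ while fixing $g_{L}$ and $g$, linearity forces $f(v)=0$, and the $W$-orbit of $v$ spans, so $f=0$. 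This disposes of all types with $-1\in W$ at once and of $D_{\mathrm{odd}}$ and $E_{6}$ by explicit choices of $v$ and $w$, leaving only $A_{n\geq 2}$, where a direct computation with the vectors $\bar{\varepsilon}_{i}$ shows the space is exactly one-dimensional and exhibits the generator. Your approach buys uniformity and a conceptual explanation (the cubic invariant $\sum x_{i}^{3}$ is precisely what survives), at the price of citing the classification of invariant degrees; the paper's buys self-containedness at the price of a short case analysis. Your closing points — that the displayed $b_{0}$ is $W$-equivariant because $w(\alpha')=(w\alpha)'$ and $(-\alpha)'=\alpha'$, and that one must check it is nonzero so that $k'$ is a genuine parameter — are exactly the checks the paper also makes, so nothing is missing.
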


\begin{proof}
We write $b$ for $b^{\kappa}$ if no confusion arises. Obviously we can
identify $b$ with an element of
$\mathrm{Hom}(\mathfrak{a},\mathrm{Sym}^{2}(\mathfrak{a}^{\ast}))^{W}$.
First fix a positive definite generator $g$ of
$\mathrm{Sym}^{2}(\mathfrak{a}^{\ast})^{W}$ and then choose a line $L\subset\mathfrak{a}$
such that its $g$-orthogonal complement $H\subset\mathfrak{a}$ is a hyperplane
for which $R_{H}:=R\cap H$ spans $H$ and is an irreducible root system. We
decompose $\mathrm{Sym}^{2}(\mathfrak{a}^{\ast})=\mathrm{Sym}^{2}(H^{*})\oplus(L^{*}
\otimes H^{*})\oplus(H^{*}\otimes L^{*})\oplus(L^{*})^{\otimes 2}$. If we consider the
$W(R_{H})$-invariant part, the
middle two summands immediately become trivial since $(H^{*})^{W(R_{H})}=0$. Then
we have
$\mathrm{Sym}^{2}(\mathfrak{a}^{*})^{W(R_{H})}=\mathbb{R}g_{L}\oplus\mathbb{R}g$
where $g=g_{H}+g_{L}$ and $g_{H}$ resp. $g_{L}$ is the restriction of $g$ on $H$ resp. $L$.

Let $f\in \mathrm{Hom}(\mathfrak{a},\mathrm{Sym}^{2}(\mathfrak{a}^{\ast}))^{W}$, then
$f(v)=\mu g_{L}+\lambda g$ for some $v\in L$ since $L$ belongs to the $W(R_{H})$-invariant
part. Assume that there exists a $w$ such that $w(v)=-v$. Since $w$ preserves both
$g_{L}$ and $g$, we must have $\mu=\lambda=0$ by the linearity of $f$ and thus
$f(v)=0$. Since the $W$-orbit of $v$ spans $V$, it follows that $f=0$.

This assumption is certainly satisfied when $-1\in W$. Let's consider the remaining
cases: $A_{n\geq 2}$, $D_{\mathrm{odd}}$ and $E_{6}$. For $E_{6}$, we take $v$ to be a root,
then $R_{H}$ is of type $A_{5}$. For $D_{\mathrm{odd}}$, we take $v$ perpendicular to a
subsystem of type $D_{n-1}$, then there is a $w$ whose restriction to $H$ is a
reflection (in terms of the construction in Bourbaki: $v=\varepsilon_{1}$ and
$w=s_{\varepsilon_{1}-\varepsilon_{2}}s_{\varepsilon_{1}+\varepsilon_{2}})$.

When $R$ is of type $A_{n\geq 2}$, we use the construction in Bourbaki again:
$\mathfrak{a}$ is the hyperplane in $\mathbb{R}^{n+1}$ defined by
$\sum_{i=1}^{n+1}x_{i}=0$. Put $\bar{x}_{i}:=x_{i}|\mathfrak{a}$ so that
$\sum_{i}\bar{x}_{i}=0$. Let $\bar{\varepsilon}_{i}\in\mathfrak{a}$ be the
orthogonal projection of $\varepsilon_{i}\in\mathbb{R}^{n+1}$ in $\mathfrak{a}$.
The orthogonal complement of $\varepsilon_{i}$ in $\mathfrak{a}$ is spanned by
a subsystem of type $A_{n-1}$. Note that all the $\bar{\varepsilon}_{i}$'s make
up a $W$-orbit with sum zero. So if we write $f(\bar{\varepsilon}_{i})=\mu\bar{x}_{i}^{2}
+\lambda g$, sum them up, we get $0=\sum_{i=1}^{n+1}f(\bar{\varepsilon}_{i})=
\mu\sum_{i=0}^{n+1}\bar{x}_{i}^{2}+(n+1)\lambda g$. Hence we have
$f(\bar{\varepsilon}_{i})=\mu(\bar{x}_{i}^{2}-\frac{1}{n+1}\sum_{i=1}^{n+1}\bar{x}_{i}^{2})$.
This indeed defines an element of
$\mathrm{Hom}(\mathfrak{a},\mathrm{Sym}^{2}(\mathfrak{a}^{*}))^{W}$ and
we thus have
$\mathrm{dim}(\mathrm{Hom}(\mathfrak{a},\mathrm{Sym}^{2}(\mathfrak{a}^{*}))^{W})=1$.

Let $b_{0}(u,v)=\sum_{\alpha >0}\alpha(u)\alpha(v)\alpha'$. Since $w(\alpha')=w(\alpha)'$
we have $wb_{0}(u,v)=b_{0}(wu,wv)$ for all $u,v\in\mathfrak{h}$ and $w\in
W(A_{n})=\mathfrak{S}_{n+1}$. Then we see that $b_{0}$ is a generator of
$\mathrm{Hom}(\mathrm{Sym}^{2}\mathfrak{h},\mathfrak{h})^{W}$.
\end{proof}

\begin{remark}\label{rem:k'-for-An}
In fact, for type $A_{n}$, another generator is obtained by taking $v\in\mathfrak{a}
\mapsto\partial_{v}\sigma_{3}|\mathfrak{a}$ where
$\bar{\sigma}_{3}:=\sigma_{3}|\mathfrak{a}$ is an element of
$(\mathrm{Sym}^{3}(\mathfrak{a}^{*}))^{W}$. This point will become more clear
when we discuss the toric Lauricella case in Example $\ref{exm:toric-Lauricella}$.

And because $b^{\kappa}$ exists for type $A_{n}$, we would like to include
$k'$ in $\kappa$ for type $A_{n}$.
\end{remark}

We want to define a $W$-invariant projective structure on $H^{\circ}$. Then there exists an integrable system
of second order differential equations on $H^{\circ}$ according to Corollary $\ref{cor:local-coordinates}$. Inspired by
this, we make an ansatz on the second order differential operator. Besides the system should be of the most general
$W$-invariant form, we also hope the system is asymptotically free along
the mirror $H_{\alpha}$ and it has regular singularities along $H_{\alpha}$.

Then we define the vector fields
\[   X_{\alpha}:=\frac{e^{\alpha}+1}{e^{\alpha}-1}\partial_{\alpha^{\vee}}  \]
(notice that $X_{-\alpha}=X_{\alpha}$) and consider for $u,v\in\mathfrak{h}$,
such a second order differential operator on $\mathcal{O}_{H^{\circ}}$ defined by
\[ D^{\kappa}_{u,v}:=\partial_{u}\partial_{v}+\frac{1}{2}\sum\limits_{\alpha>0}
k_{\alpha}\alpha(u)\alpha(v)X_{\alpha}+\partial_{b^{\kappa}(u,v)}+a^{\kappa}(u,v). \]
It adds to the main linear second order term
a lower order perturbation which consists of a $W$-equivariant first order term and a $W$-invariant constant. Notice that $wD^{\kappa}_{u,v}w^{-1}=D^{\kappa}_{wu,wv}$ where $w\in W$.

We want this system to define a projective structure on $H^{\circ}$. That means
for each multiplicity parameter $\kappa$ and each equivariant bilinear map
$b^{\kappa}$ as above there exists a $W$-invariant bilinear form $a^{\kappa}$ such that
the system of differential equations $D^{\kappa}_{u,v}f=0$ for all $u,v \in \mathfrak{h}$
is integrable. It is obvious that
this projective structure is invariant under the action of $W$.

We associate to these data connections
$\nabla^{\kappa}=\nabla^{0}+\Omega^{\kappa}$
on the cotangent bundle of $H^{\circ}$ with
$\Omega^{\kappa}\in \mathrm{Hom}(\Omega_{H^{\circ}},\Omega_{H^{\circ}}\otimes\Omega_{H^{\circ}})$
given by
\begin{equation}\label{eqn:projective-connection}
\Omega^{\kappa}: \zeta\in\Omega_{H^{\circ}}\mapsto\frac{1}{2}\sum\limits_{\alpha>0}
k_{\alpha}\zeta(X_{\alpha})d\alpha\otimes d\alpha + (B^{\kappa})^{*}(\zeta).
\end{equation}
Then taking the cue from Proposition $\ref{prop:affine-projective}$, we define connections
$\tilde{\nabla}^{\kappa}=\tilde{\nabla}^{0}+\tilde{\Omega}^{\kappa}$
on the cotangent bundle of $H^{\circ}\times\mathbb{C}^{\times}$ with
$\tilde{\Omega}^{\kappa}\in \mathrm{Hom}(\Omega_{H^{\circ}\times\mathbb{C}^{\times}},
\Omega_{H^{\circ}\times\mathbb{C}^{\times}}\otimes\Omega_{H^{\circ}\times\mathbb{C}^{\times}})$
given by
\begin{equation}\label{eqn:affine-connection}
\tilde{\Omega}^{\kappa}:
\left\{
\begin{aligned}
\zeta\in\Omega_{H^{\circ}}&\mapsto\frac{1}{2}\sum\limits_{\alpha>0}
k_{\alpha}\zeta(X_{\alpha})d\alpha\otimes d\alpha+(B^{\kappa})^ {*}(\zeta)-\zeta\otimes\frac{dt}{t}-\frac{dt}{t}\otimes\zeta, \\
\frac{dt}{t}\in\Omega_{\mathbb{C}^{\times}} &\mapsto A^{\kappa}-\frac{dt}{t}\otimes\frac{dt}{t}.
\end{aligned}
\right.
\end{equation}
Here $t$ is the coordinate for $\mathbb{C}^{\times}$ and $A^{\kappa}$ and
$B^{\kappa}$ denote the translation invariant
tensor fields on $H$ or $H\times\mathbb{C}^{\times}$ defined by
$a^{\kappa}$ and $b^{\kappa}$ respectively. According to ($\ref{eqn:projective-connection}$), ($\ref{eqn:affine-connection}$),
we can write
$\Omega^{\kappa}$ and $\tilde{\Omega}^{\kappa}$ explicitly:
$$\Omega^{\kappa}:=\frac{1}{2}\sum\limits_{\alpha>0}k_{\alpha}d\alpha\otimes d\alpha\otimes X_{\alpha}+(B^{\kappa})^{*},$$
\begin{align*}
\tilde{\Omega}^{\kappa}:=&\frac{1}{2}\sum_{\alpha>0}k_{\alpha}d\alpha\otimes d\alpha\otimes X_{\alpha}+(B^{\kappa})^{*}
+c^{\kappa}\sum_{\alpha>0}d\alpha\otimes d\alpha\otimes t\frac{\partial}{\partial t}\\
&-\sum_{\alpha_{i}\in \mathfrak{B}}d\alpha_{i}\otimes\frac{dt}{t}\otimes\partial_{p_{i}}
-\frac{dt}{t}\otimes\frac{dt}{t}\otimes t\frac{\partial}{\partial t}
-\sum_{\alpha_{i}\in \mathfrak{B}}\frac{dt}{t}\otimes d\alpha_{i}\otimes\partial_{p_{i}}.
\end{align*}
Here $c^{\kappa}$ is a constant for each $\kappa$ such that $A^{\kappa}=c^{\kappa}\sum_{\alpha>0}d\alpha\otimes d\alpha$,
$\mathfrak{B}$ is a fundamental system for $R$ and ${p_{i}}$ is the dual basis of $\mathfrak{h}$ to $\alpha_{i}$
such that $\alpha_{i}(p_{j})=\delta^{i}_{j}$ where $\delta^{i}_{j}$ is the Kronecker delta.

We immediately have the following fact.

\begin{lemma}\label{lem:equations}
Let $f\in\mathcal{O}_{H^{\circ}}$. $\nabla^{\kappa}(df)+fA^{\kappa}=0$ can be written out in the form of
the system of differential equations $D^{\kappa}_{u,v}f=0$ for all $u,v\in\mathfrak{h}$, i.e.
\begin{equation*}
(\partial_{u}\partial_{v}+\frac{1}{2}\sum\limits_{\alpha>0}
k_{\alpha}\alpha(u)\alpha(v)\frac{e^{\alpha}(h)+1}{e^{\alpha}(h)-1}\partial_{\alpha^{\vee}}
+\partial_{b^{\kappa}(u,v)}+a^{\kappa}(u,v))f(h)=0 \;
\forall u, v\in\mathfrak{h}.
\end{equation*}
\end{lemma}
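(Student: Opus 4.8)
The plan is to unwind the definitions of $\nabla^{\kappa}=\nabla^{0}+\Omega^{\kappa}$ and of $A^{\kappa}$ and to read off the section $\nabla^{\kappa}(df)+fA^{\kappa}$ of $\Omega_{H^{\circ}}\otimes\Omega_{H^{\circ}}$ in the global translation-invariant framing. Concretely, $\Omega_{H^{\circ}}$ is freely generated by the invariant differentials $d\phi$ ($\phi\in\mathfrak{h}^{*}$), which are flat for $\nabla^{0}$, while $T_{H^{\circ}}$ is framed by the invariant vector fields $\partial_{v}$ ($v\in\mathfrak{h}$); pairing a section of $\Omega_{H^{\circ}}\otimes\Omega_{H^{\circ}}$ against $\partial_{u}\otimes\partial_{v}$ extracts its ``$(u,v)$-component'', and as the pairings $(\partial_{u},\partial_{v})$ with $u,v\in\mathfrak{h}$ separate points of $\Omega_{H^{\circ}}\otimes\Omega_{H^{\circ}}$, it suffices to show that this $(u,v)$-component of $\nabla^{\kappa}(df)+fA^{\kappa}$ equals $D^{\kappa}_{u,v}f$ for every $u,v$.

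First I would treat the leading term: since $\nabla^{0}$ annihilates the framing and $df=\sum_{i}(\partial_{p_{i}}f)\,d\alpha_{i}$, the contraction of $\nabla^{0}(df)$ with $\partial_{u}\otimes\partial_{v}$ is simply $\partial_{u}\partial_{v}f$ (the invariant vector fields commute, so no ordering issue arises and the result is automatically symmetric in $u,v$). Next, for $\Omega^{\kappa}(df)$ I would use $(d\alpha\otimes d\alpha)(\partial_{u},\partial_{v})=\alpha(u)\alpha(v)$ together with $df(X_{\alpha})=X_{\alpha}f=\frac{e^{\alpha}+1}{e^{\alpha}-1}\partial_{\alpha^{\vee}}f$ to obtain the first-order root term $\frac12\sum_{\alpha>0}k_{\alpha}\alpha(u)\alpha(v)\frac{e^{\alpha}+1}{e^{\alpha}-1}\partial_{\alpha^{\vee}}f$, and I would interpret $(B^{\kappa})^{*}$ as the transpose of the invariant tensor $B^{\kappa}$ (a section of $\Omega_{H^{\circ}}\otimes\Omega_{H^{\circ}}\otimes T_{H^{\circ}}$) attached to $b^{\kappa}$, so that the contraction of $(B^{\kappa})^{*}(df)$ with $\partial_{u}\otimes\partial_{v}$ is $df(b^{\kappa}(u,v))=\partial_{b^{\kappa}(u,v)}f$. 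Finally $fA^{\kappa}$ contributes $f\,a^{\kappa}(u,v)$, because $A^{\kappa}$ is by definition the invariant tensor with $A^{\kappa}(\partial_{u},\partial_{v})=a^{\kappa}(u,v)$. Summing the four contributions produces exactly $D^{\kappa}_{u,v}f$, and since each is symmetric in $u,v$ (the root term because $d\alpha\otimes d\alpha$ is symmetric, the $B^{\kappa}$-term because $b^{\kappa}$ is symmetric, the $A^{\kappa}$-term because $a^{\kappa}$ is symmetric), the section lies in $\mathrm{Sym}^{2}\Omega_{H^{\circ}}$, consistent with the shape predicted by Corollary \ref{cor:local-coordinates}.

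I expect essentially no real obstacle here: the only point demanding care is the bookkeeping for the transpose $(B^{\kappa})^{*}$ and the identification of the invariant tensors $A^{\kappa},B^{\kappa}$ with the forms $a^{\kappa},b^{\kappa}$ (and the placement of the factor $\tfrac12$); once these conventions are pinned down the lemma is a one-line contraction computation. Alternatively, one may pick local logarithmic coordinates on $H^{\circ}$, in which $\nabla^{0}$ becomes the coordinate connection of Corollary \ref{cor:local-coordinates}, with $\Gamma^{k}_{ij}$ and $A_{ij}$ the coefficients of $\Omega^{\kappa}$ and $A^{\kappa}$ in that framing; applying the corollary verbatim yields the same system $D^{\kappa}_{u,v}f=0$.
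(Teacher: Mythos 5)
Your proposal is correct and follows essentially the same route as the paper: both arguments unwind $\nabla^{\kappa}=\nabla^{0}+\Omega^{\kappa}$ in the translation-invariant framing and contract $\nabla^{\kappa}(df)+fA^{\kappa}$ against $\partial_{u}\otimes\partial_{v}$, identifying the four contributions $\partial_{u}\partial_{v}f$, the root term, $\partial_{b^{\kappa}(u,v)}f$ and $a^{\kappa}(u,v)f$ term by term. No gaps.
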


\begin{proof}
For $\omega\in\Omega^{1}(H^{\circ})$, we have
\[ \nabla^{\kappa}(\omega)=d\omega+\frac{1}{2}\sum\limits_{\alpha>0}
k_{\alpha}\frac{e^{\alpha}+1}{e^{\alpha}-1}d\alpha\otimes d\alpha
\cdot\alpha^{\vee}(\omega)+(B^{\kappa})^{*}(\omega).  \]
Its covariant derivative in direction $v\in\mathfrak{h}$ is:
\[ \nabla_{v}^{\kappa}(\omega)=\partial_{v}\omega+\frac{1}{2}\sum\limits_{\alpha>0}
k_{\alpha}\frac{e^{\alpha}+1}{e^{\alpha}-1}d\alpha\cdot\alpha(v)
\alpha^{\vee}(\omega)+(B_{v}^{\kappa})^{*}(\omega).  \]
Let $\omega=df$, we have
\begin{align*}
\nabla^{\kappa}_{v}(df)&=\partial_{v}(df)+\frac{1}{2}\sum\limits_{\alpha>0}
k_{\alpha}\frac{e^{\alpha}+1}{e^{\alpha}-1}d\alpha\cdot\alpha(v)
\alpha^{\vee}(df)+(B_{v}^{\kappa})^{*}(df)\\
&=d(\partial_{v}f)+\frac{1}{2}\sum\limits_{\alpha>0}
k_{\alpha}\frac{e^{\alpha}+1}{e^{\alpha}-1}\alpha(v)
\cdot\partial_{\alpha^{\vee}}f \cdot d\alpha+(B_{v}^{\kappa})^{*}(df).
\end{align*}
Now contraction with $u$:
\[ \nabla^{\kappa}_{v}(df)(u)=\partial_{u}\partial_{v}f+\frac{1}{2}\sum\limits_{\alpha>0}
k_{\alpha}\frac{e^{\alpha}+1}{e^{\alpha}-1}\alpha(u)\alpha(v)
\partial_{\alpha^{\vee}}f+\partial_{b^{\kappa}(u,v)}f,  \]
yields an element of $\mathcal{O}_{H^{\circ}}$.

So, $\nabla^{\kappa}(df)+fA^{\kappa}=0$ is equivalent to
\begin{align*}
\partial_{u}\partial_{v}f+\frac{1}{2}\sum_{\alpha>0}
k_{\alpha}\alpha(u)\alpha(v)\frac{e^{\alpha}+1}{e^{\alpha}-1}\partial_{\alpha^{\vee}}f
+\partial_{b^{\kappa}(u,v)}f+a^{\kappa}(u,v)f=0 \;
\forall u, v\in\mathfrak{h}
\end{align*}
\end{proof}

By Proposition $\ref{prop:affine-projective}$, the integrability of the above system can be told from whether
the curvature of $\nabla^{\kappa}$, viewed as a $\mathcal{O}_{H^{\circ}}$-homomorphism
$\nabla^{\kappa}\nabla^{\kappa}$: $\Omega_{H^{\circ}}\rightarrow\Omega_{H^{\circ}}^{2}\otimes\Omega_{H^{\circ}}$,
is given by wedging from the right
with the symmetric section $-A^{\kappa}$ of $\Omega_{H^{\circ}}\otimes\Omega_{H^{\circ}}: \zeta\mapsto -\zeta\wedge A^{\kappa}$.
Before we proceed to this, let us first look at an example.
\begin{example}
We take a root system of type $A_{2}$. We compute the curvature form of the
connection defined by this root system. For $\alpha, \beta, \gamma\in R_{+}$, we write
\begin{align*}
\Omega:=\frac{e^{\alpha}+1}{e^{\alpha}-1}d\alpha\otimes d\alpha\otimes\partial_{\alpha^{\vee}}
+\frac{e^{\beta}+1}{e^{\beta}-1}d\beta\otimes d\beta\otimes\partial_{\beta^{\vee}}
+\frac{e^{\gamma}+1}{e^{\gamma}-1}d\gamma\otimes d\gamma\otimes\partial_{\gamma^{\vee}}.
\end{align*}
\begin{align*}
\nabla=\nabla^{0}+\Omega \; \text{such} \; \text{that} \; \nabla^{0}(d\alpha)=0.
\end{align*}
Here we let $k_{\alpha}=2$ for all $\alpha$ and $k'=0$.

Let $\zeta=c_{1}d\alpha+c_{2}d\beta \in\Omega_{H^{\circ}}$, then we have
$$\nabla(\zeta)=\frac{e^{\alpha}+1}{e^{\alpha}-1}d\alpha\otimes\zeta(\partial_{\alpha^{\vee}})d\alpha
+\frac{e^{\beta}+1}{e^{\beta}-1}d\beta\otimes\zeta(\partial_{\beta^{\vee}})d\beta
+\frac{e^{\gamma}+1}{e^{\gamma}-1}d\gamma\otimes\zeta(\partial_{\gamma^{\vee}})d\gamma,$$
and furthermore,
\begin{align*}
&\nabla\nabla(\zeta)\\
=&-\frac{e^{\alpha}+1}{e^{\alpha}-1}d\alpha\wedge\nabla(\zeta(\partial_{\alpha^{\vee}})d\alpha)
   -\frac{e^{\beta}+1}{e^{\beta}-1}d\beta\wedge\nabla(\zeta(\partial_{\beta^{\vee}})d\beta)\\
   &-\frac{e^{\gamma}+1}{e^{\gamma}-1}d\gamma\wedge\nabla(\zeta(\partial_{\gamma^{\vee}})d\gamma)\\
=&-\frac{e^{\alpha}+1}{e^{\alpha}-1}d\alpha\wedge\zeta(\partial_{\alpha^{\vee}})
    \Big ( \frac{e^{\alpha}+1}{e^{\alpha}-1}d\alpha\otimes\partial_{\alpha^{\vee}}(d\alpha)d\alpha
       +\frac{e^{\beta}+1}{e^{\beta}-1}d\beta\otimes\partial_{\beta^{\vee}}(d\alpha)d\beta\\
       &+\: \frac{e^{\gamma}+1}{e^{\gamma}-1}d\gamma\otimes\partial_{\gamma^{\vee}}(d\alpha)d\gamma \Big )\\
&-\: \frac{e^{\beta}+1}{e^{\beta}-1}d\beta\wedge\zeta(\partial_{\beta^{\vee}})
    \Big (\frac{e^{\alpha}+1}{e^{\alpha}-1}d\alpha\otimes\partial_{\alpha^{\vee}}(d\beta)d\alpha
       +\frac{e^{\beta}+1}{e^{\beta}-1}d\beta\otimes\partial_{\beta^{\vee}}(d\beta)d\beta\\
       &+\: \frac{e^{\gamma}+1}{e^{\gamma}-1}d\gamma\otimes\partial_{\gamma^{\vee}}(d\beta)d\gamma \Big )\\
&-\: \frac{e^{\gamma}+1}{e^{\gamma}-1}d\gamma\wedge\zeta(\partial_{\gamma^{\vee}})
    \Big(\frac{e^{\alpha}+1}{e^{\alpha}-1}d\alpha\otimes\partial_{\alpha^{\vee}}(d\gamma)d\alpha
       +\frac{e^{\beta}+1}{e^{\beta}-1}d\beta\otimes\partial_{\beta^{\vee}}(d\gamma)d\beta\\
       &+\: \frac{e^{\gamma}+1}{e^{\gamma}-1}d\gamma\otimes\partial_{\gamma^{\vee}}(d\gamma)d\gamma \Big ),
\end{align*}
then making use of $\alpha+\beta+\gamma=0$ and $d\alpha\wedge d\beta=d\beta\wedge d\gamma=d\gamma\wedge d\alpha$,
we can write the curvature form as follows,
\begin{align*}
\nabla\nabla = &-\frac{e^{\alpha}+1}{e^{\alpha}-1}\frac{e^{\beta}+1}{e^{\beta}-1}
    d\alpha\wedge d\beta\otimes(d\alpha\otimes\partial_{\beta^{\vee}}-d\beta\otimes\partial_{\alpha^{\vee}})\\
&-\: \frac{e^{\gamma}+1}{e^{\gamma}-1}\frac{e^{\alpha}+1}{e^{\alpha}-1}
    d\gamma\wedge d\alpha\otimes(d\gamma\otimes\partial_{\alpha^{\vee}}-d\alpha\otimes\partial_{\gamma^{\vee}})\\
&-\: \frac{e^{\beta}+1}{e^{\beta}-1}\frac{e^{\gamma}+1}{e^{\gamma}-1}
    d\beta\wedge d\gamma\otimes(d\beta\otimes\partial_{\gamma^{\vee}}-d\gamma\otimes\partial_{\beta^{\vee}})\\
=& -d\alpha\wedge d\beta\otimes(d\beta\otimes\partial_{\alpha^{\vee}}-d\alpha\otimes\partial_{\beta^{\vee}}).
\end{align*}
Then let
\begin{align*}
A&=d\alpha\otimes d\alpha +d\beta\otimes d\beta +d\gamma\otimes d\gamma\\
&=2d\alpha\otimes d\alpha +2d\beta\otimes d\beta +d\alpha\otimes d\beta+ d\beta\otimes d\alpha,
\end{align*}
we can easily verify that
$$\nabla\nabla(\zeta)=-\zeta\wedge A.$$
\end{example}

Since $\Omega^{\kappa}$ and $\tilde{\Omega}^{\kappa}$ take values in the symmetric tensors, the connections they define are
torsion free. The inversion involution of $H^{\circ}\times\mathbb{C}^{\times}$ acts on its space
of logarithmic differentials, so that the latter decomposes into its subspace of invariants and the
subspace of anti-invariants. The collection
$\lbrace\frac{e^{\alpha}+1}{e^{\alpha}-1}d\alpha\rbrace_{\alpha\in R_{+}}$ consists of
linearly independent invariants, whereas the anti-invariants are the translation invariant
differentials $\lbrace d\alpha\rbrace_{\alpha\in R_{+}}$. In particular, $\tilde{\Omega}^{\kappa}$ is
logarithmic (in the sense that its matrix entries are logarithmic differentials) and is uniquely
given by the form ($\ref{eqn:affine-connection}$). An associated connection $\nabla^{\kappa}$ on the cotangent bundle
of $H^{\circ}$ is given by the corresponding connection matrix $\Omega^{\kappa}$.

Since the system of differential equations defined by $D^{\kappa}_{u,v}$ can be expressed as
$\nabla^{\kappa}(df)+fA^{\kappa}=0$ according to Lemma $\ref{lem:equations}$, then the integrability of the system
can be connected to the curvature of $\nabla^{\kappa}$ as follows.

\begin{theorem} \label{thm:specialeqn}
Let $f\in\mathcal{O}_{H^{\circ}}$. The system of $n(n+1)/2$ linearly independent differential
equations
\begin{equation}\label{eqn:specialeqn}
(\partial_{u}\partial_{v}+\frac{1}{2}\sum\limits_{\alpha>0}
k_{\alpha}\alpha(u)\alpha(v)\frac{e^{\alpha}(h)+1}{e^{\alpha}(h)-1}\partial_{\alpha^{\vee}}
+\partial_{b^{\kappa}(u,v)}+a^{\kappa}(u,v))f(h)=0 \;
\forall u, v\in\mathfrak{h}
\end{equation}
is an integrable system on $H^{\circ}$ if and only if $-A^{\kappa}$ represents the curvature
of $\nabla^{\kappa}$, where $A^{\kappa}$ is the translation invariant tensor field on
$H^{\circ}$ defined by $a^{\kappa}$.
\end{theorem}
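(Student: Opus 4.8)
The plan is to reduce Theorem \ref{thm:specialeqn} to Proposition \ref{prop:affine-projective} together with Lemma \ref{lem:equations}, so that no genuinely new computation is needed beyond what is already on the table. First I would invoke Lemma \ref{lem:equations} to rephrase the system \eqref{eqn:specialeqn} as the single equation $\nabla^{\kappa}(df)+fA^{\kappa}=0$ on $\mathcal{O}_{H^{\circ}}$, and recall (as observed just before the statement) that $\Omega^{\kappa}$ takes values in the symmetric tensors, so that $\nabla^{\kappa}=\nabla^{0}+\Omega^{\kappa}$ is automatically torsion free. This puts us exactly in the hypotheses of Proposition \ref{prop:affine-projective} with $M=H^{\circ}$, $\nabla=\nabla^{\kappa}$ and $A=A^{\kappa}$ (which is symmetric, being the translation invariant tensor attached to the symmetric bilinear form $a^{\kappa}$).

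Next I would connect integrability of the system to the affine structure on the $\mathbb{C}^{\times}$-bundle. The point of Corollary \ref{cor:local-coordinates} (applied in suitable local coordinates, e.g.\ coordinates pulled back from $\mathfrak{h}$ via $\log$, or the coordinates $e^{\alpha_i}$) is that the local solution space of $\nabla^{\kappa}(df)+fA^{\kappa}=0$ has dimension at most $n+1$, with equality precisely when $\nabla^{\kappa}$ is torsion free and its curvature $\mathrm{R}^{\kappa}$ equals $\zeta\mapsto -\zeta\wedge A^{\kappa}$. "Integrable system" here means exactly that this maximal dimension $n+1$ is attained (equivalently, the $n(n+1)/2$ equations are compatible and cut out an $(n+1)$-dimensional local solution sheaf containing the constants). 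Since torsion freeness already holds, integrability is equivalent to the single curvature identity $\mathrm{R}^{\kappa}(\zeta)=-\zeta\wedge A^{\kappa}$ for all $\zeta\in\Omega_{H^{\circ}}$, i.e.\ to $-A^{\kappa}$ representing the curvature of $\nabla^{\kappa}$ in the sense of Proposition \ref{prop:affine-projective}. Alternatively one can route this through Proposition \ref{prop:affine-projective} and Lemma \ref{lem:local-affine-functions} directly: the curvature condition is equivalent to $\tilde{\nabla}^{\kappa}$ being flat (torsion freeness of $\tilde\nabla^\kappa$ being automatic), hence to $H^{\circ}\times\mathbb{C}^{\times}$ carrying an affine structure, and by Lemma \ref{lem:local-affine-functions} the local affine functions are $c+tf$ with $\nabla^{\kappa}(df)+fA^{\kappa}=0$, so the solution sheaf of the system has the full rank $n+1$ exactly in that case.

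Finally I would assemble the two implications. If $-A^{\kappa}$ represents the curvature of $\nabla^{\kappa}$, then by Proposition \ref{prop:affine-projective} the connection $\tilde{\nabla}^{\kappa}$ on $H^{\circ}\times\mathbb{C}^{\times}$ is flat and torsion free, hence defines an affine structure, and Lemma \ref{lem:local-affine-functions} (or Corollary \ref{cor:local-coordinates}) shows the system \eqref{eqn:specialeqn} has an $(n+1)$-dimensional local solution space, i.e.\ is integrable. Conversely, if the system is integrable, the local solution space has dimension $n+1$, which by Corollary \ref{cor:local-coordinates} forces the curvature identity $\mathrm{R}^{\kappa}(\zeta)=-\zeta\wedge A^{\kappa}$, i.e.\ $-A^{\kappa}$ represents the curvature of $\nabla^{\kappa}$. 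I do not anticipate a serious obstacle here: the substance has been front-loaded into Proposition \ref{prop:affine-projective} and Corollary \ref{cor:local-coordinates}, and the only care needed is bookkeeping — checking that "integrable" is being used in the sense "maximal-dimensional compatible solution sheaf", and that the symmetric-tensor form of $\Omega^{\kappa}$ really does give torsion freeness for free, so that the curvature condition alone controls integrability. The genuinely hard work — verifying that an $a^{\kappa}$ making this curvature identity hold actually exists — is deliberately deferred to the flatness analysis of Section \ref{subsec:flatness-conditions} and is not part of this theorem.
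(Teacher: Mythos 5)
Your proposal is correct and follows essentially the same route as the paper: the paper's (very terse) proof likewise reduces the system to $\nabla^{\kappa}(df)+fA^{\kappa}=0$, identifies its solutions with the local affine functions $c+tf$ via Lemma \ref{lem:local-affine-functions}, and concludes that integrability is equivalent to $-A^{\kappa}$ representing the curvature of $\nabla^{\kappa}$. Your version merely spells out the bookkeeping (automatic torsion freeness from the symmetry of $\Omega^{\kappa}$, and the dimension count from Corollary \ref{cor:local-coordinates}) that the paper leaves implicit.
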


\begin{proof}
From Lemma $\ref{lem:local-affine-functions}$, we can know that the function $\tilde{f}(z,t):=c+f(z)t$ has
a flat differential relative to $\tilde{\nabla}^{\kappa}$
if and only if $f\in\mathcal{O}_{H^{\circ}}$ satisfying
$\nabla^{\kappa}(df)+fA^{\kappa}=0$. Moreover the integrability of the system can be guaranteed by that
$-A^{\kappa}$ represents the curvature of $\nabla^{\kappa}$.
\end{proof}

We call ($\ref{eqn:specialeqn}$) the special hypergeometric system with multiplicity parameter $\kappa$.

In fact, for each system there exists an $A^{\kappa}$ such that
$\nabla^{\kappa}\nabla^{\kappa}(\zeta)=-\zeta\wedge A^{\kappa}$.
The direct verification of this
fact will be done in another paper, while here we shall see the integrability of this system
by checking the flatness of $\tilde{\nabla}^{\kappa}$ in the following section by another way.

\subsection{Flatness of $\tilde{\nabla}^{\kappa}$}\label{subsec:flatness-conditions}

We want to see whether or not $\tilde{\nabla}^{\kappa}$ is indeed flat. For this we wish to
apply the flatness criterion (1.2) of \cite{Looijenga-1999} to $\tilde{\nabla}^{\kappa}$. Here we
restate the criterion as follows.

\begin{lemma} \label{lem:flatness}
Let $U$ be a connected complex manifold. Suppose that $\overline{U}\supset U$ is a smooth
projective compactification of $U$ which adds to $U$ a simple normal crossing divisor $D$ whose irreducible
components $D_{i}$ are smooth. Suppose that $\overline{U}$ has no nonzero
regular 2-forms and that any irreducible component $D_{i}$ has no nonzero regular
1-forms. Then a logarithmic connection $E$ on the trivial vector bundle $U\times V$ over $U$ is
flat if and only if for every intersection
$I$ of two distinct irreducible components of $D$, the sum
$\sum_{D_{i}\supset I}\mathrm{Res}_{D_{i}}E$ commutes with each of its terms $\mathrm{Res}_{D_{i}}E$
($D_{i}\supset I$).
\end{lemma}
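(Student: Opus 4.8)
The plan is to prove Lemma~\ref{lem:flatness}, which is essentially a restatement of the flatness criterion of Looijenga~\cite{Looijenga-1999} (with an earlier incarnation in Kohno~\cite{Kohno-1990}), by reducing the flatness of the logarithmic connection $E$ on $U\times V$ to the vanishing of its curvature form $\mathrm{R}=dE+E\wedge E$, which is an $\mathrm{End}(V)$-valued logarithmic $2$-form on $\overline U$. First I would write $E=\sum_i E_i\,\omega_i + (\text{holomorphic part})$, where $\omega_i$ is a logarithmic $1$-form with a simple pole along $D_i$ and residue $1$ (locally $dz_i/z_i$), and $E_i=\mathrm{Res}_{D_i}E\in\mathrm{End}(V)$ is constant along $D_i$ because the connection is logarithmic and $D_i$ carries no nonzero regular $1$-forms (so the residue, a priori a closed regular $1$-form on $D_i$ with values in $\mathrm{End}(V)$ — actually a flat section — must be constant). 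The curvature $\mathrm{R}$ is then a logarithmic $2$-form on $\overline U$; the hypothesis that $\overline U$ has no nonzero regular $2$-forms means that a logarithmic $2$-form is determined by its residues along the $D_i$ and along the intersections $D_i\cap D_j$, so $\mathrm{R}=0$ iff all these residues vanish.

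Next I would compute these residues explicitly. The residue of $\mathrm{R}$ along a single component $D_i$ is, up to sign, $\mathrm{Res}_{D_i}(dE + E\wedge E)$; since $E_i$ is constant, $\mathrm{Res}_{D_i}dE$ contributes nothing new and the first-order pole part gives a term proportional to $E_i\wedge(\text{regular }1\text{-form on }D_i)$, which vanishes by the no-regular-$1$-forms hypothesis on $D_i$. The genuinely new information lives in the "double residue" along a codimension-two stratum $I=D_i\cap D_j$: working in local coordinates $(z_i,z_j,\dots)$ with $D_i=\{z_i=0\}$, $D_j=\{z_j=0\}$, the only part of $E\wedge E$ that produces a $\frac{dz_i}{z_i}\wedge\frac{dz_j}{z_j}$ term is $E_iE_j\,\frac{dz_i}{z_i}\wedge\frac{dz_j}{z_j}+E_jE_i\,\frac{dz_j}{z_j}\wedge\frac{dz_i}{z_i}=[E_i,E_j]\,\frac{dz_i}{z_i}\wedge\frac{dz_j}{z_j}$, while $dE$ contributes nothing to this particular double pole. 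More generally, if several components $D_{i_1},\dots,D_{i_r}$ all contain $I$ (so $I$ has codimension two but sits in a normal-crossing configuration where locally only two of them, say, meet transversally — in the SNC setting exactly two components meet along a codimension-two stratum, but Looijenga's formulation allows for a stratum $I$ that is an irreducible component of $D_i\cap D_j$ lying in possibly more $D_k$ only if those coincide; I will phrase it as: the relevant obstruction along $I$ is $\sum_{D_i\supset I}\sum_{D_j\supset I}[E_i,E_j]$ restricted appropriately), the vanishing of the double residue of $\mathrm{R}$ along $I$ is equivalent to $\big[\sum_{D_i\supset I}E_i,\;E_j\big]=0$ for each $j$ with $D_j\supset I$, i.e. $\sum_{D_i\supset I}\mathrm{Res}_{D_i}E$ commutes with each $\mathrm{Res}_{D_i}E$.

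The step I expect to be the main obstacle, and the one requiring the most care, is the claim that $\overline U$ having no nonzero regular $2$-forms forces a logarithmic $2$-form with vanishing residues to be identically zero. This is where the global cohomological input enters: one uses the fact that a logarithmic $2$-form $\eta$ with $\mathrm{Res}_{D_i}\eta=0$ for all $i$ and $\mathrm{Res}_I\eta=0$ for all codimension-two strata $I$ is in fact a regular (holomorphic) $2$-form on $\overline U$, via the residue exact sequences for the sheaf of logarithmic forms $\Omega^2_{\overline U}(\log D)$; then $H^0(\overline U,\Omega^2_{\overline U})=0$ kills it. Here I would invoke Deligne's theory of logarithmic forms~\cite{Deligne-1970} for the local structure (a logarithmic form with all iterated residues zero extends holomorphically) together with the hypothesis on $\overline U$. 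A subtlety is keeping track of the distinction between the Poincaré residue along a divisor and the iterated residue along a codimension-two stratum, and of the sign conventions; another is verifying that $E_i$ is literally a constant endomorphism and not merely flat, which again uses $H^0(D_i,\Omega^1_{D_i})=0$ to kill the would-be non-constant part. Once these local-to-global points are in place, the equivalence in the statement follows by reading off, stratum by stratum, the residues of $\mathrm{R}$ computed above.
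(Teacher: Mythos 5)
Your proposal follows essentially the same route as the paper's proof: write $E$ locally as a sum of logarithmic and holomorphic $\mathrm{End}(V)$-valued parts, identify the iterated (double) residue of $E\wedge E$ along a codimension-two stratum $I$ with the commutators $[\sum_{D_i\supset I}\mathrm{Res}_{D_i}E,\ \mathrm{Res}_{D_j}E]$ while observing that $dE$ contributes no double pole, and then use the absence of regular $1$-forms on the $D_i$ and of regular $2$-forms on $\overline{U}$ to conclude that vanishing of all double residues of $\mathrm{R}=dE+E\wedge E$ forces first $\mathrm{Res}_{D_i}\mathrm{R}=0$ and then $\mathrm{R}=0$. The only inessential deviation is your side claim that each $\mathrm{Res}_{D_i}E$ is a constant endomorphism; this is neither needed for the commutator computation nor assumed in the paper, whose proof works directly with residues $f_i|_{l_i=0}E_i$ that may vary along $D_i$.
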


\begin{proof}
First we prove the following fact.

\noindent \textbf{Assertion:}
\emph{The condition that $[\sum_{D_{i}\supset I}\mathrm{Res}_{D_{i}}E,\mathrm{Res}_{D_{i}}E]=0$
is equivalent to $\mathrm{Res}_{I}\mathrm{Res}_{D_{i}}\mathrm{R}(\nabla)=0$.}

The connection $E$ on $U$ could be locally written as
$$E=\sum_{i}f_{i}\frac{dl_{i}}{l_{i}}\otimes E_{i}+\sum_{i}\omega_{i}\otimes E_{i}^{\prime},$$
where $f_{i}$'s are holomorphic functions, $D_{i}$ is given by $l_{i}=0$,
$\omega_{i}$'s are holomorphic 1-forms and $E_{i}$'s, $E_{i}^{\prime}$'s are the endomorphisms of $V$.
Then we have
\begin{align*}
\mathrm{Res}_{D_{i}}E=f_{i}\vert_{l_{i}=0}E_{i}
\end{align*}
and
\begin{align*}
E\wedge E
=&\sum_{i,j}f_{i}f_{j}\frac{dl_{i}}{l_{i}}
\frac{dl_{j}}{l_{j}}\otimes E_{i}E_{j}+\sum_{i,j}f_{i}\frac{dl_{i}}{l_{i}}\wedge\omega_{j}\otimes
(E_{i}E_{j}^{\prime}-E_{j}^{\prime}E_{i})\\
  &+\sum_{i,j}\omega_{i}\omega_{j}\otimes E_{i}^{\prime}E_{j}^{\prime}\\
=&\sum_{ i < j}f_{i}f_{j}\frac{dl_{i}}{l_{i}}
\frac{dl_{j}}{l_{j}}\otimes(E_{i}E_{j}-E_{j}E_{i})+\sum\limits_{i,j}f_{i}\frac{dl_{i}}{l_{i}}\wedge\omega_{j}\otimes
(E_{i}E_{j}^{\prime}-E_{j}^{\prime}E_{i})\\
&+\sum\limits_{i,j}\omega_{i}\omega_{j}\otimes E_{i}^{\prime}E_{j}^{\prime}.
\end{align*}
Then
\begin{align*}
\mathrm{Res}_{D_{i}}E\wedge E=\sum_{j:j\neq i}f_{i}f_{j}
\frac{dl_{j}}{l_{j}}\vert_{l_{i}=0}\otimes(E_{i}E_{j}-E_{j}E_{i})+
\sum_{j:j\neq i}f_{i}\omega_{j}\otimes [E_{i},E_{j}^{\prime}],
\end{align*}
As $I\subset D_{i}$ is given by $D_{j}\cap D_{i}$ for any $j\neq i$ with $D_{j}\supset I$,
we have
\begin{multline*}
\mathrm{Res}_{I}\mathrm{Res}_{D_{i}}E\wedge E=\sum\limits_{j: D_{j}\supset I, j\neq i}f_{i}f_{j}\vert
_{I}(E_{i}E_{j}-E_{j}E_{i})=\\
\sum\limits_{j: D_{j}\supset I, j\neq i}f_{i}f_{j}\vert
_{I}[E_{i},E_{j}]
=[f_{i}\vert_{I}E_{i},\sum\limits_{j}f_{j}\vert_{I}E_{j}]
=[\mathrm{Res}_{D_{i}}E,\sum \mathrm{Res}_{D_{j}}E].
\end{multline*}
Since the double residue of $dE$ is obviously zero (any term of $dE$ is of at most
simple pole), the assertion follows.

Let's continue to prove the lemma. Necessity is obvious, but it is also sufficient:
If the double residue of $\mathrm{R}(\nabla)$ is equal to zero, then
$\mathrm{Res}_{D_{i}}\mathrm{R}(\nabla)$
has no pole along $I\subset D_{j}\cap D_{i} \; \text{for} \; \forall D_{j}\neq D_{i}$ , hence
$\mathrm{Res}_{D_{i}}\mathrm{R}(\nabla)$ has as coefficients regular 1-form along $D_{i}$, but there is no nonzero regular
1-form along $D_{i}$, we then have $\mathrm{Res}_{D_{i}}\mathrm{R}(\nabla)=0$. Again, $\mathrm{R}(\nabla)$
has no pole along $D_{i}$, hence
$\mathrm{R}(\nabla)$ has as coefficients regular 2-form everywhere, but there is no nonzero regular
2-form on $\overline{U}$, we then have $\mathrm{R}(\nabla)=0$.
\end{proof}

From the lemma above, we can see that it requires a compactification of $H^{\circ}\times\mathbb{C}^{\times}$
with a boundary which is arrangement-like in order to invoke the flatness criterion. We shall
take this to be of the form $\hat{H}_{\Sigma}\times\mathbb{P}^{1}$, where the first factor is
defined below.

Recall that $H$ has a unique $\mathbb{Q}$-structure which is split, i.e., for which $H(\mathbb{Q})$
is isomorphic to a product of copies of $\mathbb{Q}^{\times}$. Each homomorphism
$u:\mathbb{C}^{\times}\rightarrow H$ defines a tangent vector
$Du(t\frac{\partial}{\partial t})\in\mathfrak{h}(\mathbb{Q})$ and these tangent vectors span
a lattice $\check{X}(H)\subset\mathfrak{h}(\mathbb{Q})$, called the \emph{cocharacter} \emph{lattice}.
\index{Lattice!cocharacter}
We
shall identify $\check{X}(H)$ with $\mathrm{Hom}(\mathbb{C}^{\times},H)=P^{\vee}$.

Since the $\mathfrak{h}_{\alpha}$'s are defined over $\mathbb{Q}$, they cut up $\mathfrak{h}(\mathbb{R})$
according to a rational cone decomposition $\Sigma$. The latter determines a compact torus
embedding $H\subset\hat{H}$ whose boundary is a union of toric divisors, indexed by the
one-dimensional faces of $\Sigma$. We denote by $\Pi$ the set of primitive elements of $\check{X}(H)$
in spanning a face of $\Sigma$ and associate an element $p$ of $\Pi$ with a boundary divisor $D_{p}$
at the place of infinity. Our assumption that $e^{\alpha}$ is primitive in $X(H)=Q$
implies that $H_{\alpha}$ is connected and hence irreducible. So an irreducible component
of $\hat{H}-H^{\circ}$ is now either the closure $\hat{H}_{\alpha}$ in $\hat{H}$ of
some $H_{\alpha}$ or is equal to some $D_{p}$ with $p\in\Pi$.

Two distinct boundary divisors meet precisely when the corresponding one dimensional
faces of $\Sigma$ span a two dimensional face. Clearly, the divisors ($t=0$) and ($t=\infty$)
meet all other divisors, and $\hat{H}_{\alpha}$ meets $D_{p}$ if and only if
$\alpha(p)=0$.

We shall not make any notational distinction between a connection on the cotangent
bundle of $H^{\circ}\times\mathbb{C}^{\times}$ and the associated
one on its tangent bundle, i.e.,
the connection on its tangent bundle is also denoted by $\tilde{\nabla}^{\kappa}$. In fact, the
associated (dual) connection on the tangent bundle of $H^{\circ}\times\mathbb{C}^{\times}$
is characterized by the property that the pairing between vector fields and differentials
is flat. So its connection form \index{Connection form} is $-(\tilde{\Omega}^{\kappa})^{*}$.

The residue of $(\tilde{\Omega}^{\kappa})^{*}$ along these divisors are as follows:
define elements
of $\mathrm{End}(\mathfrak{h})\subset\mathrm{End}(\mathfrak{h}\oplus\mathbb{C})$ by
\begin{align*}
u_{\alpha}&:=k_{\alpha}(\alpha^{\vee}\otimes\alpha),\\
U_{x}&:=-\frac{1}{4}\sum\limits_{\alpha\in R}\vert\alpha(x)\vert u_{\alpha},
\quad x\in\mathfrak{h}(\mathbb{R}).
\end{align*}
So $u_{-\alpha}=u_{\alpha}$ and $U_{-x}=U_{x}$. Notice the dependence of $U_{x}$
on $x$ is piecewise linear (relative to $\Sigma$) and continuous. For $z\in\mathfrak{h}$,
we define $b_{z}^{\kappa}\in \mathrm{End}(\mathfrak{h})$ and
$a_{z}^{\kappa}\in\mathfrak{h}^{\ast}$
as follows:
\begin{align*}
b_{z}^{\kappa}(w):=b^{\kappa}(z,w),\\
a_{z}^{\kappa}(w):=a^{\kappa}(z,w).
\end{align*}
We first need to compute the following residues. \index{Residue}

Notice that $d\alpha=d(\log e^{\alpha})=\frac{de^{\alpha}}{e^{\alpha}}$ and
the mappings:
\begin{align*}
\mathbb{C}^{\times}\xrightarrow{\gamma_{p}} &H\xrightarrow{e^{\alpha}} \mathbb{C}^{\times}\\
& t\mapsto t^{\alpha (p)},
\end{align*}
we have
\begin{align*}
\mathrm{Res}_{\hat{H}_{\alpha}\times\mathbb{P}^{1}}\frac{e^{\alpha}+1}{e^{\alpha}-1}d\alpha &=
\mathrm{Res}_{(e^{\alpha}=1)}\frac{e^{\alpha}+1}{e^{\alpha}-1}\frac{d(e^{\alpha}-1)}{e^{\alpha}}\\
&=2,
\end{align*}

\begin{align*}
\mathrm{Res}_{D_{p}\times\mathbb{P}^{1}}\frac{e^{\alpha}+1}{e^{\alpha}-1}d\alpha &=
\mathrm{Res}_{t=0}\gamma_{p}^{\ast}(\frac{e^{\alpha}+1}{e^{\alpha}-1}\frac{de^{\alpha}}{e^{\alpha}})\\
&=\mathrm{Res}_{t=0}\frac{t^{\alpha(p)}+1}{t^{\alpha(p)}-1}\alpha(p)\frac{dt}{t}\\
&=
\begin{cases}
\frac{t^{\alpha(p)}+1}{t^{\alpha(p)}-1}\alpha(p)\frac{dt}{t}=-\alpha(p)  \quad &\text{if} \quad \alpha(p)>0\\
\frac{1+t^{-\alpha(p)}}{1-t^{-\alpha(p)}}\alpha(p)\frac{dt}{t}=+\alpha(p)  \quad &\text{if} \quad \alpha(p)<0\\
\end{cases}
\\
&=-\vert\alpha(p)\vert,
\end{align*}

\begin{align*}
\mathrm{Res}_{D_{p}\times\mathbb{P}^{1}}d\alpha &=
\mathrm{Res}_{t=0}\gamma_{p}^{\ast}(\frac{de^{\alpha}}{e^{\alpha}})\\
&=\mathrm{Res}_{t=0}\alpha(p)\frac{dt}{t}\\
&=\alpha(p);
\end{align*}
then we can compute
\begin{align*}
\mathrm{Res}_{\hat{H}_{\alpha}\times\mathbb{P}^{1}}(\tilde{\Omega}^{\kappa})^{*}&=\frac{1}{2}\mathrm{Res}_{(e^{\alpha}=1)}
k_{\alpha}\frac{e^{\alpha}+1}{e^{\alpha}-1}d\alpha\cdot\alpha^{\vee}\otimes\alpha\\
&=k_{\alpha}(\alpha^{\vee}\otimes\alpha)\\
&=u_{\alpha},
\end{align*}

\begin{align*}
\mathrm{Res}_{D_{p}\times\mathbb{P}^{1}}(\tilde{\Omega}^{\kappa})^{*}\\
=&\frac{1}{4}\sum\limits_{\alpha\in R}
\mathrm{Res}_{D_{p}\times\mathbb{P}^{1}}k_{\alpha}\frac{e^{\alpha}+1}{e^{\alpha}-1}
        d\alpha\cdot\alpha^{\vee}\otimes\alpha+\mathrm{Res}_{D_{p}\times\mathbb{P}^{1}}B^{\kappa}\\
  &\: +c^{\kappa}\sum\limits_{\alpha>0}
    \mathrm{Res}_{D_{p}\times\mathbb{P}^{1}}d\alpha\cdot t\frac{\partial}{\partial t}\otimes \alpha
  -\sum\limits_{\alpha_{i}\in \mathfrak{B}}\mathrm{Res}_{D_{p}\times\mathbb{P}^{1}}d\alpha_{i}\cdot p_{i}\otimes \frac{dt}{t}\\
=&\frac{1}{4}\sum\limits_{\alpha\in R}k_{\alpha}(-\vert\alpha(p)\vert\alpha^{\vee}\otimes\alpha)+\mathrm{Res}_{D_{p}\times\mathbb{P}^{1}}B^{\kappa}\\
  &\: +c^{\kappa}\sum\limits_{\alpha>0}\alpha(p)\cdot t\frac{\partial}{\partial t}\otimes \alpha
  -\sum\limits_{\alpha_{i}\in \mathfrak{B}}\alpha_{i}(p)\cdot p_{i}\otimes \frac{dt}{t}\\
=& U_{p}+b_{p}^{\kappa}+t\frac{\partial}{\partial t}\otimes a_{p}^{\kappa}-p \otimes \frac{dt}{t},
\end{align*}
and
\begin{align*}
\mathrm{Res}_{t=0}(\tilde{\Omega}^{\kappa})^{*}&=\mathrm{Res}_{t=0}(-\frac{dt}{t})\cdot t\frac{\partial}{\partial t}\otimes \frac{dt}{t}
+\sum\limits_{\alpha_{i}\in \mathfrak{B}}\mathrm{Res}_{t=0}(-\frac{dt}{t})\cdot p_{i}\otimes \alpha_{i}\\
&=-t\frac{\partial}{\partial t}\otimes\frac{dt}{t} -\sum\limits_{\alpha_{i}\in \mathfrak{B}}p_{i}\otimes \alpha_{i}\\
&=-1_{\mathbb{C}}-1_{\mathfrak{h}}\\
&=-1_{\mathfrak{h}\oplus\mathbb{C}}\\
&=-\mathrm{Res}_{t=\infty}(\tilde{\Omega}^{\kappa})^{*}.
\end{align*}

We shall sometimes drop $\kappa$ from $\nabla^{\kappa}$, $\tilde{\nabla}^{\kappa}$,
$a^{\kappa}$ and $b^{\kappa}$ when no confusion arises, but we need to bear in mind all these notations appearing in
what follows in this section depend on $\kappa$ unless other specified.

Having these residues on hand and making use of Lemma $\ref{lem:flatness}$, we have the flatness criterion
for $\tilde{\nabla}$ as follows.

\begin{lemma} \label{lem:flatness-conditions}
The connection $\tilde{\nabla}$ is flat (and thus defines an affine structure on $H^{\circ}\times\mathbb{C}^{\times}$)
if and only if the following conditions hold:
\leftmargini=7mm
\begin{enumerate}
\item for every rank two sublattice $L\subset Q$, the sum
$\sum_{\alpha\in R\cap L}u_{\alpha}$ commutes with each of its terms,
\item each $u_{\alpha}$ is self-adjoint relative to $a$ (equivalently: $a(\alpha^{\vee},z)=c_{\alpha}\alpha(z)$)
for some $c_{\alpha}\in\mathbb{C}$),
\item for every $z\in\mathfrak{h}$, $b_{z}$ is self-adjoint relative to $a$
(equivalently: $a(b(z_{1},z_{2}),z_{3})$ is symmetric in its arguments),
\item if $\alpha(p)=0$, then \\
\emph{(a)} $[u_{\alpha},U_{p}]=0$ and \\
\emph{(b)} $[u_{\alpha},b_{p}]=0$,
\item if $p,q\in\Pi$ span a two dimensional face of $\Sigma$, then \\
\emph{(a)} $[U_{p},b_{q}]=[U_{q},b_{p}]$ and \\
\emph{(b)} $[U_{p},U_{q}]+[b_{p},b_{q}]=p\otimes a_{q}-q\otimes a_{p}$.
\end{enumerate}
\end{lemma}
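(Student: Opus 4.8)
The strategy is to apply the flatness criterion Lemma~\ref{lem:flatness} directly to the logarithmic connection $E := -(\tilde{\Omega}^{\kappa})^{*}$ on the compactification $U = H^{\circ}\times\mathbb{C}^{\times} \subset \overline{U} = \hat{H}_{\Sigma}\times\mathbb{P}^{1}$. First I would verify that the geometric hypotheses of Lemma~\ref{lem:flatness} are met: $\overline{U}$ carries no nonzero regular $2$-forms and no boundary component carries a nonzero regular $1$-form. This follows because $\hat{H}_{\Sigma}$ is a smooth projective toric variety (hence rational, with $H^{0}(\Omega^{k})=0$ for $k>0$), and $\mathbb{P}^{1}$ likewise; each irreducible boundary component is either a toric divisor $D_{p}$ (again toric, hence with no regular $1$-forms) or $\hat{H}_{\alpha}$ (a toric variety for the quotient torus $H/\langle\alpha^{\vee}\rangle$-closure, again rational), or one of $t=0$, $t=\infty$, which are copies of $\hat{H}_{\Sigma}$. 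One must also note that after possibly refining $\Sigma$ the normal crossing condition holds and the $D_{i}$ are smooth; this is standard toric geometry.

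The heart of the argument is the combinatorial translation: Lemma~\ref{lem:flatness} says $E$ is flat iff for every codimension-two stratum $I$ (an intersection of two distinct boundary components), writing $\mathcal{S}_{I} = \sum_{D_{i}\supset I}\mathrm{Res}_{D_{i}}E$, one has $[\mathcal{S}_{I},\mathrm{Res}_{D_{i}}E]=0$ for each $D_{i}\supset I$. So the plan is to enumerate all types of such strata $I$ and, using the residue computations already carried out above, write out what the commutation condition says in each case. The strata come in the following flavors: (i) $\hat{H}_{\alpha}\cap\hat{H}_{\beta}$ with $\alpha,\beta$ spanning a rank-two sublattice $L$ — here the relevant residues are the $u_{\gamma}$ for $\gamma\in R\cap L$ (all the $\hat{H}_{\gamma}$ passing through $I$), giving condition (1); (ii) $\hat{H}_{\alpha}\cap D_{p}$ with $\alpha(p)=0$ — the residues are $u_{\alpha}$ (together with any $u_{\beta}$ for $\beta$ in the plane, but by (1) those already commute) and $U_{p}+b_{p}+t\partial_{t}\otimes a_{p} - p\otimes\frac{dt}{t}$; decomposing this endomorphism of $\mathfrak{h}\oplus\mathbb{C}$ into its $\mathrm{End}(\mathfrak{h})$-part and the rank-one off-diagonal parts, the commutator with $u_{\alpha}\in\mathrm{End}(\mathfrak{h})$ splits into $[u_{\alpha},U_{p}]=0$, $[u_{\alpha},b_{p}]=0$ (condition (4)) and a piece forcing $a_{p}$ to vanish on $u_{\alpha}$-type vectors and $u_{\alpha}(p)=0$, which is exactly condition (2) read off from $\alpha(p)=0$ plus the observation that $u_\alpha p = k_\alpha \alpha(p)\alpha^\vee = 0$; (iii) $\hat{H}_{\alpha}\cap(t=0)$ and $\hat{H}_{\alpha}\cap(t=\infty)$ — here the two residues are $u_{\alpha}$ and $\mp 1_{\mathfrak{h}\oplus\mathbb{C}}$, and since a scalar commutes with everything the condition is vacuous; (iv) $D_{p}\cap D_{q}$ with $p,q$ spanning a two-dimensional face of $\Sigma$ — the residues are $R_{p} := U_{p}+b_{p}+t\partial_{t}\otimes a_{p}-p\otimes\frac{dt}{t}$ and similarly $R_{q}$, and $[R_{p}+R_{q}, R_{p}]=0$ is equivalent to $[R_{p},R_{q}]=0$; expanding this bracket in $\mathrm{End}(\mathfrak{h}\oplus\mathbb{C})$ and separating the $\mathrm{End}(\mathfrak{h})$-component from the row/column components yields $[U_{p},U_{q}]+[b_{p},b_{q}] = p\otimes a_{q}-q\otimes a_{p}$ together with $[U_{p}+b_{p},\,\cdot\,]$ acting correctly on the $\frac{dt}{t}$ and $t\partial_{t}$ directions, which after using self-adjointness gives (5)(a) $[U_{p},b_{q}]=[U_{q},b_{p}]$ and (5)(b); (v) $D_{p}\cap D_{q}$ where the faces do \emph{not} span a two-dimensional face — then $D_{p}\cap D_{q}=\varnothing$ so there is no condition; similarly $(t=0)\cap(t=\infty)=\varnothing$. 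Conditions (2) and (3) (self-adjointness of $u_{\alpha}$ and of $b_{z}$ relative to $a$) are precisely what is needed to make the rank-one off-diagonal blocks $t\partial_t\otimes a_p$ and $p\otimes\frac{dt}{t}$ interact consistently in cases (ii) and (iv); I would isolate them by pairing the commutator identities against $t\partial_t$ and $\frac{dt}{t}$.

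The main obstacle will be the bookkeeping in cases (ii) and (iv): keeping careful track of which block of $\mathrm{End}(\mathfrak{h}\oplus\mathbb{C})$ each term of the residue lives in, and checking that the seemingly extra scalar-valued or $\mathfrak{h}^{*}$-valued pieces of the commutators either vanish automatically or reduce exactly to conditions (2) and (3) rather than producing new constraints. In particular one must confirm that in case (ii) the condition coming from the $\frac{dt}{t}$-column, namely $u_{\alpha}(p)\otimes\frac{dt}{t}=0$, is automatic from $\alpha(p)=0$ (since $u_{\alpha}=k_{\alpha}\alpha^{\vee}\otimes\alpha$ kills $p$), and that the condition from the $a_{p}$-row, $a_{p}\circ u_{\alpha}=0$, is equivalent under self-adjointness (condition (2)) to $c_{\alpha}\alpha(p)=0$, again automatic. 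Once all five stratum-types are dispatched, the equivalence asserted in the lemma is exactly the conjunction of (1)–(5), completing the proof.
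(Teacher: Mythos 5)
Your overall strategy is exactly the paper's: invoke Lemma~\ref{lem:flatness} on the compactification $\hat{H}_{\Sigma}\times\mathbb{P}^{1}$, enumerate the codimension-two boundary strata by type, and translate the commutation condition on residues into conditions (1)--(5), with the $t=0,\infty$ divisors contributing nothing because their residues are scalar. Your bookkeeping of which stratum produces which condition is essentially right, including the observation that the $\frac{dt}{t}$-column and $a_{p}$-row pieces of the commutator at $\hat{H}_{\alpha}\cap D_{p}$ produce the orthogonality $a(p,\alpha^{\vee})=0$, which is how condition (2) arises.

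There is, however, one genuine missing idea. At the stratum $\hat{H}_{\alpha}\cap D_{p}$ the $\mathrm{End}(\mathfrak{h})$-block of the commutator gives only the combined condition $[u_{\alpha},U_{p}+b_{p}]=0$, and at $D_{p}\cap D_{q}$ only $[U_{p}+b_{p},U_{q}+b_{q}]=p\otimes a_{q}-q\otimes a_{p}$; no decomposition of $\mathrm{End}(\mathfrak{h}\oplus\mathbb{C})$ into blocks can separate $U_{p}$ from $b_{p}$, since both live in the same $\mathfrak{h}\to\mathfrak{h}$ block. To obtain (4)(a) and (4)(b) as \emph{separate} necessary conditions, and likewise to split the second identity into (5)(a) and (5)(b), the paper uses the parity of these tensors under inversion: $U_{-p}=U_{p}$ while $b_{-p}=-b_{p}$, so comparing the conditions at $D_{p}$ and $D_{-p}$ (equivalently, taking invariant and anti-invariant parts under $x\mapsto -x$) isolates the even part $[U_{p},U_{q}]+[b_{p},b_{q}]$ from the odd part $[U_{p},b_{q}]-[U_{q},b_{p}]$. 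Without this step your argument establishes the ``if'' direction but not that flatness forces each of (4)(a), (4)(b), (5)(a), (5)(b) individually. You should also note that condition (2) in its stated form (self-adjointness of $u_{\alpha}$, i.e. $a(\alpha^{\vee},\cdot)=c_{\alpha}\alpha$) requires the additional remark that $\mathfrak{h}_{\alpha}$ is spanned by the $p\in\Pi$ it contains, so that $a(\alpha^{\vee},p)=0$ for all such $p$ upgrades to $a(\alpha^{\vee},y)=c_{\alpha}\alpha(y)$ for all $y$; and condition (3) is extracted from the $t\frac{\partial}{\partial t}$-row identity $a_{p}(U_{q}+b_{q})=a_{q}(U_{p}+b_{p})$ at $D_{p}\cap D_{q}$ after cancelling the $U$-terms by self-adjointness, not from the stratum $\hat{H}_{\alpha}\cap D_{p}$.
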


\begin{proof}
From Lemma $\ref{lem:flatness}$, we can know that the connection is flat if and only if the $\tilde{\Omega}^{*}$-residues along
the added divisors have the property that the collection of $\tilde{\Omega}^{*}$-residues of divisors passing through
any preassigned codimension two intersection has a sum which commutes with each of its terms. We write
this out for the present case.

For an intersection $E\times\mathbb{P}^{1}$ of two distinct divisors of type $\hat{H}_{\alpha}\times\mathbb{P}^{1}$
we get (1): the characters of $H$ that are trivial on $E$ make up a primitive rank two sublattice $L$ of
$Q$ and $R\cap L$ is the set of $\alpha\in R$ for which $\hat{H}_{\alpha}\supset E$. Conversely,
for any rank two sublattice $L\subset Q$ which contains two independent elements of
$R$, $\cap_{\alpha\in R\cap L}\hat{H}_{\alpha}$ is nonempty and of codimension two in $\hat{H}$, then
the sum $\sum_{\alpha\in R\cap L}u_{\alpha}$ commutes with each of its terms.

The intersection of $\hat{H}_{\alpha}\times\mathbb{P}^{1}$ and $D_{p}\times\mathbb{P}^{1}$
is nonempty only if $\alpha(p)=0$ and in that case no other boundary divisor will contain
that intersection; since
\begin{align*}
&[U_{p}+b_{p}+t\frac{\partial}{\partial t}\otimes a_{p}-p\otimes \frac{dt}{t},u_{\alpha}]\\
=&[U_{p}+b_{p},u_{\alpha}]+k_{\alpha}(a(p,\alpha^{\vee})t\frac{\partial}{\partial t}\otimes \alpha\\
  &-\alpha(t\frac{\partial}{\partial t})\alpha^{\vee}\otimes a_{p}
  -\frac{dt}{t}(\alpha^{\vee})p\otimes \alpha+\alpha(p)\alpha^{\vee}\otimes\frac{dt}{t})\\
=&[U_{p}+b_{p},u_{\alpha}]+k_{\alpha}a(p,\alpha^{\vee})t\frac{\partial}{\partial t}\otimes \alpha,
\end{align*}
this yields $[U_{p}+b_{p},u_{\alpha}]=0$ and the condition that $a(p,\alpha^{\vee})=0$
when $\alpha(p)=0$. Since $U_{-p}=U_{p}$ and $b_{-p}=-b_{p}$, we get (4).
The hyperplane $\mathfrak{h}_{\alpha}$ is spanned by its intersection with $\Pi$.
So the fact that $a(\alpha^{\vee},p)=0$ for all $p\in\Pi$ with $\alpha(p)=0$ implies that
$a(\alpha^{\vee},y)=c_{\alpha}\alpha(y)$ for some $c_{\alpha}$. This tells us that
$a(u_{\alpha}(z),w)=c_{\alpha}\alpha(z)\alpha(w)$ is symmetric in $z$ and $w$,
in other words, $u_{\alpha}$ is self-adjoint relative to $a$. Conversely, if $u_{\alpha}$ is
self-adjoint relative to $a$, then clearly, $a(\alpha^{\vee},p)=0$ when $\alpha(p)=0$. So
this amounts to (2).

The intersection of $D_{p}\times\mathbb{P}^{1}$ and $D_{q}\times\mathbb{P}^{1}$ with
$p$ and $q$ distinct is nonempty only if $p$ and $q$ span a two dimensional face. In that case,
\begin{multline*}
[U_{p}+b_{p}+t\frac{\partial}{\partial t}\otimes a_{p}-p\otimes \frac{dt}{t},
U_{q}+b_{q}+t\frac{\partial}{\partial t}\otimes a_{q}-q\otimes \frac{dt}{t}]=\\
[U_{p}+b_{p},U_{q}+b_{q}]-p\otimes a_{q}+q\otimes a_{p}+t\frac{\partial}{\partial t}\otimes (a_{p}(U_{q}+b_{q})-a_{q}(U_{p}+b_{p})).
\end{multline*}
(We used that $a(p,q)$, $b(p,q)$ and $U_{p}(q)=-\frac{1}{2}\sum_{\alpha\in R:\alpha(p)> 0,\alpha(q)>0}k_{\alpha}\alpha(p)\alpha(q)\alpha^{\vee}$
are symmetric in $p$ and $q$.) We thus have $[U_{p}+b_{p},U_{q}+b_{q}]=p\otimes a_{q}-
q\otimes a_{p}$ and $(a_{p}(U_{q}+b_{q})-a_{q}(U_{p}+b_{p}))$. If we take its invariant
and anti-invariant part in the former equality,we immediately have
$[U_{p},U_{q}]+[b_{p},b_{q}]+[U_{p},b_{q}]-[U_{q},b_{p}]=p\otimes a_{q}-
q\otimes a_{p}$ and $[U_{p},U_{q}]+[b_{p},b_{q}]-[U_{p},b_{q}]+[U_{q},b_{p}]=p\otimes a_{q}-q\otimes a_{p}$, this yields (5). The latter is equivalent to
$a(p,U_{q}(z)+b(q,z))=a(q,U_{p}(z)+b(p,z))$ for all $z$. Since
$u_{\alpha}$ is self-adjoint relative to $a$, $U_{p}$ is self-adjoint relative to $a$
as well,
we then have $a(p,U_{q}(z))=a(U_{q}(p),z)=a(U_{p}(q),z)=a(q,U_{p}(z))$. The latter
condition hence simplifies to $a(p,b(q,z))$ is symmetric in $p$ and $q$. Since
$b$ itself is symmetric and $p$ and $q$ are basis roots of $P^{\vee}$, we get (3).

The residue on the divisors defined by $t=0$ and $t=\infty$ are scalars and hence yield
no conditions.
\end{proof}

\begin{remark}
Following \cite{Couwenberg-Heckman-Looijenga}, Condition (1) is precisely what one needs in order that for every sublattice $L$
of $X(H)$ spanned by elements of $R$ the `linearized connection' on $\mathfrak{h}-\cup_{\alpha\in R\cap L}
\mathfrak{h}_{\alpha}$ defined by the End($\mathfrak{h}$)-valued differential
\[ \Omega_{L}:=\sum_{\alpha\in R\cap L} k_{\alpha}\frac{d\alpha}{\alpha}\otimes \pi_{\alpha} \]
be flat. According to \emph{loc}. \emph{cit}., it is also true that the sum $\sum_{\alpha\in R\cap L}k_{\alpha}\pi_{\alpha}$
commutes with each of its terms. If $a$ is defined over $\mathbb{R}$ and positive definite, then
Conditions (1) and (2) define a Dunkl system in the sense of \cite{Couwenberg-Heckman-Looijenga}.
\end{remark}

Now we need to verify these conditions of Lemma $\ref{lem:flatness-conditions}$ in order to show that the connection
$\tilde{\nabla}$ in our case is flat if we choose an appropriate bilinear form $a$. But before we proceed to that,
it is absolutely necessary to investigate the toric Lauricella case which gives
a hint on these conditions.

\begin{example}[The toric Lauricella case]\label{exm:toric-Lauricella}
Let $N:=\{1,2,\cdots,n+1\}$ and assign each $i\in N$ a
positive real number $\mu_{i}$. Label the standard basis of $\mathbb{C}^{n+1}$
as $\varepsilon_{1},\cdots,\varepsilon_{n+1}$. We endow $\mathbb{C}^{n+1}$ with a
bilinear form as $a(z,w):=\sum_{i=1}^{n+1}\mu_{i}z^{i}w^{i}$ where $z$ is given by
$z=\sum z^{i}\varepsilon_{i}$. Let $\mathfrak{h}$ be the quotient of $\mathbb{C}^{n+1}$
by its main diagonal $\varepsilon:=\mathbb{C}\sum\varepsilon_{i}$, but we may often
identify it with the orthogonal complement of the main diagonal in $\mathbb{C}^{n+1}$,
that is, with the hyperplane defined by $\sum\mu_{i}z^{i}=0$. We take our $\alpha$'s
to be the collection $\alpha_{i,j}:=(z_{i}-z_{j})_{i\neq j}$ where $z_{i}$ is the dual
basis of $\varepsilon_{i}$ in $\mathfrak{h}^{*}$. We associate each $\alpha_{i,j}$
a $v_{i,j}:=v_{z_{i}-z_{j}}:=\mu_{j}\varepsilon_{i}-\mu_{i}\varepsilon_{j}$.

We immediately notice that the set $R:=\{\alpha_{i,j}\}$ generates a discrete subgroup
of $\mathfrak{h}^{*}$ whose $\mathbb{R}$-linear span defines a real form $\mathfrak{h}
(\mathbb{R})$ of $\mathfrak{h}$. It's easy to show that $a(v_{i,j},\beta)=0$ for
any $\beta\in \ker(\alpha_{i,j})$. According to \cite{Couwenberg-Heckman-Looijenga}, for every rank two
subgroup $L$ of the lattice generated by $R$, $\sum_{R\cap L} u_{\alpha_{i,j}}$
commutes with each of its terms, if $u_{\alpha_{i,j}}$ is the endomorphism of
$\mathfrak{h}$ defined by $u_{\alpha_{i,j}}(z)=\alpha_{i,j}(z)v_{i,j}$.

We still denote by $\Pi$ the set of primitive elements of the cocharacter lattice
in spanning a face of the rational cone decomposition $\Sigma$. Then the
elements of $\Pi$ correspond to proper subsets $I$ of $N$:
\[ p_{I}:=\frac{\mu_{I'}}{\mu_{N}}\varepsilon_{I}-\frac{\mu_{I}}{\mu_{N}}\varepsilon_{I'} \]
where $I'$ is the complement set of $I$ in $N$, $\mu_{I}:=\sum_{i\in I}\mu_{i}$,
and $\varepsilon_{I}:=\sum_{i\in I}\varepsilon_{i}$.

Let $U_{x}:=\sum_{\alpha_{i,j}\in
R:\alpha_{i,j}(x)>0}\alpha_{i,j}(x)u_{\alpha_{i,j}}$. Notice that $\alpha_{i,j}(p_{I})>0$
if and only if $i\in I$ and $j\in I'$, its value
then being $1$. Put $U_{I}:=U_{p_{I}}$, we thus have:
\begin{align*}
U_{I}(z)&=\sum_{i\in I,j\in I'}(z^{i}-z^{j})(\mu_{j}\varepsilon_{i}-\mu_{i}\varepsilon_{j})\\
&=\mu_{I'}\sum_{i\in I}z^{i}\varepsilon_{i}-\sum_{j\in I'}\mu_{j}z^{j}\sum_{i\in I}
\varepsilon_{i}-\sum_{i\in I}\mu_{i}z^{i}\sum_{j\in I'}\varepsilon_{j}+
\mu_{i}\sum_{j\in I'}z^{j}\varepsilon_{j}\\
&=(\mu_{I'}\sum_{i\in I}z^{i}\varepsilon_{i}-(\sum_{j\in I'}\mu_{j}z^{j})\varepsilon_{I})
+(\mu_{I}\sum_{j\in I'}z^{j}\varepsilon_{j}-(\sum_{i\in I}\mu_{i}z^{i})\varepsilon_{I'}).
\end{align*}
Notice that the coefficients of $\varepsilon_{k}$ and $\varepsilon_{l}$ are the same
whenever $k,l$ are both in $I$ or both in $I'$ and $z\in \mathrm{ker}(\alpha_{k,l})$. In other
words, $\alpha_{k,l}(U_{I}(z))=0$ for $z\in \mathrm{ker}(\alpha_{k,l})$ whenever $\alpha_{k,l}(p_{I})=0$.

We also find that
\[ U_{I}(p_{I})=\mu_{N}p_{I}. \]
Notice that $p_{I}$ and $p_{J}$ span a face if and only if $I$ and $J$ satisfy an
inclusion relation: $I\subset J$ or $I\supset J$. A straightforward computation
shows that
\begin{align*}
U_{J}(p_{I})&=U_{I}(p_{J})=\mu_{J'}p_{I}+\mu_{I}p_{J},\\
a(p_{I},p_{J})&=\frac{\mu_{I}\mu_{J'}}{\mu_{N}},\\
[U_{I},U_{J}](z)&=\mu_{N}(a(z,p_{J})p_{I}-a(z,p_{I})p_{J}).
\end{align*}

There actually exists a nonzero cubic form in this case. Let $\tilde{f}:
\mathbb{C}^{n+1}\rightarrow \mathbb{C}$ be defined by $\tilde{f}(z):=
\sum\mu_{i}(z^{i})^{3}$ and take for $f:\mathfrak{h}\rightarrow \mathbb{C}$
its restriction to $\mathfrak{h}$. The partial derivative of $\tilde{f}$
with respect to $v_{i,j}$ is $3\mu_{j}\mu_{i}(z_{i}^{2}-z_{j}^{2})$, which
is divisible by $\alpha_{i,j}$.

The symmetric bilinear map $\tilde{b}:\mathbb{C}^{n+1}\times\mathbb{C}^{n+1}
\rightarrow\mathbb{C}^{n+1}$ is given by $\tilde{b}(\varepsilon_{i},\varepsilon_{j}):=
\delta_{ij}\varepsilon_{i}$. Then the map $b:\mathfrak{h}\times\mathfrak{h}\rightarrow
\mathfrak{h}$ corresponding to $f$ is the restriction of $\pi\circ\tilde{b}$ to
$\mathfrak{h}\times\mathfrak{h}$ among which $\pi:\mathbb{C}^{n+1}\rightarrow\mathfrak{h}$
is the orthogonal projection from $\mathbb{C}^{n+1}$ to $\mathfrak{h}$. We also
find that $a(\tilde{b}(z,z),z)=\tilde{f}(z)$ and $a(b(z,z),z)=f(z)$
. So if we write $\tilde{b}_{i}(z)$ for
$\tilde{b}(\varepsilon_{i},z)$, we can write $\tilde{b}_{i}$ as $\tilde{b}_{i}=
\varepsilon_{i}\otimes z_{i}$. If we write $b_{i,j}(z)$ for $b(v_{i,j},z)$, then
\[ b_{i,j}=\mu_{j}\varepsilon_{i}\otimes z_{i}-\mu_{i}\varepsilon_{j}\otimes z_{j}
-\frac{\mu_{i}\mu_{j}}{\mu_{N}}\varepsilon_{N}\otimes(z_{i}-z_{j}).\]
If we write $a_{i,j}(z)=a(v_{i,j},z)$, then $a_{i,j}=\mu_{i}\mu_{j}(z_{i}-z_{j})$,
we can verify that $[b_{i,j},b_{k,l}]=-\mu_{N}^{-1}(v_{i,j}\otimes a_{k,l}-v_{k,l}\otimes
a_{i,j})$. Hence we have $[b_{z},b_{w}]=-\mu_{N}^{-1}(z\otimes a_{w}-w\otimes a_{z})$.
\end{example}

We first verify the conditions about the bilinear map $b$ in Lemma $\ref{lem:flatness-conditions}$
since as Lemma $\ref{lem:bmap}$ says, a nonzero $b$ only exists for a root system of type $A_{n}$.

\begin{lemma}
The conditions \emph{(3)}, \emph{(4)(b)} and \emph{(5)(a)} of Lemma $\ref{lem:flatness-conditions}$ hold for a root system of
type $A_{n}$.
\end{lemma}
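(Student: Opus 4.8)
The plan is to reduce everything to the toric Lauricella system of Example~\ref{exm:toric-Lauricella}. By Lemma~\ref{lem:bmap} the equivariant map $b^{\kappa}$ vanishes unless $R$ is of type $A_{n}$ with $n\geq 2$, so only that case needs attention. For $R$ of type $A_{n}$ I would first check that the data $(a^{\kappa},\,u_{\alpha},\,U_{p},\,b^{\kappa})$ together with the set $\Pi$ coincide, up to nonzero scalar factors, with the Lauricella data of Example~\ref{exm:toric-Lauricella} specialized to equal weights $\mu_{1}=\dots=\mu_{n+1}$: by Schur's lemma $a^{\kappa}$ is a multiple of the standard inner product, so $\alpha^{\vee}_{i,j}=v_{i,j}$ and the endomorphisms $u_{\alpha_{i,j}},U_{p}$ agree with the Lauricella ones up to the factor $k_{\alpha}$, the set $\Pi$ of primitive face generators of $\Sigma$ is exactly the family $\{p_{I}\}$ for proper nonempty $I\subset N$, and the computation in the proof of Lemma~\ref{lem:bmap} realizes $b^{\kappa}$ as a multiple of $\pi\circ\tilde{b}$. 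Since conditions (3), (4)(b) and (5)(a) are homogeneous statements --- self-adjointness of $b_{z}$, vanishing of $[u_{\alpha},b_{p}]$, and the equality $[U_{p},b_{q}]=[U_{q},b_{p}]$ --- they are insensitive to such rescalings (the degenerate cases $k_{\alpha}=0$ or $k'=0$ being trivial), so it suffices to verify them for the Lauricella data, where the explicit formulas of Example~\ref{exm:toric-Lauricella} are available.

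For (3): from $\tilde{b}_{i}=\varepsilon_{i}\otimes z_{i}$ and $a(z,w)=\sum\mu_{i}z^{i}w^{i}$ one gets $a(\tilde{b}(z_{1},z_{2}),z_{3})=\sum_{i}\mu_{i}z_{1}^{i}z_{2}^{i}z_{3}^{i}$, which is totally symmetric; since the $a$-orthogonal projection $\pi\colon\mathbb{C}^{n+1}\to\mathfrak{h}$ satisfies $a(\pi w,z_{3})=a(w,z_{3})$ for $z_{3}\in\mathfrak{h}$, the trilinear form $a(b(z_{1},z_{2}),z_{3})$ is symmetric in all three arguments, i.e.\ every $b_{z}$ is $a$-self-adjoint. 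For (4)(b): if $\alpha=\alpha_{k,l}$ and $p=p_{I}\in\Pi$ with $\alpha(p)=0$ (so $\{k,l\}\subseteq I$ or $\{k,l\}\subseteq I'$), a short computation with the explicit formulas of Example~\ref{exm:toric-Lauricella} gives $b_{p_{I}}(\alpha^{\vee}_{k,l})=\lambda\,\alpha^{\vee}_{k,l}$ for a scalar $\lambda$ (namely $\mu_{I'}/\mu_{N}$ or $-\mu_{I}/\mu_{N}$); combining this with the $a$-self-adjointness of $b_{p_{I}}$ and the relation $a(\alpha^{\vee}_{k,l},\cdot)=\mu_{k}\mu_{l}\,\alpha_{k,l}$ (which is condition (2) here), one finds $\alpha_{k,l}\circ b_{p_{I}}=\lambda\,\alpha_{k,l}$ as well, and therefore $[u_{\alpha_{k,l}},b_{p_{I}}](z)=\bigl(\alpha_{k,l}(b_{p_{I}}z)-\lambda\,\alpha_{k,l}(z)\bigr)\alpha^{\vee}_{k,l}=0$ because $u_{\alpha_{k,l}}=\alpha^{\vee}_{k,l}\otimes\alpha_{k,l}$ has rank one.

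Then (5)(a) follows formally from (4)(b). If $p_{I},p_{J}$ span a two dimensional face, then $I\subset J$ or $J\subset I$; say $\emptyset\neq I\subsetneq J\subsetneq N$. Writing $U_{p_{I}}=\sum_{i\in I,\,j\in I'}u_{\alpha_{i,j}}$ and $U_{p_{J}}=\sum_{i\in J,\,j\in J'}u_{\alpha_{i,j}}$ and discarding, by (4)(b), the terms $u_{\alpha_{i,j}}$ with $\alpha_{i,j}(p_{J})=0$ (resp.\ $\alpha_{i,j}(p_{I})=0$), both commutators collapse to sums over the same index set $\{(i,j)\colon i\in I,\ j\in J'\}$, so $[U_{p_{I}},b_{p_{J}}]-[U_{p_{J}},b_{p_{I}}]=\sum_{i\in I,\,j\in J'}[u_{\alpha_{i,j}},\,b_{p_{J}-p_{I}}]$. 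A direct check gives $p_{J}-p_{I}=p_{J\setminus I}$, and since $i\in I$ and $j\in J'$ both lie outside $J\setminus I$, every such $\alpha_{i,j}$ vanishes on $p_{J\setminus I}$, so each bracket $[u_{\alpha_{i,j}},b_{p_{J\setminus I}}]$ is zero by (4)(b); hence $[U_{p_{I}},b_{p_{J}}]=[U_{p_{J}},b_{p_{I}}]$. The only genuine computations are the eigenvector identity $b_{p_{I}}(\alpha^{\vee}_{k,l})\in\mathbb{C}\alpha^{\vee}_{k,l}$ used in (4)(b) and the matching of the $A_{n}$ data with the specialized Lauricella data; everything else, in particular the passage from (4)(b) to (5)(a) and the symmetry in (3), is formal, so I expect the verification of (4)(b) to be the crux.
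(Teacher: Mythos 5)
Your proposal is correct, and its overall strategy coincides with the paper's: both reduce to the toric Lauricella data of Example~\ref{exm:toric-Lauricella} specialized to equal weights $\mu_{1}=\dots=\mu_{n+1}=1$, using that the relevant space of equivariant maps is one-dimensional so that $b^{\kappa}$ is a scalar multiple of the Lauricella $b$, and that conditions (3), (4)(b), (5)(a) are insensitive to these normalizations. The differences are in the execution. For (3) you use the total symmetry of $a(\tilde{b}(z_{1},z_{2}),z_{3})=\sum_{i}\mu_{i}z_{1}^{i}z_{2}^{i}z_{3}^{i}$ together with $a(\pi w,z)=a(w,z)$ for $z\in\mathfrak{h}$, where the paper evaluates $a(b_{i,j}(z),\varepsilon_{k}-\varepsilon_{l})$ case by case on a basis; these are equivalent. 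For (4)(b) the paper extracts from the divisibility of $\partial_{\alpha^{\vee}}f$ by $\alpha$ the identity $a(v_{\alpha},b(z,w))=\alpha(z)g_{\alpha}(w)+\alpha(w)g_{\alpha}(z)$ and compares $a(w,b_{p}u_{\alpha}(z))$ with $a(w,u_{\alpha}b_{p}(z))$, whereas you verify the eigenvector identity $b_{p_{I}}(v_{k,l})=p_{I}^{k}v_{k,l}$ directly and combine it with the self-adjointness from (3) and the rank-one form of $u_{\alpha}$ --- same content, and you correctly identify this as the crux. The genuine divergence is (5)(a): the paper introduces the rank-two expression $b_{v_{\alpha}}(z)=a(v'_{\alpha},z)v_{\alpha}+a(v_{\alpha},z)v'_{\alpha}$ and checks by an explicit computation that $a([U_{p},b_{q}](z),w)$ is symmetric in $p$ and $q$, while you derive (5)(a) formally from (4)(b): after discarding the brackets that vanish by (4)(b), both commutators run over the same index set $\{(i,j):i\in I,\ j\in J'\}$, and the difference collapses to $\sum[u_{\alpha_{i,j}},b_{p_{J\setminus I}}]$, each term of which vanishes by (4)(b) again since $\alpha_{i,j}(p_{J\setminus I})=0$. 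Your route buys a computation-free passage from (4)(b) to (5)(a), resting only on $p_{J}-p_{I}=p_{J\setminus I}$ and the bilinearity of $b$, at the price of leaning on the specific combinatorics of the $p_{I}$'s; the paper's version of (5)(a) is closer in form to the general Dunkl-system arguments it is modeled on. Both are complete proofs.
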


\begin{proof}
Now we let all $\mu_{i}$ equal to $1$ in the above example, then the above example becomes the case of a root system
of type $A_{n}$. Since the dimension of $\mathrm{Hom}(\mathrm{Sym}^{2}\mathfrak{h},\mathfrak{h})^{W}$ is just $1$, then the
$b_{0}=\sum_{\alpha >0}\alpha\otimes\alpha\otimes\alpha'$ given in Lemma $\ref{lem:bmap}$ is
just a multiple of $b$ given in the above example. We thus have
$b_{i,j}(z)=z^{i}\varepsilon_{i}-z^{j}\varepsilon_{j}-\frac{1}{n+1}(z^{i}-z^{j})\varepsilon_{N}$.
If $i<j<k$, then
\[ a(b_{i,j}(z),\varepsilon_{j}-\varepsilon_{k})=-z^{j}=a(b_{j,k}(z),\varepsilon_{i}-\varepsilon_{j}); \]
if $i,j,k,l$ are pairwise distinct, then
\[ a(b_{i,j}(z),\varepsilon_{k}-\varepsilon_{l})=0. \]
Since $\{\varepsilon_{i}-\varepsilon_{i+1}\mid i=1,2,\cdots,n\}$ is a basis of $\mathfrak{h}$,
Condition (3) holds.

It's obvious that $u_{\alpha}$ is self-adjoint relative to $a$ where
$v_{\alpha}:=k_{\alpha}\alpha^{\vee}$. This is equivalent to
$a(z,v_{\alpha})=c_{\alpha}\alpha(z)$ for some $c_{\alpha}\in \mathbb{C}$. Since $a(b(z,z),z)=f(z)$ and $\partial_{\alpha^{\vee}}f$ is divisible by $\alpha$ for each $\alpha\in R$, there
exists a $g_{\alpha}\in \mathfrak{h}^{*}$ such that $a(v_{\alpha},b(z,w))=
\alpha(z)g_{\alpha}(w)+\alpha(w)g_{\alpha}(z)$.
If $p\in\mathfrak{h}_{\alpha}$, then
\[ a(w,b_{p}u_{\alpha}(z))=\alpha(z)a(w,b(p,v_{\alpha}))=\alpha(z)a(v_{\alpha},b(p,w))=
\alpha(z)\alpha(w)g_{\alpha}(p),\]
but also
\[a(w,u_{\alpha}b_{p}(z))=a(u_{\alpha}(w),b(p,z))=\alpha(w)a(v_{\alpha},b(p,z))=
\alpha(w)\alpha(z)g_{\alpha}(p).\]
This yields Condition (4)(b): $[u_{\alpha},b_{p}]=0$.

If $p\in\mathfrak{h}_{\alpha}(\mathbb{R})$ spans a $1$-face, then
$b_{p}u_{\alpha}=u_{\alpha}b_{p}$ implies that
$\alpha(z)b_{p}(v_{\alpha})=b_{p}u_{\alpha}(z)=u_{\alpha}b_{p}(z)=
\alpha(b_{p}(z))v_{\alpha}$.
This shows that $b_{p}$ has $v_{\alpha}$ as an eigenvector, with eigenvalue
$\lambda_{p,\alpha}$, say. It could also be written as: $b_{v_{\alpha}}(p)=\lambda_{p,\alpha}v_{\alpha}$. Since $\mathfrak{h}_{\alpha}$
is generated by the $1$-faces it contains, it follows that there is a unique
linear form $\lambda_{\alpha}$ on $\mathfrak{h}_{\alpha}$ such that
$b_{v_{\alpha}}(z)=\lambda_{\alpha}(z)v_{\alpha}$ for all $z\in\mathfrak{h}_{\alpha}$.
Choose $v'_{\alpha}\in\mathfrak{h}$ such that $\lambda_{\alpha}(z)=a(v'_{\alpha},z)$
for all $z\in\mathfrak{h}_{\alpha}$. We can see that this $v'_{\alpha}$ is unique up to
a multiple of $v_{\alpha}$ since $\mathfrak{h}_{\alpha}$ is the $a$-orthogonal complement
of $v_{\alpha}$. So $b_{v_{\alpha}}$ has rank at most two and will be of the form
$b_{v_{\alpha}}(z)=a(v'_{\alpha},z)v_{\alpha}+a(v_{\alpha},z)v''_{\alpha}$ for
some $v''_{\alpha}\in\mathfrak{h}$. Since $b_{v_{\alpha}}$ is self-adjoint relative
to $a$, $a(b_{v_{\alpha}}(z),w)=a(v'_{\alpha},z)a(v_{\alpha},w)+a(v_{\alpha},z)
a(v''_{\alpha},w)$ is symmetric in $z$ and $w$. This means $v''_{\alpha}$ and
$v'_{\alpha}$ differ by a multiple of $v_{\alpha}$. So by a suitable choice of
$v'_{\alpha}$, we can arrange that $v'_{\alpha}=v''_{\alpha}$. Then
\begin{align*}
a(b_{v_{\alpha}}(z),w)&=a(v'_{\alpha},z)a(v_{\alpha},w)+a(v_{\alpha},z)
a(v'_{\alpha},w)\\
&=c_{\alpha}(a(v'_{\alpha},z)\alpha(w)+\alpha(z)a(v'_{\alpha},w)).
\end{align*}
Let $p,q\in\mathfrak{h}(\mathbb{R})$ span a face of $\Sigma$. Then
\begin{align*}
a(b_{q}(z),&U_{p}(w))\\
&=-\frac{1}{2}\sum_{\alpha:\alpha(p)>0}\alpha(p)\alpha(w)
a(b_{q}(z),v_{\alpha})\\
&=-\frac{1}{2}\sum_{\alpha:\alpha(p)>0}\alpha(p)\alpha(w)
a(b_{v_{\alpha}}(z),q)\\
&=-\frac{1}{2}\sum_{\alpha:\alpha(p)>0}c_{\alpha}\alpha(p)\alpha(w)
(a(v'_{\alpha},z)\alpha(q)+\alpha(z)a(v'_{\alpha},q))\\
&=-\frac{1}{2}\sum_{\alpha:\alpha(p)>0}c_{\alpha}
(\alpha(p)\alpha(q)a(v'_{\alpha},z)\alpha(w)+\alpha(p)a(v'_{\alpha},q)\alpha(z)\alpha(w)).
\end{align*}
Since $U_{p}$ is self-adjoint relative to $a$, hence
\begin{align*}
a([U_{p},b_{q}](z),w)&=a(b_{q}(z),U_{p}(w))-a(b_{q}(w),U_{p}(z))\\
&=-\frac{1}{2}\sum_{\alpha:\alpha(p)>0}c_{\alpha}\alpha(p)\alpha(q)
(a(v'_{\alpha},z)\alpha(w)-\alpha(z)a(v'_{\alpha},w)).
\end{align*}
This is symmetric in $p$ and $q$, because $\alpha(p)>0$ implies $\alpha(q)\geq 0$
and the terms with $\alpha(q)=0$ vanish. So Condition (5)(a) holds: $[U_{p},b_{q}]=
[U_{q},b_{p}]$.
\end{proof}

Now we need to verify the other conditions for all the reduced irreducible root
systems.

\begin{theorem}
The connection $\tilde{\nabla}$ defined in ($\ref{eqn:affine-connection}$) is flat if we choose
an appropriate bilinear form $a$, and hence the connection
$\nabla$ defines a projective structure on $H^{\circ}$.
\end{theorem}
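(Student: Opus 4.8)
The plan is to deduce flatness of $\tilde{\nabla}$ from the criterion of Lemma~\ref{lem:flatness-conditions}; torsion freeness is already known (the matrix $\tilde{\Omega}^{\kappa}$ is symmetric-tensor valued), so once conditions (1)--(5) hold the pair $(\nabla,A^{\kappa})$ produces, via Proposition~\ref{prop:affine-projective}, an affine structure on $H^{\circ}\times\mathbb{C}^{\times}$ and hence a $W$-invariant projective structure on $H^{\circ}$ (the $W$-invariance being visible from $wD^{\kappa}_{u,v}w^{-1}=D^{\kappa}_{wu,wv}$). Conditions (3), (4)(b) and (5)(a) have just been verified for type $A_{n}$, and by Lemma~\ref{lem:bmap} we have $b^{\kappa}=0$ for every other irreducible type, which makes those three conditions vacuous there; so it remains to verify (1), (2), (4)(a), and (5)(b), and to pin down $a^{\kappa}$. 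Since $R$ is irreducible, Schur's lemma forces $a^{\kappa}=c^{\kappa}(\cdot,\cdot)$ for a scalar $c^{\kappa}$ (this is the constant with $A^{\kappa}=c^{\kappa}\sum_{\alpha>0}d\alpha\otimes d\alpha$), and then $a^{\kappa}(\alpha^{\vee},z)=\tfrac{2c^{\kappa}}{(\alpha,\alpha)}\alpha(z)$, which is exactly condition (2). For condition (1) one restricts to a rank-two sublattice $L$: all $u_{\alpha}$ with $\alpha\in R\cap L$ are supported on the plane spanned by the corresponding coroots, $\sum_{\alpha\in R\cap L}u_{\alpha}$ is $W(R\cap L)$-invariant and symmetric, hence acts on that plane as a scalar when $R\cap L$ is irreducible (the $A_{1}\times A_{1}$ case being immediate since orthogonal $u_{\alpha}$'s commute), and a scalar operator on that plane commutes with each $u_{\alpha}$; this is the flatness of the linearized connection $\Omega_{L}$ of the Remark following Lemma~\ref{lem:flatness-conditions} and is classical, see \cite{Couwenberg-Heckman-Looijenga}.

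For condition (4)(a), assume $\alpha(p)=0$. Then $s_{\alpha}p=p$, and since $s_{\alpha}u_{\beta}s_{\alpha}^{-1}=u_{s_{\alpha}\beta}$ (using $W$-invariance of $\kappa$) while $(s_{\alpha}\beta)(p)=\beta(p)$, re-indexing the defining sum of $U_{p}$ gives $s_{\alpha}U_{p}s_{\alpha}^{-1}=U_{p}$. Hence $U_{p}$ preserves the two eigenspaces of $s_{\alpha}$, namely the line $\mathbb{C}\alpha^{\vee}$ and the hyperplane $\mathfrak{h}_{\alpha}=\ker\alpha$; let $\lambda$ be the eigenvalue of $U_{p}$ on $\mathbb{C}\alpha^{\vee}$. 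Writing $z=z_{0}+t\alpha^{\vee}$ with $z_{0}\in\mathfrak{h}_{\alpha}$ one finds $\alpha(U_{p}z)=\lambda\alpha(z)$, so $u_{\alpha}U_{p}=\lambda u_{\alpha}=U_{p}u_{\alpha}$ because $u_{\alpha}=k_{\alpha}\alpha^{\vee}\otimes\alpha$ factors through $z\mapsto\alpha(z)$ followed by multiplication by $\alpha^{\vee}$; this is $[u_{\alpha},U_{p}]=0$. The auxiliary identity $a^{\kappa}(p,\alpha^{\vee})=0$ for $\alpha(p)=0$, also used in the proof of Lemma~\ref{lem:flatness-conditions}, holds because $(p,\alpha^{\vee})$ is proportional to $\alpha(p)$.

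The main obstacle is condition (5)(b): one must show that the constant $c^{\kappa}$ forced above can be chosen so that $[U_{p},U_{q}]+[b_{p},b_{q}]=p\otimes a_{q}^{\kappa}-q\otimes a_{p}^{\kappa}$ for every pair $p,q\in\Pi$ spanning a two-dimensional face $\sigma$ of $\Sigma$, with the same $c^{\kappa}$ for all such pairs. The right-hand side equals $c^{\kappa}\bigl(p\otimes(q,\cdot)-q\otimes(p,\cdot)\bigr)$, a skew-adjoint operator of rank $\leq 2$ supported on $\mathrm{span}(p,q)$, and the left-hand side is skew-adjoint as well, being a sum of commutators of operators self-adjoint relative to $a^{\kappa}$ by (2) and (3); so the real content is that $[U_{p},U_{q}]+[b_{p},b_{q}]$ vanishes on $\mathrm{span}(p,q)^{\perp}$ and acts on $\mathrm{span}(p,q)$ by the expected multiple. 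My plan is to exploit that the relative interior of $\sigma$ lies in a single open chamber of the arrangement $\{\mathfrak{h}_{\alpha}\}$: each root is then of constant sign on $\sigma$, or vanishes on exactly one edge of $\sigma$, or vanishes on all of $\mathrm{span}(p,q)$, and this collapses the double sum defining $[U_{p},U_{q}]$ to the contribution of the sub-root-system of $R$ transverse to $\sigma$, which has rank at most three; condition (5)(b) thereby reduces to a finite check in low rank. For type $A_{n}$ (where both $b^{\kappa}$ and the associated cubic form are present) the required commutators are exactly the ones computed in Example~\ref{exm:toric-Lauricella} upon specializing $\mu_{i}\equiv 1$: there $[U_{I},U_{J}]$ and $[b_{z},b_{w}]$ were shown to be of the form $z\otimes a_{w}-w\otimes a_{z}$ up to explicit scalars, and matching them fixes $c^{\kappa}$ as a quadratic polynomial in $(k,k')$. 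For the remaining irreducible types $b^{\kappa}=0$, and one verifies $[U_{p},U_{q}]=p\otimes a_{q}^{\kappa}-q\otimes a_{p}^{\kappa}$ directly in each rank-$\leq 3$ configuration arising from a two-dimensional face -- equivalently, by the computations for the corresponding CHL ``at infinity'' linearized systems -- obtaining a value of $c^{\kappa}$ independent of the chosen face. With (1)--(5) established for this $a^{\kappa}$, Lemma~\ref{lem:flatness-conditions} gives the flatness of $\tilde{\nabla}$, and the projective structure on $H^{\circ}$ follows as above.
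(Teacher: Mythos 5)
Your treatment of conditions (1), (2), (4)(a) and of the $b^{\kappa}$-conditions matches the paper's: Schur's lemma for (1) and for the form $a^{\kappa}=c^{\kappa}(\cdot,\cdot)$, the conjugation identity $s_{\alpha}U_{p}s_{\alpha}^{-1}=U_{p}$ for (4)(a), and Lemma \ref{lem:bmap} together with the preceding lemma to dispose of (3), (4)(b), (5)(a). The gap is in condition (5)(b), the only place where $c^{\kappa}$ is actually pinned down. Your reduction step there is not valid as stated: $[U_{p},U_{q}]=\tfrac{1}{16}\sum_{\alpha,\beta}|\alpha(p)|\,|\beta(q)|\,[u_{\alpha},u_{\beta}]$ receives a contribution from every non-orthogonal pair $(\alpha,\beta)$ with $\alpha(p)\neq 0$ and $\beta(q)\neq 0$, and such pairs are in no way confined to a rank-$\leq 3$ subsystem ``transverse to $\sigma$'' (take $p=\varpi_{1}^{\vee}$, $q=\varpi_{2}^{\vee}$ in $E_{8}$: the relevant roots span all of $\mathfrak{h}$). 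So the promised finite check in low rank does not materialize. What remains of your argument for (5)(b) is the observation that both sides are skew-adjoint of rank $\leq 2$ supported on $\mathrm{span}(p,q)$ --- but for the left-hand side that support statement is precisely what has to be proved, not a consequence of skew-adjointness --- together with the unproved assertion that the resulting scalar is independent of the chosen face.

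The paper closes both points structurally. Since $\mathbb{C}p+\mathbb{C}q$ is an intersection of mirrors $\mathfrak{h}_{\alpha}$, its $a$-orthogonal complement is spanned by the coroots $\alpha^{\vee}$ it contains; each such $\alpha^{\vee}$ is a common eigenvector of the $a$-self-adjoint operators $U_{p}$ and $U_{q}$, so $[U_{p},U_{q}]$ annihilates that complement and, lying in the orthogonal Lie algebra of $a$, must equal $\lambda(p\otimes a_{q}-q\otimes a_{p})$. The independence of $\lambda$ from the pair is then obtained by moving $(p,q)$ with $W$ into the closure $\bar{C}$ of a fixed chamber, on which $x\mapsto U_{x}$ is continuous and $(x,y)\mapsto[U_{x},U_{y}]$ is bilinear; finally $a$ is normalized so that $\lambda=1$ (resp. $\lambda+\mu=1$ for type $A_{n}$, with $\mu$ the analogous coefficient for $[b_{p},b_{q}]$ from Example \ref{exm:toric-Lauricella}). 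If you insist on a computational route you would have to evaluate $[U_{p},U_{q}]$ for all pairs of adjacent fundamental coweights of every irreducible root system --- essentially what the paper's following theorem and its online appendix do to extract the explicit $c^{\kappa}$ --- but you cannot shortcut that by the rank reduction you describe.
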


\begin{proof}
By Lemma $\ref{lem:bmap}$, Conditions (3), (4)(b) and (5)(a) are empty and $[b_{p},b_{q}]=0$
for all types other than $A_{n}$. So we only need to verify those remaining conditions
in Lemma $\ref{lem:flatness-conditions}$.

Because $W$ acts irreducibly on the space spanned by $L$, hence by Schur's lemma the sum
$\sum_{\alpha\in R\cap L}u_{\alpha}$
acts as a scalar operator, Condition (1) is thus satisfied. And there always exists a nondegenerate symmetric bilinear
form $a:\mathfrak{h}\times\mathfrak{h}\rightarrow\mathbb{R}$ such that $\alpha^{\vee}$ is $a$-perpendicular
to ker($\alpha$), i.e., $a(\alpha^{\vee},p)=0$ if $\alpha(p)=0$. This implies Condition (2).

As the reflection $s_{\alpha}$ and $u_{\alpha}$ have the relation: $u_{\alpha}=k_{\alpha}(1-s_{\alpha})$,
$u_{\alpha}$ commutes with $U_{p}$ is equivalent to that
$s_{\alpha}$ commutes with $U_{p}$. While $s_{\alpha}u_{\beta}s_{\alpha}^{-1}=u_{s_{\alpha}(\beta)}$,
we have
\begin{align*}
s_{\alpha}U_{p}s_{\alpha}^{-1}&=s_{\alpha}(-\frac{1}{4}\sum\limits_{\beta\in R}\vert\beta(p)\vert u_{\beta})s_{\alpha}^{-1}\\
    &=-\frac{1}{4}\sum\limits_{\beta\in R}\vert s_{\alpha}^{2}(\beta)(p)\vert u_{s_{\alpha}(\beta)}\\
    &=-\frac{1}{4}\sum\limits_{\beta^{\prime}\in R}\vert s_{\alpha}(\beta^{\prime})(p)\vert u_{\beta^{\prime}}
           \qquad (\text{here we let} \; \beta^{\prime}=s_{\alpha}(\beta))\\
    &=-\frac{1}{4}\sum\limits_{\beta^{\prime}\in R}\vert\beta^{\prime}(s_{\alpha}(p))\vert u_{\beta^{\prime}}\\
    &=-\frac{1}{4}\sum\limits_{\beta^{\prime}\in R}\vert\beta^{\prime}(p)\vert u_{\beta^{\prime}}
           \qquad (\text{here} \; s_{\alpha}(p)=p \; \text{because} \; \alpha(p)=0)\\
    &=U_{p}.
\end{align*}
So Condition (3) follows.

If $p\in \mathfrak{h}_{\alpha}$, then $U_{p}$ preserves $\mathfrak{h}_{\alpha}$ and since $U_{p}$ is
self-adjoint relative to $a$, $U_{p}$ will have $\alpha^{\vee}$ as an eigenvector. Since $\mathbb{C}p+\mathbb{C}q$
is an intersection of hyperplanes $\mathfrak{h}_{\alpha}$, the $a$-orthogonal complement of $\mathbb{C}p+\mathbb{C}q$
is spanned by the vectors $\alpha^{\vee}$ it contains. Hence we have a common eigenspace decomposition of this
subspace for $U_{p}$ and $U_{q}$. In particular, these endomorphisms commute there. Since $[U_{p},U_{q}]$
is an element of the Lie algebra of the orthogonal group of $a$ whose kernel contains the $a$-orthogonal complement
of $\mathbb{C}p+\mathbb{C}q$, it is necessarily a multiple of $p\otimes a_{q}-q\otimes a_{p}$, i.e., $[U_{p},U_{q}]=\lambda(p\otimes a_{q}-q\otimes a_{p})$. But for each pair of $(p,q)$, we can choose a chamber $C$, a member of $\Sigma$ that is
open and nonempty in $\mathfrak{h}(\mathbb{R})$, and this pair of $(p,q)$ could be pulled back by an
element of $W$ to some 2-face of the closure of the chamber $\bar{C}$. Notice $U_{x}$ is continuous
on $\bar{C}$ and the map $(x,y)\mapsto[U_{x},U_{y}]$ is bilinear on $\bar{C}\times\bar{C}$, we can know that
all the pair of $[U_{p},U_{q}]$ share the same coefficient of $\lambda$.
In particular, for $R$ of types other than $A_{n}$, we can normalize $a$ such that $\lambda$ becomes equal to $1$. For $R$ of type $A_{n}$, we know that
$[b_{p},b_{q}]$ is also a multiple of $p\otimes a_{q}-q\otimes a_{p}$ and share the
same coefficient $\mu$ for any pair of $(p,q)$ from Example $\ref{exm:toric-Lauricella}$,
so we can also normalize $a$ such that
$\lambda+\mu=1$. Then, Condition (4) is satisfied.
\end{proof}

In fact, we can write out the explicit form of $a^{\kappa}$ in terms of a given
inner product $(\cdot,\cdot)$ according to Condition (5)(b) of Lemma $\ref{lem:flatness-conditions}$ if
we want to construct a projective structure on $H^{\circ}$.

\begin{theorem}
If we use the construction of root systems in Bourbaki and take the inner product
$(\cdot,\cdot)$ such
that $(\varepsilon_{i},\varepsilon_{j})=\delta_{ij}$, then the $a^{\kappa}$ such that
$\tilde{\nabla}^{\kappa}$ is flat is given as follows:
\begin{align*}
&A_{n}: a^{\kappa}(u,v)=\frac{(n+1)}{4}(k^{2}-k'^{2})(u,v);\\
&B_{n}: a^{\kappa}(u,v)=((n-2)k^{2}+kk')(u,v);\\
&C_{n}: a^{\kappa}(u,v)=((n-2)k^{2}+2kk')(u,v);\\
&D_{n}: a^{\kappa}(u,v)=(n-2)k^{2}(u,v);\\
&E_{n}: a^{\kappa}(u,v)=ck^{2}(u,v);\quad c=6,12,30 \; \text{for} \; n=6,7,8;\\
&F_{4}: a^{\kappa}(u,v)=(k+k')(2k+k')(u,v);\\
&G_{2}: a^{\kappa}(u,v)=\frac{3}{4}(k+3k')(k+k')(u,v).
\end{align*}
\end{theorem}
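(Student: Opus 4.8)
The plan is to pin down $a^{\kappa}$ using Condition (5)(b) of Lemma~\ref{lem:flatness-conditions}, the only flatness condition not yet exploited in the construction. Since $R$ is irreducible, Schur's lemma (as already noted) forces $a^{\kappa}=\lambda^{\kappa}(\cdot,\cdot)$ for a scalar $\lambda^{\kappa}\in\mathbb{C}$ that depends quadratically on $\kappa$; equivalently, writing $A^{\kappa}=c^{\kappa}\sum_{\alpha>0}d\alpha\otimes d\alpha$ one has $a^{\kappa}(u,v)=c^{\kappa}\sum_{\alpha>0}\alpha(u)\alpha(v)$, and $\sum_{\alpha>0}\alpha(u)\alpha(v)$ is a fixed numerical multiple of $(u,v)$ once the inner product is normalized by $(\varepsilon_{i},\varepsilon_{j})=\delta_{ij}$. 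So the entire theorem reduces to computing one scalar per Cartan type. Moreover, from the proof of the preceding theorem we already know $[U_{p},U_{q}]$ and $[b_{p},b_{q}]$ are each multiples of $p\otimes a_{q}-q\otimes a_{p}$, so only the coefficient of proportionality is in question.

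To extract that coefficient I would fix, for each type, a convenient two-dimensional face of $\Sigma$: a pair $p,q\in\Pi$ of primitive cocharacters spanning adjacent one-faces, selected via the affine Dynkin diagram of $\tilde{R}$ so that the sets $\{\alpha\in R:\alpha(p)>0,\ \alpha(q)>0\}$ are as small and explicit as possible in the Bourbaki coordinates. Evaluating both sides of $[U_{p},U_{q}]+[b_{p},b_{q}]=p\otimes a_{q}-q\otimes a_{p}$ on the vector $p$, the right-hand side becomes $\lambda^{\kappa}\bigl((q,p)\,p-(p,p)\,q\bigr)$, so it suffices to compute $[U_{p},U_{q}](p)=U_{p}U_{q}(p)-U_{q}U_{p}(p)$ and, for $A_{n}$, $[b_{p},b_{q}](p)$. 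Using $U_{x}=-\tfrac14\sum_{\alpha\in R}|\alpha(x)|u_{\alpha}$ with $u_{\alpha}=k_{\alpha}(\alpha^{\vee}\otimes\alpha)$ together with the already-recorded identity $U_{p}(q)=-\tfrac12\sum_{\alpha(p)>0,\,\alpha(q)>0}k_{\alpha}\alpha(p)\alpha(q)\alpha^{\vee}$, the bracket unwinds into a finite root-system sum that one evaluates directly, type by type; the presence of two root lengths in $B_{n},C_{n},F_{4},G_{2}$ is exactly what produces the cross terms $kk'$ (resp.\ $2kk'$) alongside the pure terms $(n-2)k^{2}$, etc. For $A_{n}$, where $b^{\kappa}\neq0$, I would invoke Example~\ref{exm:toric-Lauricella} specialized to $\mu_{1}=\cdots=\mu_{n+1}=1$ (so $\mu_{N}=n+1$, $a(z,w)=\sum z^{i}w^{i}$) and rescaled by the parameter $k'$, giving $[b_{p},b_{q}]=-\tfrac{k'^{2}}{n+1}(p\otimes a_{q}-q\otimes a_{p})$; matching then yields $\lambda^{\kappa}=\tfrac{n+1}{4}(k^{2}-k'^{2})$ after inserting the values of $(p,p)$ and $(q,p)$ for the chosen face.

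The main obstacle is bookkeeping rather than anything conceptual: for each of the nine types one must write down an explicit $2$-face of $\Sigma$ in Bourbaki coordinates, enumerate the roots that are positive on both of its generators (separating long and short roots where relevant), and carry the resulting sums and the constants $(p,p),(q,p)$ through correctly. The subtlest point is the $A_{n}$ case, where one has to reconcile the inner product coming out of the Lauricella example (which carries the weights $\mu_{i}$) with the standard one $(\varepsilon_{i},\varepsilon_{j})=\delta_{ij}$ and track the exact power of $k'$; the sign $k^{2}-k'^{2}$ is a good sanity check since it must vanish precisely on the symmetry locus $k'=\pm k$. A further consistency check is that each displayed formula specializes correctly under the known sub-configurations (restriction of an $A_{n}$-arrangement, and comparison with the Couwenberg--Heckman--Looijenga projective case through the residue computations already carried out above).
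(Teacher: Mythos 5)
Your proposal follows essentially the same route as the paper: Schur's lemma reduces $a^{\kappa}$ to a scalar multiple of $(\cdot,\cdot)$, and that scalar is extracted by evaluating Condition (5)(b) of Lemma \ref{lem:flatness-conditions} on an explicit pair $p,q$ in Bourbaki coordinates (the paper carries out exactly this computation for $C_{n}$ with $p=\varepsilon_{1}+\cdots+\varepsilon_{s}$, $q=\varepsilon_{1}+\cdots+\varepsilon_{t}$, deferring the remaining types to an online appendix), with the $A_{n}$ case likewise read off from Example \ref{exm:toric-Lauricella}. The approach is correct and matches the paper's.
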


\begin{proof}
We already know that $a^{\kappa}$ is a multiple of the given inner product by the
Schur's lemma if $R$ is irreducible. Then it's a straightforward computation by
Condition (5)(b) of Lemma $\ref{lem:flatness-conditions}$: $[U_{p},U_{q}]+[b_{p},b_{q}]=p\otimes a^{\kappa}_{q}-
q\otimes a^{\kappa}_{p}$. In fact, the $a^{\kappa}$ for type $A_{n}$ can be obtained directly from
Example $\ref{exm:toric-Lauricella}$.

Let's determine the $a^{\kappa}$ for type $C_{n}$ for example. Put $p:=\varepsilon_{1}+\cdots+
\varepsilon_{s}$ and $q:=\varepsilon_{1}+\cdots+\varepsilon_{t}$. Assume that $s<t$
without loss of generality. It's obvious that $(p,p)=s$ and $(p,q)=s$.
A straightforward computation
shows that
\begin{align*}
U_{p}(\varepsilon_{m})&=-(((n-2)k+2k')\varepsilon_{m}+kp) \quad \; \text{for} \; 1\leq m\leq s;\\
U_{p}(\varepsilon_{m})&=-sk\varepsilon_{m} \quad \; \text{for} \; s+1\leq m\leq n.
\end{align*}
Then we have
\begin{align*}
U_{p}(p)&=-((n-2+s)k+2k')p;\\
U_{p}(q)&=U_{q}(p)=-((n-2)k+2k')p+skq);\\
U_{q}(q)&=-((n-2+t)k+2k')q.
\end{align*}
Hence
\[ [U_{p},U_{q}](p)=sk((n-2)k+2k')p-sk((n-2)k+2k')q. \]
We thus have
\[ a^{\kappa}(u,v)=((n-2)k^{2}+2kk')(u,v). \]

The calculation for all the cases can be found on the corresponding website \cite{Shen-2018}.
\end{proof}

Therefore, we have constructed a $W$-invariant projective structure on $H^{\circ}$
where $H$ is an adjoint torus. \index{Torus!adjoint}

\section{Hyperbolic structures}\label{sec:hyperbolic-structures}

In this section, we show that the toric arrangement complement $H^{\circ}$ \index{Toric arrangement complement}
admits a hyperbolic structure when $\kappa$
lies in some region so that its image under the projective evaluation map lands in a complex ball. In Section
$\ref{subsec:geometric-structures}$,
we review the basic theory of geometric structures with logarithmic singularities. In Section $\ref{subsec:eigenvalues}$,
we compute the eigenvalues of
the residue endomorphisms along those added divisors, which almost equals to obtaining
the logarithmic exponents along those divisors.
In Section $\ref{subsec:reflection-representation}$, we use the method of reflection representation to investigate
the corresponding Hermitian form so that we
can determine the hyperbolic region for $H^{\circ}$. In Section $\ref{subsec:evaluation-map}$, we set up the
(projective) evaluation map and
even give out the evaluation map around those divisors in the form of local coordinates. In Section $\ref{subsec:complex-ball}$,
we provide
a proof showing that the dual Hermitian form is greater than $0$ when $\kappa$ lies in the hyperbolic region which means
its image under the projective evaluation map lands in a complex ball.

\subsection{Geometric structures with logarithmic singularities}\label{subsec:geometric-structures}

We shall in this section introduce the geometric structures on a complex manifold in a very brief way. A good exposition on this
topic is Chapter 1 of \cite{Couwenberg-Heckman-Looijenga}.

Let $N$ be a connected complex manifold and $\tilde{N}\rightarrow N$ be a holonomy covering \index{Covering!holonomy}
and denote its Galois group by $\Gamma$. So $\mathrm{Aff}(\tilde{N}):=
\mathrm{H}^{0}(\tilde{N},\mathrm{Aff}_{\tilde{N}})$ is a $\Gamma$-invariant vector space of affine-linear functions on $\tilde{N}$. Then
the set $A$ of linear forms $\mathrm{Aff}(\tilde{N})\rightarrow\mathbb{C}$ which are the identity on $\mathbb{C}$ is an affine
$\Gamma$-invariant hyperplane in $\mathrm{Aff}(\tilde{N})^{*}$.

\begin{definition}
Given a holonomy cover as above, the evaluation map $ev: \tilde{N}\rightarrow A$ which assigns to $\tilde{z}$ the linear form
$ev_{\tilde{z}}\in A: \mathrm{Aff}(\tilde{N})\rightarrow\mathbb{C}; \tilde{f}\mapsto \tilde{f}(\tilde{z})$ is called the
$\emph{developing map}$ \index{Developing map} of the affine structure; it is $\Gamma$-equivariant and a local affine isomorphism.
\end{definition}

This tells us that a developing map determines a natural affine atlas on $N$ whose charts take values in $A$ and whose transition
maps lie in $\Gamma$.

\begin{definition}
Suppose an affine structure \index{Affine structure}
is given on a complex manifold $N$ by a torsion free, flat connection $\nabla$. \index{Connection!flat}
\index{Connection!torsion free} A nowhere zero
holomorphic vector field $E$ on $N$ is called a $\emph{dilatation field}$ \index{Dilatation field}
with factor $\lambda\in\mathbb{C}$, where $\nabla_{X}(E)=\lambda X$ for
every local vector field $X$.
\end{definition}

If $X$ is flat, then the torsion freeness yields: $[E,X]=\nabla_{E}(X)-\nabla_{X}(E)=-\lambda X$. This tells us that Lie derivative
with respect to $E$ acts on flat vector fields simply as multiplication by $-\lambda$. Hence it acts on flat differentials as
multiplication by $\lambda$.

Let $h$ be a flat Hermitian form \index{Hermitian form}
on the tangent bundle of $N$ such that $h(E,E)$ is nowhere zero. Then the leaf space $N/E$ of the
dimension one foliation induced by $E$ inherits a Hermitian form $h_{N/E}$ in much the same way as the projective space of a finite
dimensional Hilbert space acquires its Fubini-Study metric. We are especially interested in the case when $h_{N/E}$ is positive
definite:

\begin{definition}
Let $N$ be a complex manifold with an affine structure and there is a dilatation field $E$ on $N$ with factor $\lambda$. We say
that a flat Hermitian form $h$ on $N$ is $\emph{admissible}$ \index{Hermitian form!admissible}
relative to $E$ if it is in one of the following three cases:
\leftmargini=7mm
\begin{enumerate}
  \item elliptic: $\lambda\neq 0$ and $h>0$;
  \item parabolic: $\lambda=0$ and $h\geq 0$ with kernel spanned by $E$;
  \item hyperbolic: $\lambda\neq 0$, $h(E,E)<0$ and $h>0$ on $E^{\perp}$.
\end{enumerate}
\end{definition}

Then the leaf space $N/E$ acquires a metric $h_{N/E}$ of constant holomorphic sectional curvature, for it is locally isometric
to a complex projective space with Fubini-Study metric, to a complex-Euclidean space or to a complex-hyperbolic space respectively.

In order to understand the behavior of an affine structure near a given smooth subvariety of its singular locus, we need to blow up
that subvariety so that we are dealing with the codimension one case. Let's first look at the simplest degenerating affine
structures as follows which is also in \cite{Couwenberg-Heckman-Looijenga}.

\begin{definition}\label{def:infinitesimal}
Let $D$ be a smooth connected hypersurface in a complex manifold $N$ and an affine structure on $N-D$ is endowed. We say that
the affine structure on $N-D$ has an $\emph{infinitesimally simple degeneration along D}$ of logarithmic exponent
$\lambda\in\mathbb{C}$ if
\leftmargini=7mm
\begin{enumerate}
  \item $\nabla$ extends to $\Omega_{N}(\log D)$ with a logarithmic pole along $D$,

  \item the residue of this extension along $D$ preserves the subsheaf
  $\Omega_{D}\subset\Omega_{N}(\log D)\otimes\mathcal{O}_{D}$
and its eigenvalue on the quotient sheaf $\mathcal{O}_{D}$ is $\lambda$, \\
\noindent and

  \item the residue endomorphism restricted to $\Omega_{D}$ is semisimple and all of its eigenvalues are $\lambda$ or 0.
\end{enumerate}
\end{definition}

We have the following local model for the behavior of the developing map for such a degenerating affine structure \cite{Couwenberg-Heckman-Looijenga}.

\begin{proposition}
Let be given a smooth connected hypersurface $D$ in a complex manifold $N$, an affine structure on $N-D$ and a point
$p\in D$. Then the infinitesimally simple degeneration along $D$ at $p$ of logarithmic exponent $\lambda\in\mathbb{C}$
can also be described in the form of local coordinates.

Namely, there exists a local equation $t$ for $D$ and a local chart
\[ (F_{0},t,F_{\lambda}):N_{p}\rightarrow (T_{0}\times\mathbb{C}\times T_{\lambda})_{(0,0,0)} \]
($T_{\lambda}$ incorporates into $T_{0}$ when $\lambda=0$), where $T_{0}$ and $T_{\lambda}$ are vector spaces, such that the
developing map near $p$ is affine equivalent to a multivalued map
for which the explicit form can be found in Proposition 1.10 of \cite{Couwenberg-Heckman-Looijenga},
depending on whether $\lambda$ is a non-integer, a positive integer, a negative integer, or zero.
\end{proposition}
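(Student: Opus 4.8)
The plan is to reduce the statement to the classical local theory of connections with a regular singularity and then to quote the normal-form classification of \cite{Couwenberg-Heckman-Looijenga}: Definition \ref{def:infinitesimal} and the local model sought here are taken verbatim from Chapter 1 of \emph{loc.\ cit.}, so what has to be checked is that the argument behind its Proposition 1.10 applies unchanged. First I would choose local coordinates $(z,t)=(z^{1},\dots,z^{n-1},t)$ centred at $p$ with $D=\{t=0\}$, and write the torsion-free flat connection defining the affine structure on $N-D$ as $\nabla=\nabla^{0}+\Omega$ with $\Omega=\frac{dt}{t}\otimes R+\Theta$, where $\nabla^{0}$ is the coordinate connection, $\Theta$ is holomorphic and $R$ restricts along $D$ to $\mathrm{Res}_{D}\nabla$. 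By Definition \ref{def:infinitesimal}, $\mathrm{Res}_{D}\nabla$ preserves $\Omega_{D}$, is semisimple on it with eigenvalues in $\{0,\lambda\}$, and acts as multiplication by $\lambda$ on the rank-one quotient $\Omega_{N}(\log D)\otimes\mathcal{O}_{D}/\Omega_{D}\cong\mathcal{O}_{D}$. After a constant linear change of the $z^{i}$ I would arrange that $dz^{1},\dots,dz^{q}$ span the $0$-eigenspace and $dz^{q+1},\dots,dz^{n-1}$ the $\lambda$-eigenspace of $\mathrm{Res}_{D}\nabla|_{\Omega_{D}}$, and take $T_{0}$ and $T_{\lambda}$ to be the corresponding coordinate subspaces, with $T_{\lambda}$ collapsing into $T_{0}$ exactly when $\lambda=0$.

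Next I would analyse the rank $n$ local system of flat differentials near $p$. Because the extended connection has a regular singularity along $D$ with residue of the block form above, Deligne's theory of regular-singular connections \cite{Deligne-1970} --- concretely, integration of the Fuchsian system in the $t$-direction with the $z^{i}$ as holomorphic parameters --- exhibits a holomorphic gauge in which a multivalued flat frame is $t^{-R}$ times a holomorphic frame; its asymptotics are read off from the eigenvalues of $R$, so that $\dim T_{0}$ members are holomorphic along $D$ and the remaining $\dim T_{\lambda}+1$ carry the factor governed by $\lambda$. Since $\nabla$ is torsion free, flat $1$-forms are closed, so by the Poincar\'{e} lemma each frame member integrates to a local affine-linear function; adjoining the constant one obtains a basis of local affine-linear functions near $p$: a holomorphic $T_{0}$-valued map $F_{0}$, which restricts on $D$ to an affine chart for the affine structure that $\mathrm{Res}_{D}\nabla$ induces there, together with $\dim T_{\lambda}+1$ functions which, up to holomorphic factors, are of the form $t^{\lambda}$ times a holomorphic function. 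Repackaging these as the chart $(F_{0},t,F_{\lambda})\colon N_{p}\to(T_{0}\times\mathbb{C}\times T_{\lambda})_{(0,0,0)}$, with $t$ the local equation of $D$ --- the analogue here of the trivializing coordinate in Lemma \ref{lem:local-affine-functions} --- and $F_{\lambda}$ holomorphic, the developing map expressed in this chart becomes exactly the multivalued map recorded in Proposition 1.10 of \cite{Couwenberg-Heckman-Looijenga}.

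The step that carries the real work is the resonant case $\lambda\in\mathbb{Z}$. When $e^{2\pi\sqrt{-1}\lambda}\neq 1$ the local monodromy of the flat differentials around $D$ is semisimple and the clean $t^{\lambda}$ normal form holds; when $\lambda$ is a nonzero integer, or $\lambda=0$, the residue can couple the $T_{\lambda}$-direction to the rank-one quotient $\mathcal{O}_{D}$ through a single Jordan block, and then a $\log t$ term, or a correction of the shape $F_{\lambda}\mapsto t^{\lambda}F_{\lambda}+(\text{polynomial in }t)$, or the collapse $T_{\lambda}\subset T_{0}$, may appear. The content of Proposition 1.10 of \cite{Couwenberg-Heckman-Looijenga} is precisely the sorting of these possibilities into the four listed cases --- $\lambda$ a non-integer, a positive integer, a negative integer, or zero --- together with the verification that the semisimplicity of $\mathrm{Res}_{D}\nabla$ on $\Omega_{D}$ and the restriction of its eigenvalues to $\{0,\lambda\}$ are exactly what force the developing map into one of these models and tame the single possibly-nontrivial Jordan block. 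I would quote that analysis rather than reproduce it; the only thing left to check in the present context is that Definition \ref{def:infinitesimal} reproduces the hypotheses of \emph{loc.\ cit.}\ word for word, which it does by construction.
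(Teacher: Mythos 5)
Your proposal is correct and matches the paper's treatment: the paper gives no argument of its own for this proposition, deferring entirely to Proposition 1.10 of \cite{Couwenberg-Heckman-Looijenga}, and your reduction to Deligne's theory of regular-singular connections (eigenspace splitting of the residue, integration of flat closed differentials, and separate handling of the resonant integer cases) is a faithful sketch of the argument behind that cited result. No gaps to flag, beyond the usual caveat that the precise exponents in the normal form depend on the shift convention between residue eigenvalues and logarithmic exponents, which you correctly leave to the citation.
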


In fact, we need to understand what happens in case $D$ is a normal crossing divisor in the complex manifold $N$ and the affine
structure on $N-D$ degenerates infinitesimally simply along some irreducible component of $D$.

\begin{proposition}[\cite{Couwenberg-Heckman-Looijenga}]
Let be given a complex manifold $N$ with a simple normal crossing divisor $D$ on it, whose irreducible components
$D_{1},\cdots,D_{k}$ are smooth. An affine structure on $N-D$ is endowed with infinitesimally simply degeneration
along $D_{i}$ of logarithmic exponent $\lambda_{i}$.
Suppose
that $\lambda_{i}>-1$ and that the holonomy around $D_{i}$ is semisimple unless $\lambda_{i}=0$.
Let $p$ be a point of $\cap D_{i}$.
Then $\lambda_{i}\neq 0$ for $i<k$ and the
local affine retraction $r_{i}$ at the generic point of $D_{i}$ extends to $r_{i}: N_{p}\rightarrow D_{i}$ in such a manner
that $r_{i}r_{j}=r_{i}$ for $i<j$.
\end{proposition}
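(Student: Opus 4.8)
The plan is to reduce the statement to the codimension-one case already handled by the preceding proposition, and then to glue the resulting retractions inductively along the strata of $D$. First I would work locally near $p\in\cap_{i=1}^{k}D_i$, choosing simultaneous local coordinates $(t_1,\dots,t_k,s)$ on $N_p$ so that $D_i=\{t_i=0\}$, with $s=(s^1,\dots)$ a transverse system of coordinates on $\cap D_i$. For each $i$ the affine structure on $N-D$ degenerates infinitesimally simply along $D_i$ of logarithmic exponent $\lambda_i$, so by the codimension-one local model (the previous proposition, i.e. Proposition~1.10 of \cite{Couwenberg-Heckman-Looijenga}) there is, near the generic point of $D_i$, a local chart $(F_0^{(i)},t_i,F_{\lambda_i}^{(i)})$ and an associated affine retraction $r_i$ onto $D_i$. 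The point is that this retraction is \emph{canonical} enough — it is defined intrinsically from the residue endomorphism $\mathrm{Res}_{D_i}\nabla$, whose $\lambda_i$-eigenspace on $\Omega_N(\log D)\otimes\mathcal O_{D_i}$ singles out the conormal-type direction — that it extends across the locus where the other $D_j$ enter, i.e. to $r_i\colon N_p\to D_i$.

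Next I would establish the key compatibility $r_ir_j=r_i$ for $i<j$. The ordering is forced by the hypothesis: since $\lambda_i>-1$ for all $i$ and the holonomy around $D_i$ is semisimple unless $\lambda_i=0$, one first argues that $\lambda_i\neq 0$ for $i<k$ — indeed a parabolic (zero-exponent) degeneration carries a one-dimensional kernel direction (the dilatation field), and two distinct parabolic directions meeting along a codimension-two stratum would be incompatible with the flatness of $\nabla$ restricted near that stratum; hence at most the ``last'' component $D_k$ can have $\lambda_k=0$. For $i<k$ with $\lambda_i\neq 0$, the residue endomorphism $\mathrm{Res}_{D_i}\nabla$ acts on $\Omega_D$ semisimply with eigenvalues in $\{\lambda_i,0\}$, and by Lemma~\ref{lem:flatness} the residues along the $D_j$ passing through the stratum mutually commute; therefore they are simultaneously diagonalizable. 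This simultaneous eigenspace decomposition is exactly what lets one write $F_{\lambda_i}^{(i)}$ as (a component of) a single chart valid on all of $N_p$, and the retraction $r_i$ is then ``forget the $t_i$-direction and the $F_{\lambda_i}^{(i)}$-block,'' landing in $D_i$; applying $r_j$ afterward forgets a further block that $r_i$ has already collapsed, giving $r_ir_j=r_i$.

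The main obstacle I expect is precisely the extension step: showing that the retraction $r_i$, initially defined only at the \emph{generic} point of $D_i$ (away from the other components), really does extend holomorphically across the smaller strata $D_i\cap D_j$. This is where the commutativity of residues (Condition~(1)–(5) of Lemma~\ref{lem:flatness-conditions}, or abstractly Lemma~\ref{lem:flatness}) does the work: commuting semisimple residue endomorphisms can be put in a common block-diagonal form, so the ``$\lambda_i$-block'' of coordinates is globally (on $N_p$) well-defined and independent of which other components one is near. One has to check that the extended $r_i$ is still an affine retraction for the affine structure on $N_p-D$ — this follows because on the dense open set $N-D$ it agrees with the generic-point retraction, and an affine map between affine manifolds that agrees on a dense open set is determined, so holomorphic continuation preserves affineness. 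The relation $r_ir_j=r_i$ and the non-vanishing $\lambda_i\neq 0$ for $i<k$ then both drop out of the common-eigenspace picture, and one invokes the semisimplicity-of-holonomy hypothesis only to guarantee that the residue endomorphisms really are semisimple (hence simultaneously diagonalizable) in the non-parabolic range.
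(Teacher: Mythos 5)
The paper does not actually prove this proposition: it is stated with the attribution \cite{Couwenberg-Heckman-Looijenga} and no proof follows (the result is imported from that reference), so there is no in-paper argument to measure your proposal against. Judged on its own terms, your outline assembles the right ingredients --- the codimension-one local model, pairwise commutation of the residues forced by flatness, and a simultaneous block decomposition --- but the two steps that constitute the actual content of the proposition are asserted rather than proved. The extension of $r_{i}$ from the generic point of $D_{i}$ across the deeper strata $D_{i}\cap D_{j}$ is precisely the point at issue; saying the retraction is ``canonical enough'' because it is read off from the residue endomorphism begs the question, since the residue is a datum living on $D_{i}$ while $r_{i}$ is a map from a full neighbourhood in $N$. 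The argument one actually needs is an inductive normalization of the developing map near $p$, in which the hypotheses $\lambda_{i}>-1$ and semisimplicity of the holonomy are used to show that the $T_{0}$-component of the developing map extends holomorphically and submersively across $t_{j}=0$; neither hypothesis is invoked at the decisive place in your sketch.

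More seriously, your justification of $\lambda_{i}\neq 0$ for $i<k$ --- that two parabolic directions meeting along a codimension-two stratum ``would be incompatible with the flatness of $\nabla$'' --- is not correct as stated. The translation-invariant affine structure on $(\mathbb{C}^{\times})^{2}$ with developing map $(\log t_{1},\log t_{2})$ is flat and degenerates infinitesimally simply with exponent $0$ along both branches of the normal crossing divisor $t_{1}t_{2}=0$; it satisfies every hypothesis as transcribed here (the semisimplicity requirement is vacuous when $\lambda_{i}=0$), yet both exponents vanish and $r_{1}r_{2}\neq r_{1}$. So flatness alone cannot force at most one vanishing exponent; whatever excludes this configuration must come either from the existence and compatibility requirements on the affine retractions themselves (a parabolic degeneration does not retract affinely onto its divisor in the required way), or from a hypothesis of the original statement in \cite{Couwenberg-Heckman-Looijenga} that the present formulation has absorbed into the conclusion. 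You would need to identify and use that mechanism explicitly; as written, this step of the proposal fails.
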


This proposition makes it possible to perform the so-called \emph{Looijenga compactifiaction} on an arrangement complement,
determined by the arrangement. For this operation to be done, we first need to carry out a sequence of iterated blowups
in terms of the \emph{intersection lattice} of the arrangement so as to get a big resolution.
Via this we shall arrive at the situation mentioned in the proposition. Then we can contract those exceptional
divisors to get the Looijenga compactification without worrying about the compatibility of their local affine formations.
This very technical process, a successive blowups followed by contractions, can be found in
\cite{Looijenga-2003}.

\subsection{Eigenvalues of the residue endomorphisms}\label{subsec:eigenvalues}

We can see from the preceding section that it is important to know that the eigenvalues of the residue maps of the connection
$\tilde{\nabla}^{\kappa}$. Those residues are as follows from the last
section.
\begin{align*}
\mathrm{Res}_{\hat{H}_{\alpha}\times\mathbb{P}^{1}}(\tilde{\Omega}^{\kappa})^{*}&=u_{\alpha},\\
\mathrm{Res}_{D_{p}\times\mathbb{P}^{1}}(\tilde{\Omega}^{\kappa})^{*}
&= U_{p}+b_{p}^{\kappa}+t\frac{\partial}{\partial t}\otimes a_{p}^{\kappa}-p \otimes \frac{dt}{t},\\
\mathrm{Res}_{t=0}(\tilde{\Omega}^{\kappa})^{*}&=-\mathrm{Res}_{t=\infty}(\tilde{\Omega}^{\kappa})^{*}
=-1_{\mathfrak{h}\oplus\mathbb{C}}.
\end{align*}

Now let us compute the eigenvalues of these residues. We look at the residue map along the mirror $\hat{H}_{\alpha}\times\mathbb{P}^{1}$, view $u_{\alpha}$ as an endomorphism of $\mathfrak{h}$ instead of
$\mathfrak{h}\oplus\mathbb{C}$ at first. We immediately have
\begin{align*}
u_{\alpha}(\alpha^{\vee})&=2k_{\alpha}\alpha^{\vee},\\
u_{\alpha}(p)&=0 \quad \text{for} \quad \forall p\perp \alpha^{\vee}.
\end{align*}
Then we have the following eigenvalues
\begin{equation*}
\left\{
\begin{aligned}
&u_{\alpha}((\alpha^{\vee},0))=2k_{\alpha}(\alpha^{\vee},0),\\
&u_{\alpha}((p,0))=0 \quad \text{for} \quad \forall p\perp \alpha^{\vee},\\
&u_{\alpha}((0,\lambda t\frac{\partial}{\partial t}))=0
\end{aligned}
\right.
\end{equation*}
if we view $u_{\alpha}$ as an endomorphism of $\mathfrak{h}\oplus\mathbb{C}$.

Then we need to compute the eigenvalues of the residue maps of the connection along the boundary divisor $D_{p}\times\mathbb{P}^{1}$,
which is much more
complicated. Let us first look at an example. Then we shall have some feeling about how these eigenvalues come up.

\begin{example}
We take type $A_{n}$ and regard $U_{p}$ and $b_{p}$ as endomorphisms of $\mathfrak{h}$ at first as before.
Here we still use the construction for root systems from Bourbaki. Let root system $R$ of type $A_{n}$ sit inside a Euclidean
space $\mathbb{R}^{n+1}$ and denote its orthonormal basis by $e^{1},e^{2},\cdots,e^{n+1}$, so its positive roots
\index{Root!positive}
are all of the
form $e^{i}-e^{j}$ for $1\leq i< j\leq n+1$. Its dual root system $R^{\vee}$ is also of type $A_{n}$ and we denote the dual
orthonormal basis by $\varepsilon_{1},\varepsilon_{2},\cdots,\varepsilon_{n+1}$. Then its positive coroots are all of the form
$\varepsilon_{i}-\varepsilon_{j}$ for $1\leq i<j\leq n+1$ and simple coroots are of the form of $\varepsilon_{i}-\varepsilon_{i+1}$
for $i=1,2,\cdots,n$.

Let $p=\varpi_{m}^{\vee}:=\frac{n+1-m}{n+1}(\varepsilon_{1}+\cdots +\varepsilon_{m})-\frac{m}{n+1}(\varepsilon_{m+1}+\cdots
+\varepsilon_{n+1})$. We know that all the positive roots $\alpha$ of $R$ such that $\alpha(p)\neq 0$ are of the form
$e^{i}-e^{j}$ for $1\leq i \leq m,\; m+1\leq j \leq n+1$ and in fact $\alpha(p)=1$ for all these $\alpha$'s.
Write
\begin{align*}
\sigma_{0}&=\sum_{\alpha\in R_{+}}|\alpha(p)|(\alpha^{\vee}\otimes\alpha)\\
&=\sum_{\scriptstyle 1\leq i \leq m \atop \scriptstyle m+1\leq j \leq n+1}(\varepsilon_{i}-\varepsilon_{j})\otimes(e^{i}-e^{j}),
\end{align*}
we then have
\begin{eqnarray*}
\sigma_{0}(\varepsilon_{s})=
\left\{
\begin{aligned}
&\sum_{m+1\leq j \leq n+1}(\varepsilon_{s}-\varepsilon_{j}) \quad &&\text{for} \quad 1\leq s \leq m \\
&\sum_{1\leq i \leq m}-(\varepsilon_{i}-\varepsilon_{s}) \quad &&\text{for} \quad m+1\leq s\leq n+1.
\end{aligned}
\right.
\end{eqnarray*}
After a straightforward computation, we have
\begin{eqnarray*}
\left\{
\begin{aligned}
&\sigma_{0}(\varepsilon_{s}-\varepsilon_{t})=(n+1-m)(\varepsilon_{s}-\varepsilon_{t}) \quad &&\text{for} \quad 1\leq s<t \leq m \\
&\sigma_{0}(p)=(n+1)p \\
&\sigma_{0}(\varepsilon_{s}-\varepsilon_{t})=m(\varepsilon_{s}-\varepsilon_{t}) \quad &&\text{for} \quad m+1\leq s<t \leq n+1.
\end{aligned}
\right.
\end{eqnarray*}
Since $U_{p}=-\frac{1}{4}\sum_{\alpha\in R}|\alpha(p)|k(\alpha^{\vee}\otimes\alpha)=-\frac{1}{2}k\sigma_{0}$, then the
above tells us
that
\begin{eqnarray*}
\left\{
\begin{aligned}
&U_{p}(\alpha_{i}^{\vee})=-\frac{1}{2}(n+1-m)k\alpha_{i}^{\vee} \quad &&\text{for} \quad 1\leq i \leq m-1 \\
&U_{p}(p)=-\frac{1}{2}(n+1)kp \\
&U_{p}(\alpha_{i}^{\vee})=-\frac{1}{2}mk\alpha_{i}^{\vee} \quad &&\text{for} \quad m+1\leq i \leq n.
\end{aligned}
\right.
\end{eqnarray*}
Since $b_{p}=\frac{1}{2}k'\sum_{\alpha >0}\alpha(p)(\alpha'\otimes\alpha)$ where $\alpha'=\varepsilon_{i}+\varepsilon_{j}-
\frac{2}{n+1}(\varepsilon_{1}+\cdots+\varepsilon_{n+1})$, the computation for $b_{p}$ is similar to the situation of $U_{p}$
and hence we have
\begin{eqnarray*}
\left\{
\begin{aligned}
&b_{p}(\alpha_{i}^{\vee})=\frac{1}{2}(n+1-m)k'\alpha_{i}^{\vee} \quad &&\text{for} \quad 1\leq i \leq m-1 \\
&b_{p}(p)=\frac{1}{2}(n+1-2m)k'p \\
&b_{p}(\alpha_{i}^{\vee})=-\frac{1}{2}mk'\alpha_{i}^{\vee} \quad &&\text{for} \quad m+1\leq i \leq n.
\end{aligned}
\right.
\end{eqnarray*}
If we write $\sigma=\mathrm{Res}_{D_{p}\times\mathbb{P}^{1}}(\tilde{\Omega}^{\kappa})^{*}
= U_{p}+b_{p}+t\frac{\partial}{\partial t}\otimes a_{p}-p \otimes \frac{dt}{t}$, regard $U_{p}$ and $b_{p}$ as endomorphisms of
$\mathfrak{h}\oplus\mathbb{C}$, we then have the following eigenvalues
after a little bit more effort
\begin{eqnarray*}
\left\{
\begin{aligned}
&\sigma((\alpha_{i}^{\vee},0))=-\frac{1}{2}(n+1-m)(k-k')(\alpha_{i}^{\vee},0) \quad \text{for} \quad 1\leq i \leq m-1 \\
&\sigma((p,-\frac{1}{2}m(k+k')t\frac{\partial}{\partial t}))
=-\frac{1}{2}(n+1-m)(k-k')(p,-\frac{1}{2}m(k+k')t\frac{\partial}{\partial t}) \\
&\sigma((p,-\frac{1}{2}(n+1-m)(k-k')t\frac{\partial}{\partial t}))
=-\frac{1}{2}m(k+k')(p,-\frac{1}{2}(n+1-m)(k-k')t\frac{\partial}{\partial t}) \\
&\sigma((\alpha_{i}^{\vee},0))=-\frac{1}{2}m(k+k')(\alpha_{i}^{\vee},0) \quad \text{for} \quad m+1\leq i \leq n.
\end{aligned}
\right.
\end{eqnarray*}

\end{example}

From this example, we notice that the eigenvalue of $U_{p}(+b_{p})$ on the space $\mathbb{C}p$ is the sum of the two eigenvalues
on the spaces $\mathfrak{h}_{1}=\mathrm{span}\{\alpha_{1}^{\vee},\cdots,\alpha_{m-1}^{\vee}\}$ and
$\mathfrak{h}_{2}=\mathrm{span}\{\alpha_{m+1}^{\vee},\cdots,\alpha_{n}^{\vee}\}$ respectively. And the product of the two eigenvalues is
$a(p,p)$. In fact, this holds for all the root systems. In the end, $\sigma$ has two eigenvalues on the space
$\mathfrak{h}\oplus\mathbb{C}$.

\begin{theorem}
Let $\sigma=\mathrm{Res}_{D_{p}\times\mathbb{P}^{1}}(\tilde{\Omega}^{\kappa})^{*}
= U_{p}+b_{p}+t\frac{\partial}{\partial t}\otimes a_{p}-p \otimes \frac{dt}{t}$, then $\sigma$ has at most two eigenvalues on the
space $\mathfrak{h}\oplus\mathbb{C}$ with multiplicites $m$ and $n+1-m$ respectively.
In fact, these two eigenvalues satisfy such a quadratic equation
$\lambda^{2}-\varphi\lambda+a(p,p)=0$ where $\varphi$ is the eigenvalue of $U_{p}(+b_{p})$ (if viewed as an endomorphism
of $\mathfrak{h}$) on $\mathbb{C}p$.
\end{theorem}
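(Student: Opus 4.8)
The plan is to read off the eigenvalues of $\sigma$ from a block decomposition of $\mathfrak{h}\oplus\mathbb{C}$ adapted to $p$, the blocks being forced by the flatness conditions of Lemma~\ref{lem:flatness-conditions} together with Schur's lemma; the quadratic equation then drops out of a single $2\times2$ block, and the eigenvalue count becomes a finite, type-dependent bookkeeping. For the decomposition: since $p\in\Pi$ spans a one-dimensional face of $\Sigma$ and the fundamental coweights form a $\mathbb{Z}$-basis of $P^\vee$, $p$ is $W$-conjugate to a fundamental coweight, so $R_p:=\{\alpha\in R\mid\alpha(p)=0\}$ is a maximal parabolic subsystem of rank $n-1$; write $R_p=R_1\sqcup\cdots\sqcup R_k$ for its decomposition into irreducible components and put $\mathfrak{h}_l:=\sum_{\alpha\in R_l}\mathbb{C}\alpha^\vee$. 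By Condition~(2) of Lemma~\ref{lem:flatness-conditions} one has $a(\alpha^\vee,p)=0$ whenever $\alpha(p)=0$, so $\mathfrak{h}=\mathbb{C}p\oplus\mathfrak{h}_1\oplus\cdots\oplus\mathfrak{h}_k$ is an $a$-orthogonal direct sum, and $\mathfrak{h}\oplus\mathbb{C}=\mathbb{C}p\oplus\mathbb{C}s\oplus\bigoplus_l\mathfrak{h}_l$ with $s:=t\tfrac{\partial}{\partial t}$.

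Next I would show $\sigma$ is block-diagonal for this decomposition. For $\alpha\in R_p$ one has $[u_\alpha,U_p]=[u_\alpha,b_p]=0$ by Condition~(4), and since $u_\alpha=k_\alpha(1-s_\alpha)$ this means $U_p$ and $b_p$ commute with $W(R_l)$ on $\mathfrak{h}_l$; as $W(R_l)$ acts irreducibly on $\mathfrak{h}_l$, Schur's lemma gives $(U_p+b_p)|_{\mathfrak{h}_l}=\mu_l\cdot\mathrm{id}$ for a scalar $\mu_l$. Moreover $U_p$ and $b_p$ are self-adjoint relative to $a$ (Conditions~(2),(3)) and preserve $\mathfrak{h}_p=\bigoplus_l\mathfrak{h}_l$, hence preserve its $a$-orthogonal complement $\mathbb{C}p$, on which $U_p+b_p$ acts by a scalar $\varphi$ — precisely the $\varphi$ of the statement. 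Finally $a_p\in\mathfrak{h}^*$ kills each $\mathfrak{h}_l$ while $\tfrac{dt}{t}$ kills $\mathfrak{h}$, so the remaining terms $s\otimes a_p$ and $-p\otimes\tfrac{dt}{t}$ of $\sigma$ couple only $\mathbb{C}p$ and $\mathbb{C}s$. Thus $\sigma$ splits into the scalar blocks $\mu_l\cdot\mathrm{id}_{\mathfrak{h}_l}$ and one $2\times2$ block on $\mathbb{C}p\oplus\mathbb{C}s$ given by $\sigma(p)=\varphi p+a(p,p)\,s$ and $\sigma(s)=-p$; its characteristic polynomial is $\lambda^2-\varphi\lambda+a(p,p)$, which is the asserted quadratic equation.

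It then remains to count eigenvalues. Let $\nu_1,\nu_2$ be the roots of $\lambda^2-\varphi\lambda+a(p,p)=0$. The key point is that $\{\mu_1,\dots,\mu_k\}\subseteq\{\nu_1,\nu_2\}$; equivalently, that the ``trace'' identity $\varphi=\nu_1+\nu_2$ and the ``norm'' identity $a(p,p)=\nu_1\nu_2$ hold, i.e.\ the eigenvalue of $U_p+b_p$ on $\mathbb{C}p$ is the sum, and $a(p,p)$ the product, of the two values taken by the $\mu_l$. (When $R_p$ has three components — the branch-node coweights of $D_n$ and $E_6$ — a diagram automorphism of $R$ fixing $p$ permutes two of them, so only two values $\mu_l$ occur.) Granting this, $\lambda^2-\varphi\lambda+a(p,p)=(\lambda-\nu_1)(\lambda-\nu_2)$, so the $2\times2$ block contributes exactly $\nu_1$ and $\nu_2$, and $\sigma$ has the two eigenvalues $\nu_1,\nu_2$ with multiplicities $m:=1+\sum_{\mu_l=\nu_1}\dim\mathfrak{h}_l$ and $(n+1)-m$; for type $A_n$ with $p$ conjugate to $\varpi_m^\vee$ this recovers the labelling of the statement and of the Example preceding it.

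The genuine work, and the main obstacle, is the pair of identities $\varphi=\nu_1+\nu_2$ and $a(p,p)=\nu_1\nu_2$. One computes $\varphi$ and the $\mu_l$ from $U_p=-\tfrac12\sum_{\alpha>0}k_\alpha\alpha(p)\,\alpha^\vee\otimes\alpha$ (together with the explicit $b_p$ of Lemma~\ref{lem:bmap} when $R=A_n$): pairing with a vector $v$ in the relevant summand and using $a(\alpha^\vee,v)=c_\alpha\alpha(v)$ expresses $\mu_l$ and $\varphi$ as weighted sums $\propto\sum_{\alpha>0}k_\alpha\alpha(p)\alpha(v)^2/(v,v)$, while $a(p,p)$ is read off from the formulas for $a^\kappa$ already established. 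Verifying the two relations is then a finite list of rational-number checks, one per irreducible $R$ and per fundamental coweight $p$, carried out in the style of the sample computation of $a^\kappa$ (the full list being on the companion site \cite{Shen-2018}). The delicate cases are those with no diagram symmetry to invoke and a maximal parabolic whose components are pairwise non-isomorphic — for instance the branch node of $E_7$, where $R_p=A_2+A_1+A_3$ — where the collapse of the three numbers $\mu_l$ onto two values is a genuine numerical coincidence imposed by the value of $a^\kappa$; that it always occurs is the assertion ``this holds for all the root systems'' made just before the theorem, and it is the heart of the matter.
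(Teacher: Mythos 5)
Your proposal is correct and follows essentially the same route as the paper: the same decomposition $\mathfrak{h}\oplus\mathbb{C}=\mathbb{C}p\oplus\mathbb{C}t\frac{\partial}{\partial t}\oplus\bigoplus_l\mathfrak{h}_l$, the same appeal to Schur's lemma for scalar action on each $\mathfrak{h}_l$, and the same reduction of the crux to the case-by-case numerical identity (sum $=\varphi$, product $=a(p,p)$ for the scalars on $p^{\perp}$) verified in the computational appendix. The only difference is presentational — you read the quadratic off the characteristic polynomial of the explicit $2\times2$ block on $\mathbb{C}p\oplus\mathbb{C}t\frac{\partial}{\partial t}$, whereas the paper computes $\sigma^{2}$ and checks the operator identity $\sigma^{2}-\varphi\sigma+a(p,p)=0$ directly — and your version is somewhat more explicit about where Conditions (2)--(4) of Lemma \ref{lem:flatness-conditions} enter.
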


\begin{proof}
For type $A_{n}$, from above example, the result follows.

For other types, $b_{p}=0$, suppose $p=\varpi_{m}^{\vee}$, we have a decomposition of $\mathfrak{h}$:
\[ \mathfrak{h}=\mathbb{C}p \oplus \sum_{i}\mathfrak{h}_{i}  \]
where $\mathfrak{h}_{i}$ is just the space spanned by the irreducible root subsystem after deleting the $m$-th node from the original
root system $R$. These subspaces have the corresponding Weyl groups, denoted by $W_{i}$ respectively. In fact,
$U_{p}$ is a $W_{i}$-invariant endomorphism in $\mathfrak{h}_{i}$, so $U_{p}$ is just a scalar action in
$\mathfrak{h}_{i}$ by Schur's lemma.
We write $U_{p}(v)=\lambda_{i}v$ if $v\in \mathfrak{h}_{i}$ and $U_{p}(p)=\varphi_{1} p$.

We check the square of $\sigma$, we have
\begin{align*}
\sigma^{2}=U_{p}^{2}-p\otimes a_{p}-U_{p}(p)\otimes\frac{dt}{t}+t\frac{\partial}{\partial t}\otimes a_{p}(U_{p})-a(p,p)t\frac{\partial}{\partial t}
\otimes\frac{dt}{t}.
\end{align*}
From the computation above, below and online appendix \cite{Shen-2018}, we find that for all the root systems,
we have (at most) two eigenvalues on the
space perpendicular to $p$ whose sum is $\varphi_{1}$ and product $a(p,p)$, i.e.,
\begin{align*}
U_{p}^{2}(q)-\varphi_{1}U_{p}(q)+a(p,p)q=0 \quad \text{for} \quad \forall q\in p^{\bot}
\end{align*}
Then we can easily check that
\[ \sigma^{2}-\varphi_{1}\sigma+a(p,p)=0 .\]
The multiplicities follow from the decomposition of the root system.
\end{proof}

\begin{remark}
The value $m$ and $n+1-m$ for the multiplicities in
the above theorem is true except for an extremal node of $D_{n}$ and some nodes of $E_{n}$, which one can see from
below. In fact, we only need to bear in mind that in principle the multiplicities follow from the decomposition of the
root system.
\end{remark}

The computation for all the other types are similar for which the reader could check the online appendix \cite{Shen-2018},
we just list the results over here.

For type $B_{n}$, corresponding to $p=\varepsilon_{1}+\cdots+\varepsilon_{m}$ for $1\leq m \leq n$, we have
\begin{eqnarray*}
\left\{
\begin{aligned}
&\sigma((\alpha_{i}^{\vee},0))=-((n-2)k+k')(\alpha_{i}^{\vee},0) \quad \text{for} \quad 1\leq i \leq m-1 \\
&\sigma((p,-mk t\frac{\partial}{\partial t}))
=-((n-2)k+k')(p,-mk t\frac{\partial}{\partial t}) \\
&\sigma((p,-((n-2)k+k')t\frac{\partial}{\partial t}))
=-mk(p,-((n-2)k+k')t\frac{\partial}{\partial t}) \\
&\sigma((\alpha_{i}^{\vee},0))=-mk(\alpha_{i}^{\vee},0) \quad \text{for} \quad m+1\leq i \leq n.
\end{aligned}
\right.
\end{eqnarray*}

For type $C_{n}$, corresponding to $p=\varepsilon_{1}+\cdots+\varepsilon_{m}$ for $1\leq m < n$, we have
\begin{eqnarray*}
\left\{
\begin{aligned}
&\sigma((\alpha_{i}^{\vee},0))=-((n-2)k+2k')(\alpha_{i}^{\vee},0) \quad \text{for} \quad 1\leq i \leq m-1 \\
&\sigma((p,-mk t\frac{\partial}{\partial t}))
=-((n-2)k+2k')(p,-mk t\frac{\partial}{\partial t}) \\
&\sigma((p,-((n-2)k+2k')t\frac{\partial}{\partial t}))
=-mk(p,-((n-2)k+2k')t\frac{\partial}{\partial t}) \\
&\sigma((\alpha_{i}^{\vee},0))=-mk(\alpha_{i}^{\vee},0) \quad \text{for} \quad m+1\leq i \leq n;
\end{aligned}
\right.
\end{eqnarray*}
and corresponding to $p=\frac{1}{2}(\varepsilon_{1}+\cdots+\varepsilon_{n})$, we have
\begin{eqnarray*}
\left\{
\begin{aligned}
&\sigma((\alpha_{i}^{\vee},0))=-\frac{1}{2}((n-2)k+2k')(\alpha_{i}^{\vee},0) \quad \text{for} \quad 1\leq i \leq n-1 \\
&\sigma((p,-\frac{1}{2}nk t\frac{\partial}{\partial t}))
=-\frac{1}{2}((n-2)k+2k')(p,-\frac{1}{2}nk t\frac{\partial}{\partial t}) \\
&\sigma((p,-\frac{1}{2}((n-2)k+2k')t\frac{\partial}{\partial t}))
=-\frac{1}{2}nk(p,-\frac{1}{2}((n-2)k+2k')t\frac{\partial}{\partial t}).
\end{aligned}
\right.
\end{eqnarray*}

For type $D_{n}$, corresponding to $p=\varepsilon_{1}+\cdots+\varepsilon_{m}$ for $1\leq m \leq n-2$, we have
\begin{eqnarray*}
\left\{
\begin{aligned}
&\sigma((\alpha_{i}^{\vee},0))=-(n-2)k(\alpha_{i}^{\vee},0) \quad \text{for} \quad 1\leq i \leq m-1 \\
&\sigma((p,-mk t\frac{\partial}{\partial t}))
=-(n-2)k(p,-mk t\frac{\partial}{\partial t}) \\
&\sigma((p,-(n-2)kt\frac{\partial}{\partial t}))
=-mk(p,-(n-2)kt\frac{\partial}{\partial t}) \\
&\sigma((\alpha_{i}^{\vee},0))=-mk(\alpha_{i}^{\vee},0) \quad \text{for} \quad m+1\leq i \leq n;
\end{aligned}
\right.
\end{eqnarray*}
and corresponding to $p=\frac{1}{2}(\varepsilon_{1}+\cdots+\varepsilon_{n-1}-\varepsilon_{n})$ or
$p=\frac{1}{2}(\varepsilon_{1}+\cdots+\varepsilon_{n-1}+\varepsilon_{n})$, we have
\begin{eqnarray*}
\left\{
\begin{aligned}
&\sigma((\alpha_{i}^{\vee},0))=-\frac{1}{2}(n-2)k(\alpha_{i}^{\vee},0) \quad \text{for} \quad \forall \alpha_{i}^{\vee}\perp p \\
&\sigma((p,-\frac{1}{2}nk t\frac{\partial}{\partial t}))
=-\frac{1}{2}(n-2)k(p,-\frac{1}{2}nk t\frac{\partial}{\partial t}) \\
&\sigma((p,-\frac{1}{2}(n-2)kt\frac{\partial}{\partial t}))
=-\frac{1}{2}nk(p,-\frac{1}{2}(n-2)kt\frac{\partial}{\partial t}).
\end{aligned}
\right.
\end{eqnarray*}

For type $F_{4}$, the eigenvalues are $\{-(m+1)(k+k'),-m(2k+k')\}$ corresponding to $p=\varpi_{m}^{\vee}$ for $m=1,2,3$ and
$\{-2(k+k'),-2(2k+k')\}$ for $p=\varpi_{4}^{\vee}$.

For type $G_{2}$, the eigenvalues are $\{-(k+3k'),-\frac{3}{2}(k+k')\}$ and $\{-\frac{1}{2}(k+3k'),-(k+k')\}$
for $p=\varpi_{1}^{\vee}$ and $\varpi_{2}^{\vee}$ respectively.

For type $E_{n}$, the computation becomes more complicated since the construction for their fundamental coweights is
somehow irregular one by one. Then in order to compute their eigenvalues, we have the following observation:
the collection of $\alpha\in R$ with $\alpha(p)>0$ is a union of $W_{p}$-orbits. So if we put for any $W_{p}$-orbit of roots
that are positive on $p$ as follows:
\[ E_{O}:=\frac{1}{2|W_{p}|}\sum_{w\in W_{p}}w\alpha^{\vee}\otimes w\alpha=\frac{1}{2|O|}\sum
_{\alpha'\in O}\alpha'^{\vee}\otimes\alpha'    \]
where $\alpha$ is a member of $O$, then $U_{p}$ is a linear combination of such $E_{O}$'s.

We denote the orthogonal complement of $p$ by $\bar{\mathfrak{h}}$. Since $E_{O}$ is a $W_{p}$-invariant endomorphism in each
summand $\mathfrak{h}_{i}$, $E_{O}$ is a scalar action on $\mathbb{C}p$ and
each summand $\mathfrak{h}_{i}$ by Schur's lemma. These eigenvalues only depend on the $W_{p}$-orbit $O$ of $\alpha$ and so we denote them by
$\lambda_{p,O}$ and $\lambda_{i,O}$. First we notice that the trace of $E_{O}$ is equal to $\frac{1}{2}\alpha(\alpha^{\vee})=1$
and we shall show that the traces of $E_{O}$ on these eigenspaces are distributed in a simple manner. First we observe that
\[ a(E_{O}(x),y)=\frac{a(\alpha^{\vee},\alpha^{\vee})}{4|W_{p}|}\sum_{w\in W_{p}}\alpha(wx)\cdot\alpha(wy). \]
When $x=y=p$ the left hand side is $a(p,p)\lambda_{p,O}$ and the right hand side becomes $\frac{1}{4}a(\alpha^{\vee},\alpha^{\vee})
\alpha(p)^{2}=a(\alpha^{\vee},p)^{2}/a(\alpha^{\vee},\alpha^{\vee})$ and so
\[ \lambda_{p,O}=\frac{a(\alpha,p)^{2}}{a(p,p)a(\alpha^{\vee},\alpha^{\vee})}=
\frac{\parallel\pi_{p}(\alpha^{\vee})\parallel_{a}^{2}}{\parallel\alpha^{\vee}\parallel_{a}^{2}}, \]
where $\parallel\;\parallel_{a}$ denotes the norm associated to $a$. This is just the cosine squared of the angle between
$\alpha^{\vee}$ and $\pi_{p}(\alpha^{\vee})$.

When $x=y\in \mathfrak{h}_{i}$, the left hand side is $a(x,x)\lambda_{i,O}$. Now we only consider the case for which $R$ is a single
$W$-orbit for simplicity. Then we have $R_{i}:=R\cap \mathfrak{h}_{i}^{*}$ is a $W_{p}$-orbit. We denote the Coxeter number
\index{Coxeter!number}
of
$W(R_{i})$ by $h_{i}$. It is known that $|R_{i}|=h_{i}\dim \mathfrak{h}_{i}$ and that $\sum_{\beta\in R_{i}}\beta\otimes\beta$ is
a $W(R_{i})$-invariant form on $\mathfrak{h}_{i}$ which gives each coroot the squared length $4h_{i}$. So if we take in the above
formula $x=y$ a coroot of $R_{i}$, then
\begin{align*}
\lambda_{i,O}&=\frac{1}{4|W_{p}|}\sum_{w\in W_{p}}\alpha(w\beta^{\vee})^{2}=\frac{1}{4|R_{i}|}\sum_{\beta\in R_{i}}
\alpha(\beta^{\vee})^{2}\\
&=\frac{1}{4|R_{i}|}\sum_{\beta\in R_{i}}\beta(\alpha^{\vee})^{2}=\frac{1}{4|R_{i}|}\cdot
4h_{i}\frac{\parallel\pi_{\mathfrak{h}_{i}}(\alpha^{\vee})\parallel_{a}^{2}}{\parallel \text{coroot} \parallel_{a}^{2}}
=\frac{\parallel\pi_{\mathfrak{h}_{i}}(\alpha^{\vee})\parallel_{a}^{2}}{\dim \mathfrak{h}_{i}\parallel \text{coroot} \parallel_{a}^{2}}.
\end{align*}
We can see that the trace of $E_{O}$ on $\mathfrak{h}_{i}$ ($=\lambda_{i,O}\dim \mathfrak{h}_{i}$) is the cosine squared of the angle
between $\alpha^{\vee}$ and $\pi_{\mathfrak{h}_{i}}(\alpha^{\vee})$.

Then we look at these orbits. Let $\tilde{\alpha}$ be the highest root of $R$ relative to the root basis $\mathfrak{B}$
and put $n_{p}:=\tilde{\alpha}(p)$. By inspection one finds that for $c=1,2,\cdots,n_{p}$ the set of $\alpha\in R$
with $\alpha(p)=c$ make up a single $W_{p}$-orbit $O(c)$ and that the orthogonal projection $O(c)_{i}$ of $O(c)$ in
$\mathfrak{h}_{i}^{*}$ is either $\{0\}$ or the orbit of a fundamental weight of $R_{i}$. So the $W_{p}$-orbit $O(c)$ projects in
$\bar{\mathfrak{h}}$ bijectively onto $\prod_{i} O(c)_{i}$. We also see that there is a unique
$\alpha(c)\in O(c)$ such that $\alpha(c)$
defines a fundamental coweight in $\mathfrak{h}_{i}$ relative to $\mathfrak{B}_{i}:=\mathfrak{B}\cap R_{i}$ or $0$. Therefore, our $U_{p}$
is proportional to
\[ E:=\sum_{c=1}^{n_{p}}c|O(c)|E_{O(c)}.\]

\begin{example}
We do a branch point of type $E_{7}$. Let $p$ be chosen corresponding to $\alpha_{4}$:
\[
\begin{tikzpicture}

	\draw (0,0) -- (5,0);
	\draw (2,0) -- (2,1);
	
	\draw[fill=white] (0,0) circle(.08);
	\draw[fill=white] (1,0) circle(.08);
	\draw[fill=black] (2,0) circle(.08);
	\draw[fill=white] (2,1) circle(.08);
	\draw[fill=white] (3,0) circle(.08);
	\draw[fill=white] (4,0) circle(.08);
	\draw[fill=white] (5,0) circle(.08);

	\node at (-1,0) {$E_7$};
	\node at (0,-.3) {$\alpha_1$};
	\node at (1,-.3) {$\alpha_3$};
	\node at (2,-.3) {$\alpha_4$};
	\node at (3,-.3) {$\alpha_5$};
	\node at (4,-.3) {$\alpha_6$};
	\node at (5,-.3) {$\alpha_7$};
    \node at (2,1.3) {$\alpha_2$};
	
\end{tikzpicture}
\]
The decomposition of $\mathfrak{B}-\alpha_{4}$ into irreducible root basis is $\mathfrak{B}_{1}:=\{\alpha_{1},\alpha_{3}\}$,
$\mathfrak{B}_{2}:=\{\alpha_{2}\}$, and $\mathfrak{B}_{3}:=\{\alpha_{5},\alpha_{6},\alpha_{7}\}$ (of type $A_{2}$, $A_{1}$
and $A_{3}$
respectively). The highest root is $\tilde{\alpha}=\alpha_{1}+2\alpha_{2}+3\alpha_{3}+4\alpha_{4}+3\alpha_{5}+2\alpha_{6}
+\alpha_{7}$. Since $\tilde{\alpha}(p)=4$, we have $4$ corresponding $W_{p}$-orbits and each of them is represented by
\begin{align*}
\alpha(1)&:=\alpha_{1}+\alpha_{2}+\alpha_{3}+\alpha_{4}+\alpha_{5}+\alpha_{6}+\alpha_{7},\\
\alpha(2)&:=\alpha_{1}+\alpha_{2}+2\alpha_{3}+2\alpha_{4}+2\alpha_{5}+\alpha_{6}+\alpha_{7},\\
\alpha(3)&:=\alpha_{1}+\alpha_{2}+2\alpha_{3}+3\alpha_{4}+2\alpha_{5}+\alpha_{6}+\alpha_{7},\\
\alpha(4)&:=\alpha_{1}+2\alpha_{2}+3\alpha_{3}+4\alpha_{4}+3\alpha_{5}+2\alpha_{6}+\alpha_{7}.
\end{align*}

For the case $c=1$, observe that $\alpha(1)$ defines the coweight sum $p_{1}(\mathfrak{B}_{1})+p_{2}(\mathfrak{B}_{2})+p_{7}(\mathfrak{B}_{3})$ in
$\bar{\mathfrak{h}}$. We have
\begin{align*}
\frac{\parallel \pi_{p}(\alpha(1))\parallel^{2}}{\parallel \alpha(1)\parallel^{2}}=\frac{1}{24},\;
\frac{\parallel p_{1}(\mathfrak{B}_{1})\parallel^{2}}{\parallel \alpha(1)\parallel^{2}}=\frac{1}{3},\;
\frac{\parallel p_{2}(\mathfrak{B}_{2})\parallel^{2}}{\parallel \alpha(1)\parallel^{2}}=\frac{1}{4},\;
\frac{\parallel p_{7}(\mathfrak{B}_{3})\parallel^{2}}{\parallel \alpha(1)\parallel^{2}}=\frac{3}{8}
\end{align*}
(summing to $1$). So the eigenvalues of $E_{O(1)}$ are $(\frac{1}{24},\frac{1}{6},\frac{1}{4},\frac{1}{8})$. It's easy to see
that $|O(1)|=2\cdot 3\cdot 4=24$ and so $|O(1)|E_{O(1)}$ has eigenvalues $(1,4,6,3)$.

For the case $c=2$, observe that $\alpha(2)$ defines the coweight sum $p_{3}(\mathfrak{B}_{1})+p_{6}(\mathfrak{B}_{3})$ in $\bar{\mathfrak{h}}$. We have
\begin{align*}
\frac{\parallel \pi_{p}(\alpha(2))\parallel^{2}}{\parallel \alpha(2)\parallel^{2}}=\frac{1}{6},\quad
\frac{\parallel p_{3}(\mathfrak{B}_{1})\parallel^{2}}{\parallel \alpha(2)\parallel^{2}}=\frac{1}{3},\quad
\frac{\parallel p_{6}(\mathfrak{B}_{3})\parallel^{2}}{\parallel \alpha(2)\parallel^{2}}=\frac{1}{2}
\end{align*}
(summing to $1$). So the eigenvalues of $E_{O(2)}$ are $(\frac{1}{6},\frac{1}{6},0,\frac{1}{6})$. It's easy to see
that $|O(2)|=3\cdot 6=18$ and so $2|O(2)|E_{O(2)}$ has eigenvalues $(6,6,0,6)$.

For the case $c=3$, observe that $\alpha(3)$ defines the coweight sum $p_{2}(\mathfrak{B}_{2})+p_{7}(\mathfrak{B}_{3})$ in $\bar{\mathfrak{h}}$. We have
\begin{align*}
\frac{\parallel \pi_{p}(\alpha(3))\parallel^{2}}{\parallel \alpha(3)\parallel^{2}}=\frac{3}{8},\quad
\frac{\parallel p_{2}(\mathfrak{B}_{2})\parallel^{2}}{\parallel \alpha(3)\parallel^{2}}=\frac{1}{4},\quad
\frac{\parallel p_{7}(\mathfrak{B}_{3})\parallel^{2}}{\parallel \alpha(3)\parallel^{2}}=\frac{3}{8}
\end{align*}
(summing to $1$). So the eigenvalues of $E_{O(3)}$ are $(\frac{3}{8},0,\frac{1}{4},\frac{1}{8})$. It's easy to see
that $|O(3)|=2\cdot 4=8$ and so $3|O(3)|E_{O(3)}$ has eigenvalues $(9,0,6,3)$.

For the case $c=4$, observe that $\alpha(4)$ defines the coweight sum $p_{1}(\mathfrak{B}_{1})$ in $\bar{\mathfrak{h}}$. We have
\begin{align*}
\frac{\parallel \pi_{p}(\alpha(4))\parallel^{2}}{\parallel \alpha(4)\parallel^{2}}=\frac{2}{3},\quad
\frac{\parallel p_{1}(\mathfrak{B}_{1})\parallel^{2}}{\parallel \alpha(4)\parallel^{2}}=\frac{1}{3}
\end{align*}
(summing to $1$). So the eigenvalues of $E_{O(4)}$ are $(\frac{2}{3},\frac{1}{6},0,0)$. It's easy to see
that $|O(4)|=3$ and so $4|O(4)|E_{O(4)}$ has eigenvalues $(8,2,0,0)$.

We conclude that $U_{p}=-kE=-k\sum_{c=1}^{4}c|O(c)|E_{O(c)}$ has as eigenvalues the system
$(-24k,-12k,-12k,-12k)$. In particular,
$a(p,p)=144k^{2}$.
\end{example}

Using this way, we have the eigenvalues for type $E_{n}$ as follows:

\begin{table}[H]
\caption{Eigenvalues for type $E_{n}$}
\vspace{5mm}
\begin{tabular}{|c|c|c|}
\hline
\multicolumn{3}{|c|}{$E_{6}$} \\
\hline
p & eigenvalues & multiplicities \\
\hline
$\varpi_{1}^{\vee}$,$\varpi_{6}^{\vee}$ & $(-4k,-2k)$ & $(1,6)$ \\
\hline
$\varpi_{2}^{\vee}$ & $(-4k,-3k)$ & $(1,6)$ \\
\hline
$\varpi_{3}^{\vee}$,$\varpi_{5}^{\vee}$ & $(-5k,-4k)$ & $(2,5)$ \\
\hline
$\varpi_{4}^{\vee}$ & $-6k$ & $7$ \\
\hline
\end{tabular}
\end{table}

\begin{table}[H]
\begin{tabular}{|c|c|c|}
\hline
\multicolumn{3}{|c|}{$E_{7}$} \\
\hline
p & eigenvalues & multiplicities \\
\hline
$\varpi_{1}^{\vee}$ & $(-6k,-4k)$ & $(1,7)$ \\
\hline
$\varpi_{2}^{\vee}$ & $(-7k,-6k)$ & $(1,7)$ \\
\hline
$\varpi_{3}^{\vee}$ & $(-9k,-8k)$ & $(2,6)$ \\
\hline
$\varpi_{4}^{\vee}$ & $-12k$ & $8$ \\
\hline
$\varpi_{5}^{\vee}$ & $(-9k,-10k)$ & $(5,3)$ \\
\hline
$\varpi_{6}^{\vee}$ & $(-6k,-8k)$ & $(6,2)$ \\
\hline
$\varpi_{7}^{\vee}$ & $(-3k,-6k)$ & $(7,1)$ \\
\hline
\end{tabular}
\end{table}

\begin{table}[H]
\begin{tabular}{|c|c|c|}
\hline
\multicolumn{3}{|c|}{$E_{8}$} \\
\hline
p & eigenvalues & multiplicities \\
\hline
$\varpi_{1}^{\vee}$ & $(-12k,-10k)$ & $(1,8)$ \\
\hline
$\varpi_{2}^{\vee}$ & $(-16k,-15k)$ & $(1,8)$ \\
\hline
$\varpi_{3}^{\vee}$ & $(-21k,-20k)$ & $(2,7)$ \\
\hline
$\varpi_{4}^{\vee}$ & $-30k$ & $9$ \\
\hline
$\varpi_{5}^{\vee}$ & $(-24k,-25k)$ & $(5,4)$ \\
\hline
$\varpi_{6}^{\vee}$ & $(-18k,-20k)$ & $(6,3)$ \\
\hline
$\varpi_{7}^{\vee}$ & $(-12k,-15k)$ & $(7,2)$ \\
\hline
$\varpi_{8}^{\vee}$ & $(-6k,-10k)$ & $(8,1)$ \\
\hline
\end{tabular}
\end{table}

\hfill\\

These computation show that there are at most 2 eigenvalues along the toric divisor for any root system.

\begin{remark}
We notice from the above computation that if we extend the bilinear symmetric form $a$ on $\mathfrak{h}$ to the space
$\mathfrak{h}\oplus\mathbb{C}$
by the way such that
\begin{eqnarray*}
\left\{
\begin{aligned}
&a(q,t\frac{\partial}{\partial t})=0 \quad \text{for} \quad \forall q\in\mathfrak{h} \\
&a(t\frac{\partial}{\partial t},t\frac{\partial}{\partial t})=-1,
\end{aligned}
\right.
\end{eqnarray*}
the two eigenvectors in the space which is spanned by the
fundamental coweight $p$ and $t\frac{\partial}{\partial t}$ are perpendicular
to each other with respect to this $a$.
\end{remark}

We also have the dilatation field as follows.
\begin{theorem}
Suppose an affine structure on $H^{\circ}\times\mathbb{C}^{\times}$ is given by the torsion free flat connection $\tilde{\nabla}$
defined by ($\ref{eqn:affine-connection}$), then the vector field $t\frac{\partial}{\partial t}$ is in fact a
dilatation field on $H^{\circ}\times\mathbb{C}^{\times}$ with factor $\lambda=1$.
\end{theorem}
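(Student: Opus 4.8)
The plan is to verify directly from the definition that $\tilde{\nabla}^{\kappa}_{X}(t\frac{\partial}{\partial t})=X$ for every local holomorphic vector field $X$ on $H^{\circ}\times\mathbb{C}^{\times}$; since $t\frac{\partial}{\partial t}$ vanishes nowhere this exhibits it as a dilatation field with factor $\lambda=1$. As the asserted identity is $\mathcal{O}$-linear in $X$ and $\Omega_{H^{\circ}\times\mathbb{C}^{\times}}\cong\pi^{*}\Omega_{H^{\circ}}\oplus\mathcal{O}\frac{dt}{t}$, it is enough to check it for $X$ pulled back from $H^{\circ}$ and for $X=t\frac{\partial}{\partial t}$.

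I would use that $\tilde{\nabla}^{\kappa}$ on the tangent bundle is the connection dual to the one on the cotangent bundle described in Proposition \ref{prop:affine-projective}, so that $X\langle\zeta,V\rangle=\langle\tilde{\nabla}^{\kappa}_{X}\zeta,V\rangle+\langle\zeta,\tilde{\nabla}^{\kappa}_{X}V\rangle$ for all $\zeta\in\Omega_{H^{\circ}\times\mathbb{C}^{\times}}$ and all vector fields $V$. Take $V=t\frac{\partial}{\partial t}$ and pair against the local frame $\{d\phi,\frac{dt}{t}\}$, with $d\phi$ any translation invariant $1$-form on $H^{\circ}$; then $\langle d\phi,t\frac{\partial}{\partial t}\rangle=0$ and $\langle\frac{dt}{t},t\frac{\partial}{\partial t}\rangle=1$ are constants. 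From the formulas $\tilde{\nabla}^{\kappa}(\zeta)=\nabla^{\kappa}(\zeta)-\zeta\otimes\frac{dt}{t}-\frac{dt}{t}\otimes\zeta$ and $\tilde{\nabla}^{\kappa}(\frac{dt}{t})=A^{\kappa}-\frac{dt}{t}\otimes\frac{dt}{t}$, contracting the covariant-derivative slot with $X$ lifted from $H^{\circ}$ gives $\tilde{\nabla}^{\kappa}_{X}(d\phi)=\nabla^{\kappa}_{X}(d\phi)-\langle d\phi,X\rangle\frac{dt}{t}$ and $\tilde{\nabla}^{\kappa}_{X}(\frac{dt}{t})=(A^{\kappa})_{X}$, whose only component outside $\pi^{*}\Omega_{H^{\circ}}$ is the single $\frac{dt}{t}$-term; pairing with $t\frac{\partial}{\partial t}$ in the flatness identity then forces $\langle d\phi,\tilde{\nabla}^{\kappa}_{X}(t\frac{\partial}{\partial t})\rangle=\langle d\phi,X\rangle$ and $\langle\frac{dt}{t},\tilde{\nabla}^{\kappa}_{X}(t\frac{\partial}{\partial t})\rangle=0=\langle\frac{dt}{t},X\rangle$, i.e. $\tilde{\nabla}^{\kappa}_{X}(t\frac{\partial}{\partial t})=X$. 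The case $X=t\frac{\partial}{\partial t}$ is identical, now using $\tilde{\nabla}^{\kappa}_{t\frac{\partial}{\partial t}}(d\phi)=-d\phi$ and $\tilde{\nabla}^{\kappa}_{t\frac{\partial}{\partial t}}(\frac{dt}{t})=-\frac{dt}{t}$, and yields $\tilde{\nabla}^{\kappa}_{t\frac{\partial}{\partial t}}(t\frac{\partial}{\partial t})=t\frac{\partial}{\partial t}$.

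Alternatively one may argue that the connection form of $\tilde{\nabla}^{\kappa}$ on the tangent bundle is $-(\tilde{\Omega}^{\kappa})^{*}$ and that $t\frac{\partial}{\partial t}$ is translation invariant, hence $\tilde{\nabla}^{0}$-flat, so $\tilde{\nabla}^{\kappa}_{X}(t\frac{\partial}{\partial t})=-\big((\tilde{\Omega}^{\kappa})^{*}(X)\big)(t\frac{\partial}{\partial t})$; in the expansion of $\tilde{\Omega}^{\kappa}$ displayed above the only summands whose last tensor slot pairs nontrivially with $t\frac{\partial}{\partial t}$ are $\sum_{\alpha_{i}\in\mathfrak{B}}d\alpha_{i}\otimes\frac{dt}{t}\otimes\partial_{p_{i}}$ and $\frac{dt}{t}\otimes\frac{dt}{t}\otimes t\frac{\partial}{\partial t}$, and $\sum_{i}\alpha_{i}(X)\partial_{p_{i}}=X$ for $X\in\mathfrak{h}$ while the remaining summand supplies the value $X=t\frac{\partial}{\partial t}$, again giving $X$ in all cases. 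There is no genuine obstacle here: the argument is an unwinding of Proposition \ref{prop:affine-projective}, and the only point requiring care is the tensor-slot bookkeeping — which of the two $\Omega$-factors is the covariant-derivative direction and which carries the value — fixed exactly as in the proof of Lemma \ref{lem:equations}. Since $t\frac{\partial}{\partial t}$ is a nowhere-zero holomorphic vector field on $H^{\circ}\times\mathbb{C}^{\times}$ with $\tilde{\nabla}^{\kappa}_{X}(t\frac{\partial}{\partial t})=X$ for every local $X$, it is a dilatation field with factor $1$.
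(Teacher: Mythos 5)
Your proposal is correct, and your ``alternative'' paragraph is exactly the paper's own proof: the paper decomposes $\tilde{v}=v+\mu t\frac{\partial}{\partial t}$, uses that $t\frac{\partial}{\partial t}$ is $\tilde{\nabla}^{0}$-flat and that the connection form on the tangent bundle is $-(\tilde{\Omega}^{\kappa})^{*}$, and reads off that only the summands $-\sum_{i}d\alpha_{i}\otimes\frac{dt}{t}\otimes\partial_{p_{i}}$ and $-\frac{dt}{t}\otimes\frac{dt}{t}\otimes t\frac{\partial}{\partial t}$ contribute, yielding $\tilde{\nabla}_{\tilde{v}}(t\frac{\partial}{\partial t})=\tilde{v}$. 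Your first argument via flatness of the pairing against the frame $\{d\phi,\frac{dt}{t}\}$ is just a dual restatement of the same computation (note only that the relevant slot is the middle $\Omega$-factor, not the ``last'' one), so there is nothing substantively different to compare.
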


\begin{proof}
It's a straightforward computation. Suppose a local vector field $\tilde{v}$ on $H^{\circ}\times\mathbb{C}^{\times}$ is of the form
$\tilde{v}:=v+\mu t\frac{\partial}{\partial t}$ where $v$ is a vector field on $H^{\circ}$, we have
\begin{align*}
\tilde{\nabla}_{\tilde{v}}(t\frac{\partial}{\partial t})&=\tilde{\nabla}_{v+\mu t\frac{\partial}{\partial t}}^{0}(t\frac{\partial}{\partial t})
-\tilde{\Omega}_{v+\mu t\frac{\partial}{\partial t}}^{*}(t\frac{\partial}{\partial t})\\
&=0+\sum_{\alpha_{i}\in\mathfrak{B}}\alpha_{i}(v)\partial_{p_{i}}+\mu t\frac{\partial}{\partial t}\\
&=\tilde{v}
\end{align*}
since $t\frac{\partial}{\partial t}$ is flat with respect to $\tilde{\nabla}^{0}$.
\end{proof}

We then could decompose the vector bundle $\mathcal{E}$ (with its flat connection) naturally according to these eigenspaces.

\begin{lemma}
The vector bundle $\mathcal{E}$ (with its flat connection) decomposes naturally according to the images
in $\mathbb{C}/\mathbb{Z}$ of
the eigenvalues of the residue endomorphism: $\mathcal{E}=\bigoplus_{\zeta\in\mathbb{C}^{\times}}\mathcal{E}^{\zeta}$,
where $\mathcal{E}^{\zeta}$ has a residue
endomorphism whose eigenvalues $\nu$ are such that $\exp(2\pi\sqrt{-1}\nu)=\zeta$.

And suppose $D$ is a hypersurface in $N$. Then the affine structure on $N-D$ has a degeneration along $D^{\circ}$ which could be
decomposed naturally as above with logarithmic exponent $\nu-1$ in each corresponding eigenspace.
\end{lemma}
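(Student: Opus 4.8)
The plan is to reduce everything to a local statement along the smooth locus $D^{\circ}$ of a single irreducible component $D$ of the compactifying divisor and then to appeal to Deligne's theory of regular singular connections. First I would fix such a $D$, choose a point of $D^{\circ}$ and a small polydisc $\Delta$ around it in which $D$ is cut out by a coordinate function $t$, and restrict $\mathcal{E}$ (the bundle carrying $\tilde{\nabla}^{\kappa}$, i.e.\ the logarithmic extension over $\hat{H}_{\Sigma}\times\mathbb{P}^{1}$ of the (co)tangent bundle of $H^{\circ}\times\mathbb{C}^{\times}$) to $\Delta$. There $\mathcal{E}$ is a logarithmic connection with pole along $\{t=0\}$, and its residue endomorphism $R:=\mathrm{Res}_{D}(\tilde{\Omega}^{\kappa})^{*}\in\mathrm{End}(\mathcal{E}|_{D})$ is one of those already computed in Section~\ref{subsec:eigenvalues} ($u_{\alpha}$ along a mirror; $U_{p}+b^{\kappa}_{p}+t\frac{\partial}{\partial t}\otimes a^{\kappa}_{p}-p\otimes\frac{dt}{t}$ along a boundary divisor). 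By Deligne's theorems \cite{Deligne-1970}, the local monodromy $T$ of $\tilde{\nabla}^{\kappa}|_{\Delta-D}$ around $D^{\circ}$ is conjugate to $\exp(2\pi\sqrt{-1}\,R)$, the sign being fixed by orientation.

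Next I would decompose by the generalized eigenspaces of the monodromy: for $\zeta\in\mathbb{C}^{\times}$ put $\mathcal{E}^{\zeta}:=\ker(T-\zeta)^{\dim\mathcal{E}}$. Because the fundamental group of $\Delta-D$ near $D^{\circ}$ is the infinite cyclic group generated by the class of $T$, this decomposition is automatically $\pi_{1}$-invariant, hence a decomposition of the local system, hence a direct-sum decomposition of $\tilde{\nabla}^{\kappa}|_{\Delta-D}$ into flat subbundles; being cut out by the spectral projectors of $R$ — which are polynomials in $R$ — these subbundles extend over $D^{\circ}$ to logarithmic direct summands of $\mathcal{E}$ on which the residue is the restriction of $R$. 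Since $T=\exp(2\pi\sqrt{-1}R)$, the generalized $\zeta$-eigenspace of $T$ is exactly the sum of the generalized $\nu$-eigenspaces of $R$ over all $\nu$ with $\exp(2\pi\sqrt{-1}\nu)=\zeta$; thus on $\mathcal{E}^{\zeta}$ the residue has only eigenvalues $\nu$ with $\exp(2\pi\sqrt{-1}\nu)=\zeta$. As $R$ has finitely many eigenvalues only finitely many $\mathcal{E}^{\zeta}$ are nonzero, and the construction is canonical, so it is independent of the chosen point of $D^{\circ}$ and glues; this proves the first assertion.

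For the statement about the affine structure I would recall that the affine structure on $N-D$ attached to $\tilde{\nabla}^{\kappa}$ by Proposition~\ref{prop:affine-projective} and Lemma~\ref{lem:local-affine-functions} has local affine functions $c+tf$ with $\nabla^{\kappa}(df)+fA^{\kappa}=0$, so its developing map is assembled from a basis of such $f$. In a local frame adapted to the eigenvectors of $R$ exhibited in Section~\ref{subsec:eigenvalues} (e.g.\ $(\alpha^{\vee},0)$ and $(p,\ast\,t\frac{\partial}{\partial t})$ along a boundary divisor) the decomposition $\mathcal{E}=\bigoplus_{\zeta}\mathcal{E}^{\zeta}$ induces a block decomposition of the developing map, and by the explicit local models of \cite{Couwenberg-Heckman-Looijenga} (the Proposition preceding Definition~\ref{def:infinitesimal}) the block belonging to a residue eigenvalue $\nu$ is, up to an affine change of chart, the local model of an infinitesimally simple degeneration of logarithmic exponent $\nu-1$. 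The shift by one is the contribution of $dt/t$, i.e.\ of the terms $-\zeta\otimes\frac{dt}{t}$, $-\frac{dt}{t}\otimes\zeta$ and $-p\otimes\frac{dt}{t}$ in $\tilde{\Omega}^{\kappa}$, by which the residue of $\tilde{\nabla}^{\kappa}$ on the (co)tangent bundle differs from the residue that measures the affine structure in the sense of Definition~\ref{def:infinitesimal}; one then reads off directly from that definition that each $\mathcal{E}^{\zeta}$ carries an infinitesimally simple degeneration of logarithmic exponent $\nu-1$ along $D^{\circ}$ in each residue eigenspace.

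I expect the main obstacle to be the passage across $D^{\circ}$: it is essential to group the eigenvalues of $R$ by their class modulo $\mathbb{Z}$ rather than individually, since two residue eigenvalues differing by a nonzero integer form a resonance which may obstruct splitting off the individual generalized eigenspaces as subconnections — only the coarser, mod-$\mathbb{Z}$ decomposition is always parallel and always extends logarithmically with the residue preserved. The remaining delicate point is purely bookkeeping: pinning the sign in $\exp(2\pi\sqrt{-1}\nu)=\zeta$ and the shift by one in $\nu-1$ against the conventions of Definition~\ref{def:infinitesimal} and of \cite{Couwenberg-Heckman-Looijenga}, which is settled by the explicit residue computations and eigenvector normalizations of Section~\ref{subsec:eigenvalues}.
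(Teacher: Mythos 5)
The paper itself offers no argument for this lemma: its ``proof'' consists of the single citation \cite{Couwenberg-Heckman-Looijenga}, so your proposal is necessarily a different (and more informative) route. It is in substance the standard Deligne-type argument underlying the cited result, and the overall strategy --- localize at a point of $D^{\circ}$, pass to the infinite cyclic fundamental group of the punctured polydisc, split the local system by the generalized eigenvalues $\zeta$ of the local monodromy $T$, observe that this splitting is flat and respects the logarithmic lattice, and match the residue eigenvalues $\nu$ on each summand against $\exp(2\pi\sqrt{-1}\nu)=\zeta$ --- is the right one, as is your remark that only the mod-$\mathbb{Z}$ grouping, not the individual generalized eigenspaces of the residue, can be split off in the presence of resonances.

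Two steps need repair. First, the assertion that $T$ is conjugate to $\exp(2\pi\sqrt{-1}R)$ is false precisely in the resonant case you flag: two residue eigenvalues differing by a nonzero integer can produce a unipotent part of $T$ that is invisible in $\exp(2\pi\sqrt{-1}R)$. What is true, and is all you need, is that the characteristic polynomial of $T$ is $\prod_{\nu}(x-e^{\mp 2\pi\sqrt{-1}\nu})$ taken over the residue eigenvalues $\nu$, so the generalized $\zeta$-eigenspace of $T$ has the same rank as the sum of the generalized eigenspaces of $R$ for eigenvalues in the corresponding class mod $\mathbb{Z}$. Second, ``cut out by the spectral projectors of $R$, which are polynomials in $R$'' does not by itself produce subbundles of $\mathcal{E}$ near $D$, since $R$ is only an endomorphism of $\mathcal{E}|_{D}$; the summand $\mathcal{E}^{\zeta}$ should instead be defined as $\mathcal{E}\cap j_{*}V^{\zeta}$, with $j$ the inclusion of the complement and $V^{\zeta}$ the monodromy eigensummand of the local system, and one then checks via the local normal form of \cite{Deligne-1970} that these are locally free, sum to $\mathcal{E}$, and carry residue with eigenvalues in the class $\zeta$. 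With these corrections the first assertion is established, and your treatment of the second assertion --- the shift by $1$ coming from comparing the residue on the logarithmic (co)tangent bundle with the exponent of the developing map in the sense of Definition~\ref{def:infinitesimal}, pinned down against the explicit local models and the residue computations of Section~\ref{subsec:eigenvalues} --- is exactly the bookkeeping the cited source performs.
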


\begin{proof}
\cite{Couwenberg-Heckman-Looijenga}.
\end{proof}

\subsection{Reflection representation}\label{subsec:reflection-representation}

We recall that $R\subset \mathfrak{a}^{\ast}$ is a reduced irreducible finite root system where $\mathfrak{a}^{\ast}$ is a Euclidean
vector space of dimension $n$. Let $\mathfrak{a}=\mathrm{Hom}(\mathfrak{a}^{\ast},\mathbb{R})$ be the dual Euclidean
vector space, and let $R^{\vee}$ in $\mathfrak{a}$ be the dual root system and denote the corresponding
coroot lattice \index{Lattice!coroot}
by $Q^{\vee}=\mathbb{Z}R^{\vee}$. We then have the weight lattice $P=\mathrm{Hom}(Q^{\vee},\mathbb{Z})$
of $R$ in $\mathfrak{a}^{\ast}$. The torus having $P$ as (rational) character lattice,
$H'=\mathrm{Hom}(P,\mathbb{C}^{\times})$ is often called
the simply connected torus. \index{Torus!simply connected}
Put $H'^{\circ}=H'-\cup_{\alpha\in R_{+}}H'_{\alpha}$
where $H'_{\alpha}=\{h\in H'\mid e^{\alpha}(h)=1\}$. Let
\[ C=\{ h\in H'\mid e^{\alpha}(h)=1 \; \text{for all} \; \alpha\in R\}\cong P^{\vee}/Q^{\vee}, \]
so the adjoint torus $H$ is just $H'/C$. Then as discussed in Theorem $\ref{thm:specialeqn}$ the special differential equation system ($\ref{eqn:specialeqn}$) associated with the root system
$R$ gives a $W^{\prime}$-invariant projective structure \index{Projective structure} on $H'^{\circ}$ where $W'=W\rtimes C$
is the extended Weyl group.

For the example of the root system of type $A_{1}$ this equation specializes to (take $u=v=\alpha^{\vee}/2$, $k'=0$
with variable $z=e^{\alpha}(h)$ and derivative $\theta=z\partial$)
\[(\theta^{2}+k\frac{1+z^{-1}}{1-z^{-1}}\theta+\frac{1}{4}k^{2})f(z)=0\]
which is the pull back of the classical Euler-Gauss hypergeometric equation under a
suitable coordinate transformation \cite{Heckman-Looijenga-2010}.

We shall now construct the reflection representation \index{Reflection representation}
of the affine Artin group $\mathrm{Art}(M)$ with generators $\sigma_{0},\cdots,
\sigma_{n}$ and braid relations
$$\underbrace{\sigma_{i}\sigma_{j}\sigma_{i}\cdots}_{m_{ij}}=\underbrace{\sigma_{j}\sigma_{i}\sigma_{j}\cdots}_{m_{ij}}$$
for all $i \neq j$ where both members are words comprising $m_{ij}$ letters. These results can be traced back to
Coxeter and Kilmoyer, see e.g. \cite{Coxeter} and \cite{Curtis-Iwahori-Kilmoyer}.

First we need to investigate
what the two complex reflections \index{Complex reflection}
look like if they satisfy a braid relation.

\begin{proposition}\label{prop:refleigen}
Let $s_{1},s_{2}\in\mathrm{GL}_{2}(\mathbb{C})$ be the complex reflections as follows.
\[
\left( \begin{array}{cc}
-q_{1} &  d_{1}\\
0  &  1\\
\end{array}
\right)
,\quad
\left( \begin{array}{cc}
1 &  0\\
d_{2}  &  -q_{2}\\
\end{array}
\right)
\]
where $q_{1},q_{2}\in\mathbb{C}^{\times}$.

If $m=2r+1$ ($r\geq 1$) is odd, then $s_{1}$ and $s_{2}$ satisfy the braid relation of length $m$
\[ (s_{1}s_{2})^{r}s_{1}=(s_{2}s_{1})^{r}s_{2}  \]
if and only if
\[ q_{1}=q_{2}(=q \; \text{say}),\; d_{1}d_{2}=(2+\xi+\xi^{-1})q \; \text{with} \; \xi^{m}=1,\xi\neq 1. \]

If $m=2r$ ($r\geq 1$) is even, then $s_{1}$ and $s_{2}$ satisfy the braid relation of length $m$
\[ (s_{1}s_{2})^{r}=(s_{2}s_{1})^{r}  \]
if and only if
\[ d_{1}=d_{2}=0 \;\text{for} \;m=2; \]
and
\[ d_{1}d_{2}=q_{1}+q_{2}+(\xi+\xi^{-1})q_{1}^{\frac{1}{2}}q_{2}^{\frac{1}{2}} \;\text{with} \; \xi^{m}=1,\xi^{2}\neq 1
\; \text{for} \; m\geq 4.\]
\end{proposition}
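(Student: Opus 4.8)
The plan is to recast the braid relation as a condition on the single matrix $w:=s_1s_2$ and then to read the answer off the spectrum of $w$. I would first record $\det(w)=\det(s_1)\det(s_2)=q_1q_2$ and $\operatorname{tr}(w)=d_1d_2-q_1-q_2$, and note that up to simultaneous conjugation by $\operatorname{diag}(t,t^{-1})$ the pair $(s_1,s_2)$ depends only on $q_1,q_2$ and on the product $d_1d_2$, so every condition below comes out as a condition on $d_1d_2$, as it must. The key reduction uses $s_2s_1=s_1^{-1}ws_1$, hence $(s_2s_1)^r=s_1^{-1}w^rs_1$: for $m=2r$ the relation $(s_1s_2)^r=(s_2s_1)^r$ becomes ``$w^r$ commutes with $s_1$'', while for $m=2r+1$ the relation $(s_1s_2)^rs_1=(s_2s_1)^rs_2$ rearranges to $s_1w^rs_1=w^{r+1}$, equivalently $s_2=w^rs_1w^{-r}$; in particular in the odd case $s_1$ and $s_2$ are conjugate, so $\{1,-q_1\}=\{1,-q_2\}$ and $q_1=q_2=:q$.

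Next I would clear away the degenerate strata. If $d_1d_2=0$, or if some $q_i=-1$ (so that $s_i$ is a transvection rather than a finite-order reflection), then $s_1$ and $s_2$ have a common eigenvector; a direct check shows they commute exactly when $d_1=d_2=0$, in which case the length-$m$ braid relation holds iff $m=2$, and for $m\geq3$ one sees by hand that it fails. So from now on $d_1d_2\neq0$ and $q_1,q_2\neq-1$. Under these hypotheses $w$ is diagonalisable with two distinct eigenvalues $\mu_\pm$, $\mu_+\mu_-=q_1q_2$: a repeated eigenvalue would force $\operatorname{tr}(w)=\pm2\sqrt{q_1q_2}$, i.e.\ $d_1d_2=q_1+q_2\pm2\sqrt{q_1q_2}$, which the final formulas exclude, and a non-semisimple $w$ can commute with $s_1$ (or satisfy $s_1w^rs_1=w^{r+1}$) only if $s_1$ is upper-triangular with equal diagonal entries in the basis trivialising the nilpotent part of $w$, contradicting that $s_1$ has the distinct eigenvalues $1,-q_1$.

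Now the heart of the argument: work in an eigenbasis of $w$ and write $s_1$ there as a general matrix subject only to $\operatorname{tr}(s_1)=1-q_1$ and $\det(s_1)=-q_1$. In the even case, ``$w^r$ commutes with $s_1$'' gives a dichotomy: either $w^r$ is scalar, i.e.\ $(\mu_+/\mu_-)^r=1$, or $w^r$ is non-scalar and then $s_1$ must be diagonal in the $w$-eigenbasis, whence $s_2=s_1^{-1}w$ is too, so $s_1,s_2$ commute and $d_1d_2=0$, already excluded. Hence the braid relation holds iff $(\mu_+/\mu_-)^r=1$; writing $\mu_\pm=\sqrt{q_1q_2}\,\xi^{\pm1}$ (for either square root) this reads $\xi^{2r}=\xi^m=1$, with $\xi^2\neq1$ recording $\mu_+\neq\mu_-$, and $d_1d_2=\operatorname{tr}(w)+q_1+q_2=q_1+q_2+(\xi+\xi^{-1})\sqrt{q_1q_2}$ gives exactly the stated condition for $m\geq4$; for $m=2$ the relation is just that $s_1,s_2$ commute, i.e.\ $d_1=d_2=0$. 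In the odd case I would expand $s_1w^rs_1=w^{r+1}$ in the eigenbasis: the off-diagonal equations force $\mu_+^ra+\mu_-^rd=0$ for the diagonal entries $a,d$ of $s_1$ (the alternative $b=c=0$ being the excluded commuting case), and after substituting $a+d=1-q$ and $ad-bc=-q$ the two diagonal equations collapse --- the apparently intractable terms cancel --- to the single relation $\mu_+^{r+1}=q\,\mu_-^r$. Combined with $\mu_+\mu_-=q^2$ this is equivalent to $(\mu_+/q)^{2r+1}=1$, i.e.\ $\xi^m=1$ for $\xi:=\mu_+/q$, with $\xi\neq1$ (the value $\xi=1$ being the degenerate scalar $w$, and $\xi=-1$ impossible since $m$ is odd), and $d_1d_2=\operatorname{tr}(w)+2q=(2+\xi+\xi^{-1})q$, as claimed.

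The main obstacle I expect is not any single computation but the bookkeeping of the degenerate strata --- the reducible locus $d_1d_2=0$, the transvection locus $q_i=-1$, and the repeated-eigenvalue locus --- and ensuring the ``if and only if'' is clean in both directions. For the ``if'' direction one checks that any admissible $\xi$ produces a value of $\operatorname{tr}(w)$ with $(\operatorname{tr}(w))^2-4\det(w)=q_1q_2(\xi-\xi^{-1})^2\neq0$, so the corresponding $w$ is automatically diagonalisable with the prescribed eigenvalue ratio and the equivalences above run backwards; thus every $\xi$ with $\xi^m=1,\xi^2\neq1$ (resp.\ $\xi^m=1,\xi\neq1$) is genuinely realised. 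Finally I would cross-check the low-rank cases $m=2,3$ against the brute-force matrix identity, and remark that the statement also follows from the known structure of rank-two Hecke algebras of dihedral type (Coxeter \cite{Coxeter}, Curtis--Iwahori--Kilmoyer \cite{Curtis-Iwahori-Kilmoyer}).
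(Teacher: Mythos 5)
Your main computation on the open stratum $d_1d_2\neq 0$, $q_1,q_2\neq -1$ is a workable variant of the paper's argument (the paper extracts the same spectral data on $s_1s_2$ by applying Schur's lemma to $T=(s_1s_2)^rs_1$, resp.\ $(s_1s_2)^r$, instead of diagonalising $w$ explicitly), but the paragraph that ``clears away the degenerate strata'' contains a genuine error. The assertion that $q_i=-1$ forces $s_1$ and $s_2$ to share an eigenvector is false when $d_1d_2\neq0$: for $q_1=-1$ the unique eigenline of $s_1$ is $\mathbb{C}e_1$, and $e_1$ is an eigenvector of $s_2$ only if $d_2=0$. Worse, the conclusion you draw --- that on this stratum the length-$m$ braid relation fails for every $m\geq 3$ --- contradicts the very proposition you are proving. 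Take $q_1=q_2=-1$, $d_1=1$, $d_2=-1$, i.e.\ $s_1=\left(\begin{smallmatrix}1&1\\0&1\end{smallmatrix}\right)$ and $s_2=\left(\begin{smallmatrix}1&0\\-1&1\end{smallmatrix}\right)$: these satisfy $s_1s_2s_1=s_2s_1s_2$ (the standard relation in $\mathrm{SL}_2(\mathbb{Z})$), exactly as the statement predicts with $q=-1$, $\xi$ a primitive cube root of unity and $d_1d_2=(2+\xi+\xi^{-1})q=-1$. You cannot simply fold this stratum back into your main argument, because your step ruling out non-semisimple $w$ rests on $s_1$ having the two \emph{distinct} eigenvalues $1$ and $-q_1$, which is precisely what fails at $q_1=-1$. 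The paper's proof is uniform in $q_1,q_2\in\mathbb{C}^\times$: it never diagonalises $s_1$, only uses that $T$ cannot be scalar (else $s_1=s_2$, contradicting $d_1d_2\neq0$) and that $T^2=(s_1s_2)^m$ is scalar by Schur's lemma, whence $w$ is automatically semisimple with eigenvalues $q\xi^{\pm1}$.

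A secondary point: in the odd case your ``if'' direction is not yet closed. Knowing that $w$ is diagonalisable with eigenvalues $q\xi^{\pm1}$ does not by itself recover the off-diagonal identity $a\mu_+^r+d\mu_-^r=0$, equivalently $\operatorname{tr}\bigl((s_1s_2)^rs_1\bigr)=0$, which your forward derivation used; the diagonal entries $a,d$ of $s_1$ in the $w$-eigenbasis are extra data beyond the spectrum of $w$ and must actually be computed (they are determined by $q$ and $d_1d_2$, and the identity does hold, but ``the equivalences run backwards'' is not yet an argument). The paper obtains the converse more cheaply: $T_1T_2=q^mI$ together with $\operatorname{tr}(T_1)=\operatorname{tr}(T_2)$ (the two being conjugate via $\left(\begin{smallmatrix}0&d_1\\d_2&0\end{smallmatrix}\right)$) forces $\operatorname{tr}(T_1)=0$, hence $T_1^2=q^mI=T_1T_2$ and $T_1=T_2$. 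I would rework the degenerate-strata discussion along these lines before trusting the rest.
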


\begin{proof}
It is obvious that $\det (s_{1})=-q_{1}$ and $\det (s_{2})=-q_{2}$.

Let us do the odd case first. Suppose $m=2r+1$ is odd. Put $T_{1}=(s_{1}s_{2})^{r}s_{1}$ and $T_{2}=(s_{2}s_{1})^{r}s_{2}$.
Suppose the braid relation holds, i.e., $T_{1}=T_{2}$ (=$T$ say). It is clear that $d_{1}d_{2}\neq 0$, for otherwise it
contradicts the braid relation. So the group generated by $s_{1}$ and $s_{2}$ acts irreducibly on $\mathbb{C}^{2}$.
We also have that $Ts_{1}=s_{2}T$ and $Ts_{2}=s_{1}T$ which follows that $s_{1}$ and $s_{2}$ are conjugated, hence we have
$q_{1}=q_{2}$ (=$q$ say). Moreover, $T^{2}=(s_{1}s_{2})^{m}=(s_{2}s_{1})^{m}$ commutes with both $s_{1}$ and $s_{2}$, and
hence is a scalar matrix by Schur's lemma, i.e., $T^{2}-q^{m}I=0$. But $T$ is not a scalar matrix, so it has eigenvalues
$\pm \lambda$ with $\lambda^{2}=q^{m}$ since $\det(T)=-q^{m}$. Hence $s_{1}s_{2}$ has eigenvalues $\xi q,\xi^{-1}q$ with
$\xi^{m}=1,\xi\neq 1$. Since we have $\mathrm{tr}(s_{1}s_{2})=d_{1}d_{2}-q_{1}-q_{2}$, we have also that
$d_{1}d_{2}=(2+\xi+\xi^{-1})q$ with $\xi^{m}=1,\xi\neq 1$.

Conversely, suppose $q_{1}=q_{2}$ ($=q$ say), $d_{1}d_{2}=(2+\xi+\xi^{-1})q$ with $\xi^{m}=1.\xi\neq 1$. Then
$\det(s_{1}s_{2})=\det(s_{2}s_{1})=q^{2}$ and $\mathrm{tr}(s_{1}s_{2})=\xi q+\xi^{-1} q$,
so $s_{1}s_{2}$ and $s_{2}s_{1}$ have eigenvalues $\xi q,\xi^{-1} q$ with $\xi^{m}=1,\xi\neq 1$. Hence
$T_{1}T_{2}=T_{2}T_{1}=q^{m}I$. We notice that the matrix
$
\left( \begin{array}{cc}
0 &  d_{1}\\
d_{2}  &  0\\
\end{array}
\right)
$
conjugates $s_{1}$ to $s_{2}$, and therefore also $T_{1}$ to $T_{2}$. If $T_{1}$ and $T_{2}$ have eigenvalues
$\lambda_{1},\lambda_{2}$, then $\lambda_{1}\lambda_{2}=\det(T_{1})=-q^{m}$. We also have
\[
\lambda_{1}+\lambda_{2}=\mathrm{tr}(T_{1})=\mathrm{tr}(T_{2})=q^{m}(\lambda_{1}^{-1}+\lambda_{2}^{-1})=-(\lambda_{1}+\lambda_{2})
\]
so $\lambda_{1}=-\lambda_{2}$. In turn this implies $T_{1}^{2}=q^{m}I$ and hence the braid relation
$T_{1}=T_{2}$ follows.

Now suppose $m=2r$ ($r\geq 1$) is even. In case $m=2$ it is direct to verify that the relation $s_{1}s_{2}=s_{2}s_{1}$
holds if and only if $d_{1}=d_{2}=0$. Therefore assume $r\geq 2$ now. Then the group generated by $s_{1}$ and $s_{2}$
acts irreducibly on $\mathbb{C}^{2}$. Put $T_{1}=(s_{1}s_{2})^{r}$ and $T_{2}=(s_{2}s_{1})^{r}$. The braid relation
$T_{1}=T_{2}$ ($=T$ say) implies that $T$ commutes with $s_{1}$ and $s_{2}$, and so $T$ is a scalar matrix, $\lambda I$ say,
with $\lambda^{2}=(q_{1}q_{2})^{r}$. Hence the matrix $s_{1}s_{2}$ has eigenvalues $\xi^{\pm 1}q_{1}^{\frac{1}{2}}
q_{2}^{\frac{1}{2}}$ with $\xi^{m}=1,\xi^{2}\neq 1$. The trace computation of $s_{1}s_{2}$ shows that
$d_{1}d_{2}=q_{1}+q_{2}+(\xi+\xi^{-1})q_{1}^{\frac{1}{2}}q_{2}^{\frac{1}{2}}$.

Conversely, suppose the above equality holds. Then the eigenvalues of $s_{1}s_{2}$ and $s_{2}s_{1}$ are
$\xi^{\pm 1}q_{1}^{\frac{1}{2}}q_{2}^{\frac{1}{2}}$, and $T_{1}=T_{2}=\xi^{r}(q_{1}^{\frac{1}{2}}q_{2}^{\frac{1}{2}})^{r}I$.
\end{proof}

In fact, finite complex reflection groups are already classified by Shephard and Todd in 1954 \cite{Shephard-Todd}.

Let $M=(m_{ij})_{0\leq i,j \leq n}$ be the affine Coxeter matrix
\index{Coxeter!matrix}
associated with the extended Dynkin diagram of the \index{Dynkin diagram!extended}
affine root system \index{Root system!affine}
$\tilde{R}$ associated with $R$. If $\alpha_{1},\cdots,\alpha_{n}$ are the simple roots in $R_{+}$,
then $a_{0}=-\tilde{\alpha},a_{1}=\alpha_{1},\cdots,a_{n}=\alpha_{n}$ are the simple roots in $\tilde{R}_{+}$ with
$\tilde{\alpha}$ the highest root in $R_{+}$.

Recall that $K$ is the space of multiplicity parameters for $R$ defined by
\[  \kappa=(k_{\alpha})_{\alpha\in R}\in \mathbb{C}^{R}  \]
where $\kappa$ is invariant on $W$-orbits in $R$.
It is clear that $K$ is isomorphic to $\mathbb{C}^{r}$ as a $\mathbb{C}$-vector space if $r$ is the number
of $W$-orbits in $R$ (i.e., $r=1$ or $2$). Like in Section $\ref{sec:affine-structures}$, we shall
still sometimes write $k_{i}$ instead of $k_{\alpha_{i}}$
if $\{\alpha_{1},\cdots,\alpha_{n}\}$ is a basis of simple roots in $R_{+}$. And sometimes we
also write $k$ for $k_{1}$ and $k'$ for $k_{n}$ if
$\alpha_{n}\notin W\alpha_{1}$ when no confusion can arise. But note that $k'$ has a different meaning for
type $A_{n}$, which can be seen from Remark $\ref{rem:k'-for-An}$.

For $R$ of any type other than $A_{n}$, let $q_{i}^{\frac{1}{2}}$
and $s_{ij}=s_{ji}$ for $0\leq i\neq j\leq n$ be indeterminants with additional relations
\begin{equation*}
\left\{
\begin{aligned}
&s_{ij}=0
& \text{if} \quad m_{ij}=2 \\
& q_{i}^{\frac{1}{2}}=q_{j}^{\frac{1}{2}} \; \text{and} \; s_{ij}=1
& \text{if} \quad m_{ij}=3 \\
& s_{ij}^{2}=q_{i}^{\frac{1}{2}}q_{j}^{-\frac{1}{2}}+q_{i}^{-\frac{1}{2}}q_{j}^{\frac{1}{2}}+2\cos(\frac{2\pi}{m_{ij}})
& \text{if} \quad m_{ij}\geq 4
\end{aligned}
\right.
\end{equation*}
and let $\mathcal{A}$ be the domain $\mathbb{C}[q_{i}^{\frac{1}{2}},q_{i}^{-\frac{1}{2}},s_{ij}\mid 0\leq i\neq j\leq n]$.
Let $\bar{\quad}$ be the involution of $\mathcal{A}$ defined by $\bar{q}_{i}^{\frac{1}{2}}=q_{i}^{-\frac{1}{2}}$ and
$\bar{s}_{ij}=s_{ij}$. Let $K^{\prime}$ be the space of restricted multiplicity parameters defined by
\[ K'=\{ \kappa=(k_{\alpha_{i}})\in K \mid k_{i}\in (-\frac{1}{2},\frac{1}{2}),| k_{i}-k_{j}| <1-\frac{2}{m_{ij}}
\; \text{if} \; m_{ij}\geq 4 \; \text{and even}
\}. \]
For $z\in \mathbb{C}\setminus(-\infty,0]$, let $z^{\frac{1}{2}}$ denote the branch of the square root with
$1^{\frac{1}{2}}=1$. If $\kappa\in K^{\prime}$ is a restricted multiplicity parameter on $R$ then the substitutions
$$q_{j}^{\frac{1}{2}}=\exp(-\pi \sqrt{-1} k_{j}), \quad s_{ij}=(2\cos\pi (k_{i}-k_{j})+2\cos(\frac{2\pi}{m_{ij}}))^{\frac{1}{2}}$$
for all $j$ and $i\neq j$ with $m_{ij}\geq 3$ induce a homomorphism $\mathcal{A}\rightarrow \mathbb{C}$ called specialization
of $\mathcal{A}$ at $\kappa\in K^{\prime}$.

The root system $R$ of type $A_{n}$ is somewhat peculiar due to the fact that the extended Dynkin diagram is a cycle
rather than a tree. For $R$ of type $A_{n}$, we take
$$s_{0,1}=\cdots =s_{n-1,n}=s_{n,0}=q^{\prime -\frac{1}{2}}, \qquad
s_{1,0}=\cdots =s_{n,n-1}=s_{0,n}=q^{\prime \frac{1}{2}}$$
and put $\bar{q}^{\frac{1}{2}}=q^{-\frac{1}{2}}$, $\bar{q^{\prime}}^{\frac{1}{2}}=q^{\prime -\frac{1}{2}}$
(with $q^{\frac{1}{2}}=q_{i}^{\frac{1}{2}}$ for $i=0,\cdots,n$). Let the restricted parameter space $K^{\prime}$
be defined by
\[ K^{\prime}=\lbrace (k,k')\mid k\in(-\frac{1}{2},\frac{1}{2}), k'\in(-\frac{1}{2},\frac{1}{2}) \rbrace \]
and let the specialization of $\mathcal{A}$ at $\kappa\in K^{\prime}$ be given by the substitutions
$$q^{\frac{1}{2}}=\exp(-\pi \sqrt{-1} k),\quad q^{\prime \frac{1}{2}}=\exp(-\pi \sqrt{-1} k^{\prime}).$$

\begin{remark}
In all cases the involution $\bar{\quad}$ of $\mathcal{A}$ becomes complex conjugation under specialization.
\end{remark}

Now let $e_{i}(i=0,\cdots, n)$ be the standard basis of $\mathcal{A}^{n+1}$ and define a Hermitian form on $\mathcal{A}^{n+1}$
with Gram matrix of the standard basis given by
\begin{equation}\label{eqn:hem}
h_{ij}=
\left\{
\begin{aligned}
&q_{i}^{\frac{1}{2}}+q_{i}^{-\frac{1}{2}}   &
&\text{if} \quad i=j \\
&-s_{ij}               &
&\text{if} \quad i\neq j
\end{aligned}
\right.
\end{equation}
Indeed we have $h^{\dagger}=h$ with $h^{\dagger}=\bar{h}^{t}$. The unitary reflection $T_{j}$ of $\mathcal{A}^{n+1}$
having $e_{j}$ as eigenvector with eigenvalue $-q_{j}$ satisfies
\begin{equation*}
T_{j}(e_{i})=e_{i}-q_{i}^{\frac{1}{2}}h_{ij}e_{j} \quad \text{for all} \; i.
\end{equation*}
From this identity we can see that $T_{j}$ also satisfies the quadratic relation
$$(T_{j}-1)(T_{j}+q_{j})=0$$
as well as the braid relation
$$\underbrace{T_{i}T_{j}T_{i}\cdots}_{m_{ij}}=\underbrace{T_{j}T_{i}T_{j}\cdots}_{m_{ij}}$$
for all $i\neq j$. Therefore there exists a unique (unitary) representation
\begin{equation}\label{eqn:rep}
\begin{split}
\rho^{\mathrm{refl}} :\mathrm{Art}(M)&\rightarrow \mathrm{GL}_{n+1}(\mathcal{A}) \\
\sigma_{j}&\mapsto T_{j}
\end{split}
\end{equation}
and this is the reflection representation of $\mathrm{Art}(M)$.

\begin{theorem}\label{thm:uniquerep}
For $R$ an irreducible root system not of type $A_{n}$ the reflection representation $\rho^{\mathrm{refl}}$ in
($\ref{eqn:rep}$) is up to equivalence the unique irreducible representation
$\rho' :\mathrm{Art}(M) \rightarrow \mathrm{GL}_{n+1}(\mathcal{A})$
such that $\rho'(\sigma_{j})$ is a reflection with $\det (\rho'(\sigma_{j}))=-q_{j}$
for all $j$.

For $R$ of type $A_{n}$ there is a one parameter family of such representations, depending on the
additional parameter $q'^{\frac{1}{2}}$, and for each additional $q'^{\frac{1}{2}}$ this is again the
reflection representation ($\ref{eqn:rep}$).
\end{theorem}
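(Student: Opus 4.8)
The plan is to show that the stated hypotheses rigidify $\rho'$ up to equivalence once one has fixed the data that the (extended) Dynkin diagram genuinely allows to vary, and that this data is a single ``cycle holonomy'' which is trivial for a tree and is exactly the extra parameter $q'^{\frac12}$ for the $\tilde{A}_n$-cycle. First I would produce a convenient basis. Since $\rho'(\sigma_j)$ is a reflection with $\det\rho'(\sigma_j)=-q_j$, it has $-q_j$ as a simple eigenvalue; choosing an eigenvector $f_j$ I may write $\rho'(\sigma_i)(x)=x+\ell_i(x)f_i$ for a linear form $\ell_i$ with $\ell_i(f_i)=-q_i-1$. Because $\rho'(\sigma_i)(f_j)\in\mathcal{A}f_i+\mathcal{A}f_j$, the submodule spanned by $f_0,\dots,f_n$ is $\mathrm{Art}(M)$-stable, so by irreducibility $(f_j)_{j=0}^n$ is a basis of the rank-$(n+1)$ free module; in it $\rho'(\sigma_i)$ equals the identity plus a matrix supported in the $i$-th row, with diagonal entry $-q_i-1$ and off-diagonal entries $c_{ij}:=\ell_i(f_j)$. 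Note that $\rho'$ automatically factors through the quadratic relation $(\rho'(\sigma_j)-1)(\rho'(\sigma_j)+q_j)=0$, so this is really a statement about the relevant Hecke-type algebra.

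Next I would analyse the edges of the affine Dynkin diagram one at a time. For $i\neq j$ the plane $\mathcal{A}f_i\oplus\mathcal{A}f_j$ is invariant under $\langle\sigma_i,\sigma_j\rangle$, and on it $\rho'(\sigma_i),\rho'(\sigma_j)$ are precisely the two $2\times 2$ complex reflections of Proposition \ref{prop:refleigen} with $(d_1,d_2)=(c_{ij},c_{ji})$ and $(q_1,q_2)=(q_i,q_j)$. Imposing the braid relation of length $m_{ij}$ therefore yields, by that proposition: $c_{ij}=c_{ji}=0$ if $m_{ij}=2$; $q_i=q_j$ and $c_{ij}c_{ji}=q_i$ if $m_{ij}=3$; and $c_{ij}c_{ji}=q_i+q_j+(\xi+\xi^{-1})q_i^{\frac12}q_j^{\frac12}$ for some root of unity $\xi$ with $\xi^{m_{ij}}=1\neq\xi^2$ if $m_{ij}\geq 4$. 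In particular on every edge ($m_{ij}\geq 3$) one gets $c_{ij}\neq 0\neq c_{ji}$. For the reflection representation one reads off from $T_j(e_i)=e_i-q_i^{\frac12}h_{ij}e_j$ that the analogous products are $q_i^{\frac12}q_j^{\frac12}h_{ij}^2=q_i^{\frac12}q_j^{\frac12}s_{ij}^2=q_i+q_j+2\cos(2\pi/m_{ij})\,q_i^{\frac12}q_j^{\frac12}$, so the two sets of local data match once $\xi=\exp(2\pi\sqrt{-1}/m_{ij})$, and for $m_{ij}\in\{2,3,4\}$ (all edge labels of affine type except one) this is the unique admissible $\xi$.

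Finally I would globalise. The products $c_{ij}c_{ji}$ are basis-independent, while a rescaling $f_j\mapsto\mu_j f_j$ sends $c_{ij}\mapsto(\mu_j/\mu_i)c_{ij}$. For an irreducible root system other than $A_n$ the affine Dynkin diagram is a tree, so there is no obstruction to choosing the $\mu_j$ so that all $c_{ij}$ simultaneously take the values prescribed by $(\ref{eqn:rep})$; together with $q_i=q_j$ on triple edges this identifies $\rho'$ with $\rho^{\mathrm{refl}}$. For $R$ of type $A_n$ the extended diagram is the cycle $\tilde{A}_n$: now the product of the $c_{ij}$ around the loop is an additional basis-invariant, the cycle holonomy, which is unconstrained, and whose choice is exactly the extra parameter $q'^{\frac12}$; for each value one checks (using connectedness of the diagram and $c_{ij}\neq 0$ on edges) that the resulting representation is again irreducible and is the reflection representation $(\ref{eqn:rep})$ attached to that $q'^{\frac12}$.

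The point needing the most care is the interaction of the last two steps with the discrete ambiguity of the second. I must verify that for a tree the edgewise normalisations are realised by a single rescaling---i.e. that the only cohomological freedom in the ``$1$-cochain'' $(c_{ij})$ is the loop holonomy, which is precisely why $A_n$ is exceptional---and, on the dihedral side, that the a priori finite ambiguity in the root of unity $\xi$ for $m_{ij}\geq 4$ collapses to $\exp(2\pi\sqrt{-1}/m_{ij})$. The latter is automatic for $m_{ij}\in\{2,3,4\}$; the borderline case $m_{ij}=6$, which occurs only for $\tilde{G}_2$, is the one place where one must invoke the defining relations of $\mathcal{A}$ (or carry out an explicit rank-$3$ computation) to exclude the other root of unity. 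Throughout, irreducibility of $\rho'$ is the input guaranteeing that the $f_j$ form a basis; combined with Proposition \ref{prop:refleigen} and the classical computations of Coxeter and Kilmoyer \cite{Coxeter,Curtis-Iwahori-Kilmoyer}, these assemble into the theorem.
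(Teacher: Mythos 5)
Your proof follows essentially the same route as the paper's: pick an eigenbasis $\{f_j\}$ for the reflections (a basis by irreducibility), reduce to the rank-two dihedral analysis of Proposition \ref{prop:refleigen} to pin down the products $c_{ij}c_{ji}$ edge by edge, and observe that the rescaling freedom normalises all the $c_{ij}$ precisely when the extended Dynkin diagram is a tree, the cycle holonomy of $\tilde A_n$ accounting for the extra parameter $q'^{\frac{1}{2}}$. If anything you are more explicit than the paper about the two delicate points, namely the cochain/holonomy nature of the rescaling and the residual ambiguity in the root of unity $\xi$ when $m_{ij}=6$; the paper simply asserts the value of $d_{ij}d_{ji}$ in that case without excluding the other admissible $\xi$.
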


\begin{proof}
Suppose $\rho': \mathrm{Art}(M) \rightarrow \mathrm{GL}_{n+1}(\mathcal{A})$ is an irreducible representation
satisfying that $\rho'(\sigma_{j})$ is a reflection with $\det (\rho'(\sigma_{j}))=-q_{j}$
for all $j$. Then there exists a basis $\{e_{0},e_{1},\cdots,e_{n}\}$ of $\mathcal{A}^{n+1}$ such that
\begin{equation*}
\rho'({\sigma_{j}})(e_{j})=-q_{j}e_{j},\quad \rho'(\sigma_{j})(e_{i})=e_{i}+d_{ij}e_{j} \; \text{for} \; i\neq j.
\end{equation*}
By Proposition $\ref{prop:refleigen}$ we have $d_{ij}=d_{ji}=0$ if $m_{ij}=2$; $d_{ij}d_{ji}=q_{i}=q_{j}$ if $m_{ij}=3$;
$d_{ij}d_{ji}=q_{i}+q_{j}$ if $m_{ij}=4$ and $d_{ij}d_{ji}=q_{i}+q_{j}-q_{i}^{\frac{1}{2}}q_{j}^{\frac{1}{2}}$ if $m_{ij}=6$.
So we have $d_{ij}d_{ji}=s_{ij}s_{ji}q_{i}^{\frac{1}{2}}q_{j}^{\frac{1}{2}}$ if $m_{ij}\geq 3$. By rescaling the basis
$e_{0},e_{1},\cdots,e_{n}$, we would like to arrive at $d_{ij}=s_{ij}q_{j}^{\frac{1}{2}}$ for all $i\neq j$.
This can be done if the extended Dynkin diagram is a tree, by induction on $n$ assuming the $n$-th node to be
an extremal node. Therefore the first part of the theorem follows.

For $R$ of type $A_{n}$ (with $q_{i}^{\frac{1}{2}}=q^{\frac{1}{2}}$ for $i=0,1,\cdots,n$), we have
$d_{ij}=0$ if $2\leq |i-j|\leq n-1$ and $d_{01}d_{10}=d_{12}=d_{21}=\cdots =d_{n-1,n}d_{n,n-1}=d_{n,0}d_{0,n}=q$.
Rescaling the basis $e_{0},e_{1},\cdots, e_{n}$ suitably we can arrange that
$z_{01}=\cdots =z_{n-1,n}=z_{n,0}=q^{\frac{1}{2}}q'^{-\frac{1}{2}}$ and
$z_{10}=\cdots=z_{n,n-1}=z_{0,n}=q^{\frac{1}{2}}q'^{\frac{1}{2}}$ with $q'^{\frac{1}{2}}$ some additional
parameter. This also brings us back to the reflection representation ($\ref{eqn:rep}$).
\end{proof}

The special hypergeometric system ($\ref{eqn:specialeqn}$) in Theorem $\ref{thm:specialeqn}$ is defined on the complex manifold $H^{\circ}$ and is invariant for $W$. Therefore it lives on the complex orbifold $W\backslash H^{\circ}$
as well.

This in turn implies that the monodromy of the system ($\ref{eqn:specialeqn}$) is a homomorphism:
\begin{equation}
\rho^{\mathrm{mon}}:\pi_{1}^{\mathrm{orb}}(W\backslash H^{\circ},Wp_{0})\rightarrow \mathrm{GL}_{n+1}(\mathbb{C})
\end{equation}
with $p_{0}\in D_{+}\subset H^{\circ}$ a fixed base point where $D_{+}$ is the fundamental alcove of $H^{\circ}$.

\begin{remark}
There is a natural isomorphism using Brieskorn's theorem \cite{Brieskorn}
\[ \pi_{1}(W\backslash H'^{\circ}, W'p_{0})\cong \mathrm{Art}(M) \]
and then there is a corresponding isomorphism
\[ \pi_{1}^{\mathrm{orb}}(W\backslash H^{\circ},Wp_{0})\cong \mathrm{Art}(M)\rtimes C \]
with $C$ viewed as group of diagram automorphisms of the extended Dynkin diagram of $R$ and
hence acting naturally on the generators of $\mathrm{Art}(M)$. The reflection representation ($\ref{eqn:rep}$) can be extended
in a natural way to a representation of $\mathrm{Art}(M)\rtimes C$. We write
$\mathrm{Art}'(M)=\mathrm{Art}(M)\rtimes C$.

If $\mathrm{Aut}(M)$ denotes the full group of diagram automorphisms of the
extended Dynkin diagram, then the reflection representation ($\ref{eqn:rep}$) extends in a natural way to a representation
of $\mathrm{Art}(M)\rtimes \mathrm{Aut}(M)$ with the exception of type $A_{n}$ for which case this extension
is only possible when $q'=1$.
\end{remark}

Then we identify the two representations as follows.

\begin{theorem}
The monodromy representation \index{Monodromy!representation}
of the special hypergeometric system given in Theorem $\ref{thm:specialeqn}$ for a parameter
$\kappa\in K'$ is equal to the specialization at $\kappa\in K'$ of the reflection representation of the (partially) extended
affine Artin group. In particular for $\kappa\in K^{\prime}$ the local solution space at $p_{0}\in D_{+}$
admits a Hermitian form $h(\kappa)$ invariant under monodromy.
\end{theorem}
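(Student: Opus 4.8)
The plan is to show that the monodromy representation $\rho^{\mathrm{mon}}$ of the system (\ref{eqn:specialeqn}) coincides, after specialization at $\kappa\in K'$, with the reflection representation $\rho^{\mathrm{refl}}$ of $\mathrm{Art}'(M)$. The first step is to identify the two source groups: by Brieskorn's theorem there is a canonical isomorphism $\pi_{1}^{\mathrm{orb}}(W\backslash H^{\circ},Wp_{0})\cong\mathrm{Art}(M)\rtimes C=\mathrm{Art}'(M)$, under which the standard generators $\sigma_{0},\dots,\sigma_{n}$ correspond to the elementary loops around the walls of the fundamental alcove $D_{+}$ (equivalently, to simple monodromy transformations around the mirrors $\hat H_{a_{i}}$ of the affine arrangement). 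So it suffices to compare the two representations on these generators, and then to invoke the uniqueness statement of Theorem \ref{thm:uniquerep}.

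The heart of the argument is the local analysis of the system near a generic point of a mirror and near the codimension-two strata where two walls meet. From the eigenvalue computation of the residue endomorphisms in Section \ref{subsec:eigenvalues} (applied to $\mathrm{Res}_{\hat H_{\alpha}\times\mathbb{P}^{1}}(\tilde\Omega^{\kappa})^{*}=u_{\alpha}=k_{\alpha}(\alpha^{\vee}\otimes\alpha)$), the residue along each wall has exactly two eigenvalues, one of which is $0$ with high multiplicity; hence the local monodromy $\rho^{\mathrm{mon}}(\sigma_{j})$ is a pseudo-reflection. Its nontrivial eigenvalue is $\exp(-2\pi\sqrt{-1}\,k_{j})=-q_{j}$ (after subtracting the affine-structure shift, as in the last Lemma of Section \ref{subsec:eigenvalues}), so $\det\rho^{\mathrm{mon}}(\sigma_{j})=-q_{j}$, matching the normalization in Theorem \ref{thm:uniquerep}. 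The braid relations $\underbrace{\sigma_{i}\sigma_{j}\cdots}_{m_{ij}}=\underbrace{\sigma_{j}\sigma_{i}\cdots}_{m_{ij}}$ are automatic once we know $\rho^{\mathrm{mon}}$ factors through $\mathrm{Art}(M)$; what must be extracted from the codimension-two local model is the value of the "off-diagonal" product $d_{ij}d_{ji}$ for each edge of the extended Dynkin diagram. Here I would restrict the system to a transverse $2$-plane through a point of $\hat H_{a_{i}}\cap\hat H_{a_{j}}$: the rank-two sublattice condition (Condition (1) of Lemma \ref{lem:flatness-conditions}) guarantees this restriction is a rank-two hypergeometric-type system whose monodromy is generated by two complex reflections satisfying a braid relation of length $m_{ij}$, and Proposition \ref{prop:refleigen} then pins down $d_{ij}d_{ji}$ precisely in terms of $q_{i},q_{j}$ and $\xi=\exp(2\pi\sqrt{-1}/m_{ij})$ — exactly the values $s_{ij}s_{ji}q_{i}^{1/2}q_{j}^{1/2}$ appearing in the reflection representation. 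For type $A_{n}$, where the extended diagram is a cycle, the extra parameter $q'^{1/2}$ enters through the tensor field $b^{\kappa}$ (i.e. through $k'$), and the same local computation at the cyclically-arranged walls produces the asymmetric off-diagonal data $q^{1/2}q'^{\mp1/2}$ of Theorem \ref{thm:uniquerep}; the diagram automorphism group $C$ acts compatibly on both sides, extending the comparison to $\mathrm{Art}'(M)$.

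Having matched generators and the braid/reflection data, the identification $\rho^{\mathrm{mon}}\sim\rho^{\mathrm{refl}}$ follows from the uniqueness clause of Theorem \ref{thm:uniquerep}, provided $\rho^{\mathrm{mon}}$ is irreducible — which holds for $\kappa$ generic, and extends to all of $K'$ by continuity of the monodromy in $\kappa$ together with the fact that $K'$ is connected (reducibility would be a closed condition cutting out a proper analytic subset, on which one argues separately or simply notes the statement is about the representation's equivalence class). Finally, the invariant Hermitian form: the reflection representation carries the explicit $\bar{\ }$-Hermitian Gram matrix $h_{ij}$ of (\ref{eqn:hem}) on $\mathcal{A}^{n+1}$, and under the specialization $\kappa\in K'$ the involution $\bar{\ }$ becomes complex conjugation (as recorded in the Remark preceding (\ref{eqn:hem})), so $h(\kappa)$ is an honest Hermitian form on $\mathbb{C}^{n+1}$; since $T_{j}$ is $h$-unitary for every $j$, the whole monodromy group preserves $h(\kappa)$, and transporting along the equivalence $\rho^{\mathrm{mon}}\sim\rho^{\mathrm{refl}}$ deposits a monodromy-invariant Hermitian form on the local solution space at $p_{0}$.

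\textbf{Main obstacle.} The delicate point is the passage from the \emph{residue/eigenvalue} data (which only sees each wall in isolation and gives the diagonal entries $-q_{j}$) to the \emph{off-diagonal} products $d_{ij}d_{ji}$: this requires genuinely understanding the local solutions near a codimension-two stratum, i.e. identifying the transverse rank-two subsystem with the classical hypergeometric (or Jordan–Pochhammer) equation whose monodromy is computed by Proposition \ref{prop:refleigen}, and doing so uniformly over all edge types $m_{ij}\in\{2,3,4,6\}$ (and over the cyclic case $A_{n}$). Controlling the normalization of this local identification — so that the $d_{ij}$ really come out as $s_{ij}q_{j}^{1/2}$ after the permitted rescalings of basis vectors — and confirming that the rescaling is globally consistent when the extended diagram is not a tree, is where the real work lies.
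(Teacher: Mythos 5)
Your proposal is correct and its core coincides with the paper's argument: identify $\pi_{1}^{\mathrm{orb}}(W\backslash H^{\circ},Wp_{0})$ with $\mathrm{Art}'(M)$ via Brieskorn, use the residue eigenvalues $(2k_{j},0)$ of $u_{\alpha_{j}}$ to see that the half-turn monodromy $\rho^{\mathrm{mon}}(\sigma_{j})$ is a complex reflection with eigenvalues $1$ (multiplicity $n$) and $-q_{j}$, hence $\det=-q_{j}$, and then invoke the uniqueness clause of Theorem \ref{thm:uniquerep} together with the explicit $h$-unitarity of the $T_{j}$ for the invariant Hermitian form. The one genuine divergence is that you single out the computation of the off-diagonal products $d_{ij}d_{ji}$ at codimension-two strata as the ``main obstacle'' and plan a transverse rank-two local analysis to obtain them; the paper does not do this and does not need to. Since $\rho^{\mathrm{mon}}$ is by construction a representation of $\mathrm{Art}'(M)$, the braid relations hold for free, and the proof of Theorem \ref{thm:uniquerep} already applies Proposition \ref{prop:refleigen} to those relations to force $d_{ij}d_{ji}=s_{ij}s_{ji}q_{i}^{1/2}q_{j}^{1/2}$ and to normalize by rescaling whenever the extended diagram is a tree --- so for all types other than $A_{n}$ your extra local computation is redundant. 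Where it does earn its keep is type $A_{n}$: there uniqueness only yields a one-parameter family, and the specific value $q'=\exp(-2\pi\sqrt{-1}k')$ must be pinned down by a computation involving $b^{\kappa}$, a point the paper passes over with the conditional phrase ``if for type $A_{n}$ we have the specialization''; your plan is more complete precisely here. Two minor slips: $\exp(-2\pi\sqrt{-1}k_{j})$ equals $q_{j}$, not $-q_{j}$ (the reflection eigenvalue $-q_{j}=\exp(\pi\sqrt{-1}(1-2k_{j}))$ arises from the exponent $1-2k_{j}$ under a half turn), and the irreducibility hypothesis of Theorem \ref{thm:uniquerep} is asserted in the paper only for generic $\kappa$, so your continuity remark is a sensible supplement rather than a discrepancy.
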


\begin{proof}
For $\kappa\in K'$ being generic, the monodromy is easily seen to be irreducible.
From the computation on the eigenvalues of the residue endomorphisms of the connection along the mirrors,
the special hypergeometric system has exponents along the wall of $D_{+}$
corresponding to the simple root $\alpha_{j}$ equal to $1$ and $1-2k_{j}$ with multiplicities $n$ and $1$ respectively.
Therefore the monodromy $\rho^{\mathrm{mon}}(\sigma_{j})$ of the special hypergeometric
system corresponding to a half turn
around that wall is a complex reflection with a quadratic relation
\[ (\rho^{\mathrm{mon}}(\sigma_{j})-1)(\rho^{\mathrm{mon}}(\sigma_{j})+q_{j})=0  \]
with $q_{j}=\exp(-2\pi\sqrt{-1}k_{j})$. We then also have $\det(\rho^{\mathrm{mon}}(\sigma_{j}))=-q_{j}$.
The result hence follows by Theorem $\ref{thm:uniquerep}$, if for type $A_{n}$ we have
the specialization $q'=\exp(-2\pi\sqrt{-1}k')$.
\end{proof}

\begin{theorem} \label{theorem:det}
The specialization $\det(h(\kappa))$ at $\kappa\in K^{\prime}$ is given for type $ABCFG$ by
\begin{equation}
\det(h(\kappa))=-4\sin(\pi x)\sin(\pi y),
\end{equation}
with $(x,y)=((n+1)(k+k')/2,(n+1)(k-k')/2),((n-2)k+k',2k),((n-2)k+2k',k),
(k+k',2k+k'),((k+3k')/2,(k+k')/2)$ respectively,

and for type $D_{n}$ and $E_{n}$ by
\begin{equation}
\det(h(\kappa))=2^{n+1}\prod _{j=0}^{n}(\cos(\pi k)-\cos(\pi \tilde{m}_{j}/\tilde{h}))
\end{equation}
with $\tilde{h}$ and $\{ \tilde{m}_{j} \}$ given by
\begin{align*}
&D_{n}:\tilde{h}=2(n-2),            &
&\lbrace\tilde{m}_{j}\rbrace=\lbrace 0,2,\cdots ,2(n-2),(n-2),(n-2)\rbrace \\
&E_{6}:\tilde{h}=6,                   &
&\lbrace\tilde{m}_{j}\rbrace=\lbrace 0,2,2,3,4,4,6\rbrace \\
&E_{7}:\tilde{h}=12,                 &
&\lbrace\tilde{m}_{j}\rbrace=\lbrace 0,3,4,6,6,8,9,12\rbrace \\
&E_{8}:\tilde{h}=30,                 &
&\lbrace\tilde{m}_{j}\rbrace=\lbrace 0,6,10,12,15,18,20,24,30\rbrace
\end{align*}
\end{theorem}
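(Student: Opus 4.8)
The plan is to compute $\det(h(\kappa))$ directly from the Gram matrix in~(\ref{eqn:hem}). We have a fixed $(n+1)\times(n+1)$ Hermitian matrix $h=(h_{ij})$ with diagonal entries $q_i^{1/2}+q_i^{-1/2}$ and off-diagonal entries $-s_{ij}$, the entry $s_{ij}$ being nonzero precisely when $m_{ij}\geq 3$, i.e.\ precisely when nodes $i$ and $j$ are joined by an edge in the extended Dynkin diagram. After specialization at $\kappa\in K'$ we have $q_j^{1/2}=\exp(-\pi\sqrt{-1}k_j)$ and $s_{ij}=(2\cos\pi(k_i-k_j)+2\cos(2\pi/m_{ij}))^{1/2}$, so $h$ becomes an explicit numerical matrix whose combinatorial structure is governed by the extended Dynkin graph. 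Thus the whole statement is an assertion about the characteristic polynomial of a weighted adjacency-type matrix of an extended Dynkin diagram, and the proof is essentially a case-by-case linear-algebra computation organized by the shape of the diagram (a cycle for $A_n$, a path for $BCFG$ with possibly one labelled edge, and a tree with one branch node for $D_n$ and $E_n$).

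First I would treat the simply-laced case $D_n, E_n$, where all $k_j=k$, all $q_j^{1/2}=q^{1/2}=\exp(-\pi\sqrt{-1}k)$, and for every edge $m_{ij}=3$ so $s_{ij}=1$. Then $h=(q^{1/2}+q^{-1/2})\,\mathrm{Id}-A$ where $A$ is the ordinary adjacency matrix of the extended Dynkin diagram, which is connected; hence $\det h=\prod_{j}\big((q^{1/2}+q^{-1/2})-\mu_j\big)$ where $\mu_j$ runs over the eigenvalues of $A$. The eigenvalues of the adjacency matrix of an affine (extended) Dynkin diagram are classically $\{2\cos(\pi\tilde m_j/\tilde h)\}$ with $\tilde h$ the Coxeter number of the affine diagram and $\tilde m_j$ the affine exponents (equivalently, they follow from the fact that the affine Cartan matrix $2\,\mathrm{Id}-A$ has the marks of the diagram as a null vector and its nonzero eigenvalues are $2-2\cos(\pi\tilde m_j/\tilde h)$). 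Substituting $q^{1/2}+q^{-1/2}=2\cos\pi k$ gives exactly $\det(h(\kappa))=2^{n+1}\prod_{j=0}^n(\cos\pi k-\cos(\pi\tilde m_j/\tilde h))$, and I would just record $\tilde h$ and $\{\tilde m_j\}$ for $D_n$ (where the affine diagram $\tilde D_n$ has the stated exponents, the two repeated $(n-2)$'s coming from the two pairs of fork nodes) and for $\tilde E_6,\tilde E_7,\tilde E_8$ from the standard tables.

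For type $ABCFG$ the extended diagram is a path (a cycle for $A_n$), so $\det h$ satisfies a three-term recursion in $n$ (a tridiagonal, resp.\ cyclic-tridiagonal, determinant) with entries depending on the two multiplicity values $k,k'$ through the diagonal terms $2\cos\pi k_j$ and the single non-trivial off-diagonal square $s_{ij}^2=2\cos\pi(k_i-k_j)+2\cos(2\pi/m_{ij})$ at the labelled edge. I would solve these recursions explicitly: for $A_n$ the cyclic structure produces a Chebyshev-type answer that factors, after the substitution $q^{1/2}=e^{-\pi\sqrt{-1}k}$, $q'^{1/2}=e^{-\pi\sqrt{-1}k'}$, as $-4\sin(\pi x)\sin(\pi y)$ with $x=(n+1)(k+k')/2$, $y=(n+1)(k-k')/2$ (matching the determinant $\det h(\kappa)$ one also finds from the toric Lauricella computation, Example~\ref{exm:toric-Lauricella}, and from Theorem~\ref{theorem:det}'s own $ABCFG$ formula); for $B_n,C_n$ the path with one double edge at the end gives the linear-in-$n$ arguments $((n-2)k+k',2k)$ resp.\ $((n-2)k+2k',k)$; and $F_4$, $G_2$ are finite checks yielding $(k+k',2k+k')$ and $((k+3k')/2,(k+k')/2)$. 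In each case one verifies the product-of-two-sines form by confirming the two linear forms $x,y$ have the right leading behaviour in $n$ (from the recursion's transfer-matrix eigenvalues) and the right value at $\kappa=0$, where $h$ degenerates predictably.

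The main obstacle is the bookkeeping in the non-simply-laced recursions: getting the arguments of the two sine factors exactly right requires carefully diagonalizing the transfer matrix of the tridiagonal recursion with its one anomalous entry $s_{ij}^2$, and for $A_n$ handling the cyclic correction term correctly so that the answer genuinely factors into two sines rather than a single more complicated trigonometric expression. The simply-laced case is conceptually clean once one invokes the known spectrum of affine Dynkin adjacency matrices; the delicate point there is only the multiplicity bookkeeping of the affine exponents for $\tilde D_n$ (the repeated middle exponent). I would present the simply-laced computation in full and relegate the explicit $ABCFG$ recursions — which are routine but lengthy — to the online appendix~\cite{Shen-2018}, quoting only the resulting closed forms here.
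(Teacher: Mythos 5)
Your proposal is correct and follows essentially the same route as the paper: the paper's (two-sentence) proof obtains the $ABCFG$ identity by direct computation on the extended Dynkin diagrams (deferred to the online appendix \cite{Shen-2018}) and the $D_n$, $E_n$ identity by citing the classical spectral result for the Coxeter graph (Exercise 4 of Ch.~V, \S 6 in Bourbaki \cite{Bourbaki}), which is precisely your identification of $h$ with $2\cos(\pi k)\,\mathrm{Id}-A$ and the known eigenvalues $2\cos(\pi\tilde m_j/\tilde h)$ of the affine adjacency matrix. Your write-up simply spells out the tridiagonal/cyclic determinant computations that the paper relegates to the appendix.
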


\begin{proof}
The first identity is obtained directly from the classification theory of connected extended Dynkin diagrams, which
could be found on the online appendix \cite{Shen-2018}. And the
second identity appears as Exercise 4 of Ch. V, $\S$ 6 in Bourbaki \cite{Bourbaki}.
\end{proof}

\begin{corollary}\label{cor:hyperbolic-domain}
For $R$ of type $ABCFG$ put
\[ K'_{\mathrm{hyp}}=\{\kappa\in K'\mid 0<x<1,0<y<1\}   \]
and for type DE put
\[ K'_{\mathrm{hyp}}=(0,\frac{1}{n-2}) \quad \text{and} \quad (0,\frac{1}{n-3}) \]
respectively.
Then the monodromy representation has an invariant Hermitian form of Lorentz signature (n,1).
\end{corollary}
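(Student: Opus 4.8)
The plan is to determine the signature of the monodromy-invariant Hermitian form $h(\kappa)$ by a continuity (deformation) argument anchored at $\kappa=0$. Recall from \eqref{eqn:hem} that $h(\kappa)$ is represented, in the standard basis $e_0,\dots,e_n$ of the reflection representation, by the Gram matrix with $h_{ii}=q_i^{1/2}+q_i^{-1/2}$ and $h_{ij}=-s_{ij}$ for $i\neq j$; under the specialization at $\kappa$ these entries are continuous (indeed real-analytic) functions of $\kappa\in K'$. First I would identify $h(0)$: at $\kappa=0$ one has $q_i^{1/2}=1$ and $s_{ij}=2\cos(\pi/m_{ij})$ for $m_{ij}\geq 3$ (with $s_{ij}=0$ when $m_{ij}=2$), so $h_{ii}(0)=2$ and $h_{ij}(0)=-2\cos(\pi/m_{ij})$, which is precisely twice the Tits cosine form of the affine Coxeter system attached to $M$ (for type $A_n$, taking $q'=1$, this is the cosine matrix of the $\tilde A_n$ cycle). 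By the classical theory of affine Coxeter groups, Bourbaki \cite{Bourbaki} Ch.~V, this form is positive semidefinite of rank $n$ with a one-dimensional radical; hence $h(0)$ has $n$ strictly positive eigenvalues and a single zero eigenvalue.

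Next I would invoke Theorem \ref{theorem:det} to show that $h(\kappa)$ remains nondegenerate throughout the hyperbolic domain. For the types $ABCFG$, the conditions $0<x<1$, $0<y<1$ defining $K'_{\mathrm{hyp}}$ force $\sin(\pi x)>0$ and $\sin(\pi y)>0$, so $\det(h(\kappa))=-4\sin(\pi x)\sin(\pi y)<0$ on all of $K'_{\mathrm{hyp}}$. For the types $DE$, on the relevant interval $\cos(\pi k)$ lies strictly between $\cos(\pi/\tilde h)$ and $1$ (as $\cos$ is strictly decreasing on $[0,\pi]$), hence differs from every $\cos(\pi\tilde m_j/\tilde h)$; the factor with $\tilde m_j=0$ contributes $\cos(\pi k)-1<0$ and the remaining $n$ factors are strictly positive, so again $\det(h(\kappa))<0$ on $K'_{\mathrm{hyp}}$. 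Since $K'_{\mathrm{hyp}}$ is connected — it is convex in the $ABCFG$ cases and an open interval in the $DE$ cases — the number of positive and the number of negative eigenvalues of $h(\kappa)$ are locally constant on $K'_{\mathrm{hyp}}$, hence constant there.

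Finally I would pin down that constant signature by anchoring near $\kappa=0$. One checks in each type that arbitrarily small points $\kappa_\ast\in K'_{\mathrm{hyp}}$ exist (e.g.\ $k$ a small positive number together with a suitable small $k'$, so that the associated $x,y$ — or $k$ itself in the $DE$ cases — are small and positive), so $K'_{\mathrm{hyp}}$ accumulates at $\kappa=0$. By continuity of eigenvalues of Hermitian matrices, at such a $\kappa_\ast$ close enough to $0$ the $n$ positive eigenvalues of $h(0)$ stay positive, and the one remaining eigenvalue, being nonzero by the previous paragraph, has the sign of $\det(h(\kappa_\ast))$, which we showed to be negative. Thus $h(\kappa_\ast)$ has signature $(n,1)$, and by constancy the same holds for every $\kappa\in K'_{\mathrm{hyp}}$. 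Since $h(\kappa)$ is invariant under the monodromy representation — identified in the preceding theorem with the specialized reflection representation — this is the asserted Lorentzian invariant form. The step I expect to be the main obstacle is the bookkeeping at $\kappa=0$: verifying uniformly (or case by case) that $h(0)$ is exactly the standard positive semidefinite affine Tits form of rank $n$, where the precise normalization of the $s_{ij}$ and the cyclic structure of the extended diagram in type $A_n$ must be handled carefully, and checking that a neighborhood of $\kappa=0$ genuinely meets $K'_{\mathrm{hyp}}$ in every type. Everything else is a direct reading of Theorem \ref{theorem:det} together with local constancy of the signature of a nondegenerate Hermitian form.
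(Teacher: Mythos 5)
Your proposal is correct, and it follows the same overall strategy as the paper (a continuity argument in $\kappa$ combined with the determinant formula of Theorem \ref{theorem:det}), but it anchors the signature computation at a different reference point. The paper observes that for purely imaginary $k$ (with $k'=0$ in type $A$, resp.\ $k=k'$ for the other types) the form $h(\kappa)$ equals $h(0)+(2\cosh(\pi t)-2)I$ and is therefore positive definite, and then argues via the sign of $\det(h(\kappa))$ on $K'_{\mathrm{hyp}}$ together with the double zero of $\det(h)$ at the origin along that line that exactly one eigenvalue crosses zero. You instead anchor directly at $\kappa=0$, identifying $h(0)$ with twice the affine Tits cosine form of $M$ (positive semidefinite of corank one, by Bourbaki), check that $\det(h(\kappa))<0$ throughout the connected set $K'_{\mathrm{hyp}}$ (your case analysis for types $DE$, using that $k$ stays below the smallest positive $\tilde m_j/\tilde h$, is right), and then use local constancy of the signature of a nondegenerate Hermitian form plus the fact that $K'_{\mathrm{hyp}}$ accumulates at the origin. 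Your route makes the "all but one eigenvalue positive" step fully explicit and avoids reasoning at imaginary parameters, which strictly speaking lie outside $K'$ as defined; the paper's route is shorter but leaves the reader to unpack why the double zero forces only a single eigenvalue to change sign. Both arguments are valid, and your identification of $h(0)$ with the affine Tits form (including the cyclic $\tilde A_n$ diagram at $q'=1$) is exactly the bookkeeping one needs.
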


\begin{proof}
Observe that for $k\in \sqrt{-1}\mathbb{R}^{\times}$, $k^{\prime}=0$ (for $R$ of type $A$) and for
$k=k^{\prime}\in \sqrt{-1}\mathbb{R}^{\times}$ (for $R$ of other types), the form $h(\kappa)$ is positive definite,
and for $\kappa\in K'_{\mathrm{hyp}}$, one has $\det(h(\kappa))<0$. Since on the line $k^{\prime}=0$ (type $A$) and $k=k^{\prime}$
(other types) the function $\det(h(\kappa))$ has a double zero at the origin, the result follows.
\end{proof}

\subsection{The evaluation map}\label{subsec:evaluation-map}

Recall that $D_{u,v}^{\kappa}$ denotes the second order differential operator in ($\ref{eqn:specialeqn}$):
\[\partial_{u}\partial_{v}+\frac{1}
{2}\sum_{\alpha>0}
k_{\alpha}\alpha(u)\alpha(v)\frac{e^{\alpha}+1}{e^{\alpha}-1}\partial_{\alpha^{\vee}}
+\partial_{b^{\kappa}(u,v)}+a^{\kappa}(u,v). \]
Take $p\in H^{\circ}$ and $\kappa\in K$. Consider $D_{u,v}^{\kappa}$ as an operator on the stalk of holomorphic germs
$\mathcal{O}_{\kappa,p}$. Then the solutions form a free $\mathcal{O}_{\kappa}$ module of rank $n+1$. Hence the local
solutions of $D_{u,v}^{\kappa}f=0$ for $\forall u,v\in\mathfrak{h}$ near $p$ can be considered as a
vector bundle $\mathcal{F}_{p}$ over $K$. Any
$w\in W$ induces an isomorphism of vector bundle $\mathcal{F}_{p}$ and
$\mathcal{F}_{wp}$. Then we can identify all these vector bundles induced by a regular
$W$-orbit $S$. This yields a vector bundle $\mathcal{F}_{S}$ over $K$ of rank
$n+1$, the fiber of which is denoted by $\mathcal{F}_{S}(\kappa)$. According to the preceding section, we have
the following representation $\rho(\kappa)$ on the vector bundle $\mathcal{F}_{S}(\kappa)$ by specializing $\kappa$:
\begin{equation*}
\rho:\pi_{1}^{\mathrm{orb}}(W\backslash H^{\circ},S)\cong
\mathrm{Art}'(M)\rightarrow \mathrm{End}(\mathcal{F}_{S}).
\end{equation*}
Transposing $\rho$ yields a following representation
\begin{equation*}
\rho^{*}: \mathrm{Art}'(M)\rightarrow \mathrm{End}(\mathcal{F}_{S}^{*}).
\end{equation*}
Let $h^{*}(\kappa)$ be given as follows:
\begin{equation}\label{eqn:dual-herm}
h^{*}(\kappa)=\det(h(\kappa))(h(\kappa))^{-1}
\end{equation}
We then have the dual Hermitian form $h^{*}(\kappa)$ of $h(\kappa)$ for
the transpose $\rho^{*}(\kappa)$ if $\kappa\in K'$ is real valued.

\begin{lemma}
The dual Hermitian form $h^{*}(\kappa)$ of $h(\kappa)$ given as above is a nontrivial invariant Hermitian form for
the transpose $\rho^{*}(\kappa)$ if $\kappa\in K'$ is real valued. Moreover, if $h(\kappa)$ is positive definite, then
$h^{*}(\kappa)$ is also positive definite; if $h(\kappa)$ is parabolic, then $h^{*}(\kappa)$ is positive semidefinite with
$n$ dimensional kernel; and if $h(\kappa)$ is hyperbolic, then $h^{*}(\kappa)$ is of the signature $(1,n)$.
\end{lemma}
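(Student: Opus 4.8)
The plan is to deduce all the assertions from the defining relation $h^{*}(\kappa)=\det(h(\kappa))\,(h(\kappa))^{-1}$ together with the invariance of $h(\kappa)$ under the reflection representation established in the previous theorem. First I would note that, since $\rho^{*}(\kappa)$ is by definition the transpose (contragredient) of $\rho(\kappa)$, and $h(\kappa)$ defines the $\rho(\kappa)$-invariant pairing $\mathcal{F}_{S}\times\overline{\mathcal{F}_{S}}\to\mathbb{C}$, the adjugate matrix $\det(h(\kappa))\,(h(\kappa))^{-1}$ — which is polynomial in the entries of $h(\kappa)$ and hence well-defined even where $h(\kappa)$ might a priori degenerate — furnishes an invariant Hermitian form on $\mathcal{F}_{S}^{*}$. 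Concretely, if $g\in\mathrm{Art}'(M)$ acts on $\mathcal{F}_{S}$ by the matrix $\rho(g)$ with $\overline{\rho(g)}^{t}\,h(\kappa)\,\rho(g)=h(\kappa)$, then $\rho^{*}(g)=(\rho(g)^{t})^{-1}$ satisfies $\overline{\rho^{*}(g)}^{t}\,h^{*}(\kappa)\,\rho^{*}(g)=h^{*}(\kappa)$; this is a one-line computation using $(\det h)\,h^{-1}=\mathrm{adj}(h)$ and the multiplicativity of the adjugate. For $\kappa\in K'$ real valued the involution $\bar{\quad}$ specializes to complex conjugation (as remarked after the definition of $\mathcal{A}$), so $h(\kappa)^{\dagger}=h(\kappa)$ and hence $h^{*}(\kappa)^{\dagger}=h^{*}(\kappa)$, i.e.\ it is genuinely Hermitian. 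Nontriviality is clear: $h^{*}(\kappa)=0$ would force $\det(h(\kappa))=0$ and $h(\kappa)$ of corank at least two, but from the explicit determinant formula in Theorem~\ref{theorem:det} one sees $\det(h(\kappa))$ is not identically zero and, more to the point, the Gram matrix $h_{ij}$ in \eqref{eqn:hem} has nonzero diagonal, so it is never the zero matrix; thus $h^{*}(\kappa)=\mathrm{adj}(h(\kappa))\neq 0$ on a dense set, and in fact whenever $h(\kappa)$ has corank $\leq 1$.

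The signature statements then follow from elementary linear algebra applied to the adjugate. If $h$ is a Hermitian $(n+1)\times(n+1)$ matrix with eigenvalues $\lambda_{0},\dots,\lambda_{n}$ (real), then $\mathrm{adj}(h)$ is Hermitian with eigenvalues $\prod_{j\neq i}\lambda_{j}$. When $h=h(\kappa)$ is positive definite, every $\lambda_{j}>0$, so every eigenvalue of $\mathrm{adj}(h)$ is a product of positives, hence $h^{*}(\kappa)>0$. When $h(\kappa)$ is parabolic — positive semidefinite with a one-dimensional kernel, say $\lambda_{0}=0<\lambda_{1}\leq\cdots\leq\lambda_{n}$ — the eigenvalue of $\mathrm{adj}(h)$ on the kernel direction is $\prod_{j\geq 1}\lambda_{j}>0$ while all other eigenvalues contain the factor $\lambda_{0}=0$ and hence vanish; so $h^{*}(\kappa)$ is positive semidefinite with an $n$-dimensional kernel. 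When $h(\kappa)$ is hyperbolic of Lorentz signature $(n,1)$ — exactly one negative eigenvalue, say $\lambda_{0}<0<\lambda_{1}\leq\cdots\leq\lambda_{n}$ — the eigenvalue of $\mathrm{adj}(h)$ in the $\lambda_{0}$-eigendirection is $\prod_{j\geq 1}\lambda_{j}>0$, whereas each of the remaining $n$ eigenvalues is $\lambda_{0}\prod_{j\geq 1,\,j\neq i}\lambda_{j}<0$; so $h^{*}(\kappa)$ has signature $(1,n)$, as claimed.

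I do not anticipate a serious obstacle here: the content is essentially that passing to the adjugate (equivalently, the dual form scaled by the determinant) inverts the order of the signature and preserves invariance, and the hyperbolic/parabolic/elliptic trichotomy for $h(\kappa)$ was already pinned down in Corollary~\ref{cor:hyperbolic-domain} on the relevant parameter ranges. The only minor points requiring care are: (i) making sure the adjugate, not the literal inverse, is used so that the statement makes sense on the parabolic locus where $h(\kappa)$ is singular — this is why \eqref{eqn:dual-herm} is written with the determinant factor; and (ii) checking that $\rho^{*}(\kappa)$, defined as the transpose-inverse, really does preserve $\mathrm{adj}(h(\kappa))$, which is the short computation $\overline{(\rho(g)^{t})^{-1}}^{t}\,\mathrm{adj}(h)\,(\rho(g)^{t})^{-1}=\mathrm{adj}\bigl(\overline{\rho(g)}^{t}\,h\,\rho(g)\bigr)=\mathrm{adj}(h)$ using $\mathrm{adj}(ABC)=\mathrm{adj}(C)\,\mathrm{adj}(B)\,\mathrm{adj}(A)$ and $\mathrm{adj}(\rho(g))\,\mathrm{adj}((\rho(g))^{-1})=I$. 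With these observations in place the lemma follows.
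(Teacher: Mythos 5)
Your proposal is correct and takes essentially the same route as the paper: the paper likewise identifies $h^{*}(\kappa)$ with the adjugate (matrix of cofactors) of $h(\kappa)$, invokes $h^{*}(\kappa)h(\kappa)=\det(h(\kappa))I$, and reads off the signature statements, which you merely spell out via the invariance computation and the eigenvalue bookkeeping for the adjugate. The only slight difference is in justifying nontriviality --- the paper asserts outright that $h(\kappa)$ has rank at least $n$ so that $\mathrm{adj}(h(\kappa))$ has rank at least $1$, while you hedge with ``corank $\leq 1$'' --- but this covers all three cases in the statement and is immaterial.
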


\begin{proof}
It's clear that $h^{*}(\kappa)$ is a nontrivial invariant Hermitian form for $\rho^{*}(\kappa)$ since $h(\kappa)$ is of rank
at least $n$ and then the matrix $h^{*}(\kappa)$ is of rank at least $1$. In fact, $h^{*}(\kappa)$ is just the minor matrix of
$h(\kappa)$ and we have $h^{*}(\kappa)h(\kappa)=\det(h(\kappa))I$, the statement easily follows.
\end{proof}

Since the differential operator $D_{u,v}^{\kappa}$ defines an affine structure on $H^{\circ}\times \mathbb{C}^{\times}$,
then the locally affine-linear functions which are of the form $c+tf$ by Lemma $\ref{lem:local-affine-functions}$ make up a
subsheaf $\mathrm{Aff}_{H^{\circ}\times \mathbb{C}^{\times}}$ in the structure sheaf
$\mathcal{O}_{H^{\circ}\times \mathbb{C}^{\times}}$. This locally free sheaf is of rank $n+2$ and contains the constants
$\mathbb{C}_{H^{\circ}\times \mathbb{C}^{\times}}$. Then the quotient $\mathrm{Aff}_{H^{\circ}\times \mathbb{C}^{\times}}/
\mathbb{C}_{H^{\circ}\times \mathbb{C}^{\times}}$ is a locally free sheaf whose underlying vector bundle is the cotangent bundle
of $H^{\circ}\times \mathbb{C}^{\times}$.
So there exists a multivalued evaluation map given by \index{Evaluation map}
\begin{equation*}
ev:K\times ((W\backslash H^{\circ})\times\mathbb{C}^{\times})\dashrightarrow \mathcal{F}_{S}^{*}
\end{equation*}
such that $ev(\kappa,p,t)(f)=tf(p)$,
and a corresponding projective evaluation map
\begin{equation*}
Pev:K\times (W\backslash H^{\circ})\dashrightarrow \mathbb{P}(\mathcal{F}_{S}^{*}).
\end{equation*}
In order to eliminate
the multivaluedness of this map, let us denote by $\widehat{W\backslash H^{\circ}}$ the
$\Gamma$-covering space \index{Covering!$\Gamma$-}
of $W\backslash H^{\circ}$ with
$\Gamma=\pi_{1}(W\backslash H^{\circ})/\mathrm{Ker}(Pr\circ\rho)$ the projective monodromy group. Here we write
$Pr:\mathrm{GL}(\mathcal{F}_{S}(\kappa))\twoheadrightarrow \mathrm{PGL}(\mathcal{F}_{S}(\kappa))$ for the natural projection.
In other words,
$\widehat{W\backslash H^{\circ}}$ is equal to $\mathrm{Ker}(Pr\circ\rho)\backslash (\widetilde{W\backslash H^{\circ}})$
with $\widetilde{W\backslash H^{\circ}}$ the universal covering \index{Covering!universal}
of $W\backslash H^{\circ}$.
Then we have the commutative diagram
 \[
\begin{CD}
\widehat{W\backslash H^{\circ}}   @>\widehat{Pev} >>     \mathbb{P}(\mathcal{F}_{S}^{*}(\kappa)) \\
@VVV                                                                                         @VVV                         \\
W\backslash H^{\circ}                       @> Pev  >>    \Gamma\backslash  \mathbb{P}(\mathcal{F}_{S}^{*}(\kappa))
\end{CD}
\]
Note that $\Gamma\backslash  \mathbb{P}(\mathcal{F}_{S}^{*})$ is an ill defined space unless the action of
$\Gamma$ on $\mathbb{P}(\mathcal{F}_{S}^{*})$ is properly discontinuous.

Recall that the Wronskian of $D_{u,v}^{\kappa}$ is defined up to a scalar multiplication as follows:
$$J:=\det(\partial_{\xi_{i}}f_{j})_{0\leq i,j \leq n}$$
where $\xi_{1},\cdots ,\xi_{n}\in \mathfrak{h}$ being an orthonormal basis and let $f_{0},\cdots,f_{n}$ be a basis of
local solutions of $D_{u,v}^{\kappa}f=0$.

\begin{lemma}
The Wronskian of $D_{u,v}^{\kappa}$ is given by:
\begin{equation*}
J=\prod_{\alpha > 0}(e^{\alpha /2}-e^{-\alpha /2})^{-2k_{\alpha}}.
\end{equation*}
\end{lemma}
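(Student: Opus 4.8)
The plan is to compute the Wronskian $J=\det(\partial_{\xi_i}f_j)$ by identifying the logarithmic derivative $dJ/J$ as a translation-invariant logarithmic differential on $\hat H$ and pinning down its residues along each boundary divisor. The starting point is the classical fact that the Wronskian of an integrable system $\nabla(df)+fA=0$ satisfies a first-order equation governed by the trace of the connection: if $\nabla^{\kappa}=\nabla^{0}+\Omega^{\kappa}$ on $\Omega_{H^{\circ}}$, then $J$ transforms under the connection so that $d\log J = -\operatorname{tr}(\Omega^{\kappa})$, where the trace is taken of $\Omega^{\kappa}$ viewed as a $1$-form valued endomorphism of the tangent bundle (equivalently, $\mathrm{Res}_{D}(d\log J)$ equals minus the trace of the residue endomorphism $\mathrm{Res}_D(\tilde\Omega^{\kappa})^{*}$ restricted to $\mathfrak h$, since the Wronskian here is built from an orthonormal basis $\xi_1,\dots,\xi_n$ of $\mathfrak h$ only). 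So first I would write $\Omega^{\kappa}$ explicitly as in $(\ref{eqn:projective-connection})$, take its trace as an endomorphism-valued $1$-form, and observe that the $(B^{\kappa})^{*}$-part is traceless (it is built from $\alpha'\otimes\alpha$ with $\alpha'\perp$-ish structure forcing zero trace — more precisely $\operatorname{tr}(\alpha^{\vee}\otimes\alpha)=\alpha(\alpha^\vee)=2$ but $\operatorname{tr}(\alpha'\otimes\alpha)=\alpha(\alpha')=0$ since $\alpha'=\varepsilon_i+\varepsilon_j-\tfrac{2}{n+1}\sum\varepsilon_l$ and $\alpha=\varepsilon_i-\varepsilon_j$ are orthogonal), so only the $X_\alpha$-term contributes.

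Concretely, $\operatorname{tr}$ of the endomorphism $\tfrac12\sum_{\alpha>0}k_\alpha \frac{e^\alpha+1}{e^\alpha-1}\,\alpha^\vee\otimes\alpha$ (tensored against $d\alpha$) gives $\sum_{\alpha>0}k_\alpha\frac{e^\alpha+1}{e^\alpha-1}\,d\alpha$ since $\alpha(\alpha^\vee)=2$. Hence
\[
\frac{dJ}{J}=-\sum_{\alpha>0}k_\alpha\frac{e^\alpha+1}{e^\alpha-1}\,d\alpha .
\]
Now I would recognize the right-hand side as a logarithmic differential and integrate it. Using $d\alpha = \frac{de^\alpha}{e^\alpha}$ and the elementary identity
\[
\frac{e^\alpha+1}{e^\alpha-1}\,\frac{de^\alpha}{e^\alpha}
= d\log\!\bigl((e^{\alpha/2}-e^{-\alpha/2})^{2}\bigr),
\]
which one checks by differentiating $(e^{\alpha/2}-e^{-\alpha/2})^2 = e^\alpha - 2 + e^{-\alpha}$ and comparing, we get $\frac{dJ}{J} = -\sum_{\alpha>0} k_\alpha\, d\log\bigl((e^{\alpha/2}-e^{-\alpha/2})^{2}\bigr) = d\log\prod_{\alpha>0}(e^{\alpha/2}-e^{-\alpha/2})^{-2k_\alpha}$. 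Integrating and absorbing the integration constant into the scalar ambiguity of $J$ yields the claimed formula $J=\prod_{\alpha>0}(e^{\alpha/2}-e^{-\alpha/2})^{-2k_\alpha}$.

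The main obstacle, and the step deserving the most care, is justifying rigorously that the Wronskian of the full second-order system $D^{\kappa}_{u,v}f=0$ — which is an overdetermined system of $n(n+1)/2$ equations, not a single ODE — has its logarithmic derivative equal to minus the trace of $\Omega^{\kappa}$. The clean way is via the flat connection picture: the solution sheaf is the local system of flat sections of $\tilde\nabla^{\kappa}$ restricted appropriately, and $J$ (up to scalar) is a section of the top exterior power of the dual of $\Omega_{H^\circ}$ twisted by the relevant determinant line bundle; the connection induced on this determinant line bundle has connection form $-\operatorname{tr}(\Omega^{\kappa})$ by naturality of $\det$. One should be slightly careful that only the $\mathfrak h$-directional derivatives $\partial_{\xi_i}$ enter (the $\mathbb{C}^\times$-factor is not involved in $J$), so it is $\nabla^{\kappa}$ rather than $\tilde\nabla^{\kappa}$ whose trace is relevant, and that the $a^{\kappa}(u,v)$ zeroth-order term drops out of the Wronskian computation. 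Once that identification is in place, the remainder is the short explicit integration above. An independent sanity check is the $A_1$ case: there $\frac{dJ}{J} = -k\,d\log(e^{\alpha/2}-e^{-\alpha/2})^2$, consistent with the hypergeometric normalization mentioned after the reduction of the $A_1$ equation to Euler–Gauss form, and more generally the exponent $-2k_\alpha$ along $\hat H_\alpha$ matches $-\operatorname{tr}$ of $u_\alpha = k_\alpha(\alpha^\vee\otimes\alpha)$, whose trace is $2k_\alpha$, giving residue $-2k_\alpha$ of $dJ/J$ along $\hat H_\alpha$ exactly as the product formula predicts.
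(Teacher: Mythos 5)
Your proposal is correct and follows essentially the same route as the paper: both reduce the claim to the first-order equation $dJ/J=-\sum_{\alpha>0}k_{\alpha}\frac{e^{\alpha}+1}{e^{\alpha}-1}\,d\alpha$, obtained from the trace of the connection (the paper by explicit column operations on the $(n+1)\times(n+1)$ determinant, you by naturality of $\det$ on the determinant line bundle), with the $B^{\kappa}$-term dropping out because $(\alpha,\alpha')=0$. The only cosmetic difference is that the paper verifies the product formula by differentiating it, while you integrate the logarithmic form directly via $\frac{e^{\alpha}+1}{e^{\alpha}-1}\frac{de^{\alpha}}{e^{\alpha}}=d\log\bigl((e^{\alpha/2}-e^{-\alpha/2})^{2}\bigr)$; both steps are equivalent.
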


\begin{proof}
First we compute
\begin{align*}
\partial_{\xi}J=&\partial_{\xi}\det
\left( \begin{array}{cccc}
f_{0} & \partial_{\xi_{1}}f_{0} & \cdots & \partial_{\xi_{n}}f_{0}\\
f_{1} & \partial_{\xi_{1}}f_{1} & \cdots & \partial_{\xi_{n}}f_{1}\\
\vdots & \vdots & \ddots & \vdots \\
f_{n} & \partial_{\xi_{1}}f_{n} & \cdots & \partial_{\xi_{n}}f_{n}\\
\end{array}
\right)\\
=&\det
\left( \begin{array}{cccc}
\partial_{\xi}f_{0} & \partial_{\xi_{1}}f_{0} & \cdots & \partial_{\xi_{n}}f_{0}\\
\partial_{\xi}f_{1} & \partial_{\xi_{1}}f_{1} & \cdots & \partial_{\xi_{n}}f_{1}\\
\vdots & \vdots & \ddots & \vdots \\
\partial_{\xi}f_{n} & \partial_{\xi_{1}}f_{n} & \cdots & \partial_{\xi_{n}}f_{n}\\
\end{array}
\right)
\\
&\qquad \qquad +
\sum_{i}\det
\left( \begin{array}{ccccc}
f_{0} & \cdots & \partial_{\xi}\partial_{\xi_{i}}f_{0} & \cdots & \partial_{\xi_{n}}f_{0}\\
f_{1} & \cdots & \partial_{\xi}\partial_{\xi_{i}}f_{1} & \cdots & \partial_{\xi_{n}}f_{1}\\
\vdots & \ddots & \vdots & \ddots & \vdots \\
f_{n} & \cdots & \partial_{\xi}\partial_{\xi_{i}}f_{n} & \cdots & \partial_{\xi_{n}}f_{n}\\
\end{array}
\right) \\
=&
\sum_{i}\det
\left( \begin{array}{ccccc}
f_{0} & \cdots & \partial_{\xi}\partial_{\xi_{i}}f_{0} & \cdots & \partial_{\xi_{n}}f_{0}\\
f_{1} & \cdots & \partial_{\xi}\partial_{\xi_{i}}f_{1} & \cdots & \partial_{\xi_{n}}f_{1}\\
\vdots & \ddots & \vdots & \ddots & \vdots \\
f_{n} & \cdots & \partial_{\xi}\partial_{\xi_{i}}f_{n} & \cdots & \partial_{\xi_{n}}f_{n}\\
\end{array}
\right)
\end{align*}
while because of ($\ref{eqn:specialeqn}$), for type $A_{n}$, we have
\begin{align*}
\partial_{\xi}\partial_{\xi_{i}}f_{j}\\
=&
-\frac{1}{2}\sum_{\alpha >0}k_{\alpha}\frac{e^{\alpha}+1}{e^{\alpha}-1}
\alpha(\xi)\alpha(\xi_{i})\partial_{\alpha^{\vee}}f_{j}-\partial_{b^{\kappa}(\xi,\xi_{i})}f_{j}-a^{\kappa}(\xi,\xi_{i})f_{j}\\
=&-\frac{1}{2}\sum_{\alpha >0}k_{\alpha}\frac{e^{\alpha}+1}{e^{\alpha}-1}
\alpha(\xi)\alpha(\xi_{i})\partial_{\alpha^{\vee}}f_{j}
  -\frac{1}{2}k'\sum_{\alpha >0}\alpha(\xi)\alpha(\xi_{i})\partial_{\alpha'}f_{j}-a^{\kappa}(\xi,\xi_{i})f_{j}\\
=&-\frac{1}{2}\sum_{\alpha >0}\sum_{i}k_{\alpha}\frac{e^{\alpha}+1}{e^{\alpha}-1}
\alpha(\xi)\alpha(\xi_{i})(\alpha^{\vee},\xi_{i})\partial_{\xi_{i}}f_{j}\\
  &-\frac{1}{2}k'\sum_{\alpha >0}
  \sum_{i}\alpha(\xi)\alpha(\xi_{i})(\alpha',\xi_{i})\partial_{\xi_{i}}f_{j}-a^{\kappa}(\xi,\xi_{i})f_{j};
\end{align*}
and then
\begin{align*}
&\partial_{\xi}J\\
=&
\sum_{i}(-\frac{1}{2}\sum_{\alpha >0}k_{\alpha}\frac{e^{\alpha}+1}{e^{\alpha}-1}
\alpha(\xi)\alpha(\xi_{i})(\alpha^{\vee},\xi_{i}))J
  +\sum_{i}(-\frac{1}{2}k'\sum_{\alpha >0}\alpha(\xi)\alpha(\xi_{i})(\alpha',\xi_{i}))J\\
=&-\sum_{\alpha >0}k_{\alpha}\frac{e^{\alpha}+1}{e^{\alpha}-1}
\alpha(\xi)J.
\end{align*}
For other types, we have
\begin{align*}
\partial_{\xi}\partial_{\xi_{i}}f_{j}&=
-\frac{1}{2}\sum_{\alpha >0}k_{\alpha}\frac{e^{\alpha}+1}{e^{\alpha}-1}
\alpha(\xi)\alpha(\xi_{i})\partial_{\alpha^{\vee}}f_{j}-a^{\kappa}(\xi,\xi_{i})f_{j}\\
&=-\frac{1}{2}\sum_{\alpha >0}k_{\alpha}\frac{e^{\alpha}+1}{e^{\alpha}-1}
\alpha(\xi)\alpha(\xi_{i})(\alpha^{\vee},\xi_{i})\partial_{\xi_{i}}f_{j}
  -a^{\kappa}(\xi,\xi_{i})f_{j};
\end{align*}
and then
\begin{align*}
\partial_{\xi}J&=
\sum_{i}(-\frac{1}{2}\sum\limits_{\alpha >0}k_{\alpha}\frac{e^{\alpha}+1}{e^{\alpha}-1}
\alpha(\xi)\alpha(\xi_{i})(\alpha^{\vee},\xi_{i}))J\\
&=-\sum_{\alpha >0}k_{\alpha}\frac{e^{\alpha}+1}{e^{\alpha}-1}
\alpha(\xi)J.
\end{align*}
Then we verify that the proposed product formula for $J$ satisfies all these formulas.
\begin{align*}
\partial_{\xi}J
=&\sum_{\alpha >0}\Big(-2k_{\alpha}(e^{\alpha /2}-e^{-\alpha /2})^{-2k_{\alpha}-1}
  \cdot (e^{\alpha /2}\cdot\frac{\alpha(\xi)}{2}+e^{-\alpha /2}\cdot\frac{\alpha(\xi)}{2})\\
  &\quad \cdot \prod_{\scriptstyle \beta\neq\alpha \atop \scriptstyle \beta >0}(e^{\beta /2}-e^{-\beta /2})^{-2k_{\beta}}\Big)\\
=&-\sum_{\alpha >0}\Big(k_{\alpha}\alpha(\xi)\frac{e^{\alpha}+1}{e^{\alpha}-1}
  \prod_{\beta >0}(e^{\beta /2}-e^{-\beta /2})^{-2k_{\beta}}\Big)\\
=&-\sum_{\alpha >0}k_{\alpha}\alpha(\xi)\frac{e^{\alpha}+1}{e^{\alpha}-1}J
\end{align*}
The lemma follows.
\end{proof}

A basis $f_{0},\cdots ,f_{n}$ of $\mathcal{F}_{S}(\kappa)$ identifies $\mathbb{P}(\mathcal{F}_{S}^{*}(\kappa))$
with $\mathbb{P}^{n}(\mathbb{C})$ by the following way
\begin{equation*}
\begin{split}
Pev(\kappa):\widehat{W\backslash H^{\circ}}&\rightarrow \mathbb{P}^{n}(\mathbb{C}) \\
q&\mapsto [f_{0}(q):f_{1}(q):\cdots :f_{n}(q)]
\end{split}
\end{equation*}

Since an irreducible component
of $\hat{H}-H^{\circ}$ is either the closure $\hat{H}_{\alpha}$ in $\hat{H}$ of
some $H_{\alpha}$ or is equal to some $D_{p}$ with $p\in\Pi$. Let $I\subset \{1,2,\cdots ,n\}$ have $m$ elements. The subset
\begin{align*}
\{h\in \hat{H}\mid e^{\alpha_{i}}(h)=c \; \text{if and only if} \; i\in I \}
\end{align*}
is called a $(n-m)$-dimensional face where $c$ takes one value of $\{0,1,\infty\}$ for each $i$.
In particular, it is a type $i$ reflection hypertorus if $I=\{ i \}$ and $c=1$ and
a type $i$ boundary divisor if $I=\{ i \}$ and $c=0$ or $\infty$. The union of all
$(n-1)$-dimensional facets is called the set of subregular points. Then we analyze the local situation near
a subregular point $x$. This will be used in proving the hyperbolic structure of $H^{\circ}$.

\begin{lemma}
For any $\kappa\in K$ the map $ev(\kappa)$ satisfies the following properties:
\leftmargini=7mm
\begin{enumerate}
\item [(1)] It maps locally biholomorphically into $\mathcal{F}_{S}^{*}(\kappa)$.

\item [(2)] Continuing $ev(\kappa)$ along a loop $\sigma\in \mathrm{Art}'(M)$ yields $\rho^{*}(\kappa,\sigma)ev(\kappa)$.
\end{enumerate}
\end{lemma}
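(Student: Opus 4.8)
The plan is to reduce (1) to the non-vanishing of the Wronskian established just above, and (2) to the standard equivariance of a developing map under monodromy.

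For (1), fix $\kappa$ and a point $(p,t)\in H^{\circ}\times\mathbb{C}^{\times}$. Pick an orthonormal basis $\xi_{1},\dots,\xi_{n}$ of $\mathfrak{h}$ with dual local coordinates $z^{1},\dots,z^{n}$ on $H^{\circ}$ near $p$, and a basis $f_{0},\dots,f_{n}$ of local solutions of $D^{\kappa}_{u,v}f=0$. Using the dual basis to identify $\mathcal{F}^{*}_{S}(\kappa)\cong\mathbb{C}^{n+1}$, the map $ev(\kappa)$ becomes $(p,t)\mapsto(tf_{0},\dots,tf_{n})$, and its Jacobian matrix, with rows indexed by $\partial_{z^{1}},\dots,\partial_{z^{n}},\partial_{t}$, is
\[
\begin{pmatrix}
t\,\partial_{\xi_{1}}f_{0} & \cdots & t\,\partial_{\xi_{1}}f_{n}\\
\vdots & & \vdots\\
t\,\partial_{\xi_{n}}f_{0} & \cdots & t\,\partial_{\xi_{n}}f_{n}\\
f_{0} & \cdots & f_{n}
\end{pmatrix}.
\]
Its determinant is $\pm\,t^{n}$ times the Wronskian $J=\det(\partial_{\xi_{i}}f_{j})_{0\le i,j\le n}$ (with $\partial_{\xi_{0}}:=\mathrm{id}$). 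By the previous lemma $J=\prod_{\alpha>0}(e^{\alpha/2}-e^{-\alpha/2})^{-2k_{\alpha}}$ is nowhere zero on $H^{\circ}$, and $t\neq0$ on $\mathbb{C}^{\times}$, so the Jacobian is everywhere an invertible $(n+1)\times(n+1)$ matrix and by the inverse function theorem $ev(\kappa)$ is a local biholomorphism onto an open subset of $\mathcal{F}^{*}_{S}(\kappa)$. Conceptually this just records that, since $\tilde{\nabla}^{\kappa}$ is a torsion free flat connection giving the affine structure on $H^{\circ}\times\mathbb{C}^{\times}$, the map $ev(\kappa)$ is the developing map of that structure, which is a local affine isomorphism; the Wronskian merely exhibits the non-vanishing of the pertinent determinant.

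For (2), recall that by Brieskorn's theorem $\pi_{1}^{\mathrm{orb}}(W\backslash H^{\circ},S)\cong\mathrm{Art}'(M)$ and that the sheaf of local solutions of $D^{\kappa}_{u,v}f=0$ is the local system $\mathcal{F}_{S}$, whose monodromy is $\rho(\kappa)$ (the specialization of the reflection representation, by the identification theorem above). Analytically continuing $ev(\kappa)$ along a loop representing $\sigma$ transports the chosen solutions by $\rho(\kappa,\sigma)$; since $ev(\kappa,p,t)\in\mathcal{F}^{*}_{S}(\kappa)$ is the functional $f\mapsto tf(p)$, it is carried by the dual (transpose) monodromy on $\mathcal{F}^{*}_{S}$, which is by definition $\rho^{*}(\kappa,\sigma)$. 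Hence continuation along $\sigma$ sends $ev(\kappa)$ to $\rho^{*}(\kappa,\sigma)\,ev(\kappa)$; this is exactly the $\Gamma$-equivariance built into the notion of developing map, here with $\Gamma$ acting through $\rho^{*}$.

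There is no real obstacle: both parts are bookkeeping once the Wronskian formula and the identification of the monodromy with $\rho(\kappa)$ are on hand. The only point needing a moment's care is the determinant computation in (1) — checking that the Jacobian of $ev(\kappa)$ is, up to sign and a power of $t$, precisely the Wronskian $J$, so that its non-vanishing on $H^{\circ}$ proved above yields local biholomorphy at once.
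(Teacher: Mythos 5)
Your proposal is correct and follows essentially the same route as the paper: part (1) reduces local biholomorphy to the non-vanishing of the Wronskian $J=\prod_{\alpha>0}(e^{\alpha/2}-e^{-\alpha/2})^{-2k_{\alpha}}$ established in the preceding lemma (the paper phrases this via the Jacobian of the projective evaluation map in an affine chart, while you compute the Jacobian of $ev(\kappa)$ itself as $\pm t^{n}J$, which is a slightly more explicit version of the same observation), and part (2) is the standard statement that the evaluation functional transforms by the dual monodromy, which the paper dispatches in one line.
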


\begin{proof}
That evaluation map $ev(\kappa)$ is locally biholomorphic everywhere since (say $f_{0}\neq 0$)
the Wronskian
$$J=f_{0}^{n+1}\det(\partial_{\xi_{i}}(f_{j}/f_{0}))_{0\leq i,j\leq n}$$
is precisely the Jacobian of the projective evaluation mapping in the affine chart $\{f_{0}\neq 0\}$.

Statement 2 is clear since $\mathcal{F}_{S}^{*}(\kappa)$ is a fiber living on the base
orbifold $W\backslash H^{\circ}$.
\end{proof}

\begin{lemma}\label{lem:strata-local-coordinates}
We can pick local coordinates $y_{1},y_{2},\cdots ,y_{n+1}$ and certain linear coordinates of
$\mathcal{F}_{S}^{*}(\kappa)$ near $x$ such that the evaluation map has the following form:
\begin{align*}
&ev(\kappa)=(y_{1}^{\frac{1}{2}-k_{\alpha}},y_{2},\cdots , y_{n+1}) \;  &&\text{if} \; x\in H_{\alpha}^{\circ} \\
&ev(\kappa)=(y_{1}^{1-k'_{p}},\cdots,y_{m}^{1-k'_{p}},y_{m+1}^{1-k''_{p}},\cdots , y_{n+1}^{1-k''_{p}}) \;  &&\text{if} \; x\in D_{p}^{\circ}.
\end{align*}
\end{lemma}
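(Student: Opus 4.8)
The plan is to reduce the computation of the local form of $ev(\kappa)$ near a subregular point $x$ to the one-variable local model already provided in Section $\ref{subsec:geometric-structures}$, namely Proposition 1.10 of \cite{Couwenberg-Heckman-Looijenga} and the preceding propositions on infinitesimally simple degenerations. Recall that $x$ lies in exactly one irreducible component of $\hat H-H^\circ$, either a reflection hypertorus $H_\alpha^\circ$ or a boundary divisor $D_p^\circ$, and that near such a point the connection $\tilde\nabla^\kappa$ is logarithmic along that single component with all the other local coordinate directions flat. Thus the situation splits holomorphically as a product: one coordinate $t$ transverse to the divisor, carrying the logarithmic singularity, and $n$ further coordinates along the divisor on which the affine structure is the trivial one. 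The first step is therefore to make this splitting precise by invoking the local retraction statement (the proposition on normal crossing divisors restricted to a single component), so that a local chart of the form $(F_0,t,F_\lambda)$ exists with the developing map affine-equivalent to the explicit multivalued map of Proposition 1.10.

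The second step is to read off the logarithmic exponents from the eigenvalue computations of Section $\ref{subsec:eigenvalues}$. Along $\hat H_\alpha\times\mathbb P^1$ the residue $\mathrm{Res}_{\hat H_\alpha\times\mathbb P^1}(\tilde\Omega^\kappa)^*=u_\alpha$ has eigenvalues $2k_\alpha$ (multiplicity one, on the line $\mathbb C\alpha^\vee$) and $0$ (multiplicity $n$), so by the last lemma of Section $\ref{subsec:eigenvalues}$ the logarithmic exponent of the affine structure along $H_\alpha^\circ$ in the relevant eigenspace is $2k_\alpha-1$; feeding this into the local model of Proposition 1.10 gives one coordinate behaving like $y_1^{1-(2k_\alpha-1)\cdot\frac12\cdot(\text{normalization})}$. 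I will need to match the normalization carefully: the developing map of the affine structure on $H^\circ\times\mathbb C^\times$ has components $c+tf$ by Lemma $\ref{lem:local-affine-functions}$, and passing to the projective structure on $H^\circ$ divides out the $t$-direction, which shifts the exponent so that the transverse coordinate is raised to the power $\tfrac12-k_\alpha$. Along $D_p^\circ$ the residue has (at most) two eigenvalues, with multiplicities $m$ and $n+1-m$ coming from the decomposition $\mathfrak h=\mathbb Cp\oplus\sum_i\mathfrak h_i$; writing $k'_p,k''_p$ for the corresponding normalized exponents yields the stated form with blocks of sizes $m$ and $n+1-m$. Here I would define $k'_p$ and $k''_p$ in terms of the two eigenvalues $\lambda_\pm$ of $\sigma=\mathrm{Res}_{D_p\times\mathbb P^1}(\tilde\Omega^\kappa)^*$ satisfying $\lambda^2-\varphi\lambda+a(p,p)=0$, precisely as the shifted quantities $1-k'_p=$ (the exponent of the projective developing map in that eigenblock).

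The third step is purely bookkeeping: choose the linear coordinates on $\mathcal F_S^*(\kappa)$ dual to a basis $f_0,\dots,f_n$ of local solutions adapted to the eigenspace decomposition of the residue endomorphism (possible because that endomorphism is semisimple away from integer exponents, and the exceptional integer cases are handled by the corresponding clause of Proposition 1.10), and choose $y_1,\dots,y_{n+1}$ to be a local defining equation $t$ for the divisor together with flat coordinates along it. Then the explicit shape of $ev(\kappa)$ follows directly by substituting into Proposition 1.10.

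The main obstacle I anticipate is the normalization of the exponents: the eigenvalues in Section $\ref{subsec:eigenvalues}$ are computed for $(\tilde\Omega^\kappa)^*$ acting on $\mathfrak h\oplus\mathbb C$, whereas the logarithmic exponent entering the local model of the affine structure is the residue eigenvalue on the quotient line bundle $\mathcal O_D$ shifted by $-1$ (per the last lemma of Section $\ref{subsec:eigenvalues}$), and then passing from the affine structure on $H^\circ\times\mathbb C^\times$ to the projective structure on $H^\circ$ introduces a further shift by the $t$-weight. Getting all three shifts to combine correctly into the clean exponents $\tfrac12-k_\alpha$ and $1-k'_p,1-k''_p$ is the delicate point; once the relation between the $\sigma$-eigenvalues and $k'_p,k''_p$ is pinned down (using the quadratic equation and the explicit tables), the rest is a direct appeal to the cited local model.
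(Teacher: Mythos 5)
Your overall strategy is exactly the paper's: read the residue eigenvalues off the computations of Section \ref{subsec:eigenvalues}, feed them into the local model for infinitesimally simple degenerations (Proposition 1.10 of \cite{Couwenberg-Heckman-Looijenga}) recalled in Section \ref{subsec:geometric-structures}, and choose linear coordinates on $\mathcal{F}_{S}^{*}(\kappa)$ adapted to the eigenspace decomposition. The paper's own proof is in fact terser than your proposal and does no more than this.

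The one point where your stated mechanism is off is the normalization you flag as delicate. The passage from the residue eigenvalues $(2k_{\alpha},0)$ to the exponent $\tfrac12-k_{\alpha}$ is not produced by projectivizing away the $t$-direction: the evaluation map in the lemma is the full $ev(\kappa)$ into the $(n+1)$-dimensional space $\mathcal{F}_{S}^{*}(\kappa)$, with the $t$-direction still present among the $y_{i}$. The transverse exponent in $H^{\circ}$ itself is $1-2k_{\alpha}$; the extra factor $\tfrac12$ arises because the evaluation map lives on $(W\backslash H^{\circ})\times\mathbb{C}^{\times}$ and the generic point of the mirror is a $\mathbb{Z}/2$ orbifold point, so a half turn around $H_{\alpha}$ is already a loop in $W\backslash H^{\circ}$ and the transverse coordinate downstairs is the square of the one upstairs, whence $y_{1}^{(1-2k_{\alpha})/2}=y_{1}^{\frac12-k_{\alpha}}$. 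This is precisely the clause ``a half turn corresponds to a loop in $W\backslash H^{\circ}$'' in the paper's proof. No such halving occurs along $D_{p}$, since $W$ does not reflect in the toric boundary divisors; there the exponents are simply $1-k'_{p}$ and $1-k''_{p}$ with $k'_{p},k''_{p}$ the two residue eigenvalues. With that correction your argument closes up and coincides with the paper's.
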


\begin{proof}
From the computation in Section $\ref{subsec:eigenvalues}$, we know that the eigenvalues of the residue map of the connection $\tilde{\nabla}$ along the
mirror are $2k_{\alpha}$ and $0$ with multiplicities $1$ and $n$ respectively while a half turn corresponds to a loop in
$W\backslash H^{\circ}$.

Similarly, the eigenvalues of the residue map of the connection $\tilde{\nabla}$ along the boundary divisor are
$k'_{p}$ and $k''_{p}$, with multiplicities $m$ and $n+1-m$ respectively say.

Then the evaluation map could be written as in the statement with respect to these coordinates.
\end{proof}

\subsection{Complex hyperbolic ball}\label{subsec:complex-ball}

Then we finally arrive at the main result of this section. Inspired by the idea of Section 3.8 in Couwenberg \cite{Couwenberg}, we
can show the following fact.

\begin{theorem}
For $\kappa\in K'_{\mathrm{hyp}}$, the image of the projective evaluation map
$$\widehat{Pev}:\widehat{W\backslash H^{\circ}}\rightarrow\mathbb{P}^{n}(\mathbb{C})$$
is contained in the ball $\mathbb{B}^{n}(\mathbb{C})$.
\end{theorem}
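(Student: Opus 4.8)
The plan is to combine the local structure of the developing map around subregular points (Lemma~\ref{lem:strata-local-coordinates}) with the positivity of the dual Hermitian form $h^{*}(\kappa)$ on $K'_{\mathrm{hyp}}$ (the Lemma following \eqref{eqn:dual-herm} together with Corollary~\ref{cor:hyperbolic-domain}), following the strategy of Couwenberg \cite{Couwenberg}. First I would set up the target: fixing a basis $f_{0},\dots,f_{n}$ of $\mathcal{F}_{S}(\kappa)$, the dual Hermitian form $h^{*}(\kappa)$ on $\mathcal{F}_{S}^{*}(\kappa)$ has signature $(1,n)$ for $\kappa\in K'_{\mathrm{hyp}}$, so $\mathbb{P}(\mathcal{F}_{S}^{*}(\kappa))$ contains a well-defined ball $\mathbb{B}^{n}(\mathbb{C})=\{[\ell]\mid h^{*}(\kappa)(\ell,\ell)<0\}$, and the claim is precisely that $\widehat{Pev}$ maps into this locus. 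The function $\varphi:=h^{*}(\kappa)(\widehat{ev},\widehat{ev})$ descends from $\widehat{W\backslash H^{\circ}}$ to a well-defined (single-valued) function on $W\backslash H^{\circ}$, up to a positive scalar, because $\rho^{*}(\kappa)$ preserves $h^{*}(\kappa)$; so it suffices to show $\varphi<0$ everywhere on $H^{\circ}$.

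The core of the argument is a maximum-principle / boundary-analysis dichotomy. I would first check that $\varphi$ cannot vanish on $H^{\circ}$: if $\varphi(x_{0})=0$ at an interior point, then $\widehat{ev}(x_{0})$ lies on the isotropic cone of $h^{*}(\kappa)$, and since $\widehat{ev}$ is a local biholomorphism (the first Lemma of Section~\ref{subsec:evaluation-map}, whose proof uses the nonvanishing Wronskian $J=\prod_{\alpha>0}(e^{\alpha/2}-e^{-\alpha/2})^{-2k_{\alpha}}$), the image is open near $\widehat{ev}(x_{0})$, hence meets both $\{h^{*}<0\}$ and $\{h^{*}>0\}$; but then $\varphi$ takes positive values somewhere on $H^{\circ}$, and I must rule this out. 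The way to do that is to analyze the behavior of $\varphi$ as one approaches the boundary divisors $\hat{H}_{\alpha}$ and $D_{p}$. Using the explicit local normal form of $ev(\kappa)$ from Lemma~\ref{lem:strata-local-coordinates}, near a generic point of $\hat{H}_{\alpha}$ the developing map looks like $(y_{1}^{1/2-k_{\alpha}},y_{2},\dots,y_{n+1})$ with exponent $1/2-k_{\alpha}>0$ (since $k_{\alpha}\in(-\tfrac12,\tfrac12)$ on $K'$), so $\widehat{ev}$ extends continuously across the divisor and the limiting value lies in the $h^{*}$-orthogonal complement of the relevant residue eigenvector; a direct computation with the eigenvalue data of Section~\ref{subsec:eigenvalues} shows the limit is a negative vector for $h^{*}$ (this is where the hyperbolicity hypothesis $\kappa\in K'_{\mathrm{hyp}}$, i.e.\ $0<x<1$, $0<y<1$, enters). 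Similarly, along $D_{p}$, the two exponents $1-k'_{p}$, $1-k''_{p}$ are positive on $K'_{\mathrm{hyp}}$ (the eigenvalue tables in Section~\ref{subsec:eigenvalues} give $k'_{p},k''_{p}$ as the eigenvalues divided by appropriate factors, all lying below $1$ in the hyperbolic region), so again $\widehat{ev}$ extends and the boundary value is $h^{*}$-negative.

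Having established that $\varphi$ is $\le 0$ on a neighborhood of every subregular boundary point and $<0$ on a nonempty open set (e.g.\ near the base alcove $D_{+}$, where one computes $\varphi$ directly from the reflection representation and Corollary~\ref{cor:hyperbolic-domain}), I would run the following connectedness argument. The set $U^{-}=\{x\in H^{\circ}\mid\varphi(x)<0\}$ is open and nonempty; its boundary inside $H^{\circ}$ would consist of points where $\varphi=0$, which we excluded; and along the divisor boundary $\varphi$ stays $\le 0$, so $U^{-}$ is also closed in $H^{\circ}$. Since $H^{\circ}$ is connected, $U^{-}=H^{\circ}$, which is the assertion. (The usual technical point is that one must first perform the Looijenga blow-up so that all the boundary analysis happens across smooth divisors with normal-crossing intersections; the propositions of Section~\ref{subsec:geometric-structures} guarantee the developing map behaves compatibly under these blow-ups, so the argument is unaffected.)

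The main obstacle I expect is the boundary sign computation: showing that the limiting value of $\widehat{ev}$ along each $\hat{H}_{\alpha}$ and each $D_{p}$ is strictly $h^{*}$-negative precisely when $\kappa\in K'_{\mathrm{hyp}}$. This requires matching the local exponents from Lemma~\ref{lem:strata-local-coordinates} with the signature data encoded in $\det(h(\kappa))=-4\sin(\pi x)\sin(\pi y)$ (or the $D_{n}$, $E_{n}$ product formulas) from Theorem~\ref{theorem:det}, and checking that the "subregular" pieces of the evaluation map land in the correct half of the cone — essentially an induction on the stratification of $\hat{H}$ by faces, where each face contributes a factor governed by a lower-rank sub-root-system, and one must verify the signs propagate. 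The degenerate cases (extremal nodes of $D_{n}$, certain nodes of $E_{n}$, where the eigenvalue multiplicities are not simply $m$ and $n+1-m$) need to be handled individually, but the eigenvalue tables already supply the needed data.
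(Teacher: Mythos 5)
Your proposal assembles the right ingredients (the dual form $h^{*}$ from ($\ref{eqn:dual-herm}$), the local normal forms of Lemma $\ref{lem:strata-local-coordinates}$, local biholomorphy of the evaluation map), but its logical skeleton has a genuine gap, and it is precisely the gap that the paper's proof is built to avoid. Your connectedness argument runs: interior zeros of $\varphi=h^{*}(ev,ev)$ are impossible, boundary limits have the ball sign, hence $U^{-}=\{\varphi<0\}$ is open and closed in $H^{\circ}$. But your exclusion of interior zeros is conditional: you show that $\varphi(x_{0})=0$ at an interior point forces $\varphi$ to take both signs near $x_{0}$ (since $ev$ is open), and then say you ``must rule this out'' by boundary analysis. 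Knowing that $\varphi$ has one sign near the boundary divisors does not propagate inward: $\varphi$ is a difference $|\ell_{0}(ev)|^{2}-\sum_{i\geq 1}|\ell_{i}(ev)|^{2}$ of plurisubharmonic functions and obeys no maximum principle in either direction, so a sign change in the interior is perfectly consistent with a definite sign near the boundary. Consequently the claim that $U^{-}$ is closed in $H^{\circ}$ is circular: to exclude an interior zero you need to know $\varphi$ does not change sign, and to know it does not change sign you need to exclude interior zeros. The closing appeal to an induction on the stratification does not repair this.

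The paper closes the circle by deforming in the parameter $\kappa$ rather than arguing in $H^{\circ}$ for a fixed $\kappa$. It restricts $\phi$ to a slice $\iota(\mathbb{P}^{1}\times\mathbb{C}^{\times})$ meeting the divisors only in subregular points, checks $\phi>0$ outright on the parabolic locus $K_{0}$, and considers the closed set $N_{K}$ of parameters where $\phi\leq 0$ somewhere. At a parameter $\kappa\in\partial N_{K}$ one has the a priori one-sided bound $\phi\geq 0$ together with a zero; it is exactly this one-sided bound that makes the maximum principle usable (via the plurisubharmonic ratio $\sum_{i\geq1}|\ell_{i}(ev)/\ell_{0}(ev)|^{2}\leq 1$), pushing any zero out to a subregular boundary orbit, where the local form $|z_{i}|^{2}-|z_{j}|^{2}$ dictated by signature $(1,n)$ contradicts $\phi\geq 0$; connectedness of $K'_{\mathrm{hyp}}\cup K_{0}$ then finishes. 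Two further points. First, your boundary step asserts without proof that the limit of $ev$ along each $\hat{H}_{\alpha}$ and $D_{p}$ is a strictly negative vector for $h^{*}$; this is a nontrivial computation the paper never needs, since it only uses the local exponents to show that $\phi$ extends continuously and to produce the sign contradiction at a putative boundary zero. Second, your identification of the ball with $\{h^{*}<0\}$ is inconsistent with $h^{*}$ having signature $(1,n)$: the paper proves $h^{*}(ev,ev)>0$ on the image.
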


\begin{proof}
Let $e_{0},e_{1},\cdots ,e_{n}$ be the standard basis of $\mathcal{F}_{S}$, denote their dual sections in $\mathcal{F}_{S}^{*}$
by $e_{0}^{*},e_{1}^{*},\cdots,e_{n}^{*}$, then through the equivalence of
monodromy representation and the reflection representation we can transfer the Hermitian structure in space $\mathcal{A}^{n+1}$
to the vector bundle $\mathcal{F}_{S}^{*}$ over the restricted real valued multiplicity function $K'$
by defining $h^{*}(e^{*}_{i},e^{*}_{j})=h^{*}_{ij}$ as in ($\ref{eqn:dual-herm}$). To prove this desired result, it suffices to show that
\[ h^{*}(ev(\kappa,\cdot),ev(\kappa,\cdot))>0 \]
on $H^{\circ}$ for hyperbolic $\kappa$, i.e., for $\kappa\in K'_{\mathrm{hyp}}$.

Using the action of the Weyl group $W$ on the adjoint torus $H$ which corresponds to a complete Weyl
Chamber decompostion $\Sigma$ for the real vector space $P_{\mathbb{R}}^{\vee}=P^{\vee}\otimes\mathbb{R}$ spanned by the
coweight lattice $P^{\vee}$, a smooth full compactification $H\rightarrow \hat{H}$ added by a boundary divisor
with normal crossings could be realized. These boundary divisors are called the toric strata.
Also we could compactify $\mathbb{C}^{\times}$ by adding two points $\{ 0,\infty\}$ which actually corresponds to a type $A_{1}$
Weyl chamber decomposition.

Let $\mathcal{D}=\cup_{\alpha}H_{\alpha}$ and let
\[
\begin{CD}
\mathbb{P}^{1}\times\mathbb{C}^{\times}   @>\iota >>     \hat{H}\times\mathbb{C}^{\times}  \\
@V pr_{1} VV                                                                                         @VV pr_{1} V                         \\
\mathbb{P}^{1}                                   @> \iota^{\prime}  >>    \hat{H}
\end{CD}
\]
be a commutative diagram such that $\iota^{\prime}$ is an embedding of a projective line
in $\hat{H}$ which intersects every mirror and every irreducible toric divisor
only in subregular points. In particular, the image $\iota^{\prime}(\mathbb{C}^{\times})$ is not contained in any
reflection hypertorus or toric divisor. In fact, this desired projective line could be realized as follows: as $P^{\vee}\cong\mathbb{Z}^{n}$, any $p=\sum_{i}b_{i}p_{i}$ gives rise
to a homomorphism $\gamma_{p}:\mathbb{C}^{\times}\rightarrow H$ which sends
$\lambda\in\mathbb{C}^{\times}$ to $(\lambda^{b_{1}},\lambda^{b_{2}},\cdots,\lambda^{b_{n}})\in(\mathbb{C}^{\times})^{n}$.
We now choose an element $p_{1}=\sum_{i}b_{i}p_{i}$
with $b_{1}=1,b_{i}=0 \; \text{for} \; i=2,\cdots, n$, then we have a homomorphism
\begin{align*}
\gamma_{1}:\mathbb{C}^{\times}& \rightarrow H\\
\lambda & \mapsto (\lambda,1,\cdots, 1)
\end{align*}
If we compactify this 1-dimensional subtorus by adding two points $\{0,\infty\}$, this
induced projective line intersects $D_{p_{1}}$ and $D_{-p_{1}}$ only in subregular points.
Then we translate this projective line a little bit, i.e., multiply its coordinates by a
complex number $1+\epsilon$ with $\epsilon$ very close to $0$. By this way, we get
a projective line which intersects mirrors and toric divisors only in subregular points.

In the diagram above, let $\iota$ map the second factor unchanged and $pr_{1}$ denote the projection map to
the first factor. Let $a_{1},\cdots,a_{m}$ be the points in $\mathbb{P}^{1}$ which are mapped by $\iota^{\prime}$
into $\mathcal{D}$ and $a_{0},a_{\infty}$ be the points in $\mathbb{P}^{1}$ which are mapped by $\iota'$ into the
toric divisor. Define a real valued function $\phi$ on $K^{\prime}\times((\mathbb{P}^{1}
\setminus \lbrace a_{0},a_{1},\cdots ,a_{m},a_{\infty}\rbrace)\times\mathbb{C}^{\times})$ by:
$$\phi(\kappa,x):=h^{*}(ev(\kappa,\iota(x)),ev(\kappa,\iota(x))).$$
Here we write $x$ instead of a point $(q,t)\in \mathbb{P}^{1}\times\mathbb{C}^{\times}$.

Note that by monodromy invariance of $h^{*}$ this defines a single valued continuous function.
Then we conclude by the
characterization in Lemma $\ref{lem:strata-local-coordinates}$ that $\phi$ extends to a continuous function (also called $\phi$)
on $K^{\prime}\times(\mathbb{P}^{1}\times\mathbb{C}^{\times})$.

We now investigate if this $\phi$ can take on negative values. If we denote the
parabolic region by $K_{0}$, we
observe that $\phi(\kappa,x)>0$ for $\kappa\in K_{0}$. For parabolic $\kappa$, we always have the projection of $x$ onto the
$\mathbb{C}^{\times}$ part nonzero so that $\phi$ must be greater than $0$. Define $N$ by:
$$N:=\lbrace (\kappa,x)\in K^{\prime}\times(\mathbb{P}^{1}\times\mathbb{C}^{\times})
\mid \phi(\kappa,x)\leq 0\rbrace.$$
Then $N$ is closed by the continuity of the function $\phi$. Because $N$ is invariant under
scalar multiplication in the second factor, we have that the projection $N_{K}$ of $N$ on $K^{\prime}$
along $\mathbb{P}^{1}\times\mathbb{C}^{\times}$ is also closed.

Now suppose $\kappa\in \partial N_{K}$, then $\phi(\kappa,x)\geq 0$ otherwise $\kappa$ cannot belong to the boundary
of $N_{K}$ by the continuity of $\phi$. And we also have $\phi(\kappa,x_{0})=0$ for some
$x_{0}\in \mathbb{P}^{1}\times\mathbb{C}^{\times}$. Suppose also that $\kappa\in K'_{\mathrm{hyp}}$.
Because $ev(\kappa)$ is locally biholomorphic on $H^{\circ}\times\mathbb{C}^{\times}$ and
the image $\iota'(\mathbb{C}^{\times})$ of $\mathbb{C}^{\times}$ under $\iota'$ is not
contained in any single irreducible component of the added divisor by a previous remark, we conclude that
$\phi(\kappa,x)=0$ implies that $x\in (a_{0}\times\mathbb{C}^{\times})\cup (a_{1}\times\mathbb{C}^{\times})
\cup\cdots\cup (a_{m}\times\mathbb{C}^{\times}) \cup (a_{\infty}\times\mathbb{C}^{\times})$
by the maximal principle. Hence $\phi(\kappa,\cdot)$ vanishes along some $\mathbb{C}^{\times}$-orbit.

Either $i\in\{ 1,\cdots,m\}$ or $\{0,\infty \}$, we know that at a non zero point $x_{0}$ in $a_{i}\times\mathbb{C}^{\times}$
we can write the evaluation map $ev(\kappa,x)$ locally of the form given in Lemma $\ref{lem:strata-local-coordinates}$:
\begin{align*}
&ev(\kappa,\iota(x))=(y_{1}^{\frac{1}{2}-k_{\alpha}},y_{2},\cdots , y_{n+1}) \;  &&\text{if} \; i\in \{1,\cdots,m\} \\
&ev(\kappa,\iota(x))=(y_{1}^{1-k'_{p}},\cdots,y_{m}^{1-k'_{p}},y_{m+1}^{1-k''_{p}},\cdots , y_{n+1}^{1-k''_{p}}) \;  &&\text{if} \; i\in \{0,\infty \}.
\end{align*}
Since $\kappa$ lies in the hyperbolic region, so $\det(h^{*})$ is of the signature $(1,n)$. While $\phi(\kappa,x)\geq 0$,
then $\phi$ must possess the unique positive signature if we transform these above coordinates into the standard coordinates
$z_{i}$ for $1\leq i \leq n+1$. Since the image $\iota(\mathbb{P}^{1}\times\mathbb{C}^{\times})$ is of dimension $2$, we then
have the following formula:
\[  \phi(\kappa,x)=|z_{i}|^{2}-|z_{j}|^{2} \quad \text{for some} \; i,j\]
for $x$ near $x_{0}$. Then we know that it must take value in an open interval containing $0$ if $x$ lies in
a neighbourhood of $x_{0}$ which is
in contradiction to that $\phi(\kappa,x)\geq 0$ for $x$ takes value in a neighbourhood of $x_{0}$.

Therefore, we conclude that if $\kappa\in \partial N_{K}$ then $\kappa$ is outside of $K'_{\mathrm{hyp}}$. Since $K'_{\mathrm{hyp}}\cup K_{0}$
is connected and
not contained in $N_{K}$, we conclude that $K'_{\mathrm{hyp}}\cup K_{0}$ is disjoint from $N_{K}$ and hence $K'_{\mathrm{hyp}}$ is disjoint from
$N_{K}$ as well. This shows that
$\phi(\kappa,x)>0$ when $\kappa\in K'_{\mathrm{hyp}}$. In particular we have that on the $\iota$ image of $\mathbb{C}^{\times}\times
\mathbb{C}^{\times}$, evaluation maps into the lift of $\mathbb{B}^{n}$ in $\mathbb{C}^{n+1}$, hence
projective evaluation maps into $\mathbb{B}^{n}$.  And the desired result follows by varying the map
$\iota$ so that the images of $\iota'$ cover $H^{\circ}$.
\end{proof}

\begin{remark}
From previous computation, we can know that the monodromy along toric
strata has only two different eigenvalues. But only those strata for which one eigenvalue has
multiplicity $1$ and the other multiplicity $n$ are
mapped under the projective evaluation map to mirrors in the complex hyperbolic ball.
And in fact, monodromy around these strata acts like a complex reflection.
\end{remark}

\section{Ball quotient structures}

In this section we shall give a finite list for which the orbifold $W\backslash H^{\circ}$ could be
biholomorphically mapped onto a Heegner divisor complement of a ball quotient. This requires us to
check the Schwarz conditions for root systems, and fortunately we have already obtained all the exponents
along those mirror divisors from the preceding section. In the end, we explain a modular interpretation
for a ball quotient from type $A_{n}$ according to the Deligne-Mostow theory.
We hope this could shed some light on looking for modular interpretations for other types.

\subsection{The Schwarz conditions}

Now in order to extend geometric structures across the arrangement ``nicely'', we have to impose the so-called
$\emph{Schwarz condition}$ \index{Schwarz condition}
on this ``Dunkl-type'' system in the sense of \cite{Couwenberg-Heckman-Looijenga}.
But before we proceed to that, let us have a look at a simple example
first so that we can have some feeling why the Schwarz conditions are introduced in the way as later.
It is a one-dimensional example. Let $M$ be $\mathbb{C}^{\times}$, the toric arrangement consists of the identity and
$\Omega=k\frac{z+1}{z-1}\frac{dz}{z}\otimes dz\otimes \frac{\partial}{\partial z}$.
Assume we have finite holonomy, which means we can write $1-k=p/q$ with
$p,q$ relatively prime integers and $q>0$. Then the holonomy cover can be extended to a $q$-fold cover with
ramification over the identity $\hat{M}\rightarrow M$ defined by $(\hat{z}-1)^{q}=z-1$. On the other hand, the
projective developing map $\hat{M}\rightarrow \mathbb{P}^{1}$ is given by $w-1=(\hat{z}-1)^{p}$ and hence extends across the identity
only if $p>0$, i.e., $k<1$. But we could note that the connection is invariant under the $p$th roots of unity
$\xi_{p}$ which means the $\xi_{p}$-orbit space of $M$ is covered by the $\xi_{p}$-orbit space of $\hat{M}$ and
the projective developing map factors through the latter as a local isomorphism onto $\mathbb{P}^{1}$.
This example suggests the definition for Schwarz condition, which has also been discussed in \cite{Heckman-Looijenga-2010}.

\begin{definition}
Let be given a connected complex manifold $M$ with a smooth projective compactification $\overline{M}=M\bigsqcup D$
by adding a normal crossing divisor $D$ to it. Let the following system of second order differential equations
\[ (\partial_i\partial_j+\sum \Gamma^k_{ij}\partial_k+A_{ij})f=0 \]
defines a projective structure on $M$ with regular singularities along $D$. Then the compactification is said
to be well adapted to the given projective structure on $M$ if (on a finite cover)
the projective developing map extends locally across $D$ as a rational map.

Now assume the compactification
$\overline{M}$ is well adapted to the given projective structure on $M$, then we say the system satisfies the
Schwarz conditions if the projective developing map extends across those codimension one strata of $D$
as a local biholomorphism, as long as they are not contracted by the projective developing map.
\end{definition}

Suppose now the ``Dunkl-type'' system satisfies the Schwarz condition.
As illustrated by the previous one-dimensional example,
for $L$ an irreducible intersection, it's easy to extend the developing map across $L^{\circ}$ when $1-k_{L}>0$
where $k_{L}$ is the exponent associated to $L$.
Here we mean an irreducible intersection by it cannot be decomposed into two nontrivial intersections for which
the union of the sets of hypersurfaces containing each intersection is just the set of hypersurfaces containing
the intersection.
But when $1-k_{L}\leq 0$, the situation becomes
quite different because if we approach $L^{\circ}$ from $M$ along a curve, the image of a lift in $\widehat{M}$
under the projective developing map tends to infinity with limit a point of $\mathbb{P}(A)$.
In fact, these limit points lie in
a well-defined $\Gamma$-orbit of linear subspaces of $\mathbb{P}(A)$ of codimension $\dim(L)$, which is called a
$\emph{special subspace}$ in $\mathbb{P}(A)$. So we say that $M$ has geometric structures of
$\emph{elliptic, parabolic, hyperbolic}$ type
\index{Geometric structures of!elliptic type} \index{Geometric structures of!parabolic type}
\index{Geometric structures of!hyperbolic type}
according to whether $k_{0}<1$, $=1$ or $>1$. That is because
$k_{0}<1$ (resp. $k_{0}=1$) ensures $k_{L}<1$ for all the irreducible intersections $L$ (resp.
all the irreducible intersections except for $\{0\}$) due to the partial order of $k_{L}$'s.

While the most interesting
case is the one of hyperbolic type, we need to treat $L^{\circ}$ with $k_{L}\geq 1$ very carefully. Now we assume
the ``Dunkl-type'' system with the flat Hermitian form $h$ is of the hyperbolic type. We notice that the restriction of $h$ to
the fibers of the natural retraction $r:\overline{M}_{L^{\circ}}\rightarrow L^{\circ}$ is positive, semipositive and hyperbolic
according to whether $1-k_{L}>0$, $=0$ or $<0$. From this we see that when $k_{L}<1$ the $L^{\circ}$ still keeps
its hyperbolic type while other cases not. So for each irreducible intersection $L$ with $k_{L}\geq 1$
we need to blow it up and contract it in its own direction so that each of them has a hyperbolic structure in the end.
Of course, we need to blow up those $L$'s in an appropriate order (starting from the smallest dimension) until
the members of the arrangement become disjoint so that proper contractions can be done.
And we should point out that for those $L$'s with
$k_{L}=1$, we need to blow up each of them in a $\emph{real-oriented manner}$. The process of blowing up and
followed by contraction is a very technical tool, interested reader could consult \cite{Looijenga-2003} and
\cite{Couwenberg-Heckman-Looijenga} for a detailed and complete discussion. After we finish these operations,
we can extend the corresponding structure across the arrangement.

\subsection{Schwarz conditions for root systems}

Fortunately we have already computed the eigenvalues of the residue endomorphisms along the mirror and toric strata
in section $\ref{subsec:eigenvalues}$,
and from \cite{Couwenberg-Heckman-Looijenga} we also have the exponent near the identity element by
$k_{\{1\}}=\frac{1}{\mathrm{codim}(\{1\})}\sum_{\alpha>0}k_{H_{\alpha}}$. Then we list these exponents
of toric strata, mirror strata and identity element for all the root systems as follows.

\begin{lemma}\label{lem:relative-exponent}
The relative exponents
of toric strata, mirror strata and identity element for all the root systems are as follows.

\newcommand{\tabincell}[2]{\begin{tabular}{@{}#1@{}}#2\end{tabular}}
\begin{table}[H]
\caption{Relative exponents}
\vspace{5mm}
\begin{tabular}{|c|c|c|c|}
\hline
type & toric strata & mirror strata & identity element (after blown up) \\
\hline
$A_{n}$ & \tabincell{c}{$\frac{1}{2}(n-1)k-\frac{1}{2}(n+1)k'$ \\ $\frac{1}{2}(n-1)k+\frac{1}{2}(n+1)k'$} & $\frac{1}{2}-k$ & $((n+1)k-1)/2$ \\
\hline
$B_{n}$ & $(n-3)k+k'$, $2k-k'$ & $\frac{1}{2}-k$, $\frac{1}{2}-k'$ & $(2(n-1)k+2k'-1)/2$ \\
\hline
$C_{n}$ & $(n-3)k+2k'$, $k-k'$ & $\frac{1}{2}-k$, $\frac{1}{2}-k'$ & $(2(n-1)k+2k'-1)/2$ \\
\hline
$D_{n}$ & $(n-3)k$, $k$ & $\frac{1}{2}-k$ & $(2(n-1)k-1)/2$\\
\hline
$E_{n}$ & $k$, $2k$, $(n-4)k$ & $\frac{1}{2}-k$ & \tabincell{c}{$(hk-1)/2$ \\ with $h$ the Coxeter number} \\
\hline
$F_{4}$ & $k'$, $2k$ & $\frac{1}{2}-k$, $\frac{1}{2}-k'$ & $(6(k+k')-1)/2$ \\
\hline
$G_{2}$ & $-\frac{1}{2}k+\frac{3}{2}k'$, $\frac{1}{2}k-\frac{1}{2}k'$ & $\frac{1}{2}-k$, $\frac{1}{2}-k'$ & $(3(k+k')-1)/2$ \\
\hline
\end{tabular}
\end{table}

\end{lemma}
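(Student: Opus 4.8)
The plan is to treat Lemma~\ref{lem:relative-exponent} as a compilation: every entry of Table~3 is extracted from data already displayed in Section~\ref{subsec:eigenvalues}, and the only real work is to organise it type by type. The guiding principle is that, by Lemma~\ref{lem:strata-local-coordinates} together with the eigenspace decomposition $\mathcal{E}=\bigoplus_{\zeta}\mathcal{E}^{\zeta}$, the relative exponent of the affine structure along a codimension-one stratum is read off from the eigenvalues of the residue endomorphism of $\tilde{\nabla}^{\kappa}$ along that stratum, after dividing by the ramification order of $H^{\circ}\to W\backslash H^{\circ}$ along it: this order is $2$ along a mirror $\hat{H}_{\alpha}$ (a half-turn about $H_{\alpha}$ is a loop downstairs) and $1$ along a toric divisor $D_{p}$, and since the developing map is projective only differences of the resulting exponents matter.

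First I would dispose of the mirror-strata and identity-element columns, which are uniform in the type. The residue $\mathrm{Res}_{\hat{H}_{\alpha}\times\mathbb{P}^{1}}(\tilde{\Omega}^{\kappa})^{*}=u_{\alpha}$ was computed to have eigenvalue $2k_{\alpha}$ on $\mathbb{C}\alpha^{\vee}$ and $0$ elsewhere (including on the $\mathbb{C}$-factor); hence the two exponents of the system along the wall of $D_{+}$ through $\alpha_{j}$ are $1$ and $1-2k_{j}$, and the half-turn normalization turns the non-trivial one into $\tfrac12-k_{j}$, which is the mirror-strata column (where $k,k'$ are the multiplicity parameters, with $k'$ as in Remark~\ref{rem:k'-for-An} for type $A_{n}$). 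Likewise, the identity $1\in H$, at which $e^{\alpha}=1$ for every $\alpha$, is the deepest stratum of the arrangement, of codimension $n$, so by the formula of \cite{Couwenberg-Heckman-Looijenga} its exponent is $k_{\{1\}}=\tfrac1n\sum_{\alpha>0}k_{H_{\alpha}}$; inserting the mirror exponents and the number of positive roots of each type --- and, for $E_{n}$, using $|R_{+}|=\tfrac12 n h$ with $h$ the Coxeter number --- gives $k_{\{1\}}=\tfrac12(n+1)k$ for $A_{n}$, $(n-1)k+k'$ for $B_{n}$, and so on down the list. Because $k_{\{1\}}\ge 1$ in the regimes of interest, $\{1\}$ must be blown up (real-oriented when $k_{\{1\}}=1$), and the exceptional divisor then carries the relative exponent recorded in the last column, e.g.\ $\bigl((n+1)k-1\bigr)/2$ for $A_{n}$; the precise shift relative to $k_{\{1\}}$ is fixed by the same conventions.

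Next the toric-strata column. By the theorem of Section~\ref{subsec:eigenvalues} that $\sigma=\mathrm{Res}_{D_{p}\times\mathbb{P}^{1}}(\tilde{\Omega}^{\kappa})^{*}$ has at most two eigenvalues $k'_{p},k''_{p}$ on $\mathfrak{h}\oplus\mathbb{C}$, the divisor $D_{p}$ contributes the single relative exponent $|k'_{p}-k''_{p}|$, and it remains to run over the one-dimensional faces $p\in\Pi$. For $A_{n}$ this is the toric Lauricella computation of Example~\ref{exm:toric-Lauricella} specialised to $\mu_{1}=\cdots=\mu_{n+1}=1$: for $p=\varpi_{m}^{\vee}$ the eigenvalues are $-\tfrac12(n+1-m)(k-k')$ and $-\tfrac12 m(k+k')$, whose difference has endpoints $\tfrac12(n-1)k\mp\tfrac12(n+1)k'$, and these are the tabulated values. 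For $B_{n},C_{n},D_{n},F_{4},G_{2}$ the explicit eigenvalue lists displayed after that theorem give the differences directly, the two kinds of faces (through a long-root node, and the spin or short node) producing the two tabulated values. For $E_{6},E_{7},E_{8}$ one invokes the orbit machinery: $U_{p}$ is proportional to $E=\sum_{c=1}^{n_{p}}c|O(c)|\,E_{O(c)}$, the eigenvalues of each $E_{O(c)}$ being cosines-squared of projection angles computed from the decomposition of $\mathfrak{B}-\alpha_{m}$ into irreducible blocks; this yields the eigenvalue tables for $E_{n}$, whence the differences $k$, $2k$, $(n-4)k$.

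The main obstacle is purely computational and confined to $E_{6},E_{7},E_{8}$: their fundamental coweights admit no uniform description, so the block decomposition of $\mathfrak{h}$ under deletion of each node, the list of $W_{p}$-orbits $O(c)=\{\alpha\in R:\alpha(p)=c\}$, and the projection-angle bookkeeping must be redone node by node --- exactly the computation recorded in the online appendix \cite{Shen-2018}, with an independent cross-check via the $\det(h(\kappa))$ formula and Exercise~4 of Ch.~V, \S 6 of Bourbaki \cite{Bourbaki}. A secondary point demanding care is fixing the normalization of the ``blown up'' identity exponent --- the precise shift produced by the blow-up, and the real-oriented version when $k_{\{1\}}=1$ --- which one pins down by matching against the one-dimensional toric example and the Lauricella case, where the answer is independently known.
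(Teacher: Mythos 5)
Your proposal is correct and follows exactly the route the paper takes: its own proof is the one-line ``Straightforward, from the eigenvalues computation,'' and your write-up simply spells that out --- mirror exponents $\tfrac12-k_j$ from the residue eigenvalues $2k_\alpha,0$ with the half-turn normalization, toric exponents as differences of the two eigenvalues of $\mathrm{Res}_{D_p\times\mathbb{P}^1}(\tilde{\Omega}^\kappa)^*$ tabulated in Section~\ref{subsec:eigenvalues}, and the identity column as $(2k_{\{1\}}-1)/2$ with $k_{\{1\}}=\tfrac1n\sum_{\alpha>0}k_{H_\alpha}$. The only looseness (the exact normalization of the blown-up identity exponent, which you defer to ``conventions'') is no worse than the paper's own treatment, and your numerical cross-checks confirm the table.
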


\begin{proof}
Straightforward, from the eigenvalues computation.
\end{proof}

In Corollary $\ref{cor:hyperbolic-domain}$ we have already obtained the hyperbolic region for which $H^{\circ}$ is endowed with
a hyperbolic structure. Within this region, only those $W\backslash H^{\circ}$ with $(k,k')$ satisfying the Schwarz conditions
could be extended to a ball quotient, by which we mean of the form $\Gamma\backslash\mathbb{B}$ with $\Gamma$ a discrete group
of $\mathrm{Aut}(\mathbb{B})$ acting on $\mathbb{B}$ with finite covolume.

The Schwarz conditions in the present case are easy to state: those relative exponents appeared in
Lemma $\ref{lem:relative-exponent}$, denoted by $relative \; exponent$, should satisfy:
\[ \emph{relative exponent} \in 1/\mathbb{N} \; \text{if} \;  >0.   \]

Write $\frac{1}{2}-k=1/p$ with $p\in\mathbb{N}$ and $p\geq 3$ for all the types and $\frac{1}{2}-k'=1/p'$
with $p'\in\mathbb{N}$ and $p'\geq 3$ for type $BCFG$.
Then we have the following ball quotients list for the root system $R$ of rank at least $2$.

\begin{table}[H]
\caption{ball quotients for toric mirror arrangement}
\vspace{5mm}
\begin{tabular}{|c|c|c|c|}
\hline
\multicolumn{4}{|c|}{type $A$} \\
\hline
$n$ & $k$ & $p$ & $k'$ \\
\hline
$2$ & $\frac{1}{6}$ & $3$ & $0$, $\pm\frac{1}{90}$, $\pm\frac{1}{54}$, $\pm\frac{1}{36}$, $\pm\frac{5}{126}$,
$\pm\frac{1}{18}$, $\pm\frac{7}{90}$, $\pm\frac{1}{9}$\\
\hline
$2$ & $\frac{1}{4}$ & $4$ & $0$, $\pm\frac{1}{36}$, $\pm\frac{1}{20}$, $\pm\frac{1}{12}$, $\pm\frac{5}{36}$ \\
\hline
$2$ & $\frac{3}{10}$ & $5$ & $\pm\frac{1}{30}$, $\pm\frac{1}{15}$, $\pm\frac{11}{90}$, $\pm\frac{7}{30}$ \\
\hline
$2$ & $\frac{1}{3}$ & $6$ & $0$, $\pm\frac{1}{18}$, $\pm\frac{1}{9}$, $\pm\frac{2}{9}$ \\
\hline
$2$ & $\frac{5}{14}$ & $7$ & $\pm\frac{13}{126}$, $\pm\frac{3}{14}$ \\
\hline
$2$ & $\frac{3}{8}$ & $8$ &  $\pm\frac{1}{24}$, $\pm\frac{7}{72}$, $\pm\frac{5}{24}$ \\
\hline
$2$ & $\frac{7}{18}$ & $9$ & $\pm\frac{5}{54}$, $\pm\frac{11}{54}$ \\
\hline
$2$ & $\frac{2}{5}$ & $10$ & $0$, $\pm\frac{4}{45}$, $\pm\frac{1}{5}$ \\
\hline
$2$ & $\frac{5}{12}$ & $12$ & $\pm\frac{1}{36}$, $\pm\frac{1}{12}$, $\pm\frac{7}{36}$ \\
\hline
$2$ & $\frac{3}{7}$ & $14$ & $\pm\frac{4}{21}$ \\
\hline
$2$ & $\frac{4}{9}$ & $18$ & $\pm\frac{2}{27}$, $\pm\frac{5}{27}$ \\
\hline
$2$ & $\frac{7}{15}$ & $30$ & $\pm\frac{8}{45}$ \\
\hline
$3$ & $\frac{1}{6}$ & $3$ & $0$, $\pm\frac{1}{24}$, $\pm\frac{1}{12}$ \\
\hline
$3$ & $\frac{1}{4}$ & $4$ & $0$, $\pm\frac{1}{24}$, $\pm\frac{1}{8}$ \\
\hline
$3$ & $\frac{3}{10}$ & $5$ & $\pm\frac{1}{10}$ \\
\hline
$3$ & $\frac{1}{3}$ & $6$ & $0$, $\pm\frac{1}{12}$ \\
\hline
$3$ & $\frac{3}{8}$ & $8$ & $\pm\frac{1}{16}$ \\
\hline
$3$ & $\frac{5}{12}$ & $12$ & $\pm\frac{1}{24}$ \\
\hline
$4$ & $\frac{1}{6}$ & $3$ & $0$, $\pm\frac{1}{30}$, $\pm\frac{1}{10}$ \\
\hline
$4$ & $\frac{1}{4}$ & $4$ & $\pm\frac{1}{20}$ \\
\hline
$4$ & $\frac{1}{3}$ & $6$ & $0$ \\
\hline
$5$ & $\frac{1}{6}$ & $3$ & $0$, $\pm\frac{1}{18}$ \\
\hline
$5$ & $\frac{1}{4}$ & $4$ & $0$ \\
\hline
$6$ & $\frac{1}{6}$ & $3$ & $\pm\frac{1}{42}$ \\
\hline
$7$ & $\frac{1}{6}$ & $3$ & $0$ \\
\hline
$9$ & $\frac{1}{6}$ & $3$ & $\pm\frac{1}{15}$ \\
\hline
\end{tabular}
\end{table}

\begin{table}[H]
\begin{tabular}{|c|c|c|c|c|}
\hline
\multicolumn{5}{|c|}{type $B$} \\
\hline
$n$ & $k$ & $p$ & $k'$ & $p'$ \\
\hline
$2$ & $\frac{1}{6}$ & $3$ & $\frac{1}{6}$, $\frac{1}{4}$, $\frac{1}{3}$, $\frac{5}{12}$
& $3$, $4$, $6$, $12$ \\
\hline
$2$ & $\frac{1}{4}$ & $4$ & $\frac{1}{6}$, $\frac{1}{4}$, $\frac{3}{10}$, $\frac{1}{3}$, $\frac{3}{8}$,
$\frac{5}{12}$, $\frac{9}{20}$
& $3$, $4$, $5$, $6$, $8$, $12$, $20$ \\
\hline
$2$ & $\frac{3}{10}$ & $5$ & $\frac{2}{5}$ & $10$ \\
\hline
$2$ & $\frac{1}{3}$ & $6$ & $\frac{1}{6}$, $\frac{1}{3}$, $\frac{5}{12}$
& $3$, $6$, $12$ \\
\hline
$2$ & $\frac{3}{8}$ & $8$ & $\frac{1}{4}$ & $4$ \\
\hline
$2$ & $\frac{7}{18}$ & $9$ & $\frac{4}{9}$ & $18$ \\
\hline
$2$ & $\frac{2}{5}$ & $10$ & $\frac{3}{10}$ & $5$ \\
\hline
$2$ & $\frac{5}{12}$ & $12$ & $\frac{1}{3}$ & $6$ \\
\hline
$2$ & $\frac{4}{9}$ & $18$ & $\frac{7}{18}$ & $9$ \\
\hline
$3$ & $\frac{1}{6}$ & $3$ & $\frac{1}{6}$, $\frac{1}{4}$, $\frac{1}{3}$ & $3$, $4$, $6$ \\
\hline
$3$ & $\frac{1}{4}$ & $4$ & $\frac{1}{6}$, $\frac{1}{4}$, $\frac{1}{3}$ & $3$, $4$, $6$ \\
\hline
$3$ & $\frac{1}{3}$ & $6$ & $\frac{1}{6}$, $\frac{1}{3}$ & $3$, $6$ \\
\hline
$3$ & $\frac{3}{8}$ & $8$ & $\frac{1}{4}$ & $4$ \\
\hline
$4$ & $\frac{1}{6}$ & $3$ & $\frac{1}{6}$, $\frac{1}{3}$ & $3$, $6$ \\
\hline
$4$ & $\frac{1}{4}$ & $4$ & $\frac{1}{4}$ & $4$ \\
\hline
$5$ & $\frac{1}{6}$ & $3$ & $\frac{1}{6}$ & $3$ \\
\hline
\end{tabular}
\end{table}

\begin{table}[H]
\begin{tabular}{|c|c|c|c|c|}
\hline
\multicolumn{5}{|c|}{type $C$} \\
\hline
$n$ & $k$ & $p$ & $k'$ & $p'$ \\
\hline
$2$ & $\frac{1}{6}$ & $3$ & $\frac{1}{6}$, $\frac{1}{4}$, $\frac{1}{3}$
& $3$, $4$, $6$ \\
\hline
$2$ & $\frac{1}{4}$ & $4$ & $\frac{1}{6}$, $\frac{1}{4}$, $\frac{3}{8}$
& $3$, $4$, $8$ \\
\hline
$2$ & $\frac{3}{10}$ & $5$ & $\frac{1}{4}$, $\frac{2}{5}$ & $4$, $10$ \\
\hline
$2$ & $\frac{1}{3}$ & $6$ & $\frac{1}{6}$, $\frac{1}{4}$, $\frac{1}{3}$, $\frac{5}{12}$
& $3$, $4$, $6$, $12$ \\
\hline
$2$ & $\frac{3}{8}$ & $8$ & $\frac{1}{4}$ & $4$ \\
\hline
$2$ & $\frac{7}{18}$ & $9$ & $\frac{4}{9}$ & $18$ \\
\hline
$2$ & $\frac{2}{5}$ & $10$ & $\frac{3}{10}$ & $5$ \\
\hline
$2$ & $\frac{5}{12}$ & $12$ & $\frac{1}{6}$, $\frac{1}{4}$, $\frac{1}{3}$ & $3$, $4$, $6$ \\
\hline
$2$ & $\frac{4}{9}$ & $18$ & $\frac{7}{18}$ & $9$ \\
\hline
$2$ & $\frac{9}{20}$ & $20$ & $\frac{1}{4}$ & $4$ \\
\hline
$3$ & $\frac{1}{6}$ & $3$ & $\frac{1}{6}$, $\frac{1}{4}$ & $3$, $4$ \\
\hline
$3$ & $\frac{1}{4}$ & $4$ & $\frac{1}{6}$, $\frac{1}{4}$ & $3$, $4$ \\
\hline
$3$ & $\frac{1}{3}$ & $6$ & $\frac{1}{6}$ & $3$ \\
\hline
$3$ & $\frac{3}{8}$ & $8$ & $\frac{1}{4}$ & $4$ \\
\hline
$3$ & $\frac{5}{12}$ & $12$ & $\frac{1}{6}$ & $3$ \\
\hline
$4$ & $\frac{1}{6}$ & $3$ & $\frac{1}{6}$ & $3$ \\
\hline
$5$ & $\frac{1}{6}$ & $3$ & $\frac{1}{3}$ & $6$ \\
\hline
\end{tabular}
\end{table}

\begin{table}[H]
\begin{tabular}{|c|c|c|}
\hline
\multicolumn{3}{|c|}{type $D$} \\
\hline
$n$ & $k$ & $p$  \\
\hline
$4$ & $\frac{1}{6}$, $\frac{1}{4}$, $\frac{1}{2}$ & $3$, $4$, $6$ \\
\hline
$5$ & $\frac{1}{6}$, $\frac{1}{4}$ & $3$, $4$ \\
\hline
$6$ & $\frac{1}{6}$ & $3$ \\
\hline
\end{tabular}
\end{table}

\begin{table}[H]
\begin{tabular}{|c|c|c|}
\hline
\multicolumn{3}{|c|}{type $E$} \\
\hline
$n$ & $k$ & $p$  \\
\hline
$6$ & $\frac{1}{6}$, $\frac{1}{4}$ & $3$, $4$ \\
\hline
$7$ & $\frac{1}{6}$ & $3$ \\
\hline
\end{tabular}
\end{table}

\begin{table}[H]
\begin{tabular}{|c|c|c|c|c|}
\hline
\multicolumn{5}{|c|}{type $F$} \\
\hline
$n$ & $k$ & $p$ & $k'$ & $p'$ \\
\hline
$4$ & $\frac{1}{6}$ & $3$ & $\frac{1}{6}$, $\frac{1}{3}$ & $3$, $6$ \\
\hline
$4$ & $\frac{1}{4}$ & $4$ & $\frac{1}{4}$ & $4$ \\
\hline
\end{tabular}
\end{table}

\begin{table}[H]\label{tab:list}
\begin{tabular}{|c|c|c|c|c|}
\hline
\multicolumn{5}{|c|}{type $G$} \\
\hline
$n$ & $k$ & $p$ & $k'$ & $p'$ \\
\hline
$2$ & $\frac{1}{6}$ & $3$ & $\frac{1}{6}$, $\frac{7}{18}$ & $3$, $9$ \\
\hline
$2$ & $\frac{1}{4}$ & $4$ & $\frac{1}{6}$, $\frac{1}{4}$, $\frac{5}{12}$
& $3$, $4$, $12$ \\
\hline
$2$ & $\frac{3}{10}$ & $5$ & $\frac{1}{6}$ & $3$ \\
\hline
$2$ & $\frac{1}{3}$ & $6$ & $\frac{1}{6}$, $\frac{1}{3}$ & $3$, $6$ \\
\hline
$2$ & $\frac{7}{18}$ & $9$ & $\frac{1}{6}$ & $3$ \\
\hline
$2$ & $\frac{5}{12}$ & $12$ & $\frac{1}{4}$ & $4$ \\
\hline
\end{tabular}
\end{table}

\begin{remark}
Firstly, there might be overlaps in the above list among type $A$, $B$ and $C$, because type $B_{n}$ and type $C_{n}$ are
dual to each other, and a type $B_{n}$ arrangement can be obtained from a reduction of a higher rank $A_{m}$ arrangement
to a subtorus.

Secondly, however, besides the above list there actually are more ball quotient structures
due to some \emph{hidden symmetry}. For instance
when $k=\frac{1}{6}$ for type $E_{8}$, there is also a ball quotient structure, which can be seen from a point of
view of moduli space of rational elliptic surfaces with certain markings,
see \cite{Looijenga-2008} and \cite{Couwenberg-Heckman-Looijenga-CM} for details.
This hidden symmetry does not come from the root system perspective. Nevertheless, a description for this hidden symmetry in
$E_{8}$ root system still remains unclear.
\end{remark}

\subsection{A modular interpretation}\label{subsec:modular-interpretation}

From last section, we see that under suitable
Schwarz conditions, \index{Schwarz condition}
we can find a ball quotient structure for our space $W\backslash H^{\circ}$ as follows
\[ Pev:W\backslash H^{\circ}\rightarrow \Gamma\backslash\mathbb{B} \]
with $\Gamma$ a discrete subgroup of $\mathrm{Aut}(\mathbb{B})$ with finite covolume.
In particular, we even wish to find a modular interpretation for these
(potential) ball quotients. Namely, does there exist such a commutative diagram
\[
\begin{CD}
W\backslash H^{\circ}   @>Pev >>     \Gamma\backslash\mathbb{B} \\
@VVV                                     @VVV                         \\
\mathcal{M}               @> Per  >>    \Gamma'\backslash\mathbb{B}
\end{CD}
\]
that $\mathcal{M}$ is a suitable moduli space with $Per$ a suitable period map.
For $R$ of type $A_{n}$, the answer is already given by the theory of
Deligne-Mostow. In fact, the classical root system is just a special case of
the theory of Deligne-Mostow with all the weights being equal. We also encounter
the moduli space of del Pezzo surfaces when we look at the type $E_{n}$. However,
for the other root systems, we barely have any idea about them for the moment. We shall investigate the $A_{n}$ case here
since this example as well as type $E_{n}$ cases strongly motivated current research presented in this paper.

\begin{example}\label{exm:ball-quotient}
For the root system $R$ of type $A_{n}$, we impose a condition $k'=0$ for simplicity.
Let be given $n+3$ pairwise distinct
points $z_{0},\cdots,z_{n+2}$ on the projective line $\mathbb{P}^{1}$ and $n+3$
associated rational numbers $\mu_{0},\cdots,\mu_{n+2}\in (0,1)$ with $\sum\mu_{i}=2$.
Fix $z_{0}=0$ and $z_{n+2}=\infty$, if we denote the simply connected torus by $H'$,
then $H'^{\circ}$ can be defined as
\begin{multline*}
H'^{\circ}=\{(z_{1},\cdots,z_{n+1})\in(\mathbb{C}^{\times})^{n+1}\mid
z_{1}\cdots z_{n+1}=1,z_{i}\neq z_{j} \; \\
\text{for each pair of distinct} (i,j)\}.
\end{multline*}
And then the adjoint torus
\[ H^{\circ}=C_{n+1}\backslash H'^{\circ} \]
can be defined with $C_{n+1}=P^{\vee}/Q^{\vee}$
the cyclic group of order $n+1$. Let $\mathcal{M}_{0,n+3}$ denote the moduli space of
genus $0$ curve with $n+3$ marked points. Write $\mu_{i}=m_{i}/m$ with $m$ being their
smallest denominator, consider the algebraic curve $C(z)$ defined by the
affine equation:
\[ C(z): y^{m}=\prod(\zeta-z_{i})^{m_{i}}.  \]
Then the periods of the cyclic cover of $\mathbb{C}$
\[\int_{z_{i}}^{z_{i+1}}\frac{d\zeta}{y}   \]
are just solutions of the Lauricella $F_{D}$ hypergeometric equations.
If we take $\mu_{i}=k$ for $i=1,\cdots,n+1$ and the remaining
$\mu_{0}=\mu_{n+2}=(2-(n+1)k)/2$ so
that it becomes our special hypergeometric system associated with
the root system $A_{n}$. Let $\mathfrak{S}_{\mu}$
denote the subgroup of the symmetric group $\mathfrak{S}_{n+3}$ fixing
$\mu=(\mu_{0},\cdots,\mu_{n+2})$.
The half integrality condition \index{Half integrality condition}
from the theory of
Deligne-Mostow is given as follows:
\begin{align*}
\mu_{i}+\mu_{j}<1\Rightarrow (1-\mu_{i}-\mu_{j}) \in
\begin{cases}
&1/\mathbb{N} \quad \text{if} \quad \mu_{i}\neq \mu_{j}\\
&2/\mathbb{N} \quad \text{if} \quad \mu_{i}=\mu_{j}\\
\end{cases}
\; \text{for all} \; i\neq j.
\end{align*}
This happens to coincide with the Schwarz conditions for the special hypergeometric system with type
$A_{n}$ along the toric strata, along the mirrors and near the identity
element:
\begin{align*}
(n-1)k/2=(1-\mu_{0}-\mu_{1})\in 1/\mathbb{N}\\
(1-2k)/2=(1-\mu_{1}-\mu_{n+1})/2\in 1/\mathbb{N}\\
((n+1)k-1)/2=(1-\mu_{0}-\mu_{n+2})/2 \in 1/\mathbb{N}.
\end{align*}
If these conditions are satisfied, then we have a commutative diagram
\[
\begin{CD}
W\backslash H^{\circ}   @>Pev >>     \Gamma_{n}\backslash\mathbb{B}^{n} \\
@VVV                                     @VVV                         \\
\mathfrak{S}_{\mu}\backslash \mathcal{M}_{0,n+3}  @> Per  >>    \Gamma'_{n}\backslash\mathbb{B}^{n}
\end{CD}
\]
with left vertical arrow a covering map
and top arrow being an isomorphism
onto a Heegner divisor complement.
\end{example}

\end{document}